\def\@currentlabel{2.1}\label{e:dispaa}
\def\@currentlabel{2.21}\label{e:dispau}
\def\@currentlabel{2.22}\label{e:dispav}
\def\@currentlabel{2.23}\label{e:dispaw}
\def\@currentlabel{2.24}\label{e:dispax}
\def\theequation{\thesection.\@arabic\c@equation}
\newcommand{\red}[1]{{\color{red} #1}}
\newcommand{\p}{{p}}
\newcommand{\h}{{\tt h}}
\newcommand{\q}{{\tt q}}
\newcommand{\pe}{{\tt p}}
\newcommand{\bi}{{\tt b}}
\newcommand{\ta}{{\tt a}}
\newcommand{\x}{{\tt x}}
\newcommand{\y}{{\tt y}}
\newcommand{\s}{{\tt s}}
\newcommand{\z}{{\tt z}}
\newcommand{\Q}{\mathbb{Q}}
\newcommand{\R}{\mathbb{R}}
\newcommand{\diver}{\text{div}}
\newcommand{\eps}{\varepsilon}
\DeclareMathOperator{\sgn}{sign}
\renewcommand{\theequation}{\thesection.\arabic{equation}}
 \newtheorem{lemma}{Lemma}[section]
\newtheorem{definition}{Definition}[section]
\newtheorem{theorem}{Theorem}[section]
\newtheorem{proposition}{Proposition}[section]
\newtheorem{remark}{Remark}[section]
\newcommand{\bremark}{\begin{remark} \em}
\newcommand{\eremark}{\end{remark} }
\numberwithin{equation}{section}
\title[\tiny{doubling construction for $O(m)\times O(n)$ invariant solutions to the Allen-Cahn equation}]{Doubling construction for $O(m)\times O(n)$ invariant solutions to the Allen-Cahn equation}
\author{OScar Agudelo}
\address{University of West Bohemia in Pilsen-NTIS, Univerzitn\'{i} 22, Czech Republic.}
\email {oiagudel@ntis.zcu.cz}
\author{Micha{\l } Kowalczyk}
\address{Departamento de Ingenier\'{\i}a Matem\'atica and Centro
de Modelamiento Matem\'atico (UMI 2807 CNRS), Universidad de Chile, Casilla
170 Correo 3, Santiago, Chile.}
\email {kowalczy@dim.uchile.cl}
\author{Matteo Rizzi}
\address{Centro
de Modelamiento Matem\'atico (UMI 2807 CNRS), Universidad de Chile, Casilla
170 Correo 3, Santiago, Chile.}
\email{mrizzi1988@gmail.com}
\thanks{O. Agudelo was supported by the Grant 18-032523S
of the Grant Agency of the Czech Republic and also by the Project LO1506 of the Ministry
of Education, Youth and Sports of the Czech Republic.  
M. Kowalczyk was partially supported by Chilean research grants Fondecyt 1130126 and 1170164 and CMM Conicyt PIA AFB170001 CMM-Chile.
M. Rizzi was partially supported by Fondecyt postdoctoral research grant 3170111 and CMM Conicyt PIA AFB170001 CMM-Chile.}
\begin{document}

\maketitle
\begin{abstract}
We construct new families of two-ended $O(m)\times O(n)$-invariant solutions to the Allen-Cahn equation $\Delta u+u-u^3$=0 in $\R^{N+1}$, with $N\ge 7$, whose zero level sets diverge logarithmically from the Lawson cone at infinity. The construction is based on a careful study of the Jacobi-Toda system on a given $O(m)\times O(n)$-invariant manifold, which is asymptotic to the Lawson cone at infinity.
\end{abstract}

\tableofcontents

\section{Introduction}\label{introduction}
In this work we study existence and asymptotic behaviour of bounded, sign-changing solutions to the {\it Allen-Cahn} equation
\begin{equation}
\label{Allen-Cahn-eq}
\Delta u+u-u^3=0 \quad \hbox{in} \quad \R^{N+1}.
\end{equation}

\medskip
Equation \eqref{Allen-Cahn-eq} was introduced in \cite{ALLENCAHN1979} to model the allocation of binary mixtures and it is the prototype equation for the continuous modelling of {\it phase transition phenomena}.


\medskip
In the one dimensional case, the Allen-Cahn equation becomes the ordinary diffe\-rential equation (ODE)
\begin{equation}\label{Allen-Cahn-eqn 1D}
v''(t) + v(t)-v^3(t)=0 \quad \hbox{in} \quad \R.
\end{equation}

\medskip

With the boundary conditions $v(\pm \infty)=\pm 1$, the equation \eqref{Allen-Cahn-eqn 1D} has an explicit solution given by
\begin{equation}\label{Allen-Cahn-eqn 1D Sln}
v_{\star}(t) = \tanh\bigg(\frac{t}
{\sqrt{2}}\bigg) \quad \hbox{for} \quad t\in \R
\end{equation}
and up to translations, this solution is unique. Besides, $v_{\star}$ is strictly monotone increasing, i.e. $v_{\star}'(t)>0$ for every $t\in \R$. In particular, $\{v_{\star}=0\}=\{0\}$.

\medskip
Assume $N \geq 1$ and fix $\vec{a}\in \R^{N+1}$, a unit vector and $\xi_0 \in \R^{N+1}$. The function 
\begin{equation}\label{DeGiorgiconjectsln}
u(\xi):= v_{\star}(t), \quad t= \vec{a}\cdot (\xi -\xi_0)\quad \hbox{for} \quad \xi\in \R^{N+1}
\end{equation}
is a bounded and sign-changing solution to \eqref{Allen-Cahn-eq} which is  monotone in the direction of  $\vec{a}$ and whose nodal set is the hyperplane with equation $\vec{a}\cdot (\xi-\xi_0)=0$. We remark that, up to a translation and rotations of the axis, these solutions depend only on one variable and in this sense they are trivial.

\medskip
In 1978 (see \cite{DEGIORGI1978}), E. De Giorgi conjectured  that if $1 \leq N \leq 7$, then for any solution to \eqref{Allen-Cahn-eq} which is monotone in one direction, the level sets $\{u=c\}$ must be parallel hyperplanes. This is equivalent to saying that for some unit vector $\vec{a}$ and some point $\xi_0$, the solution $u$ satisfies \eqref{DeGiorgiconjectsln}.

\medskip
De Giorgi's conjecture shows also evidence of the strong connection between the study of bounded solutions to the Allen-Cahn equation and the theory of minimal hypersurfaces, see for instance \cite{MODICA1979}. 

\medskip
In \cite{AMBROSIOCABRE2000,GHOUSSOUBGUI1998}, De Giorgi's conjecture was established in dimensions $N=2,3$. In \cite{SAVIN2009}, it was proven true in dimensions $4 \leq N \leq 8$, under the additional assumption
$$
\lim_{\xi_{N+1}\to \pm \infty} u(\xi',\xi_{N+1}) = \pm 1.
$$

Further evidence of the connection between solutions to \eqref{Allen-Cahn-eq} and the theory of minimal hypersurfaces is the {\it Bernstein Conjecture}, concerning rigidity of minimal hypersurfaces, see \cite{ALMGREN1966,BERNSTEIN1917,BDG_cones, FLEMING1962,Si}.

\medskip
In \cite{DELPINOKOWALCKZYKWEI2011} a counterexample, disproving the De Giorgi's conjecture, was built for $N=8$ using the non-trivial minimal graph $\Gamma$ built in \cite{BDG_cones}, as a counterexample to Bernstein's conjecture. The solution found in \cite{DELPINOKOWALCKZYKWEI2011} is bounded, monotone in one direction and its zero level set  is close to the dilated surface $\Gamma_{\eps} =\eps^{-1} \Gamma$, where $\eps>0$ is a small positive number. The main strategy is based upon the fact that $\Gamma_{\epsilon}$  is nearly flat around each of this points and hence the quantity $v_{\star}(t)$ is a good approximation to a solution of \eqref{Allen-Cahn-eq}, where $t=t(\xi)$ is a choice of normal coordinate (signed distance) from $\xi$ to $\Gamma_{\epsilon}$.

\medskip
In \cite{DELPINOKOWALCZYKWEI2013I}, the same approach was used  for the case $N=2$ to construct a solution to \eqref{Allen-Cahn-eq} having zero level set close to a large dilation of an embedded minimal surface with finite total curvature, that satisfies certain non-degeneracy assumptions. One important example of such surfaces is the {\it catenoid} leading to an axially symmetric solution of \eqref{Allen-Cahn-eq}.

\medskip
The aforementioned construction was generalized to the case $N \geq 3$ in \cite{AGUDELODELPINOWEI2016}, where the authors built an axially symmetric solution to \eqref{Allen-Cahn-eq} having nodal set close to a large dilation of a logarithmic correction of the higher dimensional catenoid. This logarithmic correction, which is governed by the Liouville equation, is needed due to the fact that outside a large ball, the higher dimensional catenoid is asymptotic to two parallel planes.  

\medskip
In the works mentioned above, the solutions are constructed using the infinite dimensional Lyapunov-Schmidt reduction and based on the previous knowledge of what the nodal set should be, profiting also from the properties of those minimal hypersurfaces in each particular case.

\medskip
To mention some further relevant works, let us introduce some notation. Throughout this work, we assume that $N+1:=m+n\ge 8$ and that $m,n \ge 2$. For points in $\R^{N+1}$ we write $\xi=(x,y)\in\R^m\times\R^n$. 

\medskip

For {$m,n\ge 2$}, we introduce the minimal hypersurface
$$
C_{m,n}:=
\left\{(x,y)\in\R^m\times\R^n:\,|x|^2=\frac{m-1}{n-1}|y|^2
\right\}
$$
known as the Lawson cone. In the case $m=n$, $C_{m,m}$ is known as the Simon's cone. Also, observe that the Lawson cone is invariant under the action of the group of rotations $O(m)\times O(n)$.

\medskip
In \cite{CABRETERRA2009,CABRETERRA2010} existence and qualitative properties of saddle-shaped solutions to \eqref{Allen-Cahn-eq} are studied. The nodal set of this solutions is exactly the Simons cone $C_{m,m}$.

\medskip
We remark that for $m,n \geq 2$, the opent set $\R^{N+1}\backslash C_{m,n}$ has two connected components, namely
$$
\begin{aligned}
E_{m,n}^+ &:=\bigg\{(x,y)\in\R^m\times\R^n:\,|x|^2<\frac{m-1}{n-1}|y|^2\bigg\}, \\ 
E_{m,n}^-& :=\bigg\{(x,y)\in\R^m\times\R^n:\,|x|^2>\frac{m-1}{n-1}|y|^2\bigg\},
\end{aligned}
$$
corresponding to the interior and the exterior of $C_{m,n}$, respectively. The sets $E_{m,n}^\pm$ help to describe an important feature of $C_{m,n}$ that has been already studied in \cite{HS,M,SS} and that is described in the next result.

\medskip 
\begin{theorem}\cite{M,SS}
Let $m,\,n\ge 3$, $n+m=N+1\ge 8$. Then there exist two unique minimal hypersurfaces $\Sigma^\pm_{m,n}\subset E^\pm_{m,n}$ which are asymptotic to $C_{m,n}$ at infinity and $d(\Sigma^\pm_{m,n},\{0\})=1$. Moreover, $\Sigma^\pm_{m,n}$ are $O(m)\times O(n)$-invariant.
\label{th_min-surf_cone}
\end{theorem}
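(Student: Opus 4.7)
Since $\Sigma$ must be $O(m)\times O(n)$-invariant, it is determined by a profile curve in the closed quarter plane $Q=\{(r,s):\ r=|x|\ge 0,\ s=|y|\ge 0\}$. Parametrizing the curve by arc length as $\gamma(t)=(r(t),s(t))$ with $r'=\cos\alpha$, $s'=\sin\alpha$, I would rewrite the minimal surface equation as
\begin{equation*}
\alpha'(t) \;=\; (m-1)\frac{\sin\alpha(t)}{r(t)} \;-\; (n-1)\frac{\cos\alpha(t)}{s(t)}.
\end{equation*}
The Lawson cone $C_{m,n}$ is an exact solution of constant slope $\alpha\equiv\alpha_0$ with $\tan^2\alpha_0=(n-1)/(m-1)$. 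Moreover, the scaling $(r,s)\mapsto\lambda(r,s)$ preserves the solution set, which reduces the normalization $d(\Sigma,\{0\})=1$ to a condition imposed at a single point.

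Next I would pass to polar coordinates $r=\rho\cos\theta$, $s=\rho\sin\theta$ and take $\tau=\log\rho$ as the new independent variable, obtaining an autonomous planar system for $(\theta,\dot\theta)$ for which the line $\theta\equiv\theta_0$ corresponds to the cone. Linearizing at this equilibrium produces the indicial equation of the Jacobi operator of $C_{m,n}$; its roots are real and negative precisely under the Simons dimension hypothesis $m+n=N+1\ge 8$, which is exactly the assumption of the theorem. The stable manifold theorem then supplies a one-parameter family of trajectories converging exponentially (in $\tau$) to the cone as $\rho\to\infty$, one family inside $E^+_{m,n}$ and another inside $E^-_{m,n}$, producing generating curves with the correct cone-asymptotic behaviour at infinity.

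To single out the two surfaces $\Sigma^\pm_{m,n}$, I would run a shooting argument from the unit quarter-circle $\{r^2+s^2=1\}\cap Q$. For each point $P\in E^+_{m,n}$ on this arc, consider the trajectory $\gamma_P$ starting at $P$ with tangent orthogonal to the radius vector, so that the resulting hypersurface realizes $d(\Sigma,\{0\})=|P|$. Using the monotone dependence of the flow on $P$ and tracking the two extreme behaviours, namely trajectories that fall back onto a coordinate axis in finite time and those captured by the stable manifold built in the previous step, a standard intermediate-value argument yields a unique $P^+\in E^+_{m,n}$ giving a globally defined curve asymptotic to $C_{m,n}$. The mirror argument on the other side of the cone delivers $\Sigma^-_{m,n}$, and the scaling invariance combined with the uniqueness of the codimension-one stable manifold at the cone gives the uniqueness statement after rescaling to $d(\Sigma^\pm,\{0\})=1$.

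The main obstacle I foresee is the global phase-plane analysis backing up the shooting step: one must rule out drift, oscillation, or return to a coordinate axis in finite arc length for the intermediate trajectories, and establish monotone dependence of the asymptotic behaviour on the shooting parameter. Both rely on constructing a suitable Lyapunov or conserved quantity tied to the homogeneity of the ODE and the explicit geometry of the cone. The hypothesis $N+1\ge 8$ enters decisively here, since it is precisely the condition under which the indicial roots at $C_{m,n}$ are real, so that profile curves approach the cone in a non-oscillatory way and the shooting parameter depends monotonically on the asymptotic direction.
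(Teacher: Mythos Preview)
Your ODE/phase-plane strategy is essentially the route taken in \cite{ABPRS}, and for \emph{existence} of $O(m)\times O(n)$-invariant minimal hypersurfaces asymptotic to $C_{m,n}$ it is sound: reducing to the profile curve, passing to the autonomous system in $\tau=\log\rho$, and invoking the stable manifold at the cone (real indicial roots precisely when $N+1\ge 8$) is the right picture. The shooting step and the Lyapunov-type control you flag as the main obstacle are indeed the technical heart of that construction, and are carried out in \cite{ABPRS}.

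The paper, however, does not argue this way. It treats the theorem as a cited result and, in its more detailed restatement (Theorem~\ref{th_Al}), assembles the proof from the literature along a completely different line: existence and uniqueness come from the geometric-measure-theoretic results of Hardt--Simon \cite{HS} on area-minimising hypersurfaces lying to one side of a strictly minimising cone; the decay rate \eqref{rate-dec-Sigma} also comes from \cite{HS}; and the $O(m)\times O(n)$-invariance is obtained \emph{a posteriori}, by combining the ODE construction of \cite{ABPRS} with the uniqueness theorem of Mazet \cite{M}, which forces the Hardt--Simon surfaces to coincide with the invariant ones.

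This points to a genuine gap in your plan. By building the symmetry into the ODE from the outset, your argument can at best yield uniqueness \emph{within the class of $O(m)\times O(n)$-invariant} minimal hypersurfaces. The theorem, as stated, asserts uniqueness among \emph{all} minimal hypersurfaces in $E^\pm_{m,n}$ asymptotic to $C_{m,n}$ with $d(\Sigma,\{0\})=1$, and then records invariance as a consequence (``Moreover''). Your stable-manifold/shooting argument says nothing about a hypothetical non-symmetric competitor. To close this gap you would need either a symmetrisation or moving-planes argument forcing any such minimal hypersurface to be invariant, or an appeal to an external uniqueness result such as \cite{M,SS}, exactly as the paper does. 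Without that extra ingredient, the ``Moreover'' clause and the unqualified uniqueness remain unproven.
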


\medskip
We are interested in solutions $u$ to \eqref{Allen-Cahn-eq} which are invariant under the action of the group of rotations $O(m)\times O(n)$. In this regard, the saddle-shaped solutions in \cite{CABRETERRA2009, CABRETERRA2010} enjoy this symmetry. We stress that functions with such symmetry are even in each of the variables.

\medskip
Let $\Sigma$ be one of the minimal hypersurfaces $\Sigma_{m,n}^{\pm}$. In \cite{PW} the authors construct stable, $O(m)\times O(n)-$invariant solutions to \eqref{Allen-Cahn-eq}, changing sign once and having nodal set close to a large dilation of $\Sigma$. Their construction follows the approach from \cite{DELPINOKOWALCKZYKWEI2011} using extensively the area-minimising character of the underlying cone. Also, from the results in \cite{HS}, this contruction can be generalized to more general minimal hypersurfaces asymptotic to an area-minimising cone.

\medskip 
In this work, we generalise  this construction to built solutions to \eqref{Allen-Cahn-eq}, but changing sign twice near a large dilation of $\Sigma$. 

\medskip
The first step in this generalisation is the following theorem, which is our first main result in this work.


\begin{theorem}
Let {$m,\,n\ge 3$}, $n+m=N+1\ge 8$  and let $a_{\star}>0$ be a constant. Let $\Sigma$ be one of the two minimal hypersurfaces constructed in Theorem \ref{th_min-surf_cone}, then there exists $\delta_*>0$ small such that if $0<\delta \leq \delta_*$, then the equation
\begin{equation}
\label{eq_Liouville}
\delta \big(\Delta_\Sigma w+|A_\Sigma|^2 w\big)=  2a_{\star}e^{-\sqrt{2}w}
\end{equation}
has a smooth solution which is $O(m)\times O(n)$-invariant.
\label{th_Liouville}
\end{theorem}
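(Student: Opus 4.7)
The plan is to exploit the $O(m)\times O(n)$-symmetry to reduce \eqref{eq_Liouville} to an ODE along the generating curve of $\Sigma$, construct an explicit approximate solution dictated by the asymptotic cone geometry, and conclude by a sub/super-solution argument. Concretely, $\Sigma$ is the surface obtained by revolving a smooth curve $\gamma\subset\{s,t>0\}$ around $S^{m-1}\times S^{n-1}$; parametrizing $\gamma$ by arclength $\sigma\in[0,\infty)$ with coordinates $(\phi(\sigma),\psi(\sigma))$, any $O(m)\times O(n)$-invariant function $w$ on $\Sigma$ depends only on $\sigma$ and
\[
\Delta_\Sigma w+|A_\Sigma|^2 w \;=\; w''(\sigma)+H(\sigma)\,w'(\sigma)+V(\sigma)\,w(\sigma),
\]
with $H=(m-1)\phi'/\phi+(n-1)\psi'/\psi$ and $V=|A_\Sigma|^2$. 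Theorem~\ref{th_min-surf_cone} ensures that $\gamma$ is asymptotic to the generator of $C_{m,n}$, so $H(\sigma)=(N-1)/\sigma+O(\sigma^{-2})$ and $V(\sigma)=c_{m,n}\sigma^{-2}+O(\sigma^{-3})$ as $\sigma\to\infty$ for an explicit $c_{m,n}>0$, while both $H$ and $V$ remain smooth near $\sigma=0$.

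Next I would construct an approximate solution. Balancing the leading term $\delta V(\sigma)w$ of the Jacobi operator against $2a_\star e^{-\sqrt{2}w}$ forces $w(\sigma)\sim\sqrt{2}\log\sigma$, corrected by a $\log\log$ factor. A natural ansatz is therefore
\[
w_0(\sigma) \;=\; \sqrt{2}\,\log(1+\sigma)\;+\;\frac{1}{\sqrt{2}}\,\log\!\frac{1}{\delta\,\log(2+\sigma)}\;+\;c_0,
\]
where $c_0$ is fixed by imposing $e^{-\sqrt{2}c_0}=c_{m,n}/(\sqrt{2}a_\star)$, so that the dominant terms $\delta c_{m,n}\sqrt{2}(\log\sigma)/\sigma^2$ on the left and $2a_\star e^{-\sqrt{2}w_0}$ on the right agree identically at leading order. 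A direct substitution using the asymptotics of $H$ and $V$ shows that $w_0$ solves \eqref{eq_Liouville} up to an error of size $\delta\,\sigma^{-2}(\log\sigma)^{-1}$ in the far region and of size $\delta$ on compact subsets of $\sigma$.

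To upgrade $w_0$ to a genuine solution I would invoke a monotone iteration. The key observation is that the reaction $w\mapsto \delta|A_\Sigma|^2 w-2a_\star e^{-\sqrt{2}w}$ is strictly increasing in $w$, so that rewriting \eqref{eq_Liouville} in the form $(-\delta\Delta_\Sigma+K)w=Kw+\delta|A_\Sigma|^2w-2a_\star e^{-\sqrt{2}w}$, for $K$ large enough, yields a Perron-type iteration whose positivity is governed by the maximum principle for $-\delta\Delta_\Sigma+K$ on $\Sigma$. The high sensitivity of the exponential right-hand side to additive shifts of $w$, together with the asymptotic estimates on $H,V$, makes $\underline{w}=w_0-M_-$ and $\overline{w}=w_0+M_+$ into ordered sub- and super-solutions in the far region for suitable constants $M_\pm$; on any fixed compact set the inequalities can be arranged by enlarging $M_\pm$ and shrinking $\delta$. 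Iterating within the class of $O(m)\times O(n)$-invariant functions, which is preserved by the iteration, produces a smooth invariant solution with $\underline{w}\le w\le\overline{w}$, to which elliptic regularity applies.

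The principal difficulty I anticipate is the transition region where the cone approximation to $\Sigma$ degrades, namely $\sigma$ of order one on the $\delta$-independent scale. There the Jacobi operator on $\Sigma$ differs from its conical counterpart by non-negligible errors, so the asymptotic ansatz $w_0$ must be patched to a bounded interior piece; the matching has to be performed uniformly in the small parameter $\delta$. It is precisely the fine asymptotic control on $\Sigma$ provided by Theorem~\ref{th_min-surf_cone} that makes this gluing, and the resulting verification of the sub/super-solution inequalities across the entire range of $\sigma$, feasible.
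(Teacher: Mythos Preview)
Your barrier argument has a sign error that is fatal, not cosmetic. Recall that for the monotone iteration $(-\delta\Delta_\Sigma+K)w_{n+1}=Kw_n+\delta|A_\Sigma|^2 w_n-2a_\star e^{-\sqrt{2}w_n}$ one needs a \emph{super}-solution $\overline w$ satisfying $\delta J_\Sigma\overline w\le 2a_\star e^{-\sqrt{2}\overline w}$ and a \emph{sub}-solution $\underline w\le\overline w$ with the reverse inequality. Take $\overline w=w_0+M_+$ with $M_+>0$. Since $\delta J_\Sigma(w_0+M_+)=\delta J_\Sigma w_0+\delta|A_\Sigma|^2 M_+$ and $\delta J_\Sigma w_0\approx 2a_\star e^{-\sqrt{2}w_0}$, you would need
\[
2a_\star e^{-\sqrt{2}w_0}+\delta|A_\Sigma|^2 M_+ + E \;\le\; 2a_\star e^{-\sqrt{2}w_0}e^{-\sqrt{2}M_+},
\]
which asks a quantity strictly larger than $2a_\star e^{-\sqrt{2}w_0}$ to be bounded by something strictly smaller. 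The inequality fails for every $M_+>0$, regardless of how small $\delta$ is. Symmetrically, $w_0-M_-$ fails to be a sub-solution. What your shifts actually produce is a sub-solution \emph{above} and a super-solution \emph{below} the approximate solution, which is useless for Perron iteration.

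The obstruction is structural: both the Jacobi potential $\delta|A_\Sigma|^2 w$ and the reaction $-2a_\star e^{-\sqrt{2}w}$ are increasing in $w$, so the linearization $\delta\Delta_\Sigma+\bigl(\delta|A_\Sigma|^2+2\sqrt{2}a_\star e^{-\sqrt{2}w}\bigr)$ carries a \emph{positive} zeroth-order coefficient. No comparison principle is available for such operators, and there is no cheap choice of ordered barriers. More elaborate perturbations of $w_0$ (multiplicative, or with decaying corrections) run into the same problem because the error $E\sim\delta(1+\sigma)^{-2}$ from $\Delta_\Sigma w_0$ is only one logarithm smaller than the leading balance, not small enough to be absorbed by a barrier margin. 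This is exactly why the paper takes a different route: it builds an approximate solution ${\tt v}_j$ via iterated Lambert-$W$ corrections until the residual is $O(\delta|\log\delta|^{-j/2})$, then inverts the linearized Jacobi--Toda operator through a delicate Emden--Fowler/ODE analysis across several regimes of the potential (subsections~3.3--3.7), and closes by contraction mapping. The heart of the matter---Proposition~\ref{propinverselinearJT}, giving a quantitative inverse for $L_\delta$ with loss $\sigma^{3/4}\log\sigma$---cannot be bypassed by monotonicity.

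A minor separate point: $H(\sigma)=(m-1)\phi'/\phi+(n-1)\psi'/\psi$ is \emph{not} smooth at $\sigma=0$; it has a pole $(n-1)/\sigma$ because $\psi(0)=0$. This is only a coordinate singularity (the Laplace--Beltrami operator on $\Sigma$ is smooth), but it does mean your ansatz must be even in $\sigma$ and the near-tip analysis has to go through the full PDE rather than the reduced ODE.
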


Above $\Delta_\Sigma$ is the Laplace-Beltrami operator on $\Sigma$,  $|A_\Sigma|$ is the norm of the second fundamental form and $\Delta_\Sigma+|A_\Sigma|^2$ is the Jacobi operator. The nonlinear, exponential term on the right hand side of (\ref{eq_Liouville}) describes the Toda interaction between the two sheets of the zero level set of $u$,   hence the name the Jacobi-Toda equation given to (\ref{eq_Liouville}). Before  we will describe the role it plays in the problem at hand in more details  we will first explain why it is an important problem on its own. 

Geometric analogs of the existence result for  \eqref{Allen-Cahn-eq} proven in this paper are doubling construction for minimal surfaces (e.g.  \cite{kapouleas2010doubling}, \cite{kapouleas2014minimal}, \cite{KAPOULEASYANG2010}) and connected sum construction for CMC (constant mean curvature) surfaces (e.g. \cite{maz_pac_pol}).   Both are based on a similar idea of taking two copies of a given minimal or CMC surface and building a new, connected surface of the same type by inserting a catenoidal bridge between them. In general this requires also a deformation of  the original surfaces. Likewise,  in our case we want to  "double" the zero level set $\Sigma$ of the solution $u$ of  \eqref{Allen-Cahn-eq} and the Jacobi-Toda equation provides  the "connection" between  the two components. In this context a general principle would be: if for a given minimal surface one can solve the Jacobi-Toda equation (\ref{eq_Liouville}) then a connected sum  construction for \eqref{Allen-Cahn-eq} based on such surface should be possible (e.g. \cite{DELPINOKOWALCZYKWEIYANG2010,DELPINOKOWALCZYKWEI2013II}). As recent results in \cite{2018arXiv180302716C} and \cite{doi:10.1002/cpa.21812} show that  the Jacobi-Toda equation also  plays a crucial role in the problem of classification of finite Morse index solutions of the Allen-Cahn equation.

We  state now the second  result in this work, concerned directly with the existence of  solutions to \eqref{Allen-Cahn-eq}.

\begin{theorem}\label{2ndMainTheo}
Let {$m,\,n\ge 3$}, $n+m=N+1\ge 8$ and let $\Sigma$ be one of the two minimal hypersurfaces
described in Theorem \ref{th_min-surf_cone}. Then there exists $\eps_0>0$ such that, for any $\eps\in(0,\eps_0)$, there exists a solution $u_\eps$ to \eqref{Allen-Cahn-eq} in $\R^{N+1}$ such that 
\begin{enumerate}
\item[(i)] $u_\eps$ is smooth and $O(m)\times O(n)$-invariant;

\medskip
\item[(ii)] the zero level set of $u_\eps$ is the disjoint union of $2$ connected components, which are normal graphs over $\Sigma_\eps:=\eps^{-1}\Sigma$ of $O(m)\times O(n)$-invariant functions;

\medskip
\item[(iii)] \label{est_energy_grad} there exists a constant $c>0$ such that for any $\eps \in (0,\eps_0)$ and any $R> 2\eps^{-1}$,
\begin{equation}\notag
\int_{B_R}\frac{1}{2} |\nabla u_\eps|^2+\frac{1}{4}(1-u_\eps^2)^2\le c R^N.
\end{equation}
\end{enumerate}
\end{theorem}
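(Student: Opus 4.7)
The plan is an infinite-dimensional Lyapunov--Schmidt reduction around a two-sheeted approximate solution whose sheet separation is dictated by the Jacobi--Toda solution of Theorem~\ref{th_Liouville}. In Fermi coordinates $(y,t)$ around $\Sigma_\eps:=\eps^{-1}\Sigma$ (with $t$ the signed distance and $y\in\Sigma_\eps$) and for smooth $O(m)\times O(n)$-invariant functions $h_\pm:\Sigma_\eps\to\R$ with $h_+>h_-$, I would consider the ansatz
$$
U_\eps(\xi):=v_\star\bigl(t-h_+(y)\bigr)-v_\star\bigl(t-h_-(y)\bigr)-1,
$$
whose zero set consists of two $O(m)\times O(n)$-invariant normal graphs $\{t=h_\pm(y)\}$ over $\Sigma_\eps$. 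Expanding $\Delta U_\eps+U_\eps-U_\eps^{3}$ with the tubular-coordinate formula for $\Delta$ yields, to leading order near each interface, a \emph{Jacobi} contribution $\bigl(\Delta_{\Sigma_\eps}h_\pm+|A_{\Sigma_\eps}|^{2}h_\pm\bigr)v_\star'(t-h_\pm)$ and an exponentially small \emph{Toda interaction} of size $e^{-\sqrt{2}(h_+-h_-)}$ coupling the two sheets. Pulling back to $\Sigma$ introduces an $\eps^{2}$ in front of the Jacobi operator, so that the profile equation reduces precisely to the Jacobi--Toda equation of Theorem~\ref{th_Liouville} with $\delta=\eps^{2}$, and the choice $h_\pm=\pm\tfrac{1}{\sqrt 2}(w-\tfrac{1}{\sqrt 2}\log\eps^{-2})$, with $w$ the solution provided by that theorem, makes the leading error of $U_\eps$ cancel.

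Next, I would look for a true solution of \eqref{Allen-Cahn-eq} of the form $u_\eps=U_\eps+\phi$ and split the resulting equation for $\phi$ as in the scheme of~\cite{DELPINOKOWALCKZYKWEI2011,DELPINOKOWALCZYKWEIYANG2010,DELPINOKOWALCZYKWEI2013II,AGUDELODELPINOWEI2016,PW}. The linearisation $\phi\mapsto \Delta\phi+(1-3U_\eps^{2})\phi$ has an approximate kernel generated by $v_\star'(t-h_\pm(y))$, which reflects the freedom to translate each nodal sheet independently. I would therefore impose that $\phi$ be $L^{2}$-orthogonal to $v_\star'(t-h_\pm)$ on each normal fibre, obtaining an orthogonal equation for $\phi$ together with a reduced system of two scalar equations on $\Sigma_\eps$ for $h_\pm$ (the projection of the full equation onto the kernel directions). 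The orthogonal part is solved by contraction mapping in a weighted $L^{\infty}$ space adapted to the polynomial decay of $|A_{\Sigma_\eps}|^{2}$ at infinity, using the non-degeneracy of $v_\star$ and a blow-up argument based on the fact that $\Sigma_\eps$ is smooth and asymptotic to the Lawson cone $C_{m,n}$; this produces a small $\phi$ whose size is controlled linearly by the size of the error of $U_\eps$ and of the Toda interaction.

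Plugging this $\phi=\phi(h_+,h_-)$ back into the reduced equations, and using the $O(m)\times O(n)$-symmetry (which forces $h_+=-h_-=:h/2$), one arrives at a single scalar equation for $h$ on $\Sigma_\eps$ that, to leading order, is a small perturbation of the Jacobi--Toda equation of Theorem~\ref{th_Liouville}. The main obstacle I anticipate is closing the fixed-point argument for this reduced equation: the Jacobi operator $\Delta_{\Sigma_\eps}+|A_{\Sigma_\eps}|^{2}$ on the non-compact, asymptotically conical $\Sigma_\eps$ carries bounded Jacobi fields coming from rigid translations normal to $C_{m,n}$ and a slowly decaying sub-principal part, which must be projected out using the free parameters of the construction (or absorbed into a modulation of the $\log\eps^{-2}$ shift) in order to invert the linearisation of the Jacobi--Toda equation at $w$ in weighted spaces compatible with the logarithmic growth of $w$ and with the size $\eps^{2}$ of the perturbation. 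Once a genuine $u_\eps$ is produced, property~(i) is automatic because the whole scheme is carried out within the $O(m)\times O(n)$-equivariant category; (ii) follows from the implicit function theorem applied to $u_\eps=0$ near each approximate interface, since $\phi$ is small in $C^{1}$; and (iii) follows by comparing $u_\eps$ with $v_\star(t-h_\pm)\pm 1$ on $B_R$ and using that $\Sigma_\eps\cap B_R$ has $N$-dimensional area bounded by $cR^{N}$ thanks to the asymptotic conicity of $\Sigma_\eps$.
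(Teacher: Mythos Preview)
Your overall strategy matches the paper's: Fermi coordinates, a two-sheeted ansatz, Lyapunov--Schmidt reduction onto $v_\star'$ at each interface, and a reduced system of Jacobi--Toda type for the sheet positions.  There are, however, two substantive inaccuracies in your outline.

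First, the claim that $O(m)\times O(n)$-symmetry \emph{forces} $h_+=-h_-$ is wrong.  That symmetry only says each $h_\pm$ is a radial function of the arclength $s$ along the profile curve; it does not produce a reflection across $\Sigma_\eps$ (which is not an isometry of $\R^{N+1}$, since $\Sigma$ is curved and lies on one side of the cone).  The reduced system is genuinely two-dimensional.  The paper decouples it by passing to ${\tt v}_1=\h_1+\h_2$ and ${\tt v}_2=\h_2-\h_1$: the sum satisfies a pure Jacobi equation $J_\Sigma{\tt v}_1=g_1$, and the difference satisfies a perturbed Jacobi--Toda equation.  At leading order one does get ${\tt v}_{0,1}=0$ (so $h_+\approx -h_-$), but this comes from the \emph{invertibility} of $J_\Sigma$ on $O(m)\times O(n)$-invariant functions with decay (Proposition~\ref{prop_Jacobi_lin}), not from symmetry; the correction ${\tt q}_1$ to ${\tt v}_1$ is generically nonzero and must be solved for.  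Collapsing to a single scalar equation as you propose would leave the constraint $f_1+f_2=0$ unverified.

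Second, your anticipated obstacle --- bounded Jacobi fields from rigid translations --- does not arise in the $O(m)\times O(n)$-invariant class: translation Jacobi fields $e_j\cdot\nu_\Sigma$ break the symmetry.  The only smooth invariant Jacobi field is the dilation field ${\tt v}_+=-\pe\cdot\nu_\Sigma$, and it decays like $s^{\gamma_+}$ with $\gamma_+<0$, so $J_\Sigma$ is injective (hence invertible) on the weighted spaces used.  The real technical core, which your sketch underplays, is inverting the \emph{linearised Jacobi--Toda operator} $\delta J_\Sigma+2\sqrt{2}a_\star e^{-\sqrt{2}{\tt v}}$ with loss controlled only polynomially in $|\log\eps|$.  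The paper does this through a detailed Emden--Fowler analysis of the potential $Q(t)$ across several regimes (Subsections~\ref{subs_est_neg_pot}--\ref{subs_pos_pot}); this is what buys the fixed-point closure for ${\tt q}_2$, and it is independent of any modulation of free parameters.
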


\medskip
A few comments are now in order. First, from Theorem \ref{2ndMainTheo} and taking $m,n \geq 3$, there are two associated minimal hypersurfaces $\Sigma^{\pm}_{m,n}$, each of which giving rise to a family of solutions. Since the nonlinearity is odd, if $u$ is a solution then also $-u$ is also a solution. Thus, we actually have $4$ families of solutions.

\medskip
Let $\Sigma$ be as in Theorem \ref{2ndMainTheo}. We remark also that the nodal set of the solution is governed by a Jacobi-Toda system associated to $\Sigma$, namely:
\begin{equation}\label{JacTodaSystIntro}
\begin{aligned}
\eps^2 \big(\Delta_{\Sigma} \h_1 + |A_{\Sigma}|^2 \h_1\big) - a_\star e^{-\sqrt{2}(\h_2 -\h_1)}&=0\\
\eps^2 \big(\Delta_{\Sigma} \h_2 + |A_{\Sigma}|^2 \h_2\big) + a_\star e^{-\sqrt{2}(\h_2 -\h_1)}&=0
\end{aligned}
\quad \hbox{in} \quad \Sigma,
\end{equation}
where $\Delta_{\Sigma}$ and $|A_{\Sigma}|$ are the {\it Laplace-Beltrami} operator and the norm of the second fundamental form on $\Sigma$, respectively,   and $a_{\star}>0$ is a constant depending only on the function $v_{\star}$ described in \eqref{Allen-Cahn-eqn 1D Sln}.

\medskip 
As we will see, the system \eqref{JacTodaSystIntro} is decoupled into the system  
\begin{equation}\label{JacTodaSystIntro2}
\begin{aligned}
\Delta_{\Sigma} {\tt v}_{0,1} + |A_{\Sigma}|^2 {\tt v}_{0,1}&=0\\
\eps^2 \big(\Delta_{\Sigma} {\tt v}_{0,2}+ |A_{\Sigma}|^2 {\tt v}_{0,2} \big) - 2a_\star e^{-\sqrt{2}{\tt v}_{0,2}}&=0
\end{aligned}
\quad \hbox{in} \quad \Sigma,
\end{equation}
that can be solved using  nondegeneracy  of $\Sigma$ (see Proposition \ref{prop_Jacobi_lin} below) and Theorem \ref{th_Liouville} with $\delta=\eps^{2}$.

\medskip

The proof of Theorem \ref{2ndMainTheo} is based on the {\it infinite dimensional Lyapunov-Schmidth reduction} technique and as we will see, we can be more precise regarding the asymptotic behavior of the solution $u_{\eps}$. In particular the energy growth estimate follows from this.

\medskip
To explain the previous paragraph, let $\eps\in (0,\eps_0)$ and let $\nu_{\Sigma}$ be the choice of the unit normal vector to $\Sigma$ pointing towards the hyperplane $\{0\}\times \R^n$. Let also $\Sigma_{\eps}:=\eps^{-1}\Sigma$ be a large dilation of $\Sigma$. Consider a tubular neighbourhood of $\Sigma_{\eps}$ of the form
$$
\mathcal{N}_\eps:=\{\pe + \z \nu_{\Sigma}(\eps \pe)\,:\, \pe \in \Sigma_{\eps}, \quad  |\z|<\eps^{-1}\eta+ c|\pe|\}
$$
for some $\eta>0$ small.

\medskip
Observe that for any  $\xi=\pe + \z \nu_{\Sigma}(\eps \pe)\in \mathcal{N}_\eps$, $|\z|={\rm dist}(\xi, \Sigma_{\eps})$ and the solution $u_{\eps}$ satisfies that
$$
u_{\eps}(\xi) \sim v_{\star}(\z - \h_1(\eps \pe)) - v_{\star}(\z - h_2(\eps \pe)) -1 
$$
while for $\xi$ far from the set $\mathcal{N}$, $u_{\eps}(\xi)\sim -1$ at an exponential rate in $|\z|$.

\medskip
Similar constructions have already been carried out for the equation \eqref{Allen-Cahn-eq} under different geometric settings. We mention for instance  \cite{DELPINOKOWALCZYKWEIPACARD2010}, where the authors build solutions to \eqref{Allen-Cahn-eq} in $\R^2$, having multiple ends governed at main order by a one-dimensional {\it Toda System}.

\medskip
In \cite{AGUDELODELPINOWEI2015} similar techniques as in \cite{DELPINOKOWALCZYKWEIPACARD2010} were used to construct solutions to \eqref{Allen-Cahn-eq} in $\R^3$ whose nodal set, outside a large ball, has multiple catenoidal like components. These components are governed by the Jacobi-Toda system associated to the catenoid.

\medskip
Our developments are in the spirit of the construction of sign-changing solutions for the travelling wave problem for the Allen-Chan equation, carried out in \cite{DELPINOKOWALCZYKWEI2013II}, within the context of hypersurfaces with constant mean curvature, where the nodal set of the solutions is also governed by a Jacobi-Toda type system.

\medskip
Finally, part {\it (iii)} in Theorem \ref{2ndMainTheo} implies that for any $\eps \in (0,\eps_0)$, 
$$
\limsup \limits_{R \to \infty} \frac{1}{R^N}\int_{B_R}\frac{1}{2}|\nabla u_\eps|^2 +\frac{1}{4} (1-u^2_\eps)^2< \infty
$$
and this suggests that these solutions should have {\it finite Morse index}, see \cite{{AMBROSIOCABRE2000},{GHOUSSOUBGUI1998}}.

\medskip
The rest of the paper is organized as follows: Section \ref{sec_invariant_minsurf} discusses the topic of minimal hypersurfaces, minimal cones and area-minimising hypersurfaces. It also contains a detailed discussion on nondegeneracy properties of the hypersurfaces $\Sigma_{m,n}^{\pm}$. Section \ref{JacToda Section} discusses the proof of Theorem \ref{th_Liouville} and the solution of system \eqref{JacTodaSystIntro}. Section \ref{approxslnAC} presents the approximate solution to \eqref{Allen-Cahn-eq} and gives the preliminary sketch of the proof of Theorem \ref{2ndMainTheo}. In section \ref{LSreduction} we present the Lyapunov-Schmidth reduction and we finish the proof of part {\it (i)} and {\it (ii)} in Theorem \ref{2ndMainTheo}. Finally, in Section \ref{sec_energy_ball}, we prove the energy estimate of the gradient in part {\it (iii)} of Theorem \ref{2ndMainTheo}.

\section{$O(m)\times O(n)$-invariant minimal hypersurfaces}\label{sec_invariant_minsurf}


\subsection{The Jacobi operator and stability}\label{subsec_stability}

A hypersurface having zero mean curvature is not necessarily a minimiser of the area functional. One way to study the stability properties of a minimal hypersurface is through the study of the second variation of the area functional around the hypersurface, whenever the area functional is smooth enough.

\medskip
To be more precise, let $\Sigma\subset \R^N$ be a hypersurface with singular set ${\rm sing}(\Sigma)$ and normal vector $\nu:\Sigma - {\rm sing}(\Sigma)\to \R^{N}$. For any ${\tt v}\in C^{\infty}_c\big(\Sigma \backslash {\rm sing}(\Sigma)\big)$, consider the normal graph
$$
\Sigma_{{\tt v}}:=\{{\pe}+{\tt v}(\pe)\nu_\Sigma(\pe):\, \pe\in\Sigma\}.
$$

It is well known that $\Sigma$ is a minimal hypersurface if it is a critical point (in some  appropriate topology) of the functional
\begin{equation}\label{areafunctional}
C^{\infty}_c\big(\Sigma  \backslash{\rm sing}(\Sigma)\big) \ni {\tt v}\mapsto \int_{\Sigma_{{\tt v}}}\, 1 \,d\sigma,
\end{equation}which is equivalent to saying that ${\tt v}=0$ is a zero for the mean curvature operator,
\begin{equation}\label{meancurvatureop}
C^{\infty}_c\big(\Sigma \backslash {\rm sing}(\Sigma)\big) \ni {\tt v} \mapsto H_{\Sigma_{{\tt v}}}.
\end{equation}

The second variation of \eqref{areafunctional} and the first variation of \eqref{meancurvatureop} at ${\tt v}=0$ give rise to the quadratic form 
$$
{\tt v}\in C^{\infty}_c(\Sigma \backslash {\rm sing}(\Sigma))\mapsto \int_{\Sigma}\bigg(|\nabla_{\Sigma} {\tt v}|^2 - |A_{\Sigma}|^2 {\tt v}^2 \bigg) d\sigma,
$$
which is characterised by the {\it Jacobi operator} of $\Sigma$, 
\begin{equation}\label{JacobiOp}
J_\Sigma:=\Delta_\Sigma+|A_\Sigma|^2,
\end{equation}
where $\Delta_{\Sigma}$ is the {\it Laplace-Beltrami} operator and $|A_{\Sigma}|$ is the norm of the second fundamental form of $\Sigma$.

\medskip
Stability properties of $\Sigma$ can be studied with the help of the operator $J_{\Sigma}$. We say that $\Sigma$ is {\it stable} if for every ${\tt v}\in C^\infty_c(\Sigma\backslash Sing(\Sigma))$,
\begin{equation}
\label{def_stability_ineq}
\int_\Sigma (|\nabla_\Sigma {\tt v}|^2-|A_\Sigma|^2 {\tt v}^2)d\sigma\ge 0.
\end{equation}

We also say that $\Sigma$ is {\it strictly stable} if the inequality in (\ref{def_stability_ineq}) is strict.

\medskip
Observe that the minimal hypersurface $\Sigma$ is stable if and only if the eigenvalues of $-J_\Sigma$ are non negative, while it is strictly stable if and only if all these eigenvalues are positive.

\medskip
Next, we consider the case when $\Sigma$ is a minimal cone, which we denote by $C$ (saving $\Sigma$ for later porpuses). We will also assume that $C$ is regular, i.e. ${\rm sing}(C)$ consists only on one point which is assumed to be the origin.

\medskip
Next, we analyse the stability properties of $C$. Let $B_\rho$ denote the ball in $\R^N$ and $S^{N-1}_{\rho}$ be the sphere in $\R^N$ both of them of radius $\rho>0$ and centered at the origin. 

\medskip
Set $\Lambda:=C\cap \partial B_1$ so that $\Lambda$ is a minimal submanifold of $\partial B_1$ and  
 \begin{equation}
\label{reg_cone}
C=\{r\pe:\, \pe\in\Lambda,\, r>0\}.
\end{equation}

In the $(r,\pe)$-coordinates, the Jacobi operator $J_C= \Delta_{C} + |A_C|^2$ takes the form
\begin{equation}\label{Jacobi_cone}
J_C=\partial^2_r+\frac{N-1}{r}\partial_r+\frac{1}{r^2} J_\Lambda,
\end{equation}
where $J_\Lambda$ corresponds to the Jacobi operator of $\Lambda$. 

\medskip
Let 
$$
\lambda_1 < \lambda_2 \leq \lambda_3 \leq \cdots \leq \lambda_k \leq \cdots
$$ 
be the sequences of eigenvalues of $-J_{\Lambda}$, counting multiplicities, and let $\{\varphi_j\}_{j\ge 1}$ be an orthonormal basis for $L^2(\Lambda)$ with $\varphi_j$ being an eigenfunction associated to $\lambda_j$.

\medskip
For any $\phi\in L^2(C)$, we perform the {\it Fourier decomposition},
$$
\phi(r\pe)=\sum_{j=1}^\infty \phi_j(r)\varphi_j(\pe)\quad \hbox{with}\quad \phi_j(r)=\int_\Lambda \phi(r\pe)\varphi_j(\pe)d\sigma(\pe)
$$
for all $r>0$.

\medskip
Therefore, $\phi$ is an eigenfunction of $-J_C$ with eigenvalue $\mu\in \R$ if and only if for every $j\in \mathbb{N}$,
\begin{equation}\label{EqnFourierdecomp}
-\Delta\phi_j+\frac{\lambda_j}{|\mathbf{y}|^2}\phi_j=\mu\phi_j \quad \hbox{in} \quad \R^{N}.
\end{equation}

Multiply \eqref{EqnFourierdecomp} by $\phi_j$ and integrate by parts, using that for $j>1$, $\lambda_j>\lambda_1$ and the Hardy inequality (see \cite{CP}, Proposition $1.20$) to find that
$$
\mu\int_{\R^N}\phi_j^2 d\mathbf{y}\ge \left(\left(\frac{N-2}{2}\right)^2+\lambda_1\right)\int_{\R^N}\frac{\phi_j^2}{|\mathbf{y}|^2}d\mathbf{y},
$$
which yields that the cone $C$ is stable if 
\begin{equation}
\label{def_stable_cone}
\lambda_1\ge -\left(\frac{N-2}{2}\right)^2
\end{equation}
and strictly stable if 
\begin{equation}
\label{def_strictly_stable_cone}
\lambda_1> -\left(\frac{N-2}{2}\right)^2.
\end{equation}

We now focus in the particular instance when  $C$ is a {\it Lawson cone}. To be more precise, let $m,n\geq 2$ and such that $N=m+n -1$. Recall that the Lawson cone is defined as 
\begin{equation}
\label{Lawson_cones}
C_{m,n}:=\left\{(x,y)\in\R^m\times\R^n:\,|x|^2=\frac{m-1}{n-1}|y|^2\right\}.
\end{equation}

In this case, 
$$
\Lambda=\Lambda_{m,n}:=S^{m-1}_{\rho_m}\times S^{n-1}_{\rho_n},
$$
where $\rho_m:=\sqrt{\frac{m-1}{N-1}}$ and $\rho_n:=\sqrt{\frac{n-1}{N-1}}$.

\medskip
Since $|A_{\Lambda_{m,n}}|^2= N-1$, the Jacobi operator of $\Lambda_{{m,n}}$ takes the form 
$$
J_{\Lambda_{m,n}}=\Delta_{\Lambda_{m,n}}+(N-1)$$ 
and the first eigenvalue of $-J_{\Lambda_{m,n}}$ is $\lambda_1=-(N-1)<0$. Thus, \eqref{def_strictly_stable_cone} translates into $$
\frac{(N-2)^2}{4}-(N-1)>0
$$
or equivalently $N\ge 7$.

\medskip
Therefore, the Lawson cone $C_{m,n}$ is stable whenever $n+m\ge 8$, $n,m\ge 2$ (see \cite{PW}) and its Jacobi operator $J_{C_{m,n}}$ reads as
\begin{equation}
\label{Jacobi_Cmn}
J_{C_{m,n}}=\partial^2_r+\frac{N-1}{r}\partial_r+\frac{1}{r^2}\left(\Delta_{\Lambda_{m,n}}+(N-1)\right).
\end{equation}

\medskip

\subsection{Area-minimising hypersurfaces} \label{subsec_area-minimising}

Another property of the Lawson cone $C_{m,n}$ that concerns with its stability is that $C_{m,n}$ is not only a minimal hypersurface, but actually it is an area minimising hypersurface. Next, we discuss this topic in more detail. 

\medskip
First, define the perimeter of a subset $E\subset\R^{N+1}$ in an open set $\Omega\subset\R^{N+1}$ as
\begin{equation}\label{Perimeter}
{\rm Per}(E,\Omega):=\sup\left\{\int_E \diver X \,d\xi:\, X\in C^\infty_c(\Omega,\R^{N+1})\right\}.
\end{equation}

If $E$ has smooth boundary $\partial E$, it follows from the Divergence Theorem that ${\rm Per}(E,\Omega)$ coincides with the $N$-dimensional {\it Hausdorff measure} of $\partial E\cap\Omega$. However, the definition in \eqref{Perimeter} allows us to treat the case of sets $E$ with non-smooth boundary $\partial E$, like the case when $\partial E =C_{m,n}$.

\medskip 
 
Up to a translation, there is no loss of generality in assuming that $0\in\partial E$.  Next, we define the concept of area minimising hypersurface. 

\begin{definition}
\label{def_area-minimising}
We say that $\Sigma:=\partial E$ is an area-minimising hypersurface if for any $\rho>0$ and for any smooth set $F\subset\R^{N+1}$ such that $F\backslash B_\rho=E\backslash B_\rho$,
$$
{\rm Per}(E,B_{2\rho})\le {\rm Per}(F,B_{2\rho}).
$$
\end{definition}

This definition is equivalent to say that $\Sigma$ is a global minimiser of the area functional. Therefore, any area-minimising hypersurface is a minimal hypersurface.

\medskip
We refer interested readers to \cite{CP,CF} and references therein, for a deeper understanding of the Definition \ref{def_area-minimising} and related topics. 

\medskip
A relevant question concerns with the regularity of area-minimising hypersurfaces and, in general, of minimal hypersurfaces. In \cite{Si}, Simons proved that if $N\le 6$, $N$-dimensional area-minimising hypersurfaces are smooth. In higher dimension, area-minimising cones are known to exist (see \cite{D}). This is the essence of the following theorem.

\begin{theorem}\label{th_min_cones}{\cite{D}}
\begin{enumerate}
\item[(i)] If $N=m+n-1> 7$ with $n,m\ge 2$, then the Lawson cone $C_{m,n}$ is area-minimising.
 
 \medskip
\item [(ii)]If $N=m+n-1=7$, then the Lawson cone $C_{m,n}$ has zero mean curvature everywhere except in the origin, which is singular. Moreover, it is area-minimising if and only if $|m-n|\le 2$. 
\end{enumerate}
\end{theorem}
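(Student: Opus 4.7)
The plan is to reduce the theorem to two main ingredients: a direct mean-curvature computation, and a foliation-calibration argument.

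\emph{Minimality.} I would write $C_{m,n}=\{F=0\}$ with $F(x,y)=(n-1)|x|^2-(m-1)|y|^2$, so that $\nabla F=2((n-1)x,-(m-1)y)$, $\Delta F=2(n-m)$, and on $C_{m,n}$ one has $|\nabla F|^2=4(m-1)(m+n-2)|y|^2$. A short calculation of $H=\diver(\nabla F/|\nabla F|)$ restricted to $\{F=0\}$, using $|x|^2=\frac{m-1}{n-1}|y|^2$, yields $H\equiv 0$ on $C_{m,n}\setminus\{0\}$. This handles the mean-curvature assertion of (ii) and the smooth-minimality content of (i).

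\emph{Calibration via foliation for (i).} I would apply Theorem~\ref{th_min-surf_cone} to produce the $O(m)\times O(n)$-invariant minimal hypersurfaces $\Sigma_{m,n}^\pm\subset E_{m,n}^\pm$, asymptotic to $C_{m,n}$ and with $\mathrm{d}(\Sigma_{m,n}^\pm,0)=1$. By homogeneity of $\R^{N+1}$, the dilations $\{t\Sigma_{m,n}^+\}_{t>0}$ and $\{t\Sigma_{m,n}^-\}_{t>0}$, together with the central leaf $C_{m,n}$, cover $\R^{N+1}\setminus\{0\}$ exactly once, giving a smooth foliation by minimal hypersurfaces. Let $V$ be the smooth unit normal field to this foliation, oriented from $E_{m,n}^-$ toward $E_{m,n}^+$ across the central leaf. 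Since each leaf has zero mean curvature, $\diver V\equiv 0$ on the regular set of the foliation; moreover $|V|\equiv 1$ everywhere and $V=\nu_{C_{m,n}}$ on $C_{m,n}\setminus\{0\}$. For any competitor $F\subset\R^{N+1}$ with $F\setminus B_\rho=E_{m,n}^+\setminus B_\rho$, the divergence theorem applied to $V$ on the symmetric difference $F\triangle E_{m,n}^+\subset B_\rho$ then gives
$$
\mathrm{Per}(E_{m,n}^+,B_{2\rho})=\int_{C_{m,n}\cap B_{2\rho}}V\cdot\nu_{C_{m,n}}\,d\sigma=\int_{\partial F\cap B_{2\rho}}V\cdot\nu_F\,d\sigma\le \mathrm{Per}(F,B_{2\rho}),
$$
where the middle equality uses $\diver V=0$ and the boundary coincidence outside $B_\rho$, and the last inequality uses $|V|\le 1$. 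The singularity of $V$ at the origin is harmless by a standard approximation, since $V$ is bounded and $\{0\}$ has vanishing $\mathcal H^N$-measure.

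\emph{The borderline dimension for (ii).} Minimality is by the same computation as in Step~1. For the \emph{if} direction, when $|m-n|\le 2$ we have $(m,n)\in\{(3,5),(4,4),(5,3)\}$, so Theorem~\ref{th_min-surf_cone} applies and the foliation-calibration argument carries over verbatim. The \emph{only if} direction, ruling out area-minimization for $(m,n)=(2,6)$ or $(6,2)$, is the main obstacle I expect: the hypothesis $m,n\ge 3$ in Theorem~\ref{th_min-surf_cone} fails, so no smooth foliating family $\Sigma_{m,n}^\pm$ is available, and one must instead exhibit an explicit $O(m)\times O(n)$-invariant competitor with strictly smaller perimeter. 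A natural strategy is to smooth the generating ray of $C_{m,n}$ in the $(|x|,|y|)$-quadrant by a one-parameter family of curves $\gamma_\eta$ rounding off the singularity at $0$, producing smooth $O(m)\times O(n)$-invariant hypersurfaces $\Sigma_\eta$ that coincide with $C_{m,n}$ outside a ball, and to compute the area change to leading order in $\eta$. The hard part will be showing that in the cases $|m-n|>2$ the leading coefficient of this expansion is negative, which requires a delicate sharp analysis of the Jacobi quadratic form on $C_{m,n}$ against the Hardy-type threshold $-(N-2)^2/4$ of Section~\ref{subsec_stability}, and genuinely uses that the cross-sectional geometry is not stiff enough to dominate the smoothing gain.
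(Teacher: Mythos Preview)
The paper does not give its own proof of Theorem~\ref{th_min_cones}: it is simply quoted from \cite{D} (with historical remarks pointing to \cite{BDG_cones,La,Sim}), so there is no in-paper argument to compare against. Your proposal is therefore an independent proof attempt, and its overall strategy---mean-curvature computation plus foliation/calibration---is indeed the classical route used in the literature (this is essentially the Bombieri--De Giorgi--Giusti mechanism, later formalised via calibrations as in \cite{D}).

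That said, as written your argument for (i) has two genuine gaps. First, there is a circularity risk: you invoke Theorem~\ref{th_min-surf_cone} to produce the foliating leaves $\Sigma_{m,n}^\pm$, but in this paper the existence of those leaves (via Theorem~\ref{th_Al}) is obtained from Hardt--Simon \cite{HS}, whose hypotheses already require the cone to be area-minimising. To avoid circular reasoning you would need to appeal instead to a construction that does not presuppose area-minimality, for instance the ODE-based $O(m)\times O(n)$-invariant construction in \cite{ABPRS}, and then check separately that the dilated family actually foliates (this uses the strict stability of the cone, not its minimising property). Second, Theorem~\ref{th_min-surf_cone} is stated only for $m,n\ge 3$, so your calibration argument for (i) does not cover the cases $m=2$ or $n=2$ with $m+n\ge 9$, which are included in the statement.

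For (ii), you correctly note that the ``only if'' direction (ruling out $(2,6)$ and $(6,2)$) is the hard part and leave it essentially open. Your proposed smoothing competitor is the right idea in spirit, but the decisive computation is not the Jacobi quadratic form against the Hardy threshold (that governs stability, and $C_{2,6}$ is in fact stable); what is actually needed is an explicit comparison hypersurface, as carried out in \cite{Sim}, showing strict area decrease. So as it stands the proposal is a reasonable outline for the ``if'' parts, modulo the circularity and the $m=2$ coverage, but it does not complete the ``only if'' in (ii).
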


Theorem \ref{th_min_cones} provides an example of non smooth $N$-dimensional area-minimising hypersurfaces, at least of dimension $N\ge 7$. The first result in this direction was obtained in \cite{BDG_cones} for the case $n=m\ge 4$. The case $m+n>8$ was treated in \cite{La}, while in \cite{Sim} it is proven that $C_{3,5}$ is area minimising and that $C_{2,6}$ has zero mean curvature, but it is not a global minimiser of the area.

\medskip
We finish this discussion with the notion of \textit{strictly} area-minimising cones.

\begin{definition}
Let $C\subset \R^{N+1}$ be a cone and set $\Lambda:=C\cap  B_1$. Then $C$ is said to be strictly area-minimising if there exist constants $\theta>0$ and $\eps_0>0$ such that
for any $\eps\in (0,\eps_0)$ and for any smooth hypersurface $\Gamma\subset\R^{N+1}\backslash B_\eps$ such that $\Gamma\backslash B_1=C\backslash B_1$
$$
\mathcal{H}_N(\Lambda)\le \mathcal{H}_N(\Gamma\cap B_1)-\theta\eps^N,
$$ 
where $\mathcal{H}_N$ stands for the $N-$dimensional Hausdorff measure in $\R^{N+1}$.
\end{definition}
For instance, if $n,m\ge 2$ and $N=m+n-1> 7$, then $C_{m,n}$ is strictly area minimising. The same is true if $n,m\ge 3$ and $N=m+n-1=7$. This agrees with the fact that in this case $C_{m,n}$ is strictly stable.

\subsection{Minimal hypersurfaces asymptotic to a cone}\label{subsec_as-min-surf}

Next, we discuss the existence and asymptotic behaviour of smooth minimal hypersurfaces that are asymptotic to the cone $C_{m,n}$, $m,n\ge 3$, $m+n\ge 8$. In this work, these hypersurfaces are the core  of the construction of sign changing solutions to \eqref{Allen-Cahn-eq}.

\medskip
In what follows we make extensive use of some of the symmetries of the cone $C_{m,n}$. To be more precise, consider the group $O(m)\times O(n)$. From \eqref{Lawson_cones} it is clear that $C_{m,n}$ is invariant under the action of this group.  

\medskip
A function ${\tt v}:C_{m,n}\to\R$ is invariant under the action of $O(m)\times O(n)$ if and only if there exists $v:(0,\infty)\to\R$ such that 
$$
{\tt v}(r\pe)=v(r) \quad \hbox{for all}\quad  r>0, \quad \pe\in\Lambda_{m,n}.
$$

We are next interested in the solutions of the homogeneous equation 
$$
J_{C_{m,n}}{\tt v}=0 \quad \hbox{in} \quad C_{m,n},
$$
also known as {\it Jacobi fields} of $C_{m,n}$. In particular, we are interested in the $O(m)\times O(n)-$invariant Jacobi fields.

\medskip

It is straightforward to verify that the only two $O(m)\times O(n)-$invariant Jacobi fields of $C_{m,n}$ are  
$$
{\tt u}_{\pm}(r\pe)=u_{\pm}(r)=r^{\gamma^\pm}, 
$$
where $\gamma^\pm$ are the roots (usually referred to as \textit{indicial roots}) of
\begin{equation}\notag
\gamma^2+\gamma(N-2)+(N-1)=0,
\end{equation}
that is
$$
\gamma^\pm=-\frac{N-2}{2}\pm\sqrt{\left(\frac{N-2}{2}\right)^2-(N-1)}.
$$

Observe that $\gamma_{\pm}\in (-\infty,0)$ if and only if $N\ge 7$.

\medskip
Let $E^\pm$ denote the two connected components of $\R^{N+1}\backslash C_{m,n}$, where $E^-$ is the component containing the hyperplane $\{(x,y)\in\R^m\times\R^n:\, y=0\}$. 

\medskip

The following result summarises the discussion in this section. 

\begin{theorem}\label{th_Al}{\cite{ABPRS,HS,M,SS}}

Let $m,n\ge 3$, $m+n\ge 8$. Then there exist two unique minimal hypersurfaces $\Sigma^\pm_{m,n}\subset E^\pm$ satisfying that
\begin{enumerate} 
\item[(i)] $\Sigma^\pm_{m,n}$ are smooth;

\medskip
\item[(ii)] ${\rm dist}(\Sigma^{\pm}_{m,n},\{0\})=1$;

\medskip
\item[(iii)] for any $\xi\in E^\pm$, the ray $\{\lambda\xi:\,\lambda>0\}$ intersects $\Sigma^\pm_{m,n}$ exactly once;

\medskip
\item[(iv)]  $\Sigma^\pm_{m,n}$ are $O(m)\times O(n)$-invariant;

\medskip
\item[(v)] there exist constants $R_{\pm}=R_{\pm}(C_{m,n})>0$, $c>0$ and $O(m)\times O(n)-$invariant functions ${\tt w^\pm}: C_{m,n}\backslash B_{R_{\pm}} \to \R$ such that
$$
{\tt w}^+>0,\quad{\tt w}^-<0 \quad \hbox{in}\quad C_{m,n}\backslash B_{R_{\pm}}\quad \hbox{respectively};
$$
for any $\pe \in C_{m,n}\backslash B_{R_{\pm}}$
\begin{equation}\label{rate-dec-Sigma}
{\tt w}^\pm(\pe)=c|\pe|^{\gamma^+}(1+o(1)) \quad \hbox{as}\quad |\pe|\to\infty
\end{equation}
and 
$$
\Sigma^\pm_{m,n}=\{\pe + {\tt w}^\pm(\pe) \nu_{C_{m,n}}(\pe)\,:\, \pe\in C_{m,n}\backslash B_{R_{\pm}}\},
$$ 
where $\nu_{C_{m,n}} :C_{m,n}\backslash \{0\} \to \R^{N+1}$ is the choice of the normal vector to $C_{m,n}$ pointing towards $E^+$.
\end{enumerate}

\end{theorem}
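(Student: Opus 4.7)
The plan is to combine the Hardt–Simon foliation theorem for strictly area-minimizing cones, a uniqueness argument via symmetry, and a linearized ODE asymptotic analysis at infinity in the $O(m)\times O(n)$-invariant class.

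First, using Theorem \ref{th_min_cones} together with the discussion of strict area-minimality, I would record that $C_{m,n}$ is strictly area-minimizing for $m,n\ge 3$ and $m+n\ge 8$. By the result of \cite{HS}, each connected component of $\R^{N+1}\setminus C_{m,n}$ is then foliated by a one-parameter family of smooth area-minimizing hypersurfaces, each asymptotic to $C_{m,n}$ at infinity, with any two leaves related by a dilation centered at the origin. On each side the function $\mathrm{dist}(\cdot,\{0\})$ is a strictly monotone bijection from the foliation onto $(0,\infty)$; picking the unique leaves at unit distance gives $\Sigma_{m,n}^\pm\subset E^\pm$, proving (i) and (ii). Property (iii) follows from the dilation structure: $\{\lambda\,\Sigma_{m,n}^\pm:\lambda>0\}$ exhausts the foliation, so every ray from the origin meets $\Sigma_{m,n}^\pm$ exactly once. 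For (iv), any $g\in O(m)\times O(n)$ is an isometry fixing the origin and preserving $E^\pm$, hence $g(\Sigma_{m,n}^\pm)$ is again a smooth area-minimizing hypersurface inside $E^\pm$ at unit distance from $\{0\}$; the uniqueness within the foliation forces $g(\Sigma_{m,n}^\pm)=\Sigma_{m,n}^\pm$.

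For the asymptotic statement (v), I would write $\Sigma_{m,n}^\pm$ outside a large ball as the normal graph of an $O(m)\times O(n)$-invariant function ${\tt w}^\pm$ over $C_{m,n}\setminus B_{R_\pm}$. The minimal-surface equation linearizes to
\begin{equation}\notag
J_{C_{m,n}}{\tt w}^\pm=\mathcal{Q}({\tt w}^\pm,\nabla_{C_{m,n}}{\tt w}^\pm,\nabla^2_{C_{m,n}}{\tt w}^\pm),
\end{equation}
with $\mathcal{Q}$ at least quadratic in its arguments. Because ${\tt w}^\pm$ depends only on $r=|\pe|$, the angular part of $J_{C_{m,n}}$ in (\ref{Jacobi_Cmn}) drops and the Jacobi operator reduces to the Euler operator $\partial_r^2+\tfrac{N-1}{r}\partial_r+\tfrac{N-1}{r^2}$, whose homogeneous solutions are precisely $r^{\gamma^+}$ and $r^{\gamma^-}$. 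A standard fixed-point argument in a weighted function space adapted to the slower decay rate $r^{\gamma^+}$ then shows that the leading mode of ${\tt w}^\pm$ is indeed $r^{\gamma^+}$, with nonlinear remainders of strictly higher order, yielding the claimed expansion ${\tt w}^\pm(\pe)=c|\pe|^{\gamma^+}(1+o(1))$. The sign of $c$, and consequently the sign of ${\tt w}^+$ and ${\tt w}^-$, is fixed by the side of $C_{m,n}$ on which $\Sigma_{m,n}^\pm$ lies, combined with the convention that $\nu_{C_{m,n}}$ points into $E^+$.

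\textbf{Main obstacle.} The delicate step is nondegeneracy, namely $c\ne 0$, i.e.\ ruling out the possibility that ${\tt w}^\pm$ decays at the faster indicial rate $r^{\gamma^-}$. I would handle this either by a careful phase-plane analysis of the reduced second-order ODE for the planar curve $(s(\ell),t(\ell))=(|x|,|y|)$ describing $\Sigma_{m,n}^\pm$, whose singular point at infinity has exactly the two indicial roots $\gamma^\pm$, or equivalently by a Jacobi-field comparison argument along the Hardt–Simon foliation: under the dilation $\xi\mapsto\lambda\xi$, leading coefficients of the graph functions of the leaves rescale by $\lambda^{1-\gamma^+}$, so once $c\ne 0$ is verified on a single leaf the property propagates through the entire one-parameter family, and in particular to the canonical unit-distance leaves $\Sigma_{m,n}^\pm$.
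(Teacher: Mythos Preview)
Your proposal is essentially correct and covers the same ground, but the route differs from the paper's in two places worth noting.

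For (iv), the paper does not argue directly from the Hardt--Simon uniqueness as you do. Instead it invokes the explicit ODE construction of \cite{ABPRS}, which produces $O(m)\times O(n)$-invariant minimal hypersurfaces $\Gamma^\pm_{m,n}\subset E^\pm$ satisfying (i)--(iii), and then appeals to the uniqueness theorem of \cite{M} to identify $\Gamma^\pm_{m,n}$ with the Hardt--Simon leaves $\Sigma^\pm_{m,n}$. Your argument---that any $g\in O(m)\times O(n)$ sends $\Sigma^\pm_{m,n}$ to another area-minimizing leaf in $E^\pm$ at unit distance, hence to itself by uniqueness of that leaf---is more direct and avoids the extra citation. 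What the paper's route buys is an explicit profile-curve description coming from \cite{ABPRS}, which is used later in Section~\ref{subsec_Jacobi-invariant}.

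For (v), the paper simply cites Theorem~3.2 of \cite{HS} for the decay rate~\eqref{rate-dec-Sigma}, whereas you sketch the underlying mechanism (linearization about the cone, Euler operator, indicial roots $\gamma^\pm$). That is fine and in fact closer to what \cite{HS} actually does.

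One small caution on your ``main obstacle'': the dilation argument you propose at the end is vacuous, since all leaves are dilates of one another and the leading coefficient scales homogeneously---if $c=0$ on one leaf it vanishes on all of them. The nondegeneracy $c\ne 0$ genuinely requires either the phase-plane/ODE analysis you mention first, or the comparison and monotonicity arguments carried out in \cite{HS}; it cannot be bootstrapped from scaling alone.
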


\medskip
\begin{proof}
In Theorem 2.1 from \cite{HS}, given an area minimising cone $C$, the authors prove the existence of two unique smooth \textit{area minimising} hypersurfaces $\Sigma^\pm_{m,n}\subset E^\pm$ with $d(\Sigma^\pm_{m,n},\{0\})=1$ and asymptotic to $C_{m,n}$ in the sense that, outside a ball, they are normal graphs over $C_{m,n}$ of functions ${\tt w}^+>0$ and ${\tt w}^-<0$ respectively. It is also proven that the scaling $\lambda \Sigma^\pm_{m,n}$, $\lambda >0$, foliates $E^\pm$ respectively. The decay rate of these graphs is given by Theorem 3.2 in \cite{HS}, provided $C$ is strictly area minimising, which is the case if $C=C_{n,m}$ for such $m+n=8$ and $m,n\ge 3$ or $n+m\ge 9$ and $m,n\ge 2$.


\medskip
As for (iv), in \cite{ABPRS} the authors prove the existence of two $O(m)\times O(n)-$invariant stable minimal hypersurfaces $\Gamma^\pm_{m,n}\subset E^\pm$  which are asymptotic to $C_{m,n}$ at infinity and that satisfy (i),(ii) and (iii), for $n+m\ge 8$, $m,n\ge 3$. By the uniqueness result in \cite{M}, we find that $\Sigma^{\pm}_{m,n}=\Gamma^\pm_{m,n}$, thus for such $m,n$, (iv) is satisfied too. 
\end{proof}

\begin{remark}
{\rm The restrictions about $m$ and $n$ are crucial in the proof of Theorem \ref{th_Al}, since Theorems 2.1 and 3.2 from \cite{HS} relies on the strict minimality of the cones. In dimension $N=m+n-1\le 6$ there exist no area-minimising cones.}
\end{remark}

\subsection{The Jacobi operator on an $O(m)\times O(n)$-invariant minimal hypersurface}\label{subsec_Jacobi-invariant}


\medskip


In what follows we set $\Sigma:=\Sigma^-_{m,n}$. This represents no significant restriction in our developments since $\Sigma^+_{m,n}=\sigma^{-1}(\Sigma^-_{n,m})$, where $\sigma(x,y)=(y,x)$. In particular, if a family of solutions $u_\eps$ to the Allen-Cahn equation satisfying the properties of Theorem \ref{2ndMainTheo} with $\Sigma=\Sigma^-_{n,m}$ exists, then the family $v_\eps:=u_\eps\circ\sigma$ will enjoy the same properties with $\Sigma=\Sigma^+_{m,n}$. 


\medskip
Similar to $C_{m,n}$, the set $\R^{N+1}\backslash \Sigma$ has two connected components one, which we denote, abusing the notation, by $E^{\pm}$. We make the convention that $E^{+}$ is the connected component containing the hyperplane $\{0\}\times \R^{n}$.

\medskip
The $O(m)\times O(n)-$invariance of $\Sigma$ implies that $\Sigma$ is generated by a smooth, regular curve $\Upsilon:\R \to \R^2$, $\Upsilon(s):=(a(s),b(s))$ in the half-plane
$$
Q:= \{(a,b)\in \R^2\,:\, a>0\} 
$$
and such that $\Upsilon(0)= (1,0)$ and $\Upsilon'(0)=(0,1)$.

\medskip
To be more precise, let $\nu_{\Sigma}:\Sigma \to \R^{N+1}$ be the choice of the unit normal vector to $\Sigma$, pointing towards $E^+$. For any $\pe\in\Sigma$, there exists a unique $(s,\x,\y)=(s(\pe),\x(\pe),\y(\pe))\in\R\times S^{m-1}\times S^{n-1}$ such that
\begin{equation}
\label{param_O(m)O(n)_invariant}
\pe:=\left(a(s){\tt x},b(s){\tt y}\right).
\end{equation}

We stress that the function 
$$
\pe\in\Sigma\mapsto s(\pe)\in\R
$$
is surjective, but not injective.

\medskip
From \eqref{param_O(m)O(n)_invariant}, we compute for $\pe:=\left(a(s){\tt x},b(s){\tt y}\right)$,
\begin{equation}\label{param_O(m)O(n)_invariantnormalvec}
\nu_\Sigma(\pe)=\left(-b'(s){\tt x},a'(s){\tt y}\right)
\end{equation}
and the principal curvatures of $\Sigma$ are computed as
\begin{equation}
\label{princ_curv}
\begin{aligned}
&\lambda_0=\frac{-a''b'+a'b''}{((a')^2+(b')^2)^{3/2}},\\
&\lambda_i=\frac{b'}{a\sqrt{(a')^2+(b')^2}}, \qquad 1\le i\le m-1,\\
&\lambda_j=\frac{-a'}{b\sqrt{(a')^2+(b')^2}}, \qquad m\le j \le m+n-2.\\
\end{aligned}
\end{equation}

The principal curvatures of $\Sigma$ allow us to compute $H_{\Sigma}$ and $|A_{\Sigma}|^2$ as follows: 
\begin{equation}\label{princcurvmeancursecondfund}
\begin{aligned}
H_{\Sigma} & =  \lambda_0 + \lambda_1 + \lambda_2 + \cdots + \lambda_{m+n-2}\\
|A_{\Sigma}|^2 & =  \lambda_0^2 + \lambda_1^2 + \lambda_2^2 + \cdots + \lambda_{m+n-2}^2.
\end{aligned}
\end{equation}
\medskip

Without any loss of generality, assume that $\Upsilon$ is parametrised by arch-lenght, i.e.
\begin{equation}
\label{arc_length}
(a')^2+(b')^2=1,
\end{equation}
so the fact that $\Sigma$ is a smooth $O(m)\times O(n)-$ invariant minimal surface together with \eqref{princ_curv},\eqref{princcurvmeancursecondfund} and \eqref{arc_length}, yield
\begin{equation}
\label{0_mc_O(m)O(n)_invariant}
H_{\Sigma}=-a'' b'+b''a'+(m-1)\frac{b'}{a}-(n-1)\frac{a'}{b}=0
\end{equation}
and
\begin{equation}
\label{1_mc_O(m)O(n)_invariant}
\begin{aligned}
|A_{\Sigma}|^2 &= \left(-a''b'+ a'b''\right)^2 + (m-1)\left(\frac{b'}{a} \right)^2 + (n-1)\left(\frac{a'}{b} \right)^2.
\end{aligned}
\end{equation}

On the other hand, from parts (ii), (iii) and (v) in Theorem \ref{th_Al}, we find that for some $R>0$, $C_{m,n} \backslash B_R$ is diffeomorphic to $\Sigma$ via the $O(m)\times O(n)-$invariant mapping 
$$
C_{m,n}\backslash B_R \ni\pe\mapsto \pe+ {\tt w}^-(\pe)\nu_{C_{m,n}}(\pe)\in \Sigma.
$$

Therefore, the definition of $O(m)\times O(n)-$invariant function extends naturally to functions defined over $\Sigma$. 

\medskip
To be more precise, a function ${\tt v}:\Sigma\to\R$ is invariant under the action of $O(m)\times O(n)$ if and only if there exists $v:\R\to\R$ such that for every $s\in \R$, ${\tt x}\in (0,2\pi)^{m-1}$ and ${\tt y}\in (0,2\pi)^{n-1}$,
$$
{\tt v}(\pe)=v(s(\pe)).
$$



Roughly speaking, a function defined on $\Sigma$ is $O(m)\times O(n)-$invariant if it depends only on the arch-length parameter $s$ of the profile curve $\Upsilon$ or equivalently it depends only $|\pe|$.

\medskip
Observe that \eqref{1_mc_O(m)O(n)_invariant} implies that $|A_{\Sigma}|^2$ depends only of the arch-length variable $s$ and hence it is $O(m)\times O(n)-$invariant.



\medskip
Next, study the invertibility theory for the linear equation
\begin{equation}
\label{eq_Jacobi_lin}
\Delta_\Sigma{\tt q}+|A_\Sigma|^2{\tt q}={\tt f} \quad \hbox{in} \quad \Sigma
\end{equation}
in the class of $O(m)\times O(n)$-invariant functions.

\medskip

\subsection{The Emden-Fowler change of variables in the Jacobi operator}\label{subsec_EF_Jacobi}

In terms of the coordinates $\pe=(a(s)\x,b(s)\y)$ in (\ref{param_O(m)O(n)_invariant}), the Laplace-Beltrami operator acting on $O(m)\times O(n)$-invariant functions $\q(\pe)=q(s(\pe))$ reads as
\begin{equation}
\label{Laplace_Sigma_O(m)O(n)_invariant}
\Delta_\Sigma{\tt q}=\partial^2_s q+\alpha(s)\partial_s q\quad \hbox{with}\quad \alpha(s):=(m-1)\frac{a'}{a}+(n-1)\frac{b'}{b}.
\end{equation}

Setting $\beta(s(\pe)):=|A_\Sigma(\pe)|^2$, the equation \eqref{eq_Jacobi_lin} becomes
\begin{equation}\label{JTincoordPsi}
\partial^2_s q+\alpha(s)\partial_s q+\beta(s) q=f \quad \hbox{in} \quad \R,
\end{equation}
where we have set ${\tt f}(\pe)=f(s(\pe))$.

\medskip

The $O(m)\times O(n)-$invariance allows us to restrict ourselves to the case when $q$ and $f$ are even and consequently, we study \eqref{JTincoordPsi} for $s>0$ with the boundary condition $\partial_s q(0) =0$. 

\medskip
Consider the {\it Emden-Fowler} change of variables $s=e^t$ and for $t\in \R$, set  
\begin{equation}\label{weightsJTEmdenFowler}
\tilde{\alpha}(t):=\alpha(e^t)e^t-1 \quad \hbox{and}\quad \tilde{\beta}(t):= \beta(e^t)e^{2t}.
\end{equation}

Consider also,
\begin{equation}\label{Cancellingfirstderterm1}
p(t):=\exp \bigg ({-\int_0^t \frac{\tilde{\alpha}(\tau)}{2}d\tau}\bigg) \quad \hbox{for} \quad t\in\R,
\end{equation}
so that $p(t)$ solves
\begin{equation}\label{Cancellingfirstderterm2}
2\frac{\partial_t p}{p}+\tilde{\alpha}(t)=0.
\end{equation}

\medskip
We look for a solution to \eqref{JTincoordPsi} having the form $q(s)=p(t)u(t)$. From \eqref{weightsJTEmdenFowler} and \eqref{Cancellingfirstderterm2}, we find that
$$
\partial^2_s q+\alpha(s)\partial_s q+\beta(s) q=
$$
\begin{equation}\notag
\begin{aligned}
&=\partial^2_t u+\bigg(2\frac{\partial_t \p}{\p}+\tilde{\alpha}(t)\bigg)\partial_t u+\left(\frac{\partial^2_t \p}{\p}+\tilde{\alpha}(t)\frac{\partial_t \p}{\p}+\tilde{\beta}(t)\right)u\\
&=  \partial^2_t u+\left(\frac{\partial^2_t \p}{\p}+\tilde{\alpha}(t)\frac{\partial_t \p}{\p}+\tilde{\beta}(t)\right)u. 
\end{aligned}
\end{equation}

Set 
\begin{equation}\label{righthandandpotential}
\begin{aligned}
\tilde{f}(t)&:=\frac{e^{2t}}{\p(t)}f(e^t)\quad \hbox{and} \quad 
V(t)&:=\frac{\partial^2_t \p}{\p}+\tilde{\alpha}(t)\frac{\partial_t \p}{\p}+\tilde{\beta}(t)
\end{aligned}
\end{equation}
for $t\in \R$.

\medskip
From \eqref{weightsJTEmdenFowler} and \eqref{Cancellingfirstderterm1}, 
$$
V(t)=-\frac{1}{4}(\alpha(e^t)e^t-1)^2+\frac{1}{2}(\alpha'(e^t)e^{2t}+\alpha(e^t)e^t)+\beta(e^t) e^{2t}
$$
and from the equation \eqref{JTincoordPsi} for ${q}$, we find that  $u$ must solve 
\begin{equation}
\label{ODE_Jacobi_EF}
\partial^2_t u+V(t)u=\tilde{f} \quad \hbox{in} \quad \R.
\end{equation}

In order to solve \eqref{ODE_Jacobi_EF}, we must analyze the asymptotic behavior of $V(t)$ as $t\to \pm \infty$. This analysis is done by studying the asymptotic behavior at $s=0$ and at infinity of the functions $a$ and $b$ related to \eqref{param_O(m)O(n)_invariant}, as well as its derivatives.

\medskip
First, notice that $a$ is an even function while and $b$ is odd and since $\Upsilon(0)=(1,0)$ and $\Upsilon'(0)=(0,1)$, then 
\begin{equation}
\begin{aligned}
a(0) &=1  \quad \hbox{and} \quad b(0)&=0,\\
a'(0)& =0 \quad \hbox{and} \quad b'(0)&=1.
\end{aligned}
\end{equation}

Let $T,N:\R \to \R^2$ denote the tangent and a choice of the unit normal vector to $\Upsilon$ respectively, so that $\{T,N\}$ is a Frenet frame with positive orientation. Thus, 
$$
T(s)=(a'(s),b'(s)) \quad \hbox{and} \quad N(s)=(-b'(s),a'(s)).
$$

\medskip
 Let $k:\R \to \R$ denote the curvature of $\Upsilon$. Thus, $k$ is an even function that does not change sign and from \eqref{0_mc_O(m)O(n)_invariant}, 
\begin{equation}\label{curvaturegamma}
k:=-T'\cdot N=-a''b+ a'b''= -(m-1)\frac{b'}{a}+(n-1)\frac{a'}{b}.
\end{equation}

Performing a Taylor expansion around $s=0$, we fix $s_0\in (0,1)$ such that for any $s\in (0,s_0)$,
\begin{equation}\label{asymptoticsabzero}
\begin{aligned}
a(s) &=1 + \frac{a''(0)}{2}s^2 + \mathcal{O}(s^4)\\
b(s) &=s + \frac{b^{(3)}(0)}{6}s^3 + \mathcal{O}(s^5).
\end{aligned}
\end{equation}

Also,
$$
0=\lim \limits_{s\to 0} H_{\Sigma}=  (m-1) -a''(0)n  
$$
so that 
$$
a''(0)=\frac{m-1}{n}.
$$

On the other hand, from Taylor expansion, \eqref{curvaturegamma} and \eqref{asymptoticsabzero},
\begin{equation}\label{curvatureupsilon}
k(s)= -\frac{m-1}{n} + \mathcal{O}(s^2) \quad \hbox{for} \quad s \in (0,s_0).
\end{equation}

Recall that $\beta(s)=|A_{\Sigma}(s(\pe))|^2$. From \eqref{0_mc_O(m)O(n)_invariant}, \eqref{1_mc_O(m)O(n)_invariant} and \eqref{curvatureupsilon}, performing again the Taylor expansion for $\beta(s)$ around zero we find that for any $s\in (0,s_0)$,
\begin{equation}\label{AAatzero}
\beta(s) = \underbrace{\frac{N(m-1)}{n}}_{=:c_0} + \mathcal{O}(s^2).
\end{equation}

\medskip

As for the asymptitoc behavior of $\beta(s)$ at inifinity, we proceed as follows.

\medskip
Since outside a ball $\Sigma$ is the normal graph over the cone $C_{m,n}$ of an $O(m)\times O(n)$-invariant function ${\tt w}^-(\pe)=w^-(r)$, $r=|\pe|$, (see part {v} in Theorem \ref{th_Al}), then
$$(a(s),b(s))
=\frac{1}{\sqrt{N-1}}\left(\sqrt{m-1},\sqrt{n-1}\right)r + \frac{1}{\sqrt{N-1}}\left(-\sqrt{n-1},\sqrt{m-1}\right)w^-(r) 
$$ 
where $s=s(\pe)$ is the arch length parameter along $\Upsilon$.

\medskip
Thus, for some $r_0>0$ fixed, 
$$
s=\int_{r_0}^r \sqrt{1+(\partial_r w^-(r'))^2} dr'=r+O(r^{\gamma_+ -\alpha}) \quad \hbox{as} \quad r\to \infty
$$
with $\alpha>0$.

\medskip
On the other hand,
\begin{equation}\label{abasymptoticsinfinity1}
\begin{aligned}
\lim \limits_{s\to \infty}\frac{a(s)}{s}=
\lim \limits_{s\to \infty}a'(s)=\sqrt{\frac{m-1}{N-1}}\\
 \lim \limits_{s\to \infty}\frac{b(s)}{s}=
\lim \limits_{s\to \infty}b'(s)=\sqrt{\frac{n-1}{N-1}}
\end{aligned}
\end{equation}

and so we can fix $c_1$ and $s_1 \in (s_0,\infty)$ such that for any $s\in (s_1, \infty)$,
\begin{equation}\label{asymptoticsabatinfinity}
\begin{aligned}
a(s) &=\sqrt{\frac{m-1}{N-1}}s + \sqrt{\frac{n-1}{N-1}}c_1 s^{\gamma_+} + \mathcal{O}(s^{\gamma_+ - \alpha}),\\
b(s)& =\sqrt{\frac{n-1}{N-1}}s - \sqrt{\frac{m-1}{N-1}}c_1 s^{\gamma_+} + \mathcal{O}(s^{\gamma_+ - \alpha})
\end{aligned}
\end{equation}
and these expressions can be differentiated.

\medskip

Putting together \eqref{1_mc_O(m)O(n)_invariant} and \eqref{asymptoticsabatinfinity}, for any $s>s_1$,
\begin{equation}\label{AAatinfinity}
\beta(s)=\frac{N-1}{s^2} + \mathcal{O}(s^{-3}).
\end{equation}

We also remark that $\partial_{s}\beta<0$ in $\R$.

\medskip
Next, we study the asymptotic behavior of the function 
$$
\alpha= (m-1)\frac{a'}{a} + (n-1)\frac{b'}{b}\quad \hbox{in} \quad \R.
$$

By fixing $s_0>0$ smaller and $s_1>s_0$ larger if necessary, we find from \eqref{asymptoticsabzero} that for any $s\in (0,s_0)$,
\begin{equation}\label{coeffifirstderatzero}
\alpha(s)= \frac{n-1}{s} + \mathcal{O}(s) 
\end{equation}
and from \eqref{asymptoticsabatinfinity} that for any $s>s_1$,
\begin{equation}\label{coeffifirstderatinfty}
\alpha(s)= \frac{N-1}{s} + \mathcal{O}(s^{-2}) 
\end{equation}

Summarising, the function $\beta:\R \to (0,\infty)$ is a positive, smooth, even and strictly decreasing function such that for some $c_0>0$,
\begin{equation}\label{betaasymptotics}
\beta(s)=\left\{
\begin{aligned}
&c_0 + \mathcal{O}(s^2), & \quad  0< s <s_0\\
&\frac{N-1}{s^2}+ \mathcal{O}(s^{-3}), & \quad s>s_1,
\end{aligned}
\right.
\end{equation}
while the function $\alpha:(0,\infty)\to \R$ is a positive, smooth function such that
\begin{equation}\label{alphaasymptotics}
\alpha(s)=\left\{
\begin{aligned}
&\frac{n-1}{s} + \mathcal{O}(s), & \quad  0< s <s_0\\
&\frac{N-1}{s}+ \mathcal{O}(s^{-2}), & \quad s>s_1.
\end{aligned}
\right.
\end{equation}
  


Next, denote
\begin{equation}\label{T0T1}
T_0:=\ln(s_0) \quad \hbox{and} \quad T_1:=\ln(s_1).
\end{equation}

From \eqref{weightsJTEmdenFowler}, \eqref{betaasymptotics} and \eqref{alphaasymptotics},
\begin{equation}\label{alphatildeasymptotics}
\tilde{\alpha}(t)=\left\{
\begin{aligned}
(n-2) + \mathcal{O}(e^{2t}) \quad \hbox{for} \quad t<T_0, \\
(N-2) + \mathcal{O}(e^{-t}) \quad \hbox{for} \quad t>T_1
\end{aligned}
\right.
\end{equation}
and
\begin{equation}\label{betatildeasymptotics}
\tilde{\beta}(t)=\left\{
\begin{aligned}
c_0 e^{2t} + \mathcal{O}(e^{4t}), & \quad \hbox{for}\quad t<T_0,\\
N-1 + \mathcal{O}(e^{-t}), & \quad \hbox{for}\quad t>T_1.
\end{aligned}
\right.
\end{equation}

On the other hand, from \eqref{Cancellingfirstderterm1} and \eqref{alphatildeasymptotics},
\begin{equation}\label{asymptoticsp}
\p(t)=\left\{
\begin{aligned}
e^{-\frac{n-2}{2}t}\left(1 + \mathcal{O}(e^{2t}) \right), & \quad t<T_0\\
e^{-\frac{N-2}{2}t}\left(1 + \mathcal{O}(e^{-t}) \right), & \quad t>T_1.
\end{aligned}
\right.
\end{equation}

We remark that the asymptotic behaviours described in \eqref{alphatildeasymptotics}, \eqref{betatildeasymptotics} and \eqref{asymptoticsp} can be differentiated in the variable $t$.

\medskip
From this discussion and a straight forward computation we find that
\begin{equation}\label{derivativesp}
\frac{\partial_{tt} \p(t)}{\p(t)} + \tilde{\alpha}(t)\frac{\partial_{t}\p(t)}{\p(t)}=\left\{
\begin{aligned}
-\frac{(n-2)^2}{4} + \mathcal{O}(e^{2t}), \quad \hbox{for}\quad t<T_0\\
-\frac{(N-2)^2}{4} + \mathcal{O}(e^{-t}), \quad \hbox{for}\quad t>T_1.
\end{aligned}
\right.
\end{equation}

Putting together  \eqref{righthandandpotential}, \eqref{betatildeasymptotics} and \eqref{derivativesp}, we find that 
\begin{equation}\label{Vasymptotics}
V(t)=\left\{
\begin{aligned}
- \frac{(n-2)^2}{4} +  \mathcal{O}( e^{2t}), & \quad \hbox{for}\quad t<T_0\\
- \frac{(N-2)^2}{4}+ (N-1)+\mathcal{O}(e^{-t}), & \quad \hbox{for}\quad t>T_1
\end{aligned}
\right.
\end{equation}
and this relations can be differentiated. We also  recall that $N \geq 7$ and so 
$$
\left(\frac{N-2}{2}\right)^2-(N-1)>0.
$$

%
%
%
%


\subsection{Jacobi fields of $\Sigma$}\label{subsec_Jacobi_fields}

The minimality of $\Sigma$ is invariant under dilation and this allows us to find an explicit  smooth  and $O(m)\times O(n)-$invariant Jacobi field for $\Sigma$, namely the function
$$
\Sigma \ni \pe \mapsto \pe \cdot \nu_{\Sigma}(\pe). 
$$

In addition, this Jacobi field does not change sign. This follows from the fact that the family $\{\lambda\Sigma\}_{\lambda>0}$ is a foliation of the connected component of $\R^{N+1}\backslash C_{m,n}$ containing $\Sigma$. 

\medskip
We will use this information to prove the following Proposition. Recall that 
$$
\gamma_{\pm}=-\frac{N-2}{2}\pm \sqrt{\frac{(N-2)^2}{4} - (N-1)}<0
$$
and consider $c_1>0$ and $\alpha>0$ the constants in \eqref{asymptoticsabatinfinity}.

\begin{proposition}\label{prop_Jacobi_Sigma}
There exist exactly two $O(m)\times O(n)$-invariant linearly independent Jacobi fields ${\tt v}_\pm(\pe)=v_\pm(s(\pe))>0$ of $\Sigma$ such that
\begin{itemize}
\item[(i)] $v_+(s)$ is smooth, even in the variable $s$, $v_+(0)=1$ and
\begin{equation}\label{JacField+asympt}
v_+(s)=c_1 s^{\gamma_+} \big( 1 + \mathcal{O}(s^{-\alpha})\big)\quad  \hbox{as} \quad s\to\infty
\end{equation}

\medskip
\item[(ii)] $v_-(s)$ is smooth except at $s=0$, where it is singular and for some $c_2>0$
\begin{equation}\label{JacField-asympt1}
v_-(s)= \left\{
\begin{aligned}
s^{-(n-2)}(1 + \mathcal{O}(s^2)) & \quad \hbox{as} \quad s \to 0\\
c_2s^{\gamma_-} \big( 1 + \mathcal{O}(s^{-\alpha})\big) &\quad  \hbox{as} \quad s \to \infty,
\end{aligned}
\right.
\end{equation}
where $\alpha\in (0,1)$. Moreover, relation \eqref{JacField+asympt} and \eqref{JacField-asympt1} can be differen\-tiated.
\end{itemize}
%
\end{proposition}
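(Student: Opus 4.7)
The plan is to obtain the two Jacobi fields by solving the $O(m)\times O(n)$-invariant Jacobi ODE
$$
\partial_s^2 q + \alpha(s)\partial_s q + \beta(s)\, q = 0, \qquad s\in\R,
$$
which is a second-order linear ODE whose solution space is two-dimensional. One member of this space is obtained geometrically, and the other by reduction of order; their asymptotics are read off from the expansions of $\alpha$ and $\beta$ recorded in \eqref{betaasymptotics} and \eqref{alphaasymptotics}.

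\textbf{Step 1: Construction of $v_+$.} Since the family $\{\lambda\Sigma\}_{\lambda>0}$ is a foliation of the connected component of $\R^{N+1}\setminus C_{m,n}$ containing $\Sigma$, the vector field $\pe\cdot\nu_\Sigma(\pe)$ is a smooth, sign-definite, $O(m)\times O(n)$-invariant Jacobi field of $\Sigma$. Using \eqref{param_O(m)O(n)_invariant} and \eqref{param_O(m)O(n)_invariantnormalvec}, this field is, in the arch-length parameter,
$$
\pe\cdot\nu_\Sigma(\pe)=-a(s)b'(s)+b(s)a'(s).
$$
After multiplication by $-1$ and normalization, I define $v_+(s):=a(s)b'(s)-b(s)a'(s)$ (rescaled). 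The evenness of $a$ and oddness of $b$ ensure that $v_+$ is even, while the initial data $a(0)=1$, $b(0)=0$, $a'(0)=0$, $b'(0)=1$ yield $v_+(0)=1$. Inserting the expansions \eqref{asymptoticsabatinfinity} for $a,b,a',b'$ (writing $A=\sqrt{(m-1)/(N-1)}$, $B=\sqrt{(n-1)/(N-1)}$, so $A^2+B^2=1$), the linear-in-$s$ contributions cancel and the $s^{\gamma_+}$ terms combine to give
$$
v_+(s)=(1-\gamma_+)c_1\, s^{\gamma_+}\bigl(1+\mathcal{O}(s^{-\alpha})\bigr)\qquad\text{as }s\to\infty,
$$
which is the claimed form (after relabeling the constant). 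Differentiability of the remainder follows from the fact that the expansions in \eqref{asymptoticsabatinfinity} can be differentiated.

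\textbf{Step 2: Construction of $v_-$ by reduction of order.} Given $v_+$, a second linearly independent solution is produced by the standard Wronskian identity: since the Wronskian $W(s)=v_+(s)v_-'(s)-v_-(s)v_+'(s)$ satisfies $W'+\alpha W=0$, one may take
$$
v_-(s):=v_+(s)\int_{s}^{1}\frac{\exp\!\Bigl(-\int_{1}^{\sigma}\alpha(\tau)\,d\tau\Bigr)}{v_+(\sigma)^2}\,d\sigma.
$$
Since $v_+>0$ on $\R$ and the integrand is positive, $v_-$ is smooth on $(0,\infty)$ and positive for $s$ small. It is automatically a Jacobi field of $\Sigma$.

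\textbf{Step 3: Asymptotics of $v_-$ at $s\to 0^+$ and $s\to\infty$.} From the expansion \eqref{alphaasymptotics}, near the origin $\exp(-\int_1^\sigma\alpha)\sim c\,\sigma^{-(n-1)}$, while $v_+(\sigma)^2\to 1$; integrating from $s$ to $1$ gives
$$
v_-(s)=s^{-(n-2)}\bigl(1+\mathcal{O}(s^2)\bigr)\qquad\text{as }s\to 0^+,
$$
as required. At infinity, $\exp(-\int_1^\sigma\alpha)\sim c'\,\sigma^{-(N-1)}$ by \eqref{alphaasymptotics}, and $v_+(\sigma)^2\sim c''\sigma^{2\gamma_+}$, so the integrand behaves like $\sigma^{-(N-1)-2\gamma_+}=\sigma^{-1-2\mu_+}$ where $\mu_+=\sqrt{((N-2)/2)^2-(N-1)}>0$. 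Hence the integral converges at $+\infty$ and
$$
\int_s^{\infty}\sigma^{-1-2\mu_+}\,d\sigma=\tfrac{1}{2\mu_+}s^{-2\mu_+}\bigl(1+\mathcal{O}(s^{-\alpha})\bigr),
$$
so $v_-(s)\sim c\,s^{\gamma_+}\cdot s^{-2\mu_+}=c_2\,s^{\gamma_+-2\mu_+}=c_2\,s^{\gamma_-}$, with error $\mathcal{O}(s^{-\alpha})$ inherited from the expansions of $\alpha$, $\beta$, and $v_+$. All error terms are differentiable because the input expansions are.

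\textbf{Step 4: Linear independence and exhaustion.} The solution $v_+$ is bounded at the origin while $v_-$ blows up like $s^{-(n-2)}$, so the two are linearly independent. Because the ODE is second-order linear, every $O(m)\times O(n)$-invariant Jacobi field is a linear combination of $v_+$ and $v_-$.

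The main technical obstacle is Step 3: one must be sure that the integration of $\exp(-\int \alpha)/v_+^2$ preserves the decaying power of $\alpha$ in the remainder and yields exactly $s^{\gamma_-}$ with a differentiable $\mathcal{O}(s^{-\alpha})$ correction. This is done by splitting the domain of integration at a fixed large $s_1$, inserting the expansions \eqref{alphaasymptotics} and \eqref{asymptoticsabatinfinity} inside the integral, and using that $\gamma_-<\gamma_+$ so the $s^{\gamma_-}$ term strictly dominates any contribution that might arise from the $v_+$ asymptotics.
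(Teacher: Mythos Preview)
Your overall strategy matches the paper's: identify $v_+$ as the dilation Jacobi field $ab'-a'b$ and produce $v_-$ by reduction of order. (The paper first passes to Emden--Fowler variables $s=e^t$ so that the first-order term disappears and the Wronskian is constant, then sets $u_-(t)=u_+(t)\int_t^{\infty}u_+^{-2}$; your direct computation in the $s$ variable is an equivalent route.)

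There is, however, a genuine inconsistency between your Step~2 and Step~3 that makes the argument fail as written. With the upper limit $1$ in
\[
v_-(s)=v_+(s)\int_s^{1}\frac{\exp\bigl(-\textstyle\int_1^\sigma\alpha\bigr)}{v_+(\sigma)^2}\,d\sigma,
\]
the integrand is $\sim C\sigma^{-1-2\mu_+}$ at infinity and hence integrable there; consequently $\int_s^{1}(\cdots)\to -\int_1^{\infty}(\cdots)=:-I_\infty\neq 0$ as $s\to\infty$, so your $v_-(s)\sim -I_\infty\,v_+(s)\sim -I_\infty c_1 s^{\gamma_+}$, which is the wrong power and the wrong sign. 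In Step~3 you tacitly replace $\int_s^{1}$ by $\int_s^{\infty}$; that is the integral you actually need. With the upper limit $+\infty$ the integral is the tail $\int_s^{\infty}(\cdots)\sim \tfrac{1}{2\mu_+}s^{-2\mu_+}$, giving $v_-(s)\sim c_2\,s^{\gamma_+-2\mu_+}=c_2\,s^{\gamma_-}$ as claimed, and moreover $v_->0$ on all of $(0,\infty)$, whereas your present formula changes sign at $s=1$, contradicting the positivity asserted in the Proposition. The fix is simply to take
\[
v_-(s)=v_+(s)\int_s^{\infty}\frac{\exp\bigl(-\textstyle\int_1^\sigma\alpha\bigr)}{v_+(\sigma)^2}\,d\sigma,
\]
which is precisely the $s$-variable analogue of the paper's formula $u_-(t)=u_+(t)\int_t^{\infty}u_+^{-2}$; the behaviour near $s=0$ is unaffected since the integral is still dominated by the $\sigma^{-(n-1)}$ singularity there.
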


\begin{proof}
Set ${\tt v}_+(\pe)=-\pe\cdotp \nu_\Sigma (\pe)$ for $\pe\in \Sigma$. We know that ${\tt v}_+$ is a smooth Jacobi field which does not change sign. Directly from \eqref{param_O(m)O(n)_invariant} and \eqref{param_O(m)O(n)_invariantnormalvec}, we find for $\pe=\left(a(s){\tt x},b(s){\tt y}\right)$ that
\begin{equation}\label{Jacobi_field_positive}
{\tt v}_+(\pe)=-\pe\cdotp\nu_\Sigma(\pe)
\end{equation}
so that $v_+(s):={\tt v}(\pe)=a(s)b'(s)-a'(s)b(s)$ and hence it is even in the variable $s$. Since $v_+$ does not change sign and $v_+(0)=1$, then ${v}_+>0$ in $\Sigma$.



\medskip

From \eqref{asymptoticsabatinfinity} and a direct computation, 
\begin{equation*}
v_+(s)=c_1 s^{\gamma_+} \big( 1 + \mathcal{O}(s^{-\alpha})\big)\quad  \hbox{as} \quad s\to\infty,
\end{equation*}
where we assume that $\alpha \in (0,1)$. 

\medskip
This relation can be differentiated in the sense that 
\begin{equation}\label{JacField+derivasympt}
s v'_+(s)=  c_1 \gamma_+ \,s^{\gamma_+} \big( 1 + \mathcal{O}(s^{-\alpha})\big)\quad  \hbox{as} \quad s\to\infty
\end{equation}
and this proves \eqref{JacField+asympt} and completes the proof of (i).

\medskip
We next, prove (ii). With the Emden-Fowler change of variables $s=e^t$ and using the function defined in \eqref{Cancellingfirstderterm1}, we write
$$
v_+(s)=\p(t)u_+(t) \quad \hbox{for} \quad t\in \R.
$$

From \eqref{ODE_Jacobi_EF} with $\tilde{f}=0$, we see that $u_+$ must solve the ODE 
\begin{equation}\label{JacFieldinEmdFow+}
\partial^2_t u_+ +V(t)u_+=0 \quad \hbox{in} \quad \R.
\end{equation}

Setting, 
\begin{equation}\label{capitallowercaselambda}
\lambda:=\frac{n-2}{2} \quad \hbox{and}\quad  \Lambda:= \sqrt{\left(\frac{N-2}{2}\right)^2-(N-1)}
\end{equation}
we find from \eqref{asymptoticsp} and \eqref{JacField+asympt} that for some $c>0$,
\begin{equation}\label{asymptoticsu+}
u_+(t)=\left\{
\begin{aligned}
e^{\lambda t}\left(1 + \mathcal{O}(e^{2t}) \right), & \quad t<T_0\\
c_1e^{\Lambda t}\left(1 + \mathcal{O}(e^{- \alpha t}) \right), & \quad t>T_1.
\end{aligned}
\right.
\end{equation}

Proceeding in a similar fashion using \eqref{asymptoticsp}, \eqref{JacField+asympt}, \eqref{JacField+derivasympt}, \eqref{asymptoticsu+} and the fact that, 
$$
\partial_t u_+(t)=\frac{e^t}{\p(t)}\partial_s v_+(e^t) - \frac{\partial_t \p(t)}{\p(t)} u_+(t),
$$
we find that 
\begin{equation}\label{asymptoticsderivu+}
\partial_ t u_+(t)=\left\{
\begin{aligned}
\lambda \, e^{\lambda t}\left(1 + \mathcal{O}(e^{2t}) \right), & \quad t<T_0\\
\Lambda c_1\,e^{\Lambda t}\left(1 + \mathcal{O}(e^{- \alpha t}) \right), & \quad t>T_1.
\end{aligned}
\right.
\end{equation}

Next, we find the second predicted Jacobi field of $\Sigma$. Set,
\begin{equation}\label{VarparamJacFieldEmdFow}
u_-(t):=u_+(t)\int_t^{\infty} \frac{1}{u_+(\tau)^2}d\tau \quad \hbox{for}\quad t\in \R.
\end{equation}

Clearly, $u_-$ is smooth and positive in $\R$. From \eqref{JacFieldinEmdFow+} and the variation of para\-meters formula, we conclude that $u_-$ solves 
\begin{equation}\label{JacFieldinEmdFow-}
\partial^2_t u_- +V(t)u_-=0 \quad \hbox{in} \quad \R
\end{equation}
and $u_+,u_-$ are linearly independent.

\medskip
As for the asymptotic behavior of $u_-$, we estimate directly from \eqref{asymptoticsu+} and \eqref{VarparamJacFieldEmdFow}, to find that after normalisation of the solution, for some $c_2>0$,
\begin{equation}\label{asymptoticsu-}
u_-(t)=\left\{
\begin{aligned}
e^{-\lambda t}\left(1 + \mathcal{O}(e^{2t}) \right), & \quad t<T_0\\
c_2e^{-\Lambda t}\left(1 + \mathcal{O}(e^{- \alpha t}) \right), & \quad t>T_1,
\end{aligned}
\right.
\end{equation}
where we have taken $T_0<0$ smaller and $T_1> T_0$ larger if necessary, but still fix.

\medskip
Similarly,  
\begin{equation}\label{asymptoticsderivu-}
\partial_ t u_-(t)=\left\{
\begin{aligned}
-\lambda \, e^{-\lambda t}\left(1 + \mathcal{O}(e^{2t}) \right), & \quad t<T_0\\
-\Lambda \,c_2e^{-\Lambda t}\left(1 + \mathcal{O}(e^{- \alpha t}) \right), & \quad t>T_1.
\end{aligned}
\right.
\end{equation} 

 \medskip
Going back to the original coordinate $\pe=\left(a(s){\tt x},b(s){\tt y}\right)$, we define 
$$
{\tt v}_-(\pe) =v_-(s) :=\p\big(\log(s)\big)\,u_-(\log(s)).
$$

We conclude that $v_-$ is smooth and positive in  $(0,\infty)$. From \eqref{JacFieldinEmdFow-}, $v_-(s)$ is another Jacobi field and the classical ODE theory yields that $v_+(s), v_-(s)$ form a fundamental set for all the Jacobi fields of $\Sigma$ that are $O(m)\times O(n)$-invariant. 

\medskip
Finally, from \eqref{asymptoticsp}, \eqref{asymptoticsu-} and \eqref{asymptoticsderivu-}, 
\begin{equation*}\label{JacField-asympt}
v_-(s)= \left\{
\begin{aligned}
s^{-(n-2)}(1 + \mathcal{O}(s^2)) & \quad \hbox{as} \quad s \to 0\\
s^{\gamma_-} \big( 1 + \mathcal{O}(s^{-\alpha})\big) &\quad  \hbox{as} \quad s \to \infty
\end{aligned}
\right.
\end{equation*}
and this relation can be differentiated, i.e.
\begin{equation*}\label{JacField-derivasympt}
s v_-(s)= \left\{
\begin{aligned}
s^{-(n-2)}(1 + \mathcal{O}(s^2)) & \quad \hbox{as} \quad s \to 0\\
s^{\gamma_-} \big( 1 + \mathcal{O}(s^{-\alpha})\big) &\quad  \hbox{as} \quad s \to \infty.
\end{aligned}
\right.
\end{equation*}

This proves \eqref{JacField-asympt1} and completes the proof of the proposition.

\end{proof}

At this point, a few important remarks are in order.

\begin{remark}{\rm 
\begin{enumerate}
\item[(i)] We stress out that in the upcoming developments, the asymptotics described in expressions \eqref{asymptoticsu+}, \eqref{asymptoticsderivu+}, \eqref{asymptoticsu-} and \eqref{asymptoticsderivu-} will play a crucial role. Particularly, in the proof of Proposition \ref{prop_Jacobi_lin} (see below). 

\medskip
%
%

\medskip
\item[(ii)] Since minimal hypersurfaces in $\R^{N+1}$ are invariant under translations and rotations also, one expects the existence of other linearly independent Jacobi fields. However, these Jacobi fields are not $O(m)\times O(n)-$invariant and hence we do not study them here. 
\end{enumerate}}
\end{remark}

\medskip

\subsection{The Jacobi equation}\label{subsec_Jacobi_lin}

Next, we introduce suitable function spaces to study invertibility theory for \eqref{eq_Jacobi_lin}.


\medskip
For $\beta \in (0,1)$, $\mu>0$ and a function ${\tt f}:\Sigma \to \R$ we set 
\begin{equation}\label{normsJacLinTheory}
\begin{aligned}
\|{\tt f}\|_{\infty,\mu} & := \|(s(\pe)^2 +2)^{\frac{\mu}{2}}{\tt f}\|_{L^{\infty}(\Sigma)},\\ 
\|{\tt f}\|_{C^{0,\beta}_\mu(\Sigma)}& :=\sup_{\zeta\in\Sigma}(s(\pe)^2+2)^{\frac{\mu}{2}}\|{\tt f}\|_{C^{0,\beta}(B_1(\pe))}
\end{aligned}
\end{equation}
and we consider the Banach space $C^{0,\beta}_{\mu}(\Sigma)$ defined as the space of $O(m)\times O(n)-$invariant functions ${\tt f}\in C^{0,\beta}_{loc}(\Sigma)$ for which the norm
\begin{equation}\label{functionspacerighthandsideJT}
\|{\tt f}\|_{\mathcal{C}^{0,\beta}_{\mu}(\Sigma)}< \infty.
\end{equation}


\bigskip
We also consider the Banach space $\mathcal{C}^{2,\beta}_{\mu}(\Sigma)$ defined as the space of $O(m)\times O(n)-$invariant functions ${\tt q}\in C^{2,\beta}_{loc}(\Sigma)$ for which the norm
\begin{equation}
\|{\tt q}\|_{\mathcal{C}^{2,\beta}_{\infty,\mu}(\Sigma)}:=\|D^2_\Sigma{\tt q}\|_{C^{0,\beta}_{2+\mu}(\Sigma)}+\|\nabla_{\Sigma}{\tt q}\|_{\infty,1+\mu}+\|{\tt q}\|_{\infty,\mu}<\infty.
\end{equation}

The following proposition shows that in this functional analytic setting, the linear operator in \eqref{eq_Jacobi_lin} has an inverse.

\medskip  
\begin{proposition}
\label{prop_Jacobi_lin}
Let $\beta\in(0,1)$ and $\mu>0$. Then, there exists a constant $c>0$ such that for any ${\tt f}\in\mathcal{C}^{0,\beta}_{2+\mu}(\Sigma)$, there exists a solution ${\tt q}\in \mathcal{C}^{2,\beta}_{\infty,\mu}(\Sigma)$ to (\ref{eq_Jacobi_lin}) such that
\begin{equation}\label{EstJacLinProp}
\|{\tt q}\|_{\mathcal{C}^{2,\beta}_{\infty,\mu}(\Sigma)}\le c\|{\tt f}\|_{\mathcal{C}^{0,\beta}_{2+ \mu}(\Sigma)}.
\end{equation}
\end{proposition}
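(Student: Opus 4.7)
The plan is to exploit the $O(m)\times O(n)$-invariance to reduce \eqref{eq_Jacobi_lin} to a one-dimensional linear ODE, solve it explicitly by variation of parameters using the two Jacobi fields of Proposition~\ref{prop_Jacobi_Sigma}, and upgrade the resulting $L^\infty$ estimate to the full Hölder bound via interior Schauder theory on $\Sigma$.

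\textbf{Reduction.} Writing ${\tt q}(\pe)=q(s)$ and ${\tt f}(\pe)=f(s)$, the equation becomes $q''+\alpha q'+\beta q=f$ on $(0,\infty)$ with $q'(0)=0$; via the Emden--Fowler substitution $s=e^t$, $q(s)=p(t)u(t)$ from Section~\ref{subsec_EF_Jacobi} it is transformed into
$$\partial_t^2 u + V(t)\,u = \tilde f(t), \qquad t\in\mathbb{R},$$
with $V$ having the asymptotics~\eqref{Vasymptotics}. Smoothness of ${\tt q}$ across the waist $s=0$ forces $u(t)\sim c\,u_+(t)$ as $t\to-\infty$, where $u_\pm$ are the Jacobi fields of Proposition~\ref{prop_Jacobi_Sigma}.

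\textbf{Variation of parameters.} Differentiating \eqref{VarparamJacFieldEmdFow} gives the Wronskian $W(u_+,u_-)\equiv -1$, so I would take the particular solution
$$u(t) := -u_+(t)\int_t^{+\infty} u_-(\tau)\tilde f(\tau)\,d\tau \;-\; u_-(t)\int_{-\infty}^t u_+(\tau)\tilde f(\tau)\,d\tau,$$
whose limits of integration are chosen so that near $-\infty$ the singular Jacobi field $u_-$ is multiplied by a vanishing integral (ensuring smoothness at the waist) and near $+\infty$ the growing Jacobi field $u_+$ is multiplied by a vanishing integral (ensuring decay at infinity). A direct differentiation confirms the ODE is satisfied.

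\textbf{Weighted bounds and Schauder.} Using~\eqref{righthandandpotential} and~\eqref{asymptoticsp}, the assumption ${\tt f}\in\mathcal{C}^{0,\beta}_{2+\mu}(\Sigma)$ yields
$$|\tilde f(t)| \le c\,\|{\tt f}\|_{\mathcal{C}^{0,\beta}_{2+\mu}(\Sigma)}\begin{cases}\exp\bigl(\bigl(\tfrac{N-2}{2}-\mu\bigr)t\bigr),& t\ge T_1,\\ \exp\bigl(\bigl(\tfrac{n-2}{2}+2+\mu\bigr)t\bigr),& t\le T_0.\end{cases}$$
I would then insert the asymptotics~\eqref{asymptoticsu+}--\eqref{asymptoticsderivu-} of $u_\pm$ into each of the four integrands, bound them regionwise on $(-\infty,T_0)$, $[T_0,T_1]$, $(T_1,\infty)$, and use $\lambda>0$ and $\Lambda>0$ (which hold because $N\ge 7$) to conclude
$$p(t)|u(t)| \le c\,\|{\tt f}\|_{\mathcal{C}^{0,\beta}_{2+\mu}(\Sigma)}(1+e^{t})^{-\mu}, \qquad t\in\mathbb{R},$$
equivalently $\|{\tt q}\|_{\infty,\mu}\le c\,\|{\tt f}\|_{\mathcal{C}^{0,\beta}_{2+\mu}(\Sigma)}$. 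Interior Schauder estimates on unit balls $B_1(\pe)\subset\Sigma$, with coefficients uniformly controlled thanks to Theorem~\ref{th_Al}(v) and~\eqref{betaasymptotics},~\eqref{alphaasymptotics}, then upgrade this to the full estimate~\eqref{EstJacLinProp}.

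\textbf{Main obstacle.} The hard part will be the regionwise bookkeeping of the variation-of-parameters integrals: the relative exponential rates of $u_\pm$, $p$, and $\tilde f$ depend on $\mu$ and on which of the three regions one sits in, so several cases must be matched carefully. A potential resonance occurs when $\mu$ coincides with $-\gamma_+$ or $-\gamma_-$, producing a logarithmic factor; this will be absorbed by adding a suitable multiple of the smooth Jacobi field ${\tt v}_+$ to the particular solution, which leaves both the decay at infinity and the regularity at the waist unchanged.
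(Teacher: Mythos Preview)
Your overall strategy (reduce to the ODE in Emden--Fowler variables, solve by variation of parameters using $u_\pm$, then apply Schauder on $\Sigma$) matches the paper's, but your choice of integration limits is wrong and this is not a cosmetic issue. You take $\int_t^{+\infty} u_-(\tau)\tilde f(\tau)\,d\tau$, arguing that the growing field $u_+$ should be paired with a vanishing tail integral. However $u_-(\tau)\sim e^{-\Lambda\tau}$ while $\tilde f(\tau)$ can be as large as $e^{((N-2)/2-\mu)\tau}$, and since $(N-2)/2-\Lambda=-\gamma_+>0$ the product behaves like $e^{(-\gamma_+-\mu)\tau}$. For every $\mu\in(0,-\gamma_+)$ this exponent is strictly positive, so your integral \emph{diverges} and the formula is undefined. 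For $N=7$ one has $-\gamma_+=2$, so the whole range $0<\mu<2$ is excluded --- precisely the range in which the proposition is applied later. What you describe as a ``potential resonance at $\mu=-\gamma_+$'' is actually the boundary of an open region where the construction fails outright, not a logarithmic borderline that can be absorbed by adding a multiple of ${\tt v}_+$.

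The paper instead takes \emph{both} integrals from $-\infty$ to $t$,
\[
u(t)=u_+(t)\int_{-\infty}^t u_-\,W^{-1}\tilde f\,d\tau-u_-(t)\int_{-\infty}^t u_+\,W^{-1}\tilde f\,d\tau,
\]
which is always convergent at $-\infty$ (the integrands decay like $e^{2\tau}$ and $e^{n\tau}$ there). The point is that in the original variable both $v_\pm\sim s^{\gamma_\pm}$ already decay as $s\to\infty$, so the correct splitting is dictated by behaviour at the waist $s=0$, not at $s=\infty$. With these limits one checks $p(t)u(t)\sim s^2$ as $s\to 0^+$ (hence $q$ extends evenly with $q'(0)=0$) and $|p(t)u(t)|\le C\|{\tt f}\|\,s^{-\mu}$ as $s\to\infty$ for $0<\mu<-\gamma_+$; the Schauder step then goes through as you outline. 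One minor slip: your bound on $\tilde f$ for $t\le T_0$ has exponent $\tfrac{n-2}{2}+2+\mu$, but the weight $(s^2+2)^{-(2+\mu)/2}$ gives no extra decay as $s\to 0$, so the correct exponent is $\tfrac{n}{2}+1$ without the $+\mu$.
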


%
%

\begin{proof}
Given any ${\tt f}\in C^{0,\beta}_{2+\mu}(\Sigma)$, we write ${\tt f}(\pe)=f(s)$ and look for a solution of  
\eqref{eq_Jacobi_lin} having the form 
$$
q(s):= \p(t)u(t) \quad \hbox{for} \quad s=e^t >0. 
$$

Therefore, $u(t)$ must solve \eqref{ODE_Jacobi_EF} with 
$$
\tilde{f}(t):= \frac{e^{2t}}{\p(t)}f(e^t) \quad \hbox{for} \quad t \in \R
$$
and where the function $p(t)$ and the potential $V(t)$ are described in \eqref{asymptoticsp} and \eqref{Vasymptotics}.

\medskip
Observe that
\begin{equation}\notag
|\tilde{f}(t)|\leq \left\{
\begin{aligned}
e^{(\frac{n}{2}+1)t}\|{\tt f}\|_{C^{0,\beta}_{2+ \mu}(\Sigma)} &\quad \hbox{for} \quad  t\leq T_0,\\
e^{(\frac{N-2}{2} -\mu)t}\|{\tt f}\|_{C^{0,\beta}_{2+ \mu}(\Sigma)} &\quad \hbox{for} \quad  t \geq T_1.
\end{aligned}
\right.
\end{equation}

A solution of \eqref{ODE_Jacobi_EF} is given by the formula:
\begin{equation}\label{SlnODEJac}
u(t)=u_+(t)\int_{-\infty}^t u_-(\tau)W^{-1}(\tau)\tilde{f}(\tau)d\tau-u_-(t)\int_{-\infty}^t u_+(\tau)W^{-1}(\tau)\tilde{f}(\tau)d\tau
\end{equation}
for $t\in \R$.

\medskip
Observe that the Wronskian $W(t)$ of $u_+(t)$ and $u_-(t)$ is constant. Directly from \eqref{asymptoticsu+}, \eqref{asymptoticsderivu+}, \eqref{asymptoticsu-},  \eqref{asymptoticsderivu-} and \eqref{SlnODEJac} we find that for some $C>0$ depending only on $\mu$ and  $N$,
\begin{equation}\label{EstLinftyJaclin}
|u(t)|+|\partial_t u(t)|\le C\left\{
\begin{aligned}
e^{(\frac{n}{2}+1)t}\|{\tt f}\|_{C^{0,\beta}_{2+\mu}(\Sigma)} &\quad\hbox{for} \quad  t\le T_0,\\ 
e^{(\frac{N-2}{2}-\mu) t}\|{\tt f}\|_{C^{0,\beta}_{2+\mu}(\Sigma)}&\quad\hbox{for} \quad t\geq T_1.
 \end{aligned}
\right.
\end{equation}

Pulling back the change of variables, we find from \eqref{EstLinftyJaclin},that $q(s)\sim s^2$ as $s\to 0^+$. In particular, $q(0)=\partial_s q(0)=0$ so that $q$ can be extended to an even function over $\R$.

\medskip
We also find from \eqref{JTincoordPsi}, \eqref{betaasymptotics}, \eqref{alphaasymptotics} and \eqref{EstLinftyJaclin} that 
\begin{equation}
\label{est_EDO_Jacobi}
\|D^2_\Sigma{\tt q}\|_{\infty,2 + \mu}+\|\nabla_\Sigma{\tt q}\|_{\infty,1 + \mu}+\|{\tt q}\|_{\infty,\mu}\le c\|{\tt f}\|_{C^{0,\beta}_{2+ \mu}(\Sigma)}.
\end{equation}

We finish the proof of the estimate \eqref{EstJacLinProp} by applying standard H\"{o}lder regularity. Since the coefficient $\alpha(s)$ in \eqref{JTincoordPsi} is singular at the origin, we rather use regularity theory to the corresponding partial differential equation \eqref{eq_Jacobi_lin}. Thus, using \eqref{eq_Jacobi_lin} we notice that for $s(\pe)\in(0,\frac{1}{2})$ we have
$$
\|{\tt q}\|_{C^{2,\beta}(B_{\frac{3}{2}}(\pe))}\le c(\|{\tt q}\|_{L^\infty(B_2(\pe))}+\|{\tt f}\|_{C^{0,\beta}(B_2(\pe))})\le \|{\tt f}\|_{\mathcal{C}^{0,\beta}(B_2(\pe))}.
$$

If $s(\pe)\ge\frac{1}{2}$, the same estimate follows from \eqref{JTincoordPsi}, \eqref{est_EDO_Jacobi} and the fact that $\alpha$ is bounded and smooth outside a neighbourhood of the origin. This completes the proof of the proposition.
\end{proof}

\section{The Jacobi-Toda equation} \label{JacToda Section}

In this section we provide a detailed proof of the Theorem \ref{th_Liouville}.  We proceed by studying solvability theory for the equation
\begin{equation}\label{JacTodaSection3}
\delta J_\Sigma {\tt h}-2 a_\star e^{-\sqrt{2}{\tt h}}=0,
\end{equation}
where $\delta>0$ is a small parameter and $a_{\star}>0$ is a constant. We also recall that $J_{\Sigma}$ is the Jacobi operator of $\Sigma$ described in \eqref{eq_Jacobi_lin},  \eqref{Laplace_Sigma_O(m)O(n)_invariant} and \eqref{JTincoordPsi}.


\medskip
In what follows we will use the following notation. For a function ${\tt v}$ defined on $\Sigma$, set
\begin{equation}\label{errorJacTodaSection3}
E_\delta({\tt v}):=\delta J_\Sigma {\tt v}-  2a_\star e^{-\sqrt{2}{\tt v}}
\end{equation}
and also denote
\begin{equation}\label{JacTodaQuadracticerror}
Q(t):=e^{-\sqrt{2}t}-1+\sqrt{2}t \quad \hbox{for}\quad t\in \R.
\end{equation}

\medskip
To solve \eqref{JacTodaSection3}, we look for a $O(m)\times O(n)-$invariant solution ${\tt h}$ having the form
$$
{\tt h}={\tt v}+{\tt q} \quad \hbox{in} \quad \Sigma,
$$
where ${\tt v}$ is an approximate solution and ${\tt q}$ is a small correction to get a  genuine solution of \eqref{JacTodaSection3}.

\medskip
A direct calculation shows that \eqref{JacTodaSection3} becomes
\begin{equation}
\label{eq_JT}
\delta J_\Sigma {\tt q}+2\sqrt{2}a_\star e^{-\sqrt{2}{\tt v}}{\tt q}=-E_\delta({\tt v})+2a_\star e^{-\sqrt{2}{\tt v}}Q({\tt q}) \quad \hbox{in} \quad \Sigma.
\end{equation}

The strategy consists in selecting ${\tt v}$ as accurately as possible so that the term $E_{\delta}({\tt v})$ is small for $\delta>0$ small, in some suitable topology that allows us to study solvability theory for the linear operator
$$
L_{\delta}({\tt q}):=\delta J_{\Sigma}{\tt q} + 2\sqrt{2}a_{\star} e^{-\sqrt{2}{\tt v}}{\tt q}.
$$

\subsection{The approximate solution} Let us now choose the approximate solution ${\tt v}$. In this part, we will make extensive use of the {\it Lambert function} $W:[0,\infty)\to \R$ defined implicitly as the solution of the algebraic equation
$$
W(z)e^{W(z)}=z 
$$
for any given $z\geq 0$. It is well known that 
\begin{equation}\label{asympLambertfnzeroinfty}
W(z)= \left\{
\begin{aligned}
z-z^2+ \mathcal{O}(z^4), \quad \hbox{as} \quad z\to 0^+,\\
\log(z)- \log \big(\log (z)\big)+ \mathcal{O}\bigg(\frac{\log \big(\log (z)\big)}{\log (z)}\bigg), \quad \hbox{as} \quad z\to \infty
\end{aligned}
\right.
\end{equation}
and these relations can be differentiated. This is the essence of the following lemma. 

\medskip
\begin{lemma}\label{propertieslambert}
The function $W\in C^{\infty}[0,\infty)$ and satisfies that for any $i\in \mathbb{N}$, there exists a constant $C_i>0$ such that for any $z\geq 0$,
\begin{equation}
\label{est_derivatives_Lambert}
|W^{(i)}(z)|\le \frac{C_i}{(1+z)^{i}}.
\end{equation}
\end{lemma}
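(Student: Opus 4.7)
The plan is to prove smoothness via the implicit function theorem and then establish the derivative bounds by a direct induction that tracks the structure of $W^{(i)}$ as a rational function of $W(z)$ and $e^{-W(z)}$.

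For smoothness, consider the function $F(w,z) := we^w - z$ defined on $\R\times[0,\infty)$. The defining relation $W(z)e^{W(z)} = z$ is exactly $F(W(z),z) = 0$, and on $[0,\infty)$ we have $W(z)\geq 0$, so $\partial_w F(W(z),z) = (1+W(z))e^{W(z)} \geq 1 > 0$. Since $F$ is real-analytic and $\partial_w F \neq 0$ along the graph of $W$, the implicit function theorem gives $W\in C^\infty[0,\infty)$ (with one-sided smoothness at $0$).

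For the derivative bounds, I would first differentiate $W(z)e^{W(z)} = z$ to obtain
\begin{equation}\notag
W'(z) = \frac{1}{(1+W(z))e^{W(z)}} = \frac{W(z)}{z\,(1+W(z))}\qquad (z>0),
\end{equation}
where the second equality uses $e^{-W(z)} = W(z)/z$. Then I would prove by induction on $i\geq 1$ that there exists a rational function $R_i$ (regular on $[0,\infty)\times[0,\infty)$ away from the denominator $(1+W)^{q_i}$) such that
\begin{equation}\notag
W^{(i)}(z) = \frac{P_i(W(z))}{z^{\,i}(1+W(z))^{q_i}}\qquad \text{for } z>0,
\end{equation}
with $P_i$ a polynomial of degree at most $q_i - 1$. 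The inductive step amounts to differentiating this formula using $W'(z) = W(z)/(z(1+W(z)))$ and collecting terms; the key cancellation is that each differentiation produces an extra factor of $1/z$ and adjusts the polynomial in $W$ without increasing the ratio $\deg P_i/q_i$ past $1$.

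From the representation above, for $z\geq 1$ we have $W(z)\geq W(1) > 0$, so $P_i(W)/(1+W)^{q_i}$ is bounded uniformly in $z$, giving $|W^{(i)}(z)|\leq C/z^i \leq 2^i C/(1+z)^i$. On $[0,1]$ the function $W^{(i)}$ is continuous by smoothness, hence bounded, so trivially $|W^{(i)}(z)|\leq C_i \leq 2^i C_i/(1+z)^i$. Combining the two ranges yields the claimed estimate. The main (mild) obstacle is choosing the inductive ansatz so that the powers of $z$, $(1+W)$ and the degree of the numerator polynomial evolve consistently; once the correct template is identified, the induction is mechanical and the smoothness at $z=0$ handles the small-$z$ regime without additional work.
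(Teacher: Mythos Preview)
Your proposal is correct and follows essentially the same strategy as the paper: both differentiate the defining relation to get $W'(z)=W(z)/\bigl(z(1+W(z))\bigr)$ and then prove by induction that $W^{(i)}(z)=(-1)^{i+1}z^{-i}f_i(W(z))$ for a bounded rational function $f_i$ (your $P_i(W)/(1+W)^{q_i}$), handling small $z$ by smoothness. One minor correction: the correct inductive invariant is $\deg P_i\le q_i$, not $q_i-1$ (already $P_1(W)=W$ with $q_1=1$), which is precisely the bound you use anyway when you say the ratio $\deg P_i/q_i$ does not exceed~$1$.
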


\begin{proof}
We briefly sketch the proof of \eqref{est_derivatives_Lambert}. Set
\begin{equation}
f_1(z):=\frac{z}{1+z} \quad \hbox{for} \quad z \geq 0
\end{equation}
and observe that for any $l\in \mathbb{N}$,
$$
f^{(l)}_1(z)=\frac{(-1)^{l+1}}{(1+z)^{l+1}}.
$$

Define recursively for $i \geq 1$,
\begin{equation*}
f_{i+1}(z)=if_i(z)-\frac{z}{z+1}f'_i(z) \quad \hbox{for} \quad z\geq 0.
\end{equation*}

We claim that for any $i \geq 1$ and any $l\ge 0$, $f^{(l)}_i\in L^\infty(0,\infty)$, for some $c_i>0$,
\begin{equation}\label{fauxlambert}
f_i(z)=c_i z^{i}(1+o(1)),\qquad z\to 0^+
\end{equation}
and this relation can be differentiated. 

\medskip
This previous claim is obviously true for $i=1$. As for the case $i=2$, a direct computation yields that
$$
f_2(z)=\frac{z^2(z+2)}{(z+1)^3}\quad \hbox{for} \quad z\geq 0
$$
and the claim holds true also in this case.

\medskip
We proceed next by induction assuming that for some $c_i>0$, $f_i(z)=i^{(i-1)} z^i(1+o(1))$ as $z\to 0^+$. Then, as $z\to 0^+$,
$$
\begin{aligned}
f_{i+1}(z) &=ic_i z^i(1+o(1))-\frac{z}{1+z}(ic_i z^{i-1}(1+o(1)))\\
&=c_{i+1}z^{i+1}(1+o(1)),
\end{aligned}
$$
for some $c_{i+1}>0$. This completes the proof of the claim.

\medskip
Again, an argument by induction shows that
\begin{equation}
\label{W_differentiation}
W^{(i)}(z)=\frac{(-1)^{i+1}}{z^i}f_i\big(W(z)\big) \quad \hbox{for} \quad z\geq 0.
\end{equation}

Thus (\ref{est_derivatives_Lambert}) follows from \eqref{fauxlambert} and (\ref{W_differentiation}). This completes the proof of the lemma.
\end{proof}

\medskip
Next lemma states that we can choose an approximate solution as accurate as needed. 

\begin{lemma}\label{lemmaapproximateslnJacToda}
For any $\delta>0$ and for any $j\ge 0$, there exist $O(m)\times O(n)-$invariant functions ${\tt w}_0,\dots,{\tt w}_j$ defined on $\Sigma$ which are smooth and such that the function ${\tt v}_j$ defined by
\begin{equation}\label{recursiveApproxslnJacToda}
{\tt v}_j:={\tt w}_0+\dots+{\tt w}_j
\end{equation}
and written in the coordinate $s=s(\pe)$ as ${\tt v}_j(\pe)=v_j(s)$, satisfies the estimate
\begin{equation}
\label{est_approx_sol}
\left|v_j-\frac{1}{\sqrt{2}}(\log(s^2+2)+|\log\delta|)\right| \le \frac{C}{|\log\delta|^\frac{1}{2}}.
\end{equation}

Moreover, the error defined in  \eqref{errorJacTodaSection3} for ${\tt v}_j$ is given by
\begin{equation}
\label{err_j}
E_\delta({\tt v}_j)=\delta\Delta_\Sigma {\tt w}_j \quad \hbox{in} \quad \Sigma
\end{equation}
and in the coordinate $s=s(\pe)$, satisfies the estimate
\begin{equation}
\label{est_err_j}
|E_\delta({v}_j)|\le \frac{C_j\delta }{(1+s)^2(\log(s+2))^{\frac{j}{2}}|\log\delta|^{\frac{j}{2}}}
\end{equation}
for some constant $C_j>0$ depending only on $j$.
\end{lemma}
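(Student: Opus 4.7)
The plan is to construct ${\tt v}_j$ iteratively by freezing the Laplacian term at the previous iterate and solving the resulting algebraic equation explicitly via the Lambert $W$ function, so that (\ref{err_j}) holds by construction, while the estimates are read off from the asymptotics of $W$ combined with those of $|A_\Sigma|^2$.

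\textbf{Base case $j=0$.} Since $|A_\Sigma|^2$ is a smooth, strictly positive, $O(m)\times O(n)$-invariant function on $\Sigma$ (see (\ref{AAatzero})--(\ref{AAatinfinity})), I would set
$$
{\tt v}_0(\pe):=\frac{1}{\sqrt{2}}\,W\!\left(\frac{2\sqrt{2}\,a_\star}{\delta\,|A_\Sigma(\pe)|^2}\right).
$$
The defining identity $W(z)e^{W(z)}=z$ gives $\delta|A_\Sigma|^2 {\tt v}_0=2a_\star e^{-\sqrt{2}{\tt v}_0}$, so taking ${\tt w}_0:={\tt v}_0$ one has $E_\delta({\tt v}_0)=\delta\Delta_\Sigma {\tt w}_0$, which is (\ref{err_j}) at $j=0$. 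Using the large-$z$ asymptotics of $W$ from (\ref{asympLambertfnzeroinfty}) together with (\ref{betaasymptotics}), a direct computation yields $\sqrt{2}\,{\tt v}_0=\log(s^2+2)+|\log\delta|+O(\log|\log\delta|)$ uniformly in $s$, which implies (\ref{est_approx_sol}). For the bound on $\delta\Delta_\Sigma {\tt v}_0$, the chain rule combined with Lemma~\ref{propertieslambert} expresses $\partial_s {\tt v}_0$ and $\partial^2_s {\tt v}_0$ through derivatives of $|A_\Sigma|^2$ divided by powers of $1+z$; the factor $1/(1+z)\lesssim 1/|\log\delta|$ produces the required $|\log\delta|^{-1/2}$ gain, while (\ref{betaasymptotics})--(\ref{alphaasymptotics}) furnish the spatial decay $1/(1+s)^2$ demanded by (\ref{est_err_j}).

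\textbf{Inductive step.} Assume ${\tt v}_{j-1}$ satisfies (\ref{err_j}) and (\ref{est_err_j}). I would define ${\tt v}_j$ as the solution of the algebraic equation
$$
\delta|A_\Sigma|^2 {\tt v}_j+\delta\Delta_\Sigma {\tt v}_{j-1}=2a_\star e^{-\sqrt{2}{\tt v}_j},
$$
and set ${\tt w}_j:={\tt v}_j-{\tt v}_{j-1}$. Substituting $\tilde u:=\sqrt{2}\,{\tt v}_j+\sqrt{2}\,\Delta_\Sigma {\tt v}_{j-1}/|A_\Sigma|^2$ reduces it to
$$
\tilde u\,e^{\tilde u}=\frac{2\sqrt{2}\,a_\star}{\delta|A_\Sigma|^2}\exp\!\left(\frac{\sqrt{2}\,\Delta_\Sigma {\tt v}_{j-1}}{|A_\Sigma|^2}\right),
$$
so ${\tt v}_j$ is again given by a closed-form Lambert expression. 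The inductive control on $\Delta_\Sigma {\tt v}_{j-1}=\Delta_\Sigma {\tt v}_{j-2}+\Delta_\Sigma {\tt w}_{j-1}$ keeps the perturbation inside $W$ uniformly small, so ${\tt v}_j$ remains smooth, $O(m)\times O(n)$-invariant, and close to ${\tt v}_0$. By construction, $E_\delta({\tt v}_j)=\delta\Delta_\Sigma({\tt v}_j-{\tt v}_{j-1})=\delta\Delta_\Sigma {\tt w}_j$, giving (\ref{err_j}).

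\textbf{Size of ${\tt w}_j$ and main obstacle.} Linearising the algebraic equation around ${\tt v}_{j-1}$ and using the step $j-1$ identity $\delta|A_\Sigma|^2 {\tt v}_{j-1}=2a_\star e^{-\sqrt{2}{\tt v}_{j-1}}-\delta\Delta_\Sigma {\tt v}_{j-2}$, the correction satisfies at leading order
$$
{\tt w}_j\approx -\frac{\Delta_\Sigma {\tt w}_{j-1}}{|A_\Sigma|^2\,\bigl(1+\sqrt{2}\,{\tt v}_{j-1}\bigr)}.
$$
Since (\ref{est_approx_sol}) gives $1+\sqrt{2}\,{\tt v}_{j-1}\gtrsim\log(s^2+2)+|\log\delta|\gtrsim(\log(s+2)\,|\log\delta|)^{1/2}$, while the inductive hypothesis provides $|\Delta_\Sigma {\tt w}_{j-1}|\lesssim 1/[(1+s)^2(\log(s+2)\,|\log\delta|)^{(j-1)/2}]$, the desired bound $|\delta\Delta_\Sigma {\tt w}_j|\lesssim \delta/[(1+s)^2(\log(s+2)\,|\log\delta|)^{j/2}]$ follows after differentiating the closed-form Lambert expression and applying Lemma~\ref{propertieslambert} to handle $W^{(k)}$. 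The hard part will be rigorously propagating this $(\log(s+2)\,|\log\delta|)^{-1/2}$ gain per iteration uniformly in $s\in[0,\infty)$ and $\delta\in(0,\delta_*)$, which relies on the sharp decay of $W^{(k)}$ at infinity from (\ref{est_derivatives_Lambert}) together with a careful patching of the asymptotics of $|A_\Sigma|^2$ and of the radial weight $\alpha(s)$ between (\ref{betaasymptotics})--(\ref{alphaasymptotics}) near $s=0$ and at $s=\infty$.
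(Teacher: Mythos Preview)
Your proposal is correct and follows essentially the same approach as the paper. Both constructions freeze the Laplacian at the previous iterate and solve the remaining algebraic equation via the Lambert function, so that $E_\delta({\tt v}_j)=\delta\Delta_\Sigma {\tt w}_j$ by design; and both obtain the $(\log(s+2)\,|\log\delta|)^{-1/2}$ gain per step from the denominator $1+W\sim 1+\sqrt{2}\,{\tt v}_{j-1}\gtrsim (\log(s+2)\,|\log\delta|)^{1/2}$ combined with the AM--GM bound on $\log(s^2+2)+|\log\delta|$.

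The only organisational difference is that the paper parametrises the recursion through the increments ${\tt w}_{j+1}$ using auxiliary variables $a_j=\sqrt{2}\,{\tt w}_0\,e^{-\sqrt{2}({\tt w}_1+\cdots+{\tt w}_j)}$ and $b_j=\sqrt{2}\,|A_\Sigma|^{-2}\Delta_\Sigma {\tt w}_j$, together with the ready-made expansion $-a-b+W(ae^{a+b})=-b/a+O(a^{-2})$, whereas you parametrise through the full ${\tt v}_j$ and linearise around ${\tt v}_{j-1}$. One checks directly that the paper's relation (\ref{equivrecursidefwj}) is equivalent to your equation $\delta|A_\Sigma|^2{\tt v}_j+\delta\Delta_\Sigma{\tt v}_{j-1}=2a_\star e^{-\sqrt{2}{\tt v}_j}$, so the two schemes produce the same ${\tt v}_j$. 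The paper's $a_j,b_j$ bookkeeping has the slight advantage that the argument of $W$ at step $j$ involves only $\Delta_\Sigma{\tt w}_j$, which is already small by induction, whereas your argument $\exp(\sqrt{2}\,|A_\Sigma|^{-2}\Delta_\Sigma{\tt v}_{j-1})$ carries the full Laplacian; but since $|A_\Sigma|^{-2}\Delta_\Sigma{\tt v}_{j-1}$ is uniformly bounded this makes no material difference in the estimates.
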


%
%

\begin{proof}
We proceed recursively to find ${\tt w}_0,{\tt w}_1, \ldots,{\tt w}_j$. For $j=0$, choose ${\tt w}_0$ solving the algebraic equation
\begin{equation}\label{choicew0}
\delta |A_{\Sigma}|^2 {\tt w}_0 - 2a_{\star}e^{-\sqrt{2}{\tt w}_0}=0 \quad \hbox{in} \quad \Sigma.
\end{equation}

A direct calculation yields that 
$$
{\tt w}_0:=\frac{1}{\sqrt{2}}W\left(\frac{2\sqrt{2}a_\star}{\delta|A_\Sigma|^2}\right),
$$
where $W$ is the Lambert function. 

\medskip
Write $w_0(s)={\tt w}_0(\pe)$ for $s=s(\pe)$. If $\delta>0$ is small enough then, $\delta^{-1}|A_{\Sigma}|^{-2}$ is large and using the function $\beta(s)=|A_{\Sigma}(\pe)|^2$, we find from \eqref{asympLambertfnzeroinfty} that
\begin{equation}\label{asymptw0infty}
\begin{aligned}
w_0(s)=\frac{1}{\sqrt{2}}
\log\bigg(\frac{2\sqrt{2}a_\star}{\delta\beta(s)}\bigg)- \frac{1}{\sqrt{2}}\log \bigg(\log &\bigg(\frac{2\sqrt{2}a_\star}{\delta\beta(s)}\bigg)\bigg)\\
&+ \mathcal{O}\left(\frac{\log \bigg(\log (\delta^{-1}(s^2+2))\bigg)}{\log \bigg(\delta^{-1}(s^2+2)\bigg)}\right)
\end{aligned}
\end{equation}
for any $s\in \R$.

\medskip
From \eqref{est_derivatives_Lambert} and iterating the chain rule (see \cite{HMY}), we find that for any $i\in \mathbb{N}$, there exists $C_i>0$, depending only on $\Sigma$, such that
\begin{equation}
\label{est_derivatives_w0}
|\partial^{(i)}_s w_0|\le \frac{C_i}{(1+s)^i}.
\end{equation}

In particular, there exists $C>0$, independent of $\delta>0$, such that (abusing the notation)
\begin{equation}\label{Lapw0}
|\Delta_\Sigma w_0|\le\frac{C}{(1+s)^2},
\end{equation}
so that \eqref{err_j} holds true for $j=0$.

\medskip
Next, we choose ${\tt w}_1$ solving the algebraic equation
\begin{equation}\label{choicew_1}
|A_\Sigma|^{-2}\Delta_\Sigma {\tt w}_0+{\tt w}_1-{\tt w}_0(e^{-\sqrt{2}{\tt w}_1}-1)=0
\end{equation}
from where we find that
$$
{\tt w}_1:=-{\tt w}_0-|A_\Sigma|^{-2}\Delta_\Sigma {\tt w}_0+\frac{1}{\sqrt{2}}W\left(\frac{2\sqrt{2}a_\star}{\delta|A_\Sigma|^2}e^{\sqrt{2}|A_\Sigma|^{-2}\Delta_\Sigma {\tt w}_0}\right).
$$

Using  \eqref{choicew_1},
\begin{equation*}
\begin{aligned}
E_\delta({\tt v}_1) &=E_\delta({\tt w}_0)+\delta\Delta_\Sigma {\tt w}_1+\delta|A_\Sigma|^2 {\tt w}_1-\delta |A_\Sigma|^2 {\tt w}_0(e^{-\sqrt{2}{\tt w}_1}-1)\\
&=\delta \Delta_{\Sigma}{\tt w}_1,
\end{aligned}
\end{equation*}
so that \eqref{err_j} holds true for $j=1$.

\medskip
On the other hand, the asymptotic expansion
\begin{equation}
\label{est_W_large_a}
-a-b+W(ae^{a+b})=-\frac{b}{a}+O\left(\frac{1}{a^2}\right) \qquad \hbox{as}\quad a\to\infty
\end{equation}
holds uniformly for $b$ on any compact interval of $[0,\infty)$. Also, \eqref{Lapw0} yields that the term $|A_{\Sigma}|^{-2}\Delta_{\Sigma}{\tt w}_0$ is uniformly bounded in the hypersurface$\Sigma$ and in the parameter $\delta$.

\medskip

Therefore,  from \eqref{asymptw0infty} and \eqref{est_W_large_a}, there exists $C>0$, independent of $\delta>0$ such that for any $s\in \R$,
$$
|w_1(s)| \leq \frac{C}{\log(s+2)^{\frac{1}{2}}|\log(\delta)|^{\frac{1}{2}}},
$$
where we have written $w_1(s)={\tt w}_1(\pe)$ for $s=s(\pe)$.

\medskip
Next, we show how to proceed recursively to find ${\tt w}_j$. Write 
$$
a_0:=\sqrt{2}{\tt w}_0,\qquad b_0:=\sqrt{2}|A_\Sigma|^{-2}\Delta_\Sigma {\tt w}_0
$$
and for $j\ge 1$, define
\begin{equation}\label{recursivedefwj}
\left\{
\begin{aligned}
a_j&:=a_{j-1}e^{-\sqrt{2}{\tt w}_j}\\
b_j&:=\sqrt{2}|A_\Sigma|^{-2}\Delta_\Sigma {\tt w}_j\\
\sqrt{2}w_{j+1}&:=-a_j-b_j+W(a_j e^{a_j+b_j}).
\end{aligned}
\right.
\end{equation}

We remark that ${\tt w}_{j+1}$ solves the algebraic equation 
\begin{equation}\label{explicitwj}
b_j+\sqrt{2}w_{j+1}-a_j(e^{-\sqrt{2}w_{j+1}}-1)=0
\end{equation}
or equivalently,
\begin{equation}\label{equivrecursidefwj}
\delta\Delta_\Sigma {\tt w}_{j}+\delta|A_\Sigma|^2 {\tt w}_{j+1}-\delta|A_\Sigma|^2 {\tt w}_0 e^{-\sqrt{2}({\tt w}_1+\dots+{\tt w}_j)}(e^{-\sqrt{2}{\tt w}_{j+1}}-1)=0.
\end{equation}

\medskip

We notice also that\eqref{recursivedefwj} yields
\begin{equation*}
\begin{aligned}
{\tt w}_{j+1}=&-{\tt w}_0 e^{-\sqrt{2}({\tt w}_1+\dots+{\tt w}_j)}-|A_\Sigma|^{-2}\Delta_\Sigma {\tt w}_j\\
&+\frac{1}{\sqrt{2}}W\left(\sqrt{2}{\tt w}_0\exp(-\sqrt{2}({\tt w}_1+\dots+{\tt w}_j-{\tt w}_0 e^{-\sqrt{2}({\tt w}_1+\dots+{\tt w}_j)}-|A_\Sigma|^{-2}\Delta_\Sigma {\tt w}_j))\right).
\end{aligned}
\end{equation*}

Next, we prove that for any $j\geq 0$, \eqref{err_j} holds true. The cases $j=0$ and $j=1$ are already proven. 

\medskip
Proceeding inductively, for $j\ge 1$ we assume that $E_\delta({\tt v}_j)=\delta \Delta_\Sigma {\tt w}_j$. A direct calculation yields that
\begin{equation}
\begin{aligned}
E_\delta({\tt v}_{j+1})&=E_\delta({\tt v}_j)+\delta\Delta_\Sigma {\tt w}_{j+1}+\delta|A_\Sigma|^2 {\tt w}_{j+1}-2a_\star e^{-\sqrt{2}{\tt v}_j}(e^{-\sqrt{2}{\tt w}_{j+1}}-1)\\
&=\delta\Delta_\Sigma {\tt w}_{j+1}+\delta\Delta_\Sigma {\tt w}_{j}+\delta|A_\Sigma|^2 {\tt w}_{j+1}-\delta|A_\Sigma|^2 {\tt w}_0 e^{-\sqrt{2}({\tt w}_1+\dots+{\tt w}_j)}(e^{-\sqrt{2}{\tt w}_{j+1}}-1).
\end{aligned}
\end{equation}

From \eqref{equivrecursidefwj}, we conclude that $E_\delta({\tt w}_{j+1})=\delta \Delta_\Sigma {\tt w}_{j+1}$. This proves the inductive steps and concludes the proof of \eqref{err_j}.

\medskip
Write $w_j(s)={\tt w}_j(\pe)$ for $s=s(\pe)$ and for $j\in \mathbb{N}$. Next, we show that for any $j\in \mathbb{N}$ and any $i \in \mathbb{N}\cup \{0\}$, \begin{equation}
\label{est_derivative_wj}
|\partial^{(i)}_s w_j(s)|\le \frac{C}{(s+1)^i(\log(s+2))^{\frac{j}{2}}|\log\delta|^{\frac{j}{2}}} \quad \hbox{for any} \quad s \geq 0.
\end{equation}

\medskip

From \eqref{est_derivatives_w0}, estimate \eqref{est_derivative_wj} holds true for $j=0$ and for any $i\ge 1$. Next, assume that $w_1, \ldots, w_j$ satisfy \eqref{est_derivative_wj}. We  prove that \eqref{est_derivative_wj} holds true also for $j+1$.

\medskip
Differentiating the  third equation in \eqref{recursivedefwj} and using \eqref{W_differentiation},
\begin{equation}
\label{eq_first_der}
\begin{aligned}
\partial_s(\sqrt{2}w_{j+1})&=-\frac{a_j}{1+W(a_{j} e^{a_{j}+b_{j}})}\partial_s \left(\frac{b_{j}}{a_{j}}\right)+\frac{\partial_s a_{j}}{a_{j}}\frac{\sqrt{2}w_{j+1}}{1+W(a_{j} e^{a_{j}+b_{j}})}\\
&=-\frac{a_{j}}{1+W(a_{j} e^{a_{j}+b_{j}})}\left(\frac{\partial_s b_{j}}{a_{j}}-\frac{\partial_s a_{j}}{a_{j}^2}b_j\right)+\frac{\partial_s a_{j}}{a_{j}}\frac{\sqrt{2}w_{j+1}}{1+W(a_{j} e^{a_{j}+b_{j}})}
\end{aligned}
\end{equation}
for $j\ge 1$.

\medskip
Since \eqref{est_derivative_wj} holds true for $w_j$, for some constant $C>0$, depending only on $j$ and for any $s\geq 0$,
\begin{equation}\label{estimatebj}
|b_{j}|\le \frac{C}{\log(s+2)^{\frac{j}{2}}|\log\delta|^{\frac{j}{2}}}.
\end{equation}

On the other hand, since we are assuming that $w_1, \ldots, w_j$ satisfy \eqref{est_derivative_wj} and noticing that
$$
\begin{aligned}
a_j&=a_{j-2}e^{-\sqrt{2}(w_{j-1}+w_j)}\\
&=a_0e^{-\sqrt{2}(w_1+ \cdots + w_j)},
\end{aligned}
$$
then for some constant $c>0$, that is  independent of $\delta>0$ small, 
\begin{equation}\label{estimateaj}
a_{j}\ge c a_0.
\end{equation}

Putting together \eqref{estimatebj} and \eqref{estimateaj}, we find that for any $s\geq 0$,
\begin{equation}
\label{est_ba}
\left|\frac{b_{j}}{a_{j}}\right|\le \frac{C}{\log(s+2)^{\frac{j+1}{2}}|\log\delta|^{\frac{j+1}{2}}},
\end{equation}
where again $C>0$ is a constant independent of $\delta>0$.

\medskip
Using \eqref{est_W_large_a}, \eqref{recursivedefwj} and \eqref{est_ba}, for any $s\geq 0$,
\begin{equation}\label{EST0}
|w_{j+1}| \leq \frac{C}{\log(s+2)^{\frac{j+1}{2}}|\log\delta|^{\frac{j+1}{2}}}.
\end{equation}

As a by-product of the previous analysis, we also find from \eqref{recursiveApproxslnJacToda}and \eqref{asymptw0infty} that for any $j\ge 0$, \eqref{est_approx_sol} holds true.

\medskip

Using an induction procedure over $j$ and \eqref{recursivedefwj}, it can be proven that there exists a constant $C$ depending on $j$, but not on $\delta>0$ such that 
\begin{equation}\label{EST1}
\begin{aligned}
|\partial_s b_{j}| &\le C (s^{2}|\partial^3_s w_{j}|+s|\partial^2_s w_{j}|+|\partial_s w_{j}|)\\
&\le \frac{C}{(s+1)(\log(s+2))^{\frac{j}{2}}|\log\delta|^{\frac{j}{2}}}
\end{aligned}
\end{equation}
and
\begin{equation}\label{EST2}
\begin{aligned}
\left|\frac{\partial_s a_j}{a_j}\right|&=\left|\frac{\partial_s a_0}{a_0}-\sqrt{2}\partial_s(w_1+\dots+w_j)\right|\\
&\le \frac{C}{(s+1)(\log (s+2))^{\frac{1}{2}}|\log\delta|^{\frac{1}{2}}}.
\end{aligned}
\end{equation}

Putting together \eqref{eq_first_der}, \eqref{est_ba}, \eqref{EST0}, \eqref{EST1} and \eqref{EST2}, we find that \eqref{est_derivative_wj} holds true for $i=1$.

\medskip
We finish the proof of \eqref{est_derivative_wj} for any $i\geq 1$ by differentiating \eqref{eq_first_der}, using the result in \cite{HMY} and performing an inductive procedure over $i$,  with an arbitrary, but fixed $j$.

\medskip
Estimate \eqref{est_err_j} readily follows from \eqref{coeffifirstderatzero}, \eqref{coeffifirstderatinfty},  \eqref{est_derivative_wj} and the fact that the functions we are dealing with are even and smooth. This completes the proof of the lemma.
\end{proof}

\medskip

\subsection{The linearised Jacobi-Toda operator}

Let $j \in \mathbb{N}$ be fixed, to be specified later. Consider the function ${\tt v}_j({\pe})=v_j(s({\tt p}))$ for ${\tt p}\in \Sigma$, defined in \eqref{recursiveApproxslnJacToda}.

\medskip
In this part, we follow the conventions and notations from subsections \ref{subsec_EF_Jacobi}, \ref{subsec_Jacobi_fields} and \ref{subsec_Jacobi_lin}. We study solvability theory for the linear problem
\begin{equation}
\label{eq_lin_JT_Sigma}
\delta J_\Sigma {\tt q}+2\sqrt{2}a_\star e^{-\sqrt{2}{\tt v}_j}{\tt q}=\delta {\tt f} \quad \hbox{in}\quad\Sigma,
\end{equation}
where ${\tt f}:\Sigma\to\R$ is continuous and $O(m)\times O(n)$-invariant, i.e. ${\tt f}(\pe)=f(s)$ with $f:\R\to\R$ continuous and even.

The topology of ${\tt f}$ is motivated by the behavior of the error $E_{\delta}({\tt v}_j)$ described in \eqref{est_err_j}. Thus, we introduce the following functional analytic setting.

\medskip
For $\beta \in (0,1)$, $\mu>0$ and $\varrho\in\R$, we introduce the norms
\begin{equation}\label{decayingLinftynorm}
\|{\tt f}\|_{*,\mu,\varrho}:=\sup_{\pe\in\Sigma}(s(\pe)^2+2)^{\frac{\mu}{2}} \log(s(\pe)+2)^\varrho\|{\tt f}\|_{L^\infty(B_1(\pe))}.
\end{equation}

\begin{equation}\label{decayingHoldernorm}
\|{\tt f}\|_{\mathcal{D}^{0,\beta}_{\mu,\varrho}(\Sigma)}:=\sup_{\pe\in\Sigma}(s(\pe)^2+2)^{\frac{\mu}{2}}\log(s(\pe)+2)^\varrho\|{\tt f}\|_{C^{0,\beta}(B_1(\pe))}
\end{equation}
and we consider the Banach space $\mathcal{D}^{0,\beta}_{\mu,\varrho}(\Sigma)$ defined as the space of $O(m)\times O(n)-$invariant functions ${\tt f}\in C^{0,\beta}_{loc}(\Sigma)$ for which the norm
\begin{equation}\label{functionspacerighthandsideJT}
\|{\tt f}\|_{\mathcal{D}^{0,\beta}_{\mu,\varrho}(\Sigma)}< \infty.
\end{equation}

We also consider the Banach space $\mathcal{D}^{2,\beta}_{\mu,\varrho}(\Sigma)$ defined as the space of $O(m)\times O(n)-$invariant functions ${\tt q}\in C^{2,\beta}_{loc}(\Sigma)$ for which the norm
\begin{equation}\label{eq_norm_q(s)}
\|{\tt q}\|_{\mathcal{D}^{2,\beta}_{\mu,\varrho}(\Sigma)}:=\sigma^{-1}\|D^2_\Sigma{\tt q}\|_{\mathcal{D}^{0,\beta}_{\mu+2,\varrho-1}(\Sigma)}+\|\nabla_{\Sigma}{\tt q}\|_{*,\mu +1,\varrho}+\|{\tt q}\|_{*,\mu,\varrho}<\infty.
\end{equation}

\medskip
The following proposition shows that \eqref{eq_lin_JT_Sigma} has an inverse in this functional analytic setting and also it accounts for its size respect to $\sigma$, for $\sigma$ large.

\begin{proposition}\label{propinverselinearJT}
Let $\beta \in (0,1)$ and $\varrho>\frac{3}{4}$. There exist $\sigma_0>0$ and a constant $C>0$ such that for any $\sigma>\sigma_0$ and any ${\tt f}\in \mathcal{D}^{0,\beta}_{2,\varrho}(\Sigma)$, equation \eqref{eq_lin_JT_Sigma} has a solution ${\tt q}:={\tt F}_1({\tt f})\in\mathcal{D}^{2,\beta}_{2,\varrho-\frac{1}{2}}(\Sigma)$ satisfying the estimate
\begin{equation}\label{prop_eq_lin_JT_Sigma}
\|{\tt q}\|_{\mathcal{D}^{2,\beta}_{0,\varrho-\frac{1}{2}}(\Sigma)}\le  c\red{\sigma^{\frac{3}{4}}\log(\sigma)}\|{\tt f}\|_{\mathcal{D}^{0,\beta}_{2,\varrho}(\Sigma)}.
\end{equation}
\end{proposition}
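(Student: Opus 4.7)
Since both the equation and the approximate solution ${\tt v}_j$ are $O(m)\times O(n)$-invariant, I would first reduce \eqref{eq_lin_JT_Sigma} to the one-dimensional ODE in the arclength coordinate $s$:
$$
\delta\bigl(\partial_s^2 q + \alpha(s)\partial_s q + \beta(s) q\bigr) + 2\sqrt{2}\,a_\star e^{-\sqrt{2} v_j(s)} q = \delta f(s) \quad \text{in } \R,
$$
posed on the even functions with $\partial_s q(0)=0$. Then I would apply the Emden--Fowler change of variables $s=e^t$, $q(s)=p(t)u(t)$, with $p$ as in \eqref{Cancellingfirstderterm1}, exactly as in Section~2.5. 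This kills the first-order term and turns the equation into
$$
\partial_t^2 u + V(t)\,u + W_\delta(t)\,u = \tilde f(t), \qquad W_\delta(t):= \frac{2\sqrt{2}\,a_\star}{\delta}\,e^{-\sqrt{2}v_j(e^t)} e^{2t},
$$
with $\tilde f = e^{2t}p^{-1}f$. Using the asymptotic estimate \eqref{est_approx_sol} for $v_j$, one checks that $W_\delta(t) \approx 2\sqrt{2} a_\star e^{2t}/(e^{2t}+2)$ is uniformly bounded, tends to $0$ as $t\to-\infty$ and to a positive constant as $t\to+\infty$; hence $V+W_\delta$ has the same qualitative asymptotic behaviour \eqref{Vasymptotics} as $V$, with the limiting value at $+\infty$ shifted by a fixed positive constant.

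Next, I would build a fundamental pair $u_\pm^\delta(t)$ of solutions to the homogeneous equation $\partial_t^2 u +(V+W_\delta)u=0$, by perturbing the Jacobi fields $u_\pm$ of Proposition \ref{prop_Jacobi_Sigma}. Because $W_\delta$ is a bounded, exponentially localized correction to $V$ away from $t\to-\infty$ and a constant shift at $+\infty$, the exponents $\lambda=(n-2)/2$ at $-\infty$ and $\Lambda=\sqrt{((N-2)/2)^2-(N-1)}$ at $+\infty$ persist (up to a modified $\Lambda_\delta$), and one can set up a contraction/Picard iteration for the pair analogous to \eqref{asymptoticsu+}--\eqref{asymptoticsderivu-}. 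With a Wronskian constant in hand, a solution is produced by variation of parameters in the form \eqref{SlnODEJac}:
$$
u(t) = u_+^\delta(t)\!\int_{-\infty}^t \!\!u_-^\delta W^{-1}\tilde f\,d\tau - u_-^\delta(t)\!\int_{-\infty}^t\! u_+^\delta W^{-1}\tilde f\,d\tau.
$$

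The core technical step is then the $L^\infty$ estimate for $u$ and $\partial_t u$ in the weight dictated by the norm \eqref{decayingLinftynorm}. The weight $(s^2+2)^{\mu/2}\log(s+2)^{\varrho}$ in the variable $s$ translates, in $t$, to $e^{\mu t}|t|^{\varrho}$ for $t\to+\infty$. The price $\sigma^{3/4}\log\sigma$ in \eqref{prop_eq_lin_JT_Sigma} is the crucial quantity to track: it should arise from the tail integrals of $u_\pm^\delta W^{-1}\tilde f$ on the transition region $\{ t\lesssim \tfrac12|\log\delta|\}$ where the approximate solution passes from ``small'' to ``large'' regime, and where the weight $\log(s+2)^{\varrho-1/2}$ on the output is weaker than $\log(s+2)^\varrho$ on the input by a factor $|\log\delta|^{1/2}$. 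An optimized splitting of the variation-of-parameters integrals at the threshold $t_\star \sim \tfrac12 |\log\delta|$, combined with the restriction $\varrho>3/4$ that ensures the relevant integrals converge with the right power of $\sigma$, should give the claimed $\sigma^{3/4}\log\sigma$ factor.

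Finally, I would pull back from $u$ to $q$ via $q(s)=p(\log s)u(\log s)$, verify that $q$ is smooth and even at $s=0$ (as in the proof of Proposition \ref{prop_Jacobi_lin}), and upgrade the $L^\infty$-with-weight bound to the full Hölder norm \eqref{eq_norm_q(s)} by standard Schauder theory applied on unit balls $B_1(\pe)\subset\Sigma$ -- using \eqref{eq_lin_JT_Sigma} itself to rewrite $D^2_\Sigma {\tt q}$ in terms of ${\tt q}$, $\nabla_\Sigma {\tt q}$ and ${\tt f}$, absorbing the $\sigma^{-1}$ weight for the second derivative. The main obstacle will be the sharp tracking of constants in the transition regime to land precisely on $\sigma^{3/4}\log\sigma$; heuristically any softer bound like $\sigma$ would be easy, but the exponent $3/4$ is tight and forces the requirement $\varrho>3/4$.
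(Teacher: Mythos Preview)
Your reduction to the ODE and the Emden--Fowler change of variables are correct, but the heart of your plan rests on a misreading of the potential. You claim that the extra term
\[
W_\delta(t)=\frac{2\sqrt{2}\,a_\star}{\delta}\,e^{-\sqrt{2}v_j(e^t)}e^{2t}
\]
is uniformly bounded and tends to a positive constant as $t\to+\infty$. It is not. From \eqref{asymptvjinfty} one has $e^{-\sqrt{2}v_j}\sim \frac{\delta\beta(s)}{2a_\star}\bigl(\sigma+\log\beta(s)^{-1}\bigr)$, so that after multiplying by $\tilde\beta(t)=\beta(e^t)e^{2t}$ the full potential (see \eqref{deffirstwegithJT}--\eqref{Qasymptotics}) satisfies
\[
Q(t)\sim
\begin{cases}
-\lambda^2+c_0\,\sigma\,e^{2t}, & t<T_0,\\
(N-1)(\sigma+2t), & t>T_1.
\end{cases}
\]
Thus $Q$ is of order $\sigma$ on any fixed interval and grows \emph{linearly} at $+\infty$; it is not a bounded perturbation of the Jacobi potential $V$. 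In particular the homogeneous solutions are oscillatory for $t$ beyond the turning point $t_\sigma\approx-\tfrac12\log\sigma$, and a perturbation of the exponential Jacobi fields $u_\pm$ cannot produce a fundamental system: the Picard iteration you propose would diverge, since the ``perturbation'' has size $\sigma$, not $o(1)$.

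The paper's proof instead splits $\R$ into four regimes according to the sign and size of $Q$: a region $I_0=(-\infty,t_\sigma)$ where $Q<0$ (handled by a Gronwall argument perturbing $e^{\pm\lambda t}$), two short transition intervals $I_1,I_2$ of total length $\sim\tfrac12\log\sigma$ (handled by a Lyapunov energy $H(t)=\tfrac{1}{2Q}(\partial_t v)^2+\tfrac12 v^2$, which is monotone where $\partial_tQ>0$ and costs a factor $\sqrt{\sigma}\log\sigma$), and the region $I_3=(T_0,\infty)$ where $Q$ grows linearly. On $I_3$ the key device is the Liouville--Green/WKB change $v=Q^{-1/4}{\tt w}(\xi)$, $\xi=\int Q^{1/2}$, which converts the equation into $\partial_\xi^2{\tt w}+(1+{\tt V}){\tt w}=0$ with ${\tt V}\in L^1$; this is what provides the extra $\sigma^{1/4}$ via the amplitude factor $Q^{-1/4}$. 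The product $\sqrt{\sigma}\log\sigma\cdot\sigma^{1/4}=\sigma^{3/4}\log\sigma$ is exactly the constant in \eqref{prop_eq_lin_JT_Sigma}. Your proposal misses the WKB step entirely, and without it neither the oscillatory regime nor the sharp exponent $3/4$ can be obtained.
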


We devote the rest of the section to the proof of Proposition \ref{propinverselinearJT}, which will be carried out in several subsections.

\subsection{The Emden-Fowler change of variables}\label{subs_EF}
As in subsection \ref{subsec_EF_Jacobi}, we first write \eqref{eq_lin_JT_Sigma} in suitable coordinates. Since in our framework we deal with symmetric functions, we write ${\tt f}(\pe)=f(s)$ and ${\tt q}(\pe)=q(s)$ so that \eqref{eq_lin_JT_Sigma} reads as 
\begin{equation}
\label{eq_g_s}
\partial^2_s q+\alpha(s)\partial_s q+\beta(s)(1+\sqrt{2}w_0 e^{-\sqrt{2}(v_j-w_0)})q=f \quad \hbox{in} \quad \R,
\end{equation}
where we recall that $v_j$ is defined and estimated in Lemma \ref{lemmaapproximateslnJacToda} and also
$$
\beta(s):=|A_\Sigma|^2,\qquad\alpha(s):=(m-1)\frac{a'}{a}+(n-1)\frac{b'}{b} \quad \hbox{for} \quad  s\in \R.
$$

Next, set $\sigma:= \log\bigg(\frac{2\sqrt{2}a_\star}{\delta}\bigg)$. From \eqref{asympLambertfnzeroinfty},\eqref{est_approx_sol} and \eqref{asymptw0infty} in Lemma \ref{lemmaapproximateslnJacToda}  we find that
\begin{equation}\label{asymptvjinfty}
v_j(s)=
\frac{1}{\sqrt{2}}\sigma+ \frac{1}{\sqrt{2}}\log\big(\beta^{-1}(s)\big)- \frac{1}{\sqrt{2}}\log \bigg(\sigma + \log\big(\beta^{-1}(s)\big)\bigg)+  o\left(1\right)
\end{equation}
for any $s\in \R$ and as a corollary of the proof of Lemma \ref{lemmaapproximateslnJacToda}, this identity can be differentiated in $s$.

\medskip
We take advantage of the symmetric setting so that we study equation \eqref{eq_g_s} for $s\geq 0$ imposing the condition that $\partial_s q(0)=0$. 

It is also convenient to work again with the Emden-Fowler change of variable $s=e^t$ for $t\in \R$. Recall from \eqref{weightsJTEmdenFowler} that 
\begin{equation}\label{weightsJTEmdenFowlersec3}
\tilde{\alpha}(t):=\alpha(e^t)e^t-1, \qquad \tilde{\beta}(t):= \beta(e^t)e^{2t}
\end{equation}
and
\begin{equation}\label{Cancellingfirstderterm11}
p(t):=\exp \bigg ({-\int_0^t \frac{\tilde{\alpha}(\tau)}{2}d\tau}\bigg) \quad \hbox{for} \quad t\in\R,
\end{equation}
with $p(t)$ solving the ODE
\begin{equation}\label{Cancellingfirstderterm2}
2\frac{\partial_t p}{p}+\tilde{\alpha}(t)=0.
\end{equation}

Next, we denote
\begin{equation}\label{deffirstwegithJT}
\tilde{w}(t):=1+\sqrt{2}w_0(e^t) e^{-\sqrt{2}(v_j(e^t)-w_0(e^t))} \quad \hbox{for} \quad t\in\R
\end{equation}
and 
\begin{equation}\label{defMainweightJT}
Q:=\frac{\partial^2_t p}{p}+\tilde{\alpha}\frac{\partial_t p}{p}+\tilde{w}\tilde{\beta} \quad \hbox{in}\quad  \R
\end{equation}

Writing $q(s)=p(t)u(t)$ for $t=\log(s)$ and $s>0$, equation \eqref{eq_g_s} becomes
\begin{equation}
\label{eq_g_t}
\partial^2_t u+Q(t)u=\tilde{f} \quad \hbox{in} \quad \R,
\end{equation}
where as in \eqref{righthandandpotential},
\begin{equation}\label{defftilde}
\tilde{f}(t):=\frac{e^{2t}}{p(t)}f(e^t).
\end{equation}

Next, we solve equation \eqref{eq_g_t} with the help of the variations of parameters formula. Thus, we must study the solutions of the homogeneous equation 
\begin{equation}\label{HomogeneousLinearJTEF}
\partial^2_{t}v+Q(t)v=0 \quad \hbox{in} \quad \R.
\end{equation}

The first step in the study of solutions of \eqref{HomogeneousLinearJTEF} consists in analysing the asymptotic behavior of the potential $Q(t)$ in $\R$.

\medskip
In what follows we will make extensive use of the asymptotic behaviors of $\tilde{\alpha}(t)$, $\tilde{\beta}(t)$ and $p(t)$ in \eqref{alphatildeasymptotics}, \eqref{betatildeasymptotics} and \eqref{asymptoticsp} respectively. We also recall that these asymptotic behaviors can be differentiated in the variable $t$.

\medskip
On the other hand, from \eqref{asymptvjinfty},
\begin{equation}\label{wtildezeroasymptotics}
\tilde{w}(t)=\left\{
\begin{aligned}
\sigma + \mathcal{O}(1), & \quad \hbox{for}\quad t\leq T_0\\
\sigma + 2 t + \mathcal{O}(\ln(t)), & \quad \hbox{for}\quad t\geq T_1
\end{aligned}
\right.
\end{equation}
for $T_0<0<T_1$ as in \eqref{T0T1}, both indepedent of $\sigma$. Also, relations in \eqref{wtildezeroasymptotics} can be differentiated in $t$.

Putting together  \eqref{Cancellingfirstderterm1}, \eqref{alphatildeasymptotics}, \eqref{betatildeasymptotics}, \eqref{asymptoticsp} and \eqref{wtildezeroasymptotics}, we find that
\begin{equation}\label{Qasymptotics}
Q(t)=\left\{
\begin{aligned}
- \frac{(n-2)^2}{4} + c_0\sigma e^{2t}  + \mathcal{O}(e^{2t}) + \mathcal{O}(\sigma e^{4t}), & \quad \hbox{for}\quad t\leq T_0\\
(N-1)[\sigma + 2t] + \mathcal{O}(\ln(t))+\mathcal{O}(\sigma e^{-t})+ \mathcal{O}(te^{-t}), & \quad \hbox{for}\quad t\geq T_1
\end{aligned}
\right.
\end{equation}
and this relations can be differentiated $t$.

\medskip
From \eqref{Qasymptotics}, notice that the potential $Q(t)$ has three different qualitative regimes regarding its sign. We describe next the solutions $v(t),\tilde{v}(t)$ of \eqref{HomogeneousLinearJTEF} in each of these regions.

\medskip
Recall from \eqref{betaasymptotics} and  \eqref{capitallowercaselambda} that
$$
c_0>0 \quad \hbox{and} \quad  \lambda:= \frac{n-2}{2}
$$
and observe from \eqref{Qasymptotics} that for $\sigma$ and $|T_0|$ large, but $T_0$ still independent of $\sigma$,   
$$
\begin{aligned}
Q(t)=&-\lambda^2 + c_0\sigma e^{2t}\Bigl(1 + \mathcal{O}(\sigma^{-1}) + \mathcal{O}(e^{2t})\Bigr)\\
\partial_t Q(t)= &2c_0\sigma e^{2t}\Bigl(1 + \mathcal{O}(\sigma^{-1}) + \mathcal{O}(e^{2t})\Bigr)
\end{aligned}
$$
for $t<T_0$. Therefore, we may assume that for some constants $C_1,C_2,c_1,c_2>0$,
\begin{equation}\label{summaryregimesQ}
\left\{
\begin{aligned}
-\lambda^2 + c_0\sigma e^{2t} & \le Q(t) \le -\lambda^2 + 2c_0 \sigma e^{2t},&\qquad \partial_t Q>0, \qquad  t\le T_0\\
0<  c_1\sigma&\le Q(t) \le   C_1\sigma,& \qquad T_0< t\le T_1\\
c_2(\sigma+t) & \leq Q(t) \leq C_2(\sigma+t), \qquad & t>T_1.
\end{aligned}
\right.
\end{equation}

Notice finally from \eqref{defftilde}, that $\tilde{f}(t)$, the right side in \eqref{eq_g_t}, satisfies the estimate 
\begin{equation}\label{estimateftildeJT}
|\tilde{f}(t)|\leq C\left\{
\begin{aligned}
e^{(\frac{n}{2}+1)t}\|{\tt f}\|_{\mathcal{D}^{0,\beta}_{2,\varrho}(\Sigma)} &\quad \hbox{for} \quad  t\leq T_0,\\
e^{\frac{N-2}{2}t}t^{-\varrho}\|{\tt f}\|_{\mathcal{D}^{0,\beta}_{2,\varrho}(\Sigma)} &\quad \hbox{for} \quad  t \geq T_1.
\end{aligned}
\right.
\end{equation}

\subsection{Estimates for negative potential}\label{subs_est_neg_pot}

Write 
\begin{equation}
\label{hom_eq}
Q(t):=-(\lambda^2+q(t)) \quad \hbox{for} \quad t\in \R.
\end{equation}

Next, let $M>0$ be fixed, to be specified later. Set $t_{\sigma}=-\frac{1}{2}\log\bigl(\sigma\bigr) - M$ and consider the interval $I_0:=(-\infty, t_{\sigma})$. Notice from \eqref{summaryregimesQ}that 
\begin{equation}\label{negativepotentialsec3}
-\lambda^2 \leq Q(t) \leq  -\lambda^2 + 2c_0e^{-2M} \quad \hbox{in} \quad I_0.
\end{equation}

Let $v_0(t)$ be the solution of \eqref{HomogeneousLinearJTEF} satisfying the conditions
$$
v(t_{\sigma})=e^{\lambda M}\sigma^{\frac{\lambda}{2}} \quad \hbox{and} \quad \partial_t v(t_\sigma)=-\lambda e^{\lambda M}\sigma^{\frac{\lambda}{2}}.
$$

We proceed next, by writing $v_0(t)=e^{-\lambda t}x(t)$. and by setting $y(t):=e^{-2\lambda t} x'(t)$, we find that $x(t)$ and $y(t)$ must solve the system
\begin{equation}
\label{system_xy}
\begin{cases}
x'=e^{2\lambda t}y\\
y'=q(t)e^{-2\lambda t}x
\end{cases}
\end{equation}
with the initial conditions for $(x,y)$,
\begin{equation}
\label{initial_values}
x(t_\sigma)=1\quad  \hbox{and} \quad y(t_\sigma)=0.
\end{equation}

Integrating \eqref{system_xy}, we find that for $t\in I_0$,
\begin{equation}
\label{int_formulas_xy}
x(t)=1-\int_t^{t_\sigma}e^{2\lambda\tau}y(\tau)d\tau, \qquad y(t)=-\int_t^{t_\sigma}q(\tau)e^{-2\lambda\tau}x(\tau)d\tau
\end{equation}
and fom Fubini's Theorem and since $n>2$,
\begin{equation}\notag
x(t)=1+\int_t^{t_\sigma} x(\zeta)q(\zeta)\frac{1-e^{-2\lambda(\zeta-t)}}{2\lambda}d\zeta.
\end{equation}

We estimate for $t\leq t_{\sigma}$,
$$
\begin{aligned}
|x(t)| & \leq 1 + \int_{t}^{t_{\sigma}} |x(\zeta)||q(\zeta)| \frac{1-e^{-2\lambda(\zeta-t)}}{2\lambda}d\zeta,\\
&\leq 1 + \int_{t}^{t_{\sigma}} |x(\zeta)|\frac{|q(\zeta)|}{2\lambda}d\zeta.
\end{aligned}
$$

Using the Gronwall inequality and \eqref{summaryregimesQ},
\begin{equation*}
\begin{aligned}
|x(t)|&\le\exp\bigg(\int_t^{t_\sigma}\frac{|q(\zeta)|}{2\lambda}d\zeta\bigg)\le\exp\bigg(c\sigma\int_{-\infty}^{t_\sigma}e^{2\zeta}\bigg)\\
&\le \exp\bigg(\frac{c}{2}e^{-2M}\bigg)<\infty
\end{aligned}
\end{equation*}
for $t\leq t_{\sigma}$. We conclude that $x\in L^{\infty}(I_0)$ and by choosing $M$ sufficiently large, but independent of $\sigma$, 
$$
\|x\|_{L^{\infty}(I_0)}\leq 2,
$$ 
which is uniformly bounded in $\sigma$.

\medskip

Next, we use \eqref{summaryregimesQ} and the equation for $y(t)$ in \eqref{int_formulas_xy} to estimate
\begin{equation}\notag
|y(t)|\le c\sigma \int_t^{t_\sigma} e^{2(1-\lambda)\tau}d\tau\le
\begin{cases}
c\sigma\frac{e^{2(1-\lambda)t_{\sigma}}-e^{2(1-\lambda)t}}{2(1-\lambda)}, \qquad &\lambda\ne 1\\
c\sigma (t_\sigma-t), \qquad&\lambda=1.
\end{cases}
\end{equation}

Using again \eqref{summaryregimesQ} and the first equation in system \eqref{system_xy}, we get the estimate 
\begin{equation}
\label{est_x'}
|x'(t)|\le
c\sigma e^{2t_{\sigma}}
\left\{ 
\begin{aligned}
&e^{2\lambda (t-t_{\sigma})}-e^{2t}, \qquad &\lambda &\ne 1\\
&e^{2(t-t_{\sigma})}(t_\sigma-t), \qquad &\lambda &=1, \,
\end{aligned}
\right.
\end{equation}
for $t \in I_0$. Since, 
\begin{equation*}
|x(t)-1|=|x(t)-x(t_\sigma)|\le\int^{t_\sigma}_t |x'(\tau)|d\tau\le \int_{-\infty}^{t_{\sigma}} |x'(\tau)|d\tau,
\end{equation*}
by integrating \eqref{est_x'}, using integration by parts for the case $\lambda=1$ and taking $c$ larger if necessary but fixed, we get that for any $t<t_{\sigma}$,
\begin{equation}\label{estimatex(t)}
|x(t)-1|\leq c\sigma e^{2t_{\sigma}}
\left\{ 
\begin{aligned}
&e^{2\lambda (t-t_{\sigma})}-e^{2t}, \qquad &\lambda &\ne 1\\
&e^{2(t-t_{\sigma})}(t_\sigma-t), \qquad &\lambda &=1. \,
\end{aligned}
\right.
\end{equation}

By an elementary maximisation argument, it follows that
\begin{equation}
\label{est_x'1}
|x(t)-1|\le c\sigma e^{2t_{\sigma}} \leq ce^{-2M} \quad \hbox{for} \quad t\in I_0.
\end{equation}

Now, we fix $M$ large enough so that $x\geq\frac{1}{2}$ in $I_0$. Using \eqref{system_xy} and the fact that $q(t)<0$, we conclude that $y'<0$. 

\medskip
Proceeding in a similar fashion and since $y(t_\sigma)=0$, we have $y>0$ in $(-\infty, t_{\sigma})$ and therefore $x'>0$.

\medskip
Recall that $v_0=e^{-\lambda t}x$, so that $\partial_t v_0=(-\lambda x+x')e^{-\lambda t}$ and observe that expressions \eqref{est_x'}, \eqref{est_x'1} and \eqref{estimatex(t)} yield the inequalities
\begin{equation}
\label{est_v}
(1-c e^{-2M}) e^{-\lambda t}\le v_0(t)\le(1+ce^{-2M}) e^{-\lambda t}
\end{equation}
and
\begin{equation}
\label{est_v'}
-\lambda(1+ce^{-2M}) e^{-\lambda t}\le \partial_t v_0(t)\le -\lambda (1-ce^{-2M}) e^{-\lambda t}
\end{equation}
for any $t\in I_0$.

\medskip

We select the second linearly independent solution $\tilde{v}_0(t)$ of \eqref{HomogeneousLinearJTEF} by setting
\begin{equation}\label{secslnHJTEF}
\tilde{v}_0(t):=v_0(t)\int_{-\infty}^t v_0^{-2}(\tau)d\tau \quad \hbox{for} \quad t\in (-\infty,t_{\infty})
\end{equation}
and directly from \eqref{secslnHJTEF} we find that
\begin{equation}
\label{est_tildev}
\begin{aligned}
&\frac{1}{2\lambda}(1-ce^{-2M})e^{\lambda t}\le \tilde{v}_0(t)\le \frac{1}{2\lambda}(1+ce^{-2M})e^{\lambda t},\\
&\frac{1}{2}(1-ce^{-2M})e^{\lambda t}\le \partial_t\tilde{v}_0(t)\le \frac{1}{2}(1+ce^{-2M})e^{\lambda t}
\end{aligned}
\end{equation}
for $t\in I_0$. 

\medskip
Observe that 
$$
\begin{aligned}
\tilde{v}_0(t_{\sigma})&=\frac{1}{2\lambda}\sigma^{-\frac{\lambda}{2}}\big(1 + \mathcal{O}(e^{-2M})\big)\\\partial_t \tilde{v}_0(t_\sigma)&=\frac{1}{2}\sigma^{-\frac{\lambda}{2}}\big(1 + \mathcal{O}(e^{-2M})\big)
\end{aligned}
$$
so that the Wronskian of $v_0(t)$ and $\tilde{v}_(t)$ is given by
\begin{eqnarray}\label{WronskianEF}
\begin{aligned}
W(t)&=v_0(t_\sigma)\partial_t\tilde{v}_0(t_\sigma)-\tilde{v}_0(t_\sigma)\partial_t v_0(t_\sigma)\\
&=\sigma^{\frac{\lambda}{2}}\frac{\sigma^{-\frac{\lambda}{2}}}{2}
-\frac{\sigma^{-\frac{\lambda}{2}}}{2\lambda}(-\lambda\sigma^{\frac{\lambda}{2}})+\mathcal{O}(e^{-2M})
\\
&=1+\mathcal{O}(e^{-2M}).
\end{aligned}
\end{eqnarray}

We finished this section by  solving \eqref{eq_g_t} in $I_0$ via the integral formula
$$
u_0(t):= v_0(t)\int_{-\infty}^{t}W^{-1}(\tau)\tilde{v}_0(\tau)\tilde{f}(\tau)d\tau - \tilde{v}_0(t)\int_{-\infty}^{t}W^{-1}(\tau){v}_0(\tau)\tilde{f}(\tau)d\tau
$$
for $t\in I_0$ from where it is direct to verify that 
\begin{equation}\label{eq:estimatesU0I0}
|u_0(t)|+ |\partial_t u_0(t)| \leq c\|{\tt f}\|_{\mathcal{D}^{0,\beta}_{2,\varrho}(\Sigma)}e^{\bigl(\frac{n}{2}+1\bigr)t} \quad t\in I_0.
\end{equation}

\subsection{Estimates for the first transition region}\label{subs_trans}
Next, we fix $m>0$ small and independent of $\sigma$, so that 
$$
-\lambda + c_0 e^{-2M}\approx Q(t_\sigma) < m^2.
$$ 

Using that since $Q(t)$ is monotone increasing, we let $T_{\sigma}\in (t_{\sigma}, 0)$ be the unique solution of $Q(T_{\sigma})=m^2$. From \eqref{Qasymptotics} we find that
\begin{equation}\label{eq:asymptotics_Tsigmas}
T_{\sigma}\approx -\frac{1}{2}\log(\sigma) + \frac{1}{2}\log\Bigl(\frac{\lambda^2+ m^2}{c_0}\Bigr).
\end{equation}

In this subsection we consider the interval $I_1:=(t_{\sigma},T_{\sigma})$. Observe that 
$$
{\rm lenght}(I_1)=M-\frac{1}{2}\log\Bigl(\frac{\lambda^2+m^2}{c_0}\Bigr)=\mathcal{O}(1) \quad \hbox{independent of $\sigma$}.
$$ 

Let $v(t)$ be any arbitrary solution of \eqref{HomogeneousLinearJTEF} and set $x(t)=v(t)$ and $y(t)=\partial_t v(t)$ for $t\in I_1$, with
$$
x(t_{\sigma})=x_0 \quad \hbox{and} \quad y(t_{\sigma})=y_0. 
$$

\medskip
Set 
$$
{\tt A}(t):= 
\left(
\begin{array}{cc}
0& 1\\
-Q(t) & 0 
\end{array}
\right) \quad \hbox{for} \quad t\in I_1
$$
and observe that 
$$
\left(
\begin{array}{c}
x'\\
y'
\end{array}
\right) = A(t)\left(
\begin{array}{c}
x\\
y
\end{array}
\right).
$$

Observe also that for $t\in I_1$
$$
\begin{aligned}
\|{\tt A}(t)\|\leq & \sup \limits_{\|{\tt p}\|=1}|{\tt A}(t){\tt p}|\\
\leq & \sqrt{1+Q^2(t)}\\
\leq &C, 
\end{aligned}
$$
where $C>0$ is independent of $\sigma$.

Consequently, 
$$
\begin{aligned}
|(x(t),y(t))| &\leq |(x(t_{\sigma}),y(t_{\sigma}))| + \int_{t_{\sigma}}^t \|{\tt A}(\tau)\| | (x(\tau),y(\tau))|d\tau\\
& \leq |(x(t_{\sigma}),y(t_{\sigma}))| + C\int_{t_{\sigma}}^t| (x(\tau),y(\tau))|d\tau.
\end{aligned}
$$

The Gronwall inequality yields that 
$$
\begin{aligned}
|(x(t),y(t))| &\leq |(x(t_{\sigma}),y(t_{\sigma}))|\exp\left(C\int_{t_{\sigma}}^t d\tau\right)\\
& \leq |(x(t_{\sigma}),y(t_{\sigma}))|\exp\bigg(C(T_{\sigma}-t_{\sigma})\bigg)\\
&\leq \tilde{C} |(x(t_{\sigma}),y(t_{\sigma}))|.
\end{aligned}
$$

We conclude that for some constant $C>0$ independent of $\sigma$, 
$$
|v(t)| + |\partial_t v(t)| \leq C\Bigl(|v(t_{\sigma})| + |\partial_t v(t_{\sigma})|\Bigl) \quad \hbox{for} \quad t\in I_1.
$$

Next, we fix solutions $v_1(t)$ and $\tilde{v}_1(t)$ of \eqref{HomogeneousLinearJTEF} in $I_1$, satisfying the initial conditions
$$
\begin{aligned}
v_1(t_{\sigma})=& 1 \quad \partial_{t}v_1(t_{\sigma})=&0\\
\tilde{v}_1(t_{\sigma})=& 0 \quad \partial_{t}\tilde{v}_1(t_{\sigma})=&1
\end{aligned}
$$
so that the Wronski determinant $W(t)=1$ in $I_1$ and 
$$
|v_1(t)| + |\tilde{v}_1(t)| + |\partial_t v_1(t)|+|\partial_t \tilde{v}_1(t)| \leq C \quad \hbox{for} \quad t\in I_1.
$$

We solve equation \eqref{eq_g_t} in $I_1$ via the formula
$$
\begin{aligned}
u_1(t)=& u_0(t_{\sigma})v_1(t) + \partial_t u_0(t_{\sigma})\tilde{v}_1(t)\\
& + v_1(t)\int_{t_{\sigma}}^t \tilde{v}_1(\tau)\tilde{f}(\tau)d\tau - \tilde{v}_1(t)\int_{t_{\sigma}}^t {v}_1(\tau)\tilde{f}(\tau)d\tau 
\end{aligned}$$
for $t\in I_1$.

From \eqref{eq:estimatesU0I0} we find that 
$$
|u_1(t_{\sigma})|+ |\partial_t u_1(t_{\sigma})| \leq c\|{\tt f}\|_{\mathcal{D}^{0,\beta}_{2\varrho}(\Sigma)}\sigma^{-\frac{n+2}{4}}
$$
and hence we find that 
$$
\begin{aligned}
|u_1(t)|+ |\partial_t u_1(t)| \leq & c\|{\tt f}\|_{\mathcal{D}^{0,\beta}_{2,\varrho}(\Sigma)}\Bigr(\sigma^{\bigl(-\frac{n+2}{4}\bigr)}+e^{\bigl(\frac{n}{2}+1\bigr)t}\Bigr)\\ 
\leq & c\|{\tt f}\|_{\mathcal{D}^{0,\beta}_{2,\varrho}(\Sigma)}\Bigr(\sigma^{\bigl(-\frac{n+2}{4}\bigr)}e^{-\bigl(\frac{n}{2}+1\bigr)T_{\sigma}}+1\Bigr)e^{\bigl(\frac{n}{2}+1\bigr)t}\\ 
\end{aligned}
$$
for $t\in I_1$. From our choice of $T_{\sigma}\sim -\frac{1}{2}\log(\sigma)$ we find that
\begin{equation}\label{eq:estimatesU1I1}
|u_1(t)|+ |\partial_t u_1(t)| \leq c\|{\tt f}\|_{D^{0,\beta}_{2,\varrho}(\Sigma)}e^{\bigl(\frac{n}{2}+1\bigr)t} \quad t\in I_1.
\end{equation}

\subsection{Estimates for the second transition region}\label{subs_transI}
In this subsection, we consider the interval $I_2:=(T_{\sigma},T_0)$, where the potential $Q(t)$ has the properties that 
$$
0<m^2<Q(t)\leq c_1\sigma, \qquad  \partial_t Q(t)>0 \quad \hbox{in} \quad I_2 \quad \hbox{see e.g. \eqref{summaryregimesQ}}. 
$$

Observe also that $${\rm lenght}(I_2)=T_0 - T_\sigma=\frac{1}{2}\log(\sigma) + \mathcal{K}$$
for some $\mathcal{K}>0$, independent of $\sigma$.

\medskip

As in the previous subsection, we study the solutions of \eqref{HomogeneousLinearJTEF} in $I_2$. Let $v(t)$ solve \eqref{HomogeneousLinearJTEF} and consider the Lyapunov energy
$$
H(t):=\frac{1}{Q(t)}\frac{\bigl(\partial_t v(t)\bigr)^2}{2} + \frac{v^2(t)}{2} \quad t\in I_2. 
$$

The functional $H(t)$ is differentiable with
$$
\partial_t H(t)=-\frac{\partial_t Q(t)}{Q^2(t)}\frac{\bigl(\partial_t v(t)\bigr)^2}{2} \leq 0 \quad \hbox{in} \quad I_2
$$
and thus 
$$
0 \leq H(t) \leq H(t_{\sigma})\leq c\bigl(|v(T_{\sigma})|^2 + |\partial_t v(T_\sigma)|^2 \bigr) \quad \hbox{for} \quad t\in I_2.
$$ 

\medskip
We estimate for $t\in I_2$
\begin{equation}\label{eq:estimate_generic_kernelI2}
|v(t)| \leq c\bigl(|v(T_{\sigma})| + |\partial_t v(T_\sigma)|\bigr),\qquad |\partial_t v(t)| \leq  c\sqrt{\sigma}\bigl(|v(T_{\sigma})| + |\partial_t v(T_\sigma)|\bigr). 
\end{equation}
 
Next, we proceed exactly in the same fashion as we did in the previous subsection, first by fixing two solutions $v_2(t)$ and $\tilde{v}_2(t)$ of \eqref{HomogeneousLinearJTEF} in $I_2$, satisfying the initial conditions
$$
\begin{aligned}
v_2(T_{\sigma})=& 1 \quad \partial_{t}v_2(T_{\sigma})=&0\\
\tilde{v}_2(T_{\sigma})=& 0 \quad \partial_{t}\tilde{v}_2(T_{\sigma})=&1
\end{aligned}
$$
with Wronskii determinant $W(t)=1$ in $I_2$. 

\medskip
Next,we set the variations of parameters formula
$$
\begin{aligned}
u_2(t)=& u_1(T_{\sigma})v_2(t) + \partial_t u_1(T_{\sigma})\tilde{v}_2(t)\\
& + v_2(t)\int_{T_{\sigma}}^t \tilde{v}_2(\tau)\tilde{f}(\tau)d\tau - \tilde{v}_2(t)\int_{T_{\sigma}}^t {v}_2(\tau)\tilde{f}(\tau)d\tau 
\end{aligned}$$
for $t\in I_2$.

\medskip
Again, proceeding similarly as 
we did for the estimate\eqref{eq:estimatesU1I1}, we obtain
\begin{equation}\label{eq:estimatesU2I2}
|u_2(t)|+ |\partial_t u_2(t)| \leq c\sqrt{\sigma}\log(\sigma)\|{\tt f}\|_{\mathcal{D}^{0,\beta}_{2,\varrho}(\Sigma)}e^{\bigl(\frac{n}{2}+1\bigr)t} \quad t\in I_2.
\end{equation}

\medskip
\subsection{Estimates for the potential having linear growth }\label{subs_pos_pot}

Now, we solve equation \eqref{eq_g_t} in the final interval $I_3:=(T_0, \infty)$, where $Q(t)\ge c_1\sigma$. Let us then introduce the change of variable
$$
\xi(t)=\int_{T_0}^t Q(\tau)^{\frac{1}{2}}d\tau, \qquad\forall t\in I_3.
$$

Let $v(t)$ be an arbitrary solution of \eqref{HomogeneousLinearJTEF} in $I_3$. Write
\begin{equation}
\label{def_w}
v(t)=Q(t)^{-\frac{1}{4}}{\tt w}(\xi(t)) \quad \hbox{for} \quad t \in I_3 
\end{equation}
and observe from \eqref{HomogeneousLinearJTEF} that ${\tt w}$ must solve
\begin{equation}
\label{eq_w}
\partial^2_\xi {\tt w}+(1+{\tt V}(\xi)){\tt w}=0 \quad \hbox{in} \quad (0,\infty),
\end{equation}
where
$$
{\tt V}(\xi):=-\frac{\partial^2_t Q(t(\xi))}{4Q^2(t(\xi))}+\frac{5(\partial_t Q)^2(t(\xi))}{16Q^3(t(\xi))}.
$$

We find from \eqref{Qasymptotics} that $\partial_t Q$ and $\partial^2_t Q$ are bounded in $I_3$. Therefore, using again \eqref{Qasymptotics}, we estimate
\begin{equation}\label{V_L1-norm}
\begin{aligned}
\int_0^\infty |{\tt V}(\xi)|d\xi &=\int_{T_0}^\infty |{\tt V} (\xi(t))|Q^{\frac{1}{2}}(t)dt\\
&\le \int_{T_0}^{T_1}cdt+\int_{T_1}^\infty\frac{c}{(\sigma+t)^{\frac{3}{2}}}dt\\
&\le {\emph{K}}.
\end{aligned}
\end{equation}

Moreover, differentiating (\ref{def_w}) and evaluating at $t=T_0$, we get
$$
{\tt w}(0)=Q(T_0)^{\frac{1}{4}}v(T_0) \quad\hbox{and} \quad \partial_\xi {\tt w}(0)=\frac{\partial_t Q(T_0)}{4Q(T_0)^{\frac{5}{4}}}v(T_0)-\frac{\partial_t v(T_0)}{Q(T_0)^{\frac{1}{4}}},
$$
so that
\begin{equation}
\label{w_initial}
\begin{aligned}
c\sigma^\frac{1}{4}(|v(T_0)|+|\partial_t v(T_0)|)\leq |{\tt w}(0)|+|\partial_\xi {\tt w}(0)|& \le C\sigma^\frac{1}{4}(|v(T_0)|+|\partial_t v(T_0)|) \end{aligned}
\end{equation}

Now, we aim to estimate ${\tt w}$ and $\partial_\xi {\tt w}$ in $(0,\infty)$. In order to do so, we multipliy \eqref{eq_w} by $\partial_{\xi} {\tt w}$ to find that 
$$
\partial_{\xi}\big( \partial_{\xi } {\tt w}\big)^2 + \partial_{\xi }\big( {\tt w} \big)^2  = -2 {\tt V}(\xi){\tt w} \partial_{\xi}{\tt w}. 
$$

A direct integration yields that 
$$
\begin{aligned}
\big( \partial_{\xi } {\tt w}(\xi)\big)^2 + \big( {\tt w}(\xi)\big)^2  &= \int_{0}^{\xi}-2 {\tt V}(z){\tt w} \partial_{z}{\tt w}dz\\
& \leq \int_{0}^{\xi}|{\tt V}(z)|\bigg(\big( \partial_{\xi} {\tt w}\big)^2 + \big( {\tt w}\big)^2 \bigg)dz. 
\end{aligned}
$$



The Gronwall inequality, \eqref{V_L1-norm} and \eqref{w_initial} yield that
\begin{equation}
\label{est_w_dx}
\begin{aligned}
|\partial_\xi {\tt w}(\xi)|+|{\tt w}(\xi)| &\le c(|{\tt w}(0)|+|\partial_\xi {\tt w}(0)|)\quad \hbox{for}\quad \xi>0.
\end{aligned}
\end{equation}

\medskip
Pulling back the estimates, we estimate $v$ and $\partial_t v$,
\begin{equation}\label{AAA}
|v(t)|+ |\partial_t v(t)|\le c\sigma^{\frac{1}{4}}Q(t)^{-\frac{1}{4}}(|v(T_0)|+|\partial_t v(T_0)|)
\end{equation}
for any $t\in I_3$.

\medskip
Now we proceed as in the previous subsections by selecting two solutions $v_3(t)$ and $\tilde{v}_3(t)$ of \eqref{HomogeneousLinearJTEF} in $I_3$, with 
$$
\begin{aligned}
v_3(T_0)=& 1 \quad \partial_{t}v_3(T_0)=&0\\
\tilde{v}_3(T_0)=& 0 \quad \partial_{t}\tilde{v}_3(T_0)=&1
\end{aligned}
$$
with Wronskii determinant $W(t)=1$ in $I_3$. Clearly, $v_3(t)$ and $\tilde{v}_3(t)$ satisfy the estimate \eqref{AAA}. 

\medskip
Next, we solve equation \eqref{eq_g_t} in $I_3$ setting the variations of parameters formula
$$
\begin{aligned}
u_3(t)=& u_2(T_0)v_3(t) + \partial_t u_2(T_0)\tilde{v}_3(t)\\
& + v_3(t)\int_{T_{0}}^t \tilde{v}_3(\tau)\tilde{f}(\tau)d\tau - \tilde{v}_3(t)\int_{T_{0}}^t {v}_3(\tau)\tilde{f}(\tau)d\tau 
\end{aligned}$$
for $t\in I_3$.

\medskip
From \eqref{eq:estimatesU2I2} we find that 
$$
|u_2(t)|+ |\partial_t u_2(t)| \leq c\sqrt{\sigma}\log(\sigma)\|{\tt f}\|_{\mathcal{D}^{0,\beta}_{2,\varrho}(\Sigma)}.
$$

This estimate together with \eqref{estimateftildeJT} yields 
\begin{equation}\label{eq:estimatesU3I3}
|u_3(t)|+ |\partial_t u_3(t)| \leq c\sigma^\frac{3}{4}\log(\sigma)\|{\tt f}\|_{\mathcal{D}^{0,\beta}_{2,\varrho}(\Sigma)}e^{\frac{N-2}{2}t}t^{-\varrho +\frac{1}{2}} \quad t\in I_3.
\end{equation}

\subsection{Proof of Proposition \ref{propinverselinearJT} completed}\label{subs_rhs}

In this subsection we finished the detailed proof of Proposition \ref{propinverselinearJT}.

Given any interval $I\subset \R$, let $\chi_I:\R \to \R$ be the characteristic function of $I$. Consider the functions $u_j$ defined in the intervals $I_j$ for  $j=0,1,2,3$. It is direct to verify that the function 
$$
u(t)=\sum_{j=0}^3 \chi_{I_j}u_i(t) \quad t\in \R
$$
is a smooth solution of \eqref{eq_g_t}. In addition, from the estimates \eqref{eq:estimatesU0I0}, \eqref{eq:estimatesU1I1}, \eqref{eq:estimatesU2I2} and
\eqref{eq:estimatesU3I3}, the function $u(t)$ satisfies the estimate
\begin{equation}\notag
|u(t)|+|\partial_{t}u(t)|\le c\sigma^{\frac{3}{4}}\log(\sigma)\|{\tt f}\|_{\mathcal{D}^{0,\beta}_{2,\varrho}(\Sigma)}\left\{
\begin{aligned}
e^{\frac{n+2}{2}t}&, \qquad &t\le T_\sigma,\\
1&, \qquad & T_\sigma < t<T_0,\\
t^{\frac{1}{2}-\varrho}e^{\frac{N-2}{2}t}&, \qquad &t>T_0.
\end{aligned}
\right.
\end{equation}

Next, we estimate the norm in \eqref{eq_norm_q(s)} for ${\tt q}$. Setting ${q}(s):=p(t(s))u(t(s))$ for $s\geq 0$, we find that ${q}(s) \sim s^2$ as $s\to 0$, so that we can extend ${q}$ smoothly to $\R$.

\medskip 
From the previous analysis and setting ${\tt q}(\pe)=q(s)$ for $s=s(\pe)$, we find that 
\begin{equation}\label{inverseJT1}
\|{\tt q}\|_{*,0,\varrho - \frac{1}{2}} \leq  c\sigma^{\frac{3}{4}}\log(\sigma)\|{\tt f}\|_{\mathcal{D}^{0,\beta}_{2,\varrho}(\Sigma)}.
\end{equation}

On the other hand, since 
$$
\partial_s {q}(s)=\frac{1}{s}\bigg(\partial_t p(t(s))\, u(t(s))+p(t(s))\,\partial_t u(t(s))\bigg)
$$
with $t(s)=\log(s)$, we find that $\partial_s{q}(s) \sim s$ as $s \to 0$ and the gradient of the function ${\tt q}(\pe)=q(s)$ satisfies the estimate
\begin{equation}\label{inverseJT2}
\|\nabla_{\Sigma} {\tt q}\|_{*,1,\varrho -\frac{1}{2}} \leq c\sigma^{\frac{3}{4}}\log(\sigma)\|{\tt f}\|_{\mathcal{D}^{0,\beta}_{2,\varrho}(\Sigma)}.
\end{equation}

Since $\partial_s q(0)=0$, the even extension of $q$ yields a $C^2(\R)$ solution of \eqref{eq_g_s} so that ${\tt q}(\pe)=q(s)$ is a solution of \eqref{eq_lin_JT_Sigma} in $\Sigma$.

\medskip
On the other hand, from \eqref{eq_g_s}, \eqref{inverseJT1} and \eqref{inverseJT2},
\begin{equation}
\label{est_C2}
\sigma^{-1}\|D^2_\Sigma{\tt q}\|_{*,2,\varrho-\frac{3}{2}}
+\|\nabla_\Sigma{\tt q}\|_{1,\varrho-\frac{1}{2}}+\|{\tt q}\|_{*,0,\varrho-\frac{1}{2}}\le c\sigma^{\frac{3}{4} }\log(\sigma)\|{\tt f}\|_{\mathcal{D}^{0,\beta}_{2,\varrho}(\Sigma)}.
\end{equation}

We conclude the proof of the proposition as follows. Since the coefficient $\alpha(s)$ in ODE \eqref{eq_g_s} is singular at $s=0$, we need to pass through the elliptic PDE on $\Sigma$ in order to estimate the H\"{o}lder norm of the second derivative near $s=0$.
\\

From the standard local H\"{o}lder estimates  applied to the equation \eqref{eq_lin_JT_Sigma},
$$
\Delta_\Sigma{\tt q}+|A_\Sigma|^2{\tt q}={\tt f}-\sqrt{2}|A_{\Sigma}|^2{\tt w}_0 e^{-\sqrt{2}({\tt v}_j-{\tt w}_0)}{\tt q} \quad \hbox{in} \quad \Sigma, 
$$
we find that 
\begin{equation}\notag
\begin{aligned}
\|{\tt q}\|_{C^{2,\beta}(B_{\frac{3}{2}}(\zeta))}&\le c(\|{\tt q}\|_{L^\infty(B_2(\zeta))}+\|{\tt f}\|_{C^{0,\beta}(B_2(\zeta))}+\sigma\|{\tt q}\|_{C^{0,\beta}(B_2(\zeta))})\\
&\le c(\|{\tt q}\|_{L^\infty(B_2(\zeta))}+\|{\tt f}\|_{C^{0,\beta}(B_2(\zeta))}+\sigma\|\nabla_\Sigma{\tt q}\|_{L^\infty(B_2(\zeta))})\\&\le c\sigma^{\frac{7}{4}}\log(\sigma)\|{\tt f}\|_{\mathcal{D}^{0,\beta}_{2,\varrho}(\Sigma)}.
\end{aligned}
\end{equation}

This yields that, 
$$
\|{\tt q}\|_{\mathcal{D}^{2,\beta}_{0,\varrho-\frac{1}{2}}(\Sigma)}\le  c\sigma^{\frac{3}{4}}\log(\sigma)\|{\tt f}\|_{\mathcal{D}^{0,\beta}_{2,\varrho}(\Sigma)}
$$
and this completes the proof.

\subsection{A fixed point argument and the proof of Theorem \ref{th_Liouville}}

In this part, we use the linear theory studied in the previous subsections to prove Theorem \ref{th_Liouville} and the norms defined in \eqref{decayingHoldernorm} and \eqref{eq_norm_q(s)}. The ideas here will also be used in the the section 5.

\medskip

Let $\sigma_0>0$ be as in Proposition \ref{prop_eq_lin_JT_Sigma} and let $\sigma:= \log\bigg(\frac{2\sqrt{2}a_\star}{\delta}\bigg)$ with $\sigma>\sigma_0$.

\medskip
Fix $j \in \mathbb{N}$, to be specified later and let ${\tt v}_j({\pe})= v_j(s(\pe))$ be the appro\-ximate solution of \eqref{JacTodaSection3} described in Lemma \ref{lemmaapproximateslnJacToda}.

\medskip
From \eqref{est_err_j},
\begin{equation}\label{est_err_jsec3}
\|E_{\delta}({\tt v}_j)\|_{\mathcal{D}^{0,\beta}_{2,\frac{j}{2}}(\Sigma)} \leq C \delta \sigma^{- \frac{j}{2}}.
\end{equation}

We look for a solution of \eqref{JacTodaSection3} having the form ${\tt v}={\tt v}_j + {\tt q}$ so that we solve equation (\ref{eq_JT}) for ${\tt q}$. Using the operator
$$
L_{\delta}({\tt q}):=\delta J_{\Sigma}{\tt q} + 2\sqrt{2}a_{\star} e^{-\sqrt{2}{\tt v}_j}{\tt q},
$$
this equation reads as
\begin{equation}\label{linearJTwith quadractitermsec3}
L_{\delta}({\tt q})= -E_\delta({\tt v}_j)+\sqrt{2}\delta |A_\Sigma|^2{\tt v}_j Q({\tt q}),
\end{equation}
where $Q$ is the quadratic term defined in \eqref{JacTodaQuadracticerror}.

\medskip
Set 
$$
\mathcal{B}:=\bigg\{{\tt q}\in \mathcal{D}^{2,\beta}_{2,\frac{j-1}{2}}(\Sigma)\,:\, \|{\tt q}\|_{\mathcal{D}^{2,\beta}_{2,\frac{j-1}{2}}(\Sigma)}\leq A\sigma^{\frac{7}{4}-\frac{j}{2}}\log(\sigma)\bigg\},
$$
where $A>0$ is a constant independent of $\sigma$ and to be specified later.

\medskip
Next, observe there exists a constant $c>0$ such that for any ${\tt q}\in \mathcal{B}$, 
$$
|Q({\tt q})|\leq c{\tt q}^2
$$
and consequently,
\begin{equation}\label{est_Quadraticerrsec3}
\big\|\delta |A_{\Sigma}|^2 {\tt v}_jQ({\tt q})\big\|_{\mathcal{D}^{0,\beta}_{6,\frac{j-2}{2}}(\Sigma)} \leq cA^2 \delta \sigma^{\frac{7}{2} -j}\log(\sigma)^2.
\end{equation}

On the other hand, for some $c>0$ independent of $j$ and $\sigma$ and for any ${\tt q}, \tilde{{\tt q}}\in \mathcal{B}$,
$$
|Q({\tt q}) -Q(\tilde{\tt q})|\leq c|{\tt q}+ \tilde{\tt q}||{\tt q}- \tilde{\tt q}| \quad \hbox{in} \quad \Sigma 
$$
so that 
\begin{equation}\label{est_QuadraticLipschitzerrsec3}
\bigg\|\delta |A_{\Sigma}|^2 {\tt v}_j\big (Q({\tt q}) - Q(\tilde{\tt q})\big)\bigg\|_{\mathcal{D}^{0,\beta}_{6,\frac{j-2}{2}}(\Sigma)} \leq 2 A c \delta \sigma^{\frac{7}{4}-\frac{j}{2}}\log(\sigma)\|{\tt q} - \tilde{\tt q}\|_{\mathcal{D}^{2,\beta}_{2,\frac{j-1}{2}}(\Sigma)}.
\end{equation}

Next, let ${\tt R}: {\mathcal{D}^{2,\beta}_{2,\frac{j-1}{2}}(\Sigma)} \mapsto {\mathcal{D}^{2,\beta}_{2,\frac{j-1}{2}}(\Sigma)}$ be defined by
$$
{\tt R}({\tt q}):= \delta^{-1}L_{\delta}^{-1}\bigg(-E_\delta({\tt v}_j)+\sqrt{2}\delta |A_\Sigma|^2{\tt v}_j Q({\tt q})\bigg).
$$

Observe that ${\tt R}$ is the resolvent operator of the nonlinear equation \eqref{linearJTwith quadractitermsec3}.

\medskip
We fix $j \geq 8$, $\sigma>0$ large and $A>2C$ large, but independent of $\sigma$
 and $j$, so that from \eqref{est_err_jsec3} and  \eqref{est_Quadraticerrsec3}, we find that
$$
\begin{aligned}
\|{\tt R}({\tt q})\|_{{\mathcal{D}^{2,\beta}_{2,\frac{j-1}{2}}(\Sigma)}} & \leq  C\sigma^{\frac{7}{4}}\log(\sigma)\left(\sigma^{-\frac{j}{2}} + cA^2\sigma^{\frac{7}{2}-j}\log(\sigma)^2\right)\\
& \leq C\sigma^{\frac{7}{4}-\frac{j}{2}}\log(\sigma)\bigg(1 +\mathcal{O}(\sigma^{\frac{7}{2}-\frac{j}{2}}\log(\sigma)^2)\bigg)\\
&\leq 2C\sigma^{\frac{7}{4}-\frac{j}{2}}\log(\sigma)\\
& \leq A\sigma^{\frac{7}{4}-\frac{j}{2}}\log(\sigma)
\end{aligned}
$$
for any ${\tt q}\in \mathcal{B}$. Thus, ${\tt R}:\mathcal{B} \to \mathcal{B}$ is well defined.

\medskip
Next, we verify the contractive character of ${\tt R}$ restricted to $\mathcal{B}$. Using \eqref{est_QuadraticLipschitzerrsec3}, we find that for some constant $M>0$, independent of $\sigma$ and $j$,
\begin{equation*}
\begin{aligned}
\|{\tt R}({\tt q})- {\tt R}(\tilde{\tt q})\|& \leq \|(\delta^{-1}L_{\delta})^{-1}\|
\bigg\||A_{\Sigma}|^2 {\tt v}_j\big (Q({\tt q}) - Q(\tilde{\tt q})\big)\bigg\|_{\mathcal{D}^{0,\beta}_{6,\frac{j-2}{2}}(\Sigma)} \\
& \leq M
\sigma^{\frac{7}{4}-\frac{j}{2}}\log(\sigma)\|{\tt q} - \tilde{\tt q}\|_{\mathcal{D}^{2,\beta}_{2,\frac{j-1}{2}}(\Sigma)}
\end{aligned}
\end{equation*}
for any ${\tt q},\tilde{\tt q}\in \mathcal{B}$. Since $\sigma>0$ is large enough and $j > 8$, ${\tt R}:\mathcal{B} \to \mathcal{B}$ is a contraction.  

\medskip
A direct application of the contraction mapping principle yields the existence of a unique ${\tt q}\in \mathcal{B}$ solving \eqref{linearJTwith quadractitermsec3}. This completes the proof of Theorem \ref{th_Liouville}.

\section{The approximate solution to the Allen-Cahn equation}\label{approxslnAC}

In this part we find an appropriate approximate solution to \eqref{Allen-Cahn-eq} and compute its error in a suitable coordinate system.

\subsection{Fermi coordinates} Recall from Section \ref{JacToda Section} that in our developments we are $\Sigma=\Sigma_{m,n}^-$.
First, we introduce the system of coordinates that we will use to descri\-be the Laplacian near a dilated and translated version of the hypersurface $\Sigma$.

\medskip
Using \eqref{Lawson_cones} and the fact that $\Sigma$ is asymptotic to the cone $C_{m,n}$ stressed out in  \eqref{asymptoticsabatinfinity}, we find $\delta_0>0$ small and $\eta_0$ with $0<\eta_0 < \frac{1}{2\sqrt{N-1}}\min (\sqrt{m-1},\sqrt{n-1})$, such that in the tubular neighbourhood
$$
\mathcal{N}:=\left\{\pe+{\tt z}\nu_\Sigma(\pe)\,:\,
\pe\in\Sigma, \quad |{\tt z}|<\delta_0+ \eta_0|s(\pe)|\right\}
$$ of $\Sigma$, the mapping
\begin{equation}\label{FermicoordSigma}
X(\pe,{\tt z}):=\pe+{\tt z}\nu_\Sigma(\pe)
\end{equation}
defines a system of local coordinates, 
known as the \textit{Fermi coordinates}, where $\nu_\Sigma$ is computed in \eqref{param_O(m)O(n)_invariantnormalvec}. 

\medskip
Let $\eps>0$ be small, but fixed and consider the dilated hypersurface $\Sigma_\eps:=\eps^{-1}\Sigma$. Observe first that for any $\pe\in\Sigma_\eps$, there exists a unique 
$$
(\s,\x,\y)=(\s(\pe),\x(\pe),\y(\pe))\in \R \times S^{m-1}\times S^{n-1}
$$ 
such that
$$
\pe=\eps^{-1}\left(a(\eps\s)\x,b(\eps\s)\y\right)
$$
and 
\begin{equation}\notag
s(\eps\pe)=\eps\s(\pe), \quad \x(\pe)=\x(\eps\pe),\quad \y(\pe)=\y(\eps\pe).
\end{equation}

\medskip

The Fermi coordinates of $\Sigma_{\eps}$ are defined by
$$
X_\eps({\pe},{\tt z}):=\pe+{\tt z} \,\nu_\Sigma(\eps\pe)
$$
in the dilated neighbourhood
$$\mathcal{N}_\eps:=\left\{\pe+{\tt z}\,\nu_\Sigma(\eps\pe)\,:\,\pe\in\Sigma_\eps, \quad 
| {\tt z}|<\frac{\delta_0}{\eps}+\eta_0|\s(\pe)|\right\}.
$$

\medskip
Fix {\color{red}$\alpha\in\left(0,\frac{1}{9}\right)$} and consider two smooth $O(m)\times O(n)$-invariant functions ${\tt h}_1, {\tt h}_2 : \Sigma \to \R$ with ${\tt h}_l(\pe)=h_l(s(\pe))$ for $\pe \in \Sigma$ and $l=1,2$ and such that
\begin{equation}
\begin{small}
\label{cond_growth_hl}
\frac{1}{\sqrt{2}}\left(l-\frac{3}{2}-\alpha\right)\left(\log(s^2+2)+2|\log\eps|\right)<h_l(s)
<\frac{1}{\sqrt{2}}\left(l-\frac{3}{2}+\alpha\right)\left(\log(s^2+2)+2|\log\eps|\right)
\end{small}
\end{equation}
for $s\in \R$. Assume also that ${\tt h}_l(\pe)=h_{l}(s)$ is even in the variable $s\in \R$.

\medskip
For $l=1,2$, the mapping
$$X_{\eps,\h_l}(\pe,t):=\pe+(t+{\tt h}_l(\eps\pe))\nu_\Sigma(\eps\pe)
$$
defines a diffeomorphism onto the tubular neighbourhood 
\begin{equation}
\label{metric_FC}
\begin{aligned}
\mathcal{N}_{l,\eps}
&=\left\{X_{\eps,\h_l}(\pe,t):|t|<\frac{1}{4\sqrt{2}}\left(\log(s(\eps\pe)^2+2)+2|\log\eps|\right)\right\}.\\
&:=\left\{X_\eps(\pe,{\tt z}):|{\tt z}-{\tt h}_l(\eps\pe)|<\frac{1}{4\sqrt{2}}\left(\log(s(\eps\pe)^2+2)+2|\log\eps|\right)\right\}.
\end{aligned}
\end{equation}

\medskip
Next, let $l\in \{1,2\}$. We compute the euclidean Laplacian in $\mathcal{N}_{l,\eps}$ for $O(m)\times O(n)-$invariant functions.

\medskip
Let $g=(g_{ij})_{N\times N}$ be a Riemannian  metric on $\Sigma$  with inverse $g^{-1}=(g^{ij})_{N\times N}$. Using the Fermi coordinates in \eqref{FermicoordSigma}, the metric $g$ induces a metric $G=(G_{ij})_{(N + 1)\times (N + 1)}$ on $\mathcal{N}$ whose entries are determined by the formulae
$$
G_{ij}=g_{ij}-2A_{ij}{\tt z}+{\tt z}^2\partial_i\nu_\Sigma\cdotp\partial_j\nu_\Sigma \quad \hbox{for} \quad i,j =1, \ldots,N,$$
$$ 
G_{i{\tt z}}=G_{{\tt z}i}=0 \quad \hbox{for} \quad i=1,\ldots,N, \quad G_{{\tt z}{\tt z}}=1.
$$

\medskip
Writing $G^{-1}=(G^{ij})_{(N+1)\times (N+1)}$, the Laplace operator in the set $\mathcal{N}$ takes the form
$$
\Delta=\frac{1}{\sqrt{\det G}}\partial_i\left(\sqrt{\det G}\,G^{ij}\partial_j\right),
$$
where summation over repeated indexes is understood.

\medskip
A direct computation yields that 
\begin{equation}\notag
\begin{aligned}
\Delta&=\partial^2_{\tt z}+\partial_{\tt z}(\log\sqrt{\det G})\partial_{\tt z}+G^{ij}\partial_{ij}+(\partial_i G^{ij}+\partial_i(\log\sqrt{\det G})G^{ij})\partial_i\\
&=\partial^2_{\tt z}-H_{\Sigma_{\tt z}}\partial_{\tt z}+\Delta_{\Sigma_{\tt z}},
\end{aligned}
\end{equation}
where $H_{\Sigma_{\tt z}}$ and $\Delta_{\Sigma_{\tt z}}$ are the mean curvature and the Laplace-Beltrami operator of the normally translated hypersurface
$$
\Sigma_{\tt z}:=\{\pe+{\tt z}\nu_\Sigma(\pe):\, \pe\in\Sigma\}
$$
for ${\tt z}$ fixed  and such that for every $\pe \in \Sigma$, $(\pe,{\tt z})\in X^{-1}(\mathcal{N})$. We observe that for $|{\tt z}|>\delta_0$, $\Sigma_{\tt z}$ is only defined outside a compact set and has two connected components.

\medskip
Let ${\tt z}$ be arbitrary and as in the previous paragraph. Define ${\tt Q}$ by
$$
{\tt Q}(\pe,z):= H_{\Sigma_z}(\pe)- H_\Sigma(\pe)-{\tt z}|A_\Sigma(\pe)|^2
$$
so that 
$$
|(1+s(\pe))D_{\Sigma}{\tt Q}|+ |\partial_{\tt z}{\tt Q}|+|{\tt Q}| \leq C|{\tt z}|^2(1 + s(\pe))^{-3}.
$$

\medskip

For $i,j=1,\ldots,N$, define also $a^{ij}$ and $b^j$ as 
\begin{equation}\notag
a^{ij}(\pe,{\tt z}):=G^{ij}(\pe,{\tt z})-g^{ij}(\pe),
\end{equation}
$$
b^j(\pe,{\tt z}):=\partial_i G^{ij}(\pe,{\tt z})+\partial_i(\log\sqrt{\det G(\pe,{\tt z})})G^{ij}(\pe,{\tt z})-\partial_i g^{ij}(\pe)-\partial_i(\log\sqrt{\det g})g^{ij}(\pe).
$$

\medskip

The formal expansion for the euclidean Laplacian in the coordinates $X(\pe,{\tt z})$ in the set $\mathcal{N}$ reads as
\begin{equation}\notag
\Delta=\partial^2_{\tt z}-{\tt z}|A_\Sigma|^2\partial_{\tt z}+\Delta_\Sigma-{\tt Q}(\pe,{\tt z})\partial_{\tt z}+a^{ij}(\pe,{\tt z})\partial_{ij}+b^j(\pe,{\tt z})\partial_j.
\end{equation}

\medskip

Introducing the change of variables
\begin{equation}
\label{eps_shift_coord}
z=\eps(t+{\tt h}_l(\eps\pe))
\quad \hbox{for} \quad (\pe,t)\in X_{\eps,{\tt h}_l}^{-1}(\mathcal{N}_{l,\eps}),
\end{equation}
the euclidean Laplacian can be computed in the set $\mathcal{N}_{l,\eps}$ in the coordinates $X_{\eps,{\tt h}_l}(\pe,t)$. These computations are collected in the following Lemma.

\begin{lemma}
\label{lemma_coord_Fermi}
In the neighbourhood $\mathcal{N}_{l,\eps}$, the Laplacian in the $(\pe,t)$ coordinates is given by
\begin{equation}
\label{Laplacian_Fermi}
\begin{aligned}
\Delta&=\Delta_{\Sigma_\eps}+\partial^2_t-\eps^2\big(\Delta_\Sigma {\tt h}_l+|A_\Sigma|^2{\tt h}_l\big)\partial_t- \eps^2 t|A_{\Sigma}|^2 \partial_{t} - 2 \eps\nabla_\Sigma {\tt h}_l\cdotp\partial_t\nabla_\Sigma+\eps^2|\nabla_\Sigma {\tt h}_l|^2\partial^2_t
\\
&-\eps{\tt Q}\partial_t +(a^{ij}\partial^2_{ij}+\eps b^j\partial_j)-\eps^2(a^{ij} \partial_{ij}{\tt h}_l+b^j \partial_j{\tt h}_l)\partial_t-2\eps a^{ij} \partial_i{\tt h}_l\partial_{tj}+\eps^2a^{ij}\partial_i{\tt h}_l\partial_j{\tt h}_l \partial^2_t,
\end{aligned}
\end{equation}
where ${\tt h}_l$ and its derivatives are evaluated at $\eps\pe$, while ${\tt Q},\, a^{ij}$ and $b^j$ are evaluated at $(\eps\pe,\eps(t+{\tt h}_l(\eps\pe)))$.
\end{lemma}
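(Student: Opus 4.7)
The argument is a direct chain-rule computation composing two changes of variable on the Fermi-coordinate expansion of the Euclidean Laplacian adapted to $\Sigma$ that was established immediately above the lemma,
\begin{equation*}
\Delta = \partial^2_\z - \z|A_\Sigma|^2\partial_\z + \Delta_\Sigma - Q(\pe,\z)\partial_\z + a^{ij}(\pe,\z)\partial_{ij} + b^j(\pe,\z)\partial_j,
\end{equation*}
where $H_\Sigma \equiv 0$ has already been used to drop the $-H_\Sigma\partial_\z$ contribution.

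My plan would be first to pass from $\Sigma$ to $\Sigma_\eps = \eps^{-1}\Sigma$ via the dilation identity $X_\eps(\pe,\z) = \eps^{-1}X(\eps\pe,\eps\z)$. Setting $\tilde u(\eta):=u(\eps^{-1}\eta)$ one has $\Delta u(\xi) = \eps^2\Delta\tilde u(\eps\xi)$, and combined with $|A_{\Sigma_\eps}|^2 = \eps^2|A_\Sigma(\eps\pe)|^2$, the scaling $\Delta_{\Sigma_\eps}f(\pe) = \eps^2\Delta_\Sigma F(\eps\pe)$ for $f(\pe)=F(\eps\pe)$, and $\partial_{\tilde\z} = \eps^{-1}\partial_\z$ for $\tilde\z = \eps\z$, this transports the above formula to Fermi coordinates adapted to $\Sigma_\eps$, with $Q$, $a^{ij}$, $b^j$ and $|A_\Sigma|$ now evaluated at $(\eps\pe,\eps\z)$ and appropriate $\eps$-factors inserted.

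Second, I would perform the shift $\z = \eps(t+\h_l(\eps\pe))$. The two driving chain-rule identities are
\begin{equation*}
\partial_\z = \eps^{-1}\partial_t,\qquad \partial_{\pe_i}\big|_\z = \partial_{\pe_i}\big|_t - \partial_i\h_l(\eps\pe)\,\partial_t.
\end{equation*}
The pure-normal contribution $\partial^2_\z - \eps^2\z|A_\Sigma|^2\partial_\z$ produces $\partial^2_t$ together with $-\eps^2 t|A_\Sigma|^2\partial_t - \eps^2|A_\Sigma|^2\h_l\partial_t$; the Laplace--Beltrami piece $\Delta_{\Sigma_\eps}$ applied to the slice $\pe\mapsto U(\pe,\eps^{-1}\z - \h_l(\eps\pe))$ expands by the standard chain rule for Laplace--Beltrami as
\begin{equation*}
\Delta_{\Sigma_\eps}U\big|_t - 2\langle\nabla_{\Sigma_\eps}\h,\partial_t\nabla_{\Sigma_\eps}U\rangle + |\nabla_{\Sigma_\eps}\h|^2\partial^2_t U - \Delta_{\Sigma_\eps}\h\cdot\partial_t U,
\end{equation*}
with $\h(\pe):=\h_l(\eps\pe)$; using the scalings $\Delta_{\Sigma_\eps}\h = \eps^2\Delta_\Sigma\h_l$ and $|\nabla_{\Sigma_\eps}\h|^2 = \eps^2|\nabla_\Sigma\h_l|^2$ then converts this into the contributions $-\eps^2(\Delta_\Sigma\h_l)\partial_t$, $-2\eps\nabla_\Sigma\h_l\cdot\partial_t\nabla_\Sigma$ and $\eps^2|\nabla_\Sigma\h_l|^2\partial^2_t$ appearing in the statement. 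Applying the same two chain-rule identities to the remaining blocks $-\eps Q\partial_\z$, $a^{ij}\partial^2_{ij}$ and $\eps b^j\partial_j$ generates, respectively, $-\eps{\tt Q}\partial_t$, the cross-term $-2\eps a^{ij}\partial_i\h_l\partial_{tj}$, the first-order contribution $-\eps^2(a^{ij}\partial^2_{ij}\h_l + b^j\partial_j\h_l)\partial_t$, and the $\partial^2_t$-coefficient $\eps^2 a^{ij}\partial_i\h_l\partial_j\h_l$.

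Adding these blockwise contributions yields exactly the formula asserted by the lemma. The calculation is entirely mechanical; the only point requiring care is the consistent bookkeeping of $\eps$-powers produced simultaneously by the dilation in the first step and by each partial derivative landing on the factor $\h_l(\eps\pe)$ in the second step (which carries one extra $\eps$ whenever the differentiation is transported between $\Sigma$ and $\Sigma_\eps$), and this is what produces the precise pattern of $\eps$ and $\eps^2$ factors in the final expression.
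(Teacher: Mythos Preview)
Your proposal is correct and follows exactly the route the paper indicates: the paper does not in fact supply a detailed proof of this lemma, but simply records the outcome after announcing the change of variables $z=\eps(t+\h_l(\eps\pe))$ applied to the already-derived Fermi expansion $\Delta=\partial^2_\z-\z|A_\Sigma|^2\partial_\z+\Delta_\Sigma-Q\partial_\z+a^{ij}\partial_{ij}+b^j\partial_j$. Your two-step decomposition (dilate to $\Sigma_\eps$, then shift by $\h_l(\eps\pe)$) and the blockwise chain-rule bookkeeping you describe are precisely what is needed to obtain the stated formula, and in fact give more computational detail than the paper itself provides.
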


\medskip
We remark that the computations in Lemma \ref{lemma_coord_Fermi} started off from and are valid for an arbitrary metric on $\Sigma$.

\medskip
Next, we consider the Riemannian metric on $\Sigma$ that is induced by the parametrisation in \eqref{param_O(m)O(n)_invariant}. After rescaling and translating the neighbourhood $\mathcal{N}$ to $\mathcal{N}_{l,\eps}$ in this particular coordinates, it follows from Lemma \ref{FermicoordSigma} that for smooth functions defined in $\mathcal{N}_{l,\eps}$, expressed in the coordinates $(\pe,t)$ with $\s=\s(\pe)\in\R$ and being even in $\s$, we have
\begin{equation}
\label{Laplacian_Fermi_st}
\begin{aligned}
\Delta= & \Delta_{\Sigma_{\eps}}+\partial^2_t-\eps^2(\Delta_{\Sigma}{\tt h}_l +|A_\Sigma|^2{\tt h}_l)\partial_t - \eps^2t|A_{\Sigma}|^2 \partial_t\\
&-\eps^2(h''_l+\alpha(\eps \s)h'_l)\partial_t-2\eps h'_l\partial_{t\s}+\eps^2(h'_l)^2\partial^2_t\\
&-\eps\Q\partial_t+\ta\partial^2_\s+\eps\bi\partial_\s-\eps^2(\ta h''_l+\bi h'_l)\partial_t-2\eps\ta h'_l\partial_{t\s}+\eps^2\ta(h'_l)^2 \partial^2_t,
\end{aligned}
\end{equation}
where
$$
\Delta_{\Sigma_{\eps}}=\partial^2_{\s}+\eps\alpha(\eps\s)\partial_\s
$$
and
$$
\Delta_{\Sigma}{\tt h}_l + |A_{\Sigma}|^2 {\tt h}_l = \partial_{ss}h_l(s) + \alpha(s)\partial_s h_l(s) + \beta(s) h_l(s)
$$
with $\alpha(s)$ and $\beta(s)$ described in \eqref{Laplace_Sigma_O(m)O(n)_invariant}, \eqref{JTincoordPsi}, \eqref{alphaasymptotics} and \eqref{betaasymptotics} and where 
$h_l$ and its derivatives are evaluated at $\eps\s$.

\medskip
Also, for $(\pe,{\tt z})\in\Sigma\times\R$, we have denoted 
\begin{equation}\notag
\begin{aligned}
\Q(s(\pe),{\tt z}):={\tt Q}(\pe,{\tt z}), \qquad
\ta(s(\pe),{\tt z}):=a^{ss}(\pe,{\tt z}),
\qquad\bi(s(\pe),{\tt z}):=b^s(\pe,{\tt z})
\end{aligned}
\end{equation}
and observe that $\Q,\,\ta$ and $\bi$ are evaluated at $(\eps\s,\eps(t+h_l(\eps\s)))$.

\medskip
Proceeding as in the Appendix in \cite{DELPINOKOWALCZYKWEI2013I} and as in Section 3 in \cite{AGUDELODELPINOWEI2016}, using the variables $(\s,t)$ and setting ${\tt h}_l(\pe)=h_l(s(\pe))$ for $\pe \in \Sigma$, we find that
$$
|(1+s(\pe))D_{\Sigma}a^{ss}|+ |\partial_{\tt z}a^{ss}|+|a^{ss}| \leq C|{\tt z}|(1 + s(\pe))^{-1}
$$
and
$$
|(1+s(\pe))D_{\Sigma}b^s|+ |\partial_{\tt z}b^s|+|b^s| \leq C|{\tt z}|(1 + s(\pe))^{-2}.
$$

\medskip
In particular, for every $(\s,t)=(\s(\pe),t)$ with $(\pe,t)\in X^{-1}_{\eps,\h_l}(\mathcal{N}_{\eps,l})$,
\begin{equation}
\label{est_remainder_laplacian}
\begin{aligned}
|\Q(\eps\s,\eps(t+h_l(\eps\s)))|&\le C\frac{\eps^2(t+h_l(\eps\s))^2}{(1+|\eps \s|)^3}\\
|\ta(\eps\s,\eps(t+h_l(\eps\s)))|&\le C\frac{\eps(t+h_l(\eps\s))}{1+|\eps\s|}
\\
|\bi(\eps\s,\eps(t+h_l(\eps\s)))|&\le C\frac{\eps(t+h_l(\eps\s))}{(1+|\eps\s|)^2}.
\end{aligned}
\end{equation}

\subsection{First approximation}
In this part we choose the first approximation of the solution of equation \eqref{Allen-Cahn-eq}. We focus to the region near the approximate zero level set using the Fermi coordinates $(\pe,{\tt z})$ of $\Sigma_\eps \times \R$.

\medskip
Recall that we have fixed two $O(m)\times O(n)$-invariant functions ${\tt h}_1, {\tt h}_2:\Sigma\to\R$, ${\tt h}_l(\pe)=h_l(s)$, with $h_1,h_2$ even in $s$, ${\tt h}_1, {\tt h}_2 \in C^2(\Sigma)$ and satisfying \eqref{cond_growth_hl}. Assume further that 
\begin{equation}\label{orderedinterfaces}
-\infty \equiv {\tt h}_0 < {\tt h}_1 < {\tt h}_2 < {\tt h}_3\equiv + \infty
\end{equation}
and that for any $s\in \R$,
\begin{equation}
\label{cond_growth_der_hl}
|h'_l(s)|\le\frac{c}{|s|+1},\quad |h''_l(s)|\le\frac{c}{(|s|+1)^2}, \quad l=1,\,2.
\end{equation}

\medskip
First, for $l=1,2$ and $t\in \R$ set $w_l(t):=(-1)^{l-1}v_\star(t),$ where $v_{\star}(t)$ is the heteroclinic solution to \eqref{Allen-Cahn-eqn 1D} described in \eqref{Allen-Cahn-eqn 1D Sln}. For $(\pe,{\tt z})\in\Sigma_\eps\times\R$ define
\begin{equation}\label{firstapproxAC}
U_0(\pe,{\tt z}):=w_1({\tt z}-{\tt h}_l(\eps \pe))+ w_2({\tt z}-{\tt h}_2(\eps \pe))-1.
\end{equation}

Set also
$$
S(u)=\Delta u+F(u), \qquad F(u)=u(1-u^2)
$$
and let us now compute the error $S(U_0)$ near the normal graphs of the functions $\pe\mapsto{\tt h}_l(\eps\pe)$ over $\Sigma_\eps$.

\begin{lemma}\label{lemmaerrorSUzero}
For $l\in \{1,2\}$ and for any $(\pe,t)\in X^{-1}_{\eps, {\tt h}_l}(\mathcal{N}_{l,\eps})$,
\begin{equation}\label{errorSUzero}
\begin{aligned}
(-1)^{l-1}S(U_0)=&-\eps^2(\Delta_\Sigma \h_l+|A_\Sigma|^2 \h_l)v'_\star-\eps^2|A_\Sigma|^2tv'_\star+\eps^2|\nabla_\Sigma \h_l|^2v''_\star\\
&+6(1-v_\star^2)(e^{-\sqrt{2}t}e^{-\sqrt{2}(\h_l-\h_{l-1})}-e^{\sqrt{2}t}e^{-\sqrt{2}(\h_{l+1}-\h_l)})
+R_\eps(\h_1,\h_2),
\end{aligned}
\end{equation}
where $R_\eps(\h_1,\h_2)$ is such that
\begin{equation}\label{estimateReps}
|R_\eps(\h_1,\h_2)|\le C\eps^{2+\gamma}(s(\eps\pe)^2+2)^{-\frac{2+\gamma}{2}}e^{-\rho|t|} \quad \hbox{in} \quad X_{\eps,\h_l}^{-1}(\mathcal{N}_{l,\eps})
\end{equation}
for some $\gamma\in(0,\frac{1}{2})$ and $\rho\in(0,\sqrt{2})$.
\end{lemma}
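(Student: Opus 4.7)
The plan is to fix $l\in\{1,2\}$ and work in the Fermi coordinates $(\pe,t)$ adapted to ${\tt h}_l$, so that ${\tt z}=t+{\tt h}_l(\eps\pe)$. In these coordinates the first summand of $U_0$ is the pure profile $W_l(t):=w_l(t)$, while the second summand $W_{3-l}(\pe,t):=w_{3-l}(t+({\tt h}_l-{\tt h}_{3-l})(\eps\pe))$ carries the interaction with the distant interface. I would decompose
$$
S(U_0)=\bigl[\Delta W_l+F(W_l)\bigr]+\bigl[\Delta W_{3-l}+F(W_{3-l})\bigr]+\bigl[F(W_1+W_2-1)-F(W_1)-F(W_2)\bigr],
$$
and exploit the elementary cubic identity
$$
F(u+v-1)-F(u)-F(v)=-3(u+v)(1-u)(1-v),
$$
valid for $F(s)=s-s^3$. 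The three groups on the right-hand side of \eqref{errorSUzero} will be produced, respectively, by the first bracket (curvature and gradient terms), the third bracket (interaction), and contributions to $R_\eps$ from the second bracket and the lower-order correctors in \eqref{Laplacian_Fermi_st}.

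For the first bracket, since $W_l=w_l(t)$ has no $\pe$-dependence, applying \eqref{Laplacian_Fermi_st} collapses to
$$
\Delta W_l=w_l''(t)-\eps^2\bigl(\Delta_\Sigma{\tt h}_l+|A_\Sigma|^2{\tt h}_l\bigr)w_l'(t)-\eps^2 t|A_\Sigma|^2 w_l'(t)+\eps^2|\nabla_\Sigma{\tt h}_l|^2 w_l''(t)+R_1,
$$
where $R_1$ gathers the contributions from $\eps\Q\partial_t$, $\ta\partial_\s^2$, $\eps\bi\partial_\s$ and the matching mixed-derivative terms. The ODE $v_\star''+F(v_\star)=0$ together with $w_l=(-1)^{l-1}v_\star$ cancels $w_l''(t)+F(W_l)$ exactly, and multiplication by $(-1)^{l-1}$ yields the first three explicit terms of \eqref{errorSUzero}. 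Using \eqref{est_remainder_laplacian}, the growth conditions \eqref{cond_growth_hl}-\eqref{cond_growth_der_hl}, and the standard exponential decay $|w_l^{(k)}(t)|\lesssim e^{-\sqrt{2}|t|}$ for $k=1,2$, I would bound $|R_1|$ by $C\eps^{2+\gamma}(s(\eps\pe)^2+2)^{-(2+\gamma)/2}e^{-\rho|t|}$ for suitable $\gamma\in(0,\tfrac12)$ and $\rho\in(0,\sqrt{2})$.

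For the second bracket, \eqref{cond_growth_hl} forces
$$
|{\tt h}_{3-l}-{\tt h}_l|(\eps\pe)\ge\bigl(1-2\alpha\bigr)\tfrac{1}{\sqrt{2}}\bigl(\log(s(\eps\pe)^2+2)+2|\log\eps|\bigr),
$$
so on the validity domain of the coordinate $|t+({\tt h}_l-{\tt h}_{3-l})|$ is large and the exponential decay of $w_{3-l}'$, $w_{3-l}''$ and of $F(W_{3-l})$ turns the whole bracket into a term absorbable into $R_\eps(h_1,h_2)$, using $\alpha<1/9$ to keep the outcome within the required $\eps^{2+\gamma}(s^2+2)^{-(2+\gamma)/2}$ budget. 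For the third bracket I would expand each factor of $-3(W_1+W_2)(1-W_1)(1-W_2)$ at the $l$-th interface via
$$
1-v_\star(\tau)=2e^{-\sqrt{2}\tau}+O(e^{-2\sqrt{2}\tau}),\qquad 1+v_\star(\tau)=2e^{\sqrt{2}\tau}+O(e^{2\sqrt{2}\tau}),
$$
as $\tau\to+\infty$ and $\tau\to-\infty$ respectively. Because $1-W_1=1-v_\star(\tau_1)$ and $1-W_2=1+v_\star(\tau_2)$, precisely one of these factors is exponentially small at each end, producing, after multiplication by $(-1)^{l-1}$ and with the conventions ${\tt h}_0\equiv-\infty$, ${\tt h}_3\equiv+\infty$, exactly
$$
6(1-v_\star^2(t))\Bigl(e^{-\sqrt{2}t}e^{-\sqrt{2}({\tt h}_l-{\tt h}_{l-1})}-e^{\sqrt{2}t}e^{-\sqrt{2}({\tt h}_{l+1}-{\tt h}_l)}\Bigr),
$$
plus Taylor remainders that fit into $R_\eps(h_1,h_2)$. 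The main obstacle will be bookkeeping: collecting the Taylor remainders from both the algebraic identity and the coordinate expansion into a single estimate of the form \eqref{estimateReps}, using the restriction $\alpha<\tfrac{1}{9}$ to ensure the crude bounds on ${\tt h}_l$ do not erode the powers of $\eps$ and $(s^2+2)^{-1}$ produced by the genuinely exponential interaction factors.
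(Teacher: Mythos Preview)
Your proposal is correct and takes a genuinely different route from the paper. The paper splits $(-1)^{l-1}S(U_0)=E_1+E_2$ with $E_1=(-1)^{l-1}F(U_0)$ and $E_2=(-1)^{l-1}\Delta U_0$, and then extracts the interaction term by Taylor-expanding $F(U_0)$ around $F(w_l)$ and separately expanding $F(v_\star(t+h_l-h_{l\pm1}))$ around $F(\pm1)$, collecting mean-value-form quadratic remainders of the type $\tfrac12 F''(\xi)(\cdot)^2$. You instead group $S(U_0)$ as $[\Delta W_l+F(W_l)]+[\Delta W_{3-l}+F(W_{3-l})]+[F(W_1+W_2-1)-F(W_1)-F(W_2)]$ and handle the last bracket through the exact cubic identity $F(u+v-1)-F(u)-F(v)=-3(u+v)(1-u)(1-v)$, which delivers the $6(1-v_\star^2)(\cdots)$ term directly from the factorised product rather than from recombining several Taylor pieces. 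Your approach is cleaner and the bookkeeping is lighter because it exploits the polynomial structure of $F$; the paper's approach is more portable to general bistable nonlinearities where no such algebraic identity is available, and it follows the template of the earlier works cited there. Both routes land on the same remainder control: the quadratic-in-the-tail terms (yours coming from the $O(e^{-2\sqrt{2}\tau})$ corrections in $1\mp v_\star(\tau)$ and in $W_1+W_2$, the paper's from the $F''$ remainders) are bounded via the inequality $e^{-2\sqrt{2}|t+h_l-h_j|}\le \eps^{3(1-3\alpha)}((\eps\s)^2+2)^{-\frac32(1-3\alpha)}e^{-2\sqrt{2}\alpha|t|}$, which is exactly where the restriction $\alpha<\tfrac19$ is used to secure $3(1-3\alpha)=2+\gamma>2$. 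One small clean-up: in your $R_1$ the contributions $\ta\partial_\s^2$, $\eps\bi\partial_\s$ and the $\partial_{t\s}$ terms actually vanish on $W_l=w_l(t)$, so the only surviving pieces are $-\eps\Q\,w_l'$, $-\eps^2(\ta h_l''+\bi h_l')w_l'$ and $\eps^2\ta(h_l')^2 w_l''$, all of which obey the stated bound via \eqref{est_remainder_laplacian}.
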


\begin{proof}
Our calculations are done for $O(m)\times O(n)$-invariant functions and hence we use the coordinates $(\s,t)$, where $\s=\s(\pe)$ and $t= {\tt z} - {\tt h}_l(\eps \pe)$.  Recall also that $\h_l(\eps\pe)=h_l(\eps\s)$.

\medskip
We write
$$(-1)^{l-1}S(U_0)=E_1+E_2,\qquad E_1:=(-1)^{l-1}F(U_0),\qquad E_2:=(-1)^{l-1}\Delta U_0.
$$
First we consider the term $E_1$. Observe that for
\begin{equation}
F(U_0)=\sum_{j=1}^2 F(w_j(t+h_l-h_j))+F(U_0)-\sum_{j=1}^2 F(w_j(t+h_l-h_j)).
\end{equation}
Since $F$ is odd for $l\in \{1,2\}$, 
$$
(-1)^{l-1}F((-1)^{l-1}v)=F(v), \qquad (-1)^{l-1}F((-1)^{l-2}v)=(-1)^{l-1}F((-1)^l v)=-F(v).
$$
Thus,
\begin{equation}
\label{est_E1_1}
\begin{aligned}
&(-1)^{l-1}\left(F(U_0)-\sum_{j=1}^2 F(w_j(t+h_l-h_j))\right)\\
&=(-1)^{l-1}F(U_0)-F(v_\star)+F(v_\star(t+h_l-h_{l-1}))+F(v_\star(t+h_l-h_{l+1}))
\end{aligned}
\end{equation}
By the Mean Value Theorem, there exists $\xi_1$ between $w_{l}(t)$ and $U_0(\pe,t)$ such that
\begin{equation}\notag
F(U_0)-F(w_l)=F'(w_l)(U_0-w_l)+\frac{1}{2}F''(w_l+\xi_1(U_0-w_l))(U_0-w_l)^2.
\end{equation}
Using that $F$ is odd, we get
\begin{equation}\notag
\begin{aligned}
(-1)^{l-1}F(U_0)-F(v_\star)=&F'(v_\star)((-1)^{l-1}U_0-v_\star)\\
&+\frac{1}{2}F''(v_\star+\xi_1((-1)^{l-1}U_0-v_\star))((-1)^{l-1}U_0-v_\star)^2.
\end{aligned}
\end{equation}
Since,
$$(-1)^{l-1}U_0-v_\star=\sgn(l-j)-v_\star(t+h_l-h_j), \qquad j\in\{1,2\},\,\,\,j\ne l,
$$
performing a Taylor expansion we find that
\begin{equation}\label{ACerrorNonlin}
\begin{aligned}
F(v_\star(t+h_l-h_{l-1}))=&F(1)+F'(1)(v_\star(t+h_l-h_{l-1})-1)\\
&+\frac{1}{2}F''(1+\xi_2 (v_\star(t+h_l-h_{l-1})-1))(v_\star(t+h_l-h_{l-1})-1)^2
\end{aligned}
\end{equation}
and
\begin{equation}\label{ACerrorNonlin1}
\begin{aligned}
F(v_\star(t+h_l-h_{l+1}))=&F(-1)+F'(-1)(v_\star(t+h_l-h_{l+1})+1)\\
&+\frac{1}{2}F''(-1+\xi_3 (v_\star(t+h_l-h_{l+1})+1))(v_\star(t+h_l-h_{l+1})+1)^2.
\end{aligned}
\end{equation}
Putting together \eqref{est_E1_1}, \eqref{ACerrorNonlin} and \eqref{ACerrorNonlin1} we find that for $j\in\{1,2\},\,j\ne l$,
\begin{equation}
\begin{aligned}
&(-1)^{l-1}\left(F(U_0)-\sum_{j=1}^k F(w_j(t+h_l-h_j))\right)=\\
&-(2+F'(v_\star))(v_\star(t+h_l-h_{l+1})+v_\star(t+h_l-h_{l-1}))\\
&+\frac{1}{2}F''(v_\star+\xi_1((-1)^{l-1}U_0-v_\star))(v_\star(t+h_l-h_j)-\sgn(l-j))^2\\
&+\frac{1}{2}F''(1+\xi_2 (v_\star(t+h_l-h_{l-1})-1))(v_\star(t+h_l-h_{l-1})-1)^2\\
&+\frac{1}{2}F''(-1+\xi_3 (v_\star(t+h_l-h_{l+1})+1))(v_\star(t+h_l-h_{l+1})+1)^2,\qquad .
\end{aligned}
\end{equation}

Finally, using the asymptotic behaviour of $v_\star$, we have
\begin{equation}
E_1=(-1)^{l-1}\sum_{j=1}^2 F(w_j(t+h_l-h_j))+6(1-v_\star^2)(e^{-\sqrt{2}t}e^{-\sqrt{2}(h_l-h_{l-1})}-e^{\sqrt{2}t}e^{-\sqrt{2}(h_{l+1}-h_l)})+R_1,
\end{equation}
where
\begin{equation}
\begin{aligned}
R_{1,\eps}:=&-3(1+v_\star^2)(v_\star(t+h_l-h_{l-1})-1+2e^{-\sqrt{2}t}e^{-\sqrt{2}(h_l-h_{l-1})})\\
&-3(1+v_\star^2)(v_\star(t+h_l-h_{l+1})+1-2e^{\sqrt{2}t}e^{-\sqrt{2}(h_{l+1}-h_l)})\\
&+\frac{1}{2}F''(v_\star+\xi_1((-1)^{l-1}U_0-v_\star))(v_\star(t+h_l-h_j)-\sgn(l-j))^2\\
&+\frac{1}{2}F''(1+\xi_2 (v_\star(t+h_l-h_{l-1})-1))(v_\star(t+h_l-h_{l-1})-1)^2\\
&+\frac{1}{2}F''(-1+\xi_3 (v_\star(t+h_l-h_{l+1})+1))(v_\star(t+h_l-h_{l+1})+1)^2
\end{aligned}
\end{equation}
for $j\in\{1,2\},\,j\ne l$.

\medskip
Next, we compute $E_2$. Observe that 
$$
(-1)^{l-1}\Delta U_0=\Delta v_\star-\Delta v_\star(t+h_l-h_j)\quad \hbox{for} \quad j\in\{1,2\},\,j\ne l.
$$
Moreover, from \eqref{Laplacian_Fermi_st} we find that
\begin{equation}\notag
\begin{aligned}
\Delta v_\star=&v''_\star-\eps^2(h''_l+\alpha h'_l+\beta h_l)v'_\star
-\eps^2\beta tv'_\star+\eps^2(h'_l)^2 v''_\star\\
&-\eps\Q v'_\star-\eps^2(\ta h''_l+\bi h'_l)v'_\star+\eps^2\ta(h'_l)^2v''_\star\\
=&v''_\star-\eps^2(h''_l+\alpha h'_l+\beta h_l)v'_\star
-\eps^2\beta tv'_\star+\eps^2(h'_l)^2 v''_\star+R_{2,\eps},
\end{aligned}
\end{equation}
where $\alpha$, $\beta$ and $h_l$ and its derivatives are all evaluated at $\eps\s$, while $\Q,\,\ta$ and $\bi$ are evaluated at $(\eps\s,\eps(t+h_l(\eps\s)))$. 
Similarly,
\begin{equation}
\begin{aligned}
(-1)^l\Delta v_\star(t+h_l-h_j)=&\Delta w_j(t+h_l-h_j)\\
=&w''_j(t+h_l-h_j)-\eps^2(h''_j+\alpha h'_j+\beta(t+h_l))w'_j(t+h_l-h_j)\\
&+\eps^2(h'_j)^2 w''_j(t+h_l-h_j)+\eps^2\left(\ta w''_j(t+h_l-h_j)(h'_j)^2\right)\\
&-\eps^2(\ta h''_j+\bi h'_j)w'_j(t+h_l-h_j)-\eps\Q w'_j(t+h_l-h_j).\\
= &w''_j(t+h_l-h_j)-\eps^2(h''_j+\alpha h'_l+\beta(t+h_l))w'_j(t+h_l-h_j)\\
&+\eps^2(h'_j)^2 w''_j(t+h_l-h_j)+R_{3,\eps}
\end{aligned}
\end{equation}
for $j\in\{1,2\}$ with $j\ne l$.
Therefore, 
\begin{equation}
\begin{aligned}
(-1)^l\Delta v_\star(t+h_l-h_j)
= w''_j(t+h_l-h_j)-\eps^2(h''_j+\alpha h'_l+\beta(t+h_l))w'_j(t+h_l-h_j)\\
+\eps^2(h'_j)^2 w''_j(t+h_l-h_j)+R_{3,\eps}
\end{aligned}
\end{equation}
for $j\in\{1,2\}$ with $j\ne l$ and consequently,
\begin{equation}\notag
\begin{aligned}
E_2 =v''_\star-&\eps^2(h''_l+\alpha h'_l+\beta h_l)v'_{\star}-\eps^2\beta tv'_\star +\eps^2(h'_l)^2 v''_\star\\
&+w''_j(t+h_l-h_j)-\eps^2(h''_j+\alpha h'_l+\beta(t+h_l))w'_j(t+h_l-h_j)\\
& \hspace{1cm}+\eps^2(h'_j)^2 w''_j(t+h_l-h_j)+R_{2,\eps}+R_{3,\eps}.
\end{aligned}
\end{equation}
Setting $R_\eps:=R_{1,\eps}+R_{2,\eps}+R_{3,\eps}$, we find that \eqref{errorSUzero} holds true.

\medskip 
It remains to prove the estimate \eqref{estimateReps} for the remainder term $R_{\eps}$. To do so, we proceed as follows.
Using the inequalities in \eqref{cond_growth_hl}, we find the lower estimate
\begin{equation}\notag
\begin{aligned}
|t+h_j-h_l|\ge &|h_j-h_l|-|t|\\
=&|h_j-h_l|-(1+\alpha)|t|+\alpha|t|\\
\ge& \frac{1}{\sqrt{2}}\left(1-2\alpha-\frac{1+\alpha}{4}\right)(\log((\eps\s)^2+2)+2|\log\eps|)+\alpha|t|
\end{aligned}
\end{equation}
and hence we conclude that
\begin{equation}\notag
\begin{aligned}
e^{-2\sqrt{2}|t+h_j-h_l|}\le & e^{-2\sqrt{2}\left(1-2\alpha-\frac{1+\alpha}{4}\right)(\log((\eps s)^2+2)+2|\log\eps|)}e^{-2\sqrt{2}\alpha|t|}\\ 
\le & \eps^{3(1-3\alpha)}((\eps\s)^2+2)^{-\frac{3}{2}(1-3\alpha)}e^{-2\alpha|t|}.
\end{aligned}
\end{equation}
We note that
$$3(1-3\alpha)=:2+\gamma>2$$
provided $\alpha>0$ is small enough. 

\medskip
Putting together the previous estimate and the estimates in \eqref{est_remainder_laplacian}, the estimate in \eqref{estimateReps} follows. This completes the proof of the Lemma.
\end{proof}

\subsection{Improvement of the approximation}
In this subsection we improve the approximation $U_0$ defined in \eqref{firstapproxAC}. More precisely, we cancel the terms of order $\eps^2$ or smaller.

\medskip
We begin by writing
\begin{equation}\label{AClinearOrthCond1}
6(1-v_\star^2)e^{-\sqrt{2}t}=a_{\star} v'_\star+g_{0}(t) \quad  \hbox{with} \quad \int_\R g_{0}(t)v'_\star(t)dt=0
\end{equation}
and by noticing that
\begin{equation}\label{AClinearOrthCond2}
\int_{\R} v''_{\star}(t)v'_{\star}(t)dt =\int_\R t(v'_\star(t))^2 dt=0.
\end{equation}

In order to improve the approximation $U_0$, we solve the ODE's
\begin{equation}
\psi''_0+(1-3v_\star^2)\psi_0=g_0, \quad  \psi''_1+(1-3v_\star^2)\psi_1=-v''_\star, \quad 
\psi''_2+(1-3v_\star^2)\psi_2=tv'_\star \quad \hbox{in} \quad \R.
\end{equation}

Since $v'_{\star}$ is a positive solution of the equation
$$
\psi''(t) + (1 - 3v_{\star}^2)\psi(t)=0 \quad\hbox{in} \quad \R,
$$ 
directly from the variation of parameters formula and the orthogonality conditions in \eqref{AClinearOrthCond1} and \eqref{AClinearOrthCond2}, we find that 
$$
\psi_0(t)=v'_\star(t)\int_0^t (v'_\star(\tau))^{-2} \left(\int_{\tau}^\infty v'_\star(\xi)g_0(\xi)d\xi\right) d\tau,
$$
$\psi_1(t)=\frac{1}{2}tv'_{\star}(t)$ and 
$$
\psi_2(t)=v'_\star(t)\int_0^t (v'_\star(\tau))^{-2} \left(\int_\tau^\infty \xi v'_\star(\xi)^2d\xi\right) d\tau.
$$

We also find that for any $i\in \mathbb{N}\cup \{0\}$, there exists $C_i>0$ such that
$$
\big\|(1 + e^{2\sqrt{2}|t|}\chi_{t>0})\partial_i \psi_0\big\|_{L^{\infty}(\R)}\leq C_i
$$
and for any $\varrho\in (0\sqrt{2})$ and for any $j=1,2$ and any $i\in \mathbb{N}\cup \{0\}$, there exists $\tilde{C}_i>0$ 
$$
\big\|e^{\varrho |t|}\partial_i \psi_j\big\|_{L^{\infty}(\R)}\leq \tilde{C}_i.
$$
Furthermore, since $\psi_1$ and $\psi_2$ are odd functions in the variable $t$, then
$$
\int_\R\psi_1(t) v'_\star(t) dt=\int_\R\psi_2(t) v'_\star(t) dt=0.
$$

\medskip
Next, we proceed as in subsection 5.2 in \cite{AGUDELODELPINOWEI2015}. First, define $\eta_j: \Sigma_{\eps}\times \R \to \R$ by the formula 
\begin{equation}
\begin{aligned}
(-1)^{j-1}\eta_j(\pe,\z):=&-e^{-\sqrt{2}(\h_j(\eps\pe)-\h_{j-1}(\eps\pe))}\psi_0(\h_j(\eps\pe)-\z)
+e^{-\sqrt{2}(\h_{j+1}(\eps\pe)-\h_j(\eps\pe))}\psi_0(\z-\h_j(\eps\pe))\\
&\hspace{3cm}+\eps^2\psi_1(\z-\h_j(\eps\pe))+\eps^2|A_\Sigma(\eps\pe)|^2\psi_2(\z-\h_j(\eps\pe))
\end{aligned}
\end{equation}
and set $\tilde{\eta}_j(\pe,t)=(-1)^{j-1}{\eta}_j(\pe,t+\h_j(\eps\pe))$.
We consider the approximation
\begin{equation}\notag
U_1(\pe,\z):=U_0(\pe,\z)+\eta(\pe,\z), \qquad \eta(\pe,\z):=\sum_{j=1}^2\eta_{j}(\pe,\z). 
\end{equation}
Using the variable $t:=\z-\h_l(\eps\pe)$ in $X^{-1}_{\eps,\h_l}(\mathcal{N}_{l,\eps})$, we have for $  j\in\{1,2\}$ with $j\ne l$, that
\begin{equation}\notag
\begin{aligned}
(-1)^{l-1}S(U_1)
&=(-1)^{l-1}S(U_0)+\Delta_{\mathcal{N}_{l,\eps}}\tilde{\eta}_l+F'(w_l)\tilde{\eta}_l+(F'(U_0)-F'(w_l))\tilde{\eta}_l\\
&+\Delta_{\mathcal{N}_{j,\eps}}\tilde{\eta}_j(t+h_l-h_j)+F'(U_0)\tilde{\eta}_j(t+h_l-h_j)+(-1)^{l-1}Q_{U_0}(\eta),
\end{aligned}
\end{equation}
where we have denoted
$$
Q_{U_0}(\eta)=F(U_0+\eta)-F(U_0)-F'(U_0)\eta.
$$


\medskip
Using \eqref{Laplacian_Fermi_st} and \eqref{est_remainder_laplacian} in $\mathcal{N}_{l,\eps}$, we find that
\begin{equation}\notag
\begin{aligned}
(\Delta_{\mathcal{N}_{l,\eps}}+F'(w))(\eps^2(h'_l(\eps\s))^2\psi_1(t)+\eps^2\beta(\eps\s)\psi_2(t))=\eps^2\beta(\eps\s) tv'_\star-\eps^2(h'_l)^2 v''_\star+R_{4,\eps}(\h_1,\h_2)
\end{aligned}
\end{equation}
and
\begin{equation}\notag
\begin{aligned}
&(\Delta_{\mathcal{N}_{l,\eps}}+F'(w))\left(-e^{-\sqrt{2}(h_l-h_{l-1})}\psi_0(-t)
+e^{-\sqrt{2}(h_{l+1}-h_l)}\psi_0(t)\right)=\\
&-e^{-\sqrt{2}(h_l-h_{l-1})}g_0(-t)+e^{-\sqrt{2}(h_{l+1}-h_l)}g_0(t)+R_{5,\eps}(\h_1,\h_2),
\end{aligned}
\end{equation}
with $$|R_{i,\eps}(\h_1,\h_2)|\le C \eps^3(s(\eps\pe)^2+2)^{-\frac{2+\gamma}{2}}e^{-\rho|t|}, \qquad i=4,5.
$$

The following lemma summarises the computations of the error $S(U_1)$.

\begin{lemma}
\label{lemma_new_error}
Assume the hypothesis in Lemma \eqref{errorSUzero}. The error $S(U_1)$ in $X^{-1}_{\eps,\h_l}(\mathcal{N}_{\eps,l})$ is given by
\begin{equation}\notag
\begin{aligned}
(-1)^{l-1}S(U_1)=\eps^2(\Delta_\Sigma \h_l+|A_\Sigma|^2 \h_l)v'_\star+a_\star(e^{-\sqrt{2}(\h_l-\h_{l-1})}-e^{-\sqrt{2}(\h_{l+1}-\h_l)})v'_\star+R_{6,\eps}(\h_1,\h_2),
\end{aligned}
\end{equation}
with 
\begin{equation}
|R_{6,\eps}(\h_1,\h_2)|\le C \eps^{2+\gamma}(s(\eps\pe)^2+2)^{-\frac{2+\gamma}{2}}e^{-\rho|t|}.
\end{equation}
\end{lemma}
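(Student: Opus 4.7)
The plan is to start from the decomposition of $S(U_1)$ displayed immediately before the lemma's statement and, after substituting the expansion of $S(U_0)$ from Lemma~\ref{lemmaerrorSUzero}, to show that the leading part of the linearized operator $\mathcal{L}_l:=\partial_t^2+F'(w_l)=\partial_t^2+(1-3v_\star^2)$ applied to $\tilde{\eta}_l$ cancels exactly the non-resonant portion of $S(U_0)$, so that only the two $v'_\star$-multiples in the statement survive.

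First I would rewrite the interaction term in Lemma~\ref{lemmaerrorSUzero} via the splitting \eqref{AClinearOrthCond1} applied at both $t$ and $-t$ (using that $v_\star^2$ is even), obtaining
\begin{equation*}
6(1-v_\star^2)\bigl(e^{-\sqrt{2}t}e^{-\sqrt{2}(\h_l-\h_{l-1})}-e^{\sqrt{2}t}e^{-\sqrt{2}(\h_{l+1}-\h_l)}\bigr)=a_\star v'_\star\bigl(e^{-\sqrt{2}(\h_l-\h_{l-1})}-e^{-\sqrt{2}(\h_{l+1}-\h_l)}\bigr)+\mathcal{G}_l(t),
\end{equation*}
where $\mathcal{G}_l$ collects the $g_0$-type contributions, each orthogonal to $v'_\star$ in $t$. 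The first summand is resonant and must survive; combined with the term $-\eps^2(\Delta_\Sigma\h_l+|A_\Sigma|^2\h_l)v'_\star$ coming from Lemma~\ref{lemmaerrorSUzero}, it yields the Jacobi--Toda leading part of the announced error. Next, using the defining ODEs $\psi_0''+(1-3v_\star^2)\psi_0=g_0$, $\psi_1''+(1-3v_\star^2)\psi_1=-v''_\star$, $\psi_2''+(1-3v_\star^2)\psi_2=tv'_\star$ and the two displayed identities preceding the lemma, I would verify that $\mathcal{L}_l\tilde{\eta}_l$ produces exactly $-\mathcal{G}_l(t)-\eps^2|\nabla_\Sigma\h_l|^2v''_\star+\eps^2|A_\Sigma|^2 tv'_\star$ modulo the subleading remainders $R_{4,\eps}$, $R_{5,\eps}$; the three ``bad'' pieces of $S(U_0)$ are thereby eliminated.

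The remaining four contributions to $S(U_1)$ --- the pieces coming from $\mathcal{L}_l\tilde{\eta}_j$ with $j\ne l$, the mismatch $(F'(U_0)-F'(w_l))\tilde{\eta}_l$, the interaction $F'(U_0)\tilde{\eta}_j$, and the quadratic remainder $Q_{U_0}(\eta)$ --- all need to be absorbed into $R_{6,\eps}$. The first is small because $\tilde{\eta}_j$ consists of $\psi_0,\psi_1,\psi_2$-factors which decay exponentially in their argument $t+\h_l-\h_j$, and $|t+\h_l-\h_j|\ge|\h_l-\h_j|-|t|$ with $|\h_l-\h_j|$ growing logarithmically by \eqref{cond_growth_hl}. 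The mismatch $F'(U_0)-F'(w_l)=3(w_l^2-U_0^2)$ is likewise exponentially small in the interface separation. Finally $Q_{U_0}(\eta)=\mathcal{O}(|\eta|^2)$ is controlled by the square of the already exponentially small interaction factors. Combining these bounds with \eqref{cond_growth_der_hl} and the Fermi-coordinate estimates \eqref{est_remainder_laplacian}, each contribution fits into a term of the desired form $C\eps^{2+\gamma}(s(\eps\pe)^2+2)^{-(2+\gamma)/2}e^{-\rho|t|}$.

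The main obstacle is the bookkeeping needed to see that every subleading factor can indeed be reorganised into the uniform decay $\eps^{2+\gamma}(s^2+2)^{-(2+\gamma)/2}e^{-\rho|t|}$. The decisive input is the same as in the proof of \eqref{estimateReps}: the lower bound $\h_{l+1}-\h_l\ge\tfrac{1}{\sqrt{2}}(1-2\alpha)(\log(s^2+2)+2|\log\eps|)$ furnished by \eqref{cond_growth_hl} turns $e^{-\sqrt{2}(\h_{l+1}-\h_l)}$ into a polynomial in $s$ times $\eps^{2(1-2\alpha)}$; an additional factor of $\eps$ from the non-flat part of the Laplacian in Fermi coordinates then upgrades this to $\eps^{2+\gamma}$ once $\alpha\in(0,1/9)$. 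The exponential decay in $t$ is inherited from $\psi_0,\psi_1,\psi_2$ and from $v'_\star,v''_\star$, with the rate $\rho<\sqrt{2}$ tuned so that the factors $e^{\sqrt{2}|t|}$ absorbed when expanding $|t+\h_l-\h_j|$ do not overwhelm it.
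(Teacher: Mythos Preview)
Your proposal is correct and follows essentially the same route as the paper: you start from the decomposition of $S(U_1)$ displayed before the lemma, use the splitting \eqref{AClinearOrthCond1} to separate the resonant $a_\star v'_\star$ piece from the $g_0$ piece, apply $\partial_t^2+F'(w_l)$ to $\tilde\eta_l$ via the defining ODEs for $\psi_0,\psi_1,\psi_2$ to kill the non-resonant terms of $S(U_0)$, and then absorb the cross terms, the mismatch $(F'(U_0)-F'(w_l))\tilde\eta_l$, and $Q_{U_0}(\eta)$ into $R_{6,\eps}$ using the separation lower bound from \eqref{cond_growth_hl}. The paper does exactly this, recording the two key computations (the ones yielding $R_{4,\eps}$ and $R_{5,\eps}$) just before the lemma and stating the lemma as a summary without further proof.
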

\section{The Lyapunov-Schmidt reduction}\label{LSreduction}

In this section, we perform an infinite dimensional Lyapunov-Schmidt reduction procedure and finish the construction of the solution of \eqref{Allen-Cahn-eq} predicted in Theorem \ref{2ndMainTheo}. Again many of the developments are in the lines of those in \cite{AGUDELODELPINOWEI2015, DELPINOKOWALCZYKWEI2013I}.

\subsection{A gluing procedure}\label{subs_gluing}
We begin the developments in this part by construction a global approximation of \eqref{Allen-Cahn-eq}. We introduce a smooth cutoff function $\chi\in C^\infty(\R)$ such that $0\le\chi\le 1$ and
\begin{equation}\notag
\chi(t)=\begin{cases}
1 \qquad t\le 1\\
0 \qquad t\ge 2.
\end{cases}
\end{equation}
For $(\pe,{\tt z})\in\Sigma_\eps\times\R$, we set
$$\zeta(\pe,{\tt z}):=\chi\left(|{\tt z}|-\frac{4}{\sqrt{2}}(\log(s(\eps\pe)^2+2)+2|\log\eps|)+2\right)$$
and 
\begin{equation}
\tilde{\zeta}(\xi):=
\left\{\begin{aligned}
\zeta\circ X_\eps^{-1}(\xi) , \qquad &\hbox{for} \quad \xi\in \mathcal{N}_\eps\\
0 \,\,\,\,\,\,, \qquad &\hbox{otherwise.}
\end{aligned}
\right.
\end{equation}
Similarly, we set
\begin{equation}\label{globalapproxsect5}
\tilde{U}_1(\xi):=
\left\{\begin{aligned}
U_1\circ X_\eps^{-1}(\xi) , \qquad &\hbox{for} \quad \xi\in \mathcal{N}_\eps\\
0 \,\,\,\,\,\,, \qquad &\hbox{otherwise.}
\end{aligned}
\right.
\end{equation}
We define our global approximation as
\begin{equation}
w(\xi):=\tilde{\zeta}(\xi)\tilde{U}_1(\xi)-(1-\tilde{\zeta}(\xi)),
\end{equation}
and we look for a solution to the Allen-Cahn equation of the form
$$u=w+\varphi,$$
where $\varphi:\R^{N+1}\to \R$ is a small correction in an appropriate topology to be determined later.

\medskip
We introduce next some useful notation. For $l=1,\,2$ and a function $u:\Sigma_\eps\times\R\to\R$, we set
\begin{equation}
u^{\natural}_l(\xi):=
\begin{cases}
u\circ X_{\eps,{\tt h}_l}^{-1}(\xi),\qquad &\text{for $\xi\in\mathcal{N}_{\eps,l}\subset\R^{N+1}$}\\
\hspace{1cm}0, \qquad &\text{otherwise}.
\end{cases}
\end{equation}

On the other hand, given any $O(m)\times O(n)$-invariant function $v:\R^{N+1}\to\R$, we define for $1\le l\le 2$ and $(\pe,t)\in\Sigma_\eps\times\R$,
\begin{equation}
v_{l}^{\sharp}(\pe,t):=\begin{cases}
v\circ X_{\eps,{\tt h}_l}(\pe,t)\qquad&\text{if $(\pe,t)\in X^{-1}_{\eps,{\tt h}_l}(\mathcal{N}_{\eps,l})$}\\
\hspace{1cm}0,\qquad &\text{otherwise.}
\end{cases}
\end{equation}

The notation just introduced can be explained as follows: $u^\natural_l$ refers to an expression of $u$ in {\it{natural}} or euclidean coordinates while $u^\sharp$ refers to a function expressed in natural coordinates {\it{pushed forward}} to  Fermi coordinates.

\medskip
For any integer $i\ge 1$ and $(\pe,t)\in\Sigma_\eps\times\R$, we set
\begin{equation}\notag
\chi_i(\pe,t):=\chi\left(|t|-\frac{1}{4\sqrt{2}}(\log(s(\eps\pe)^2+2)+2|\log\eps|)+i\right)
\end{equation}
and we look for a correction of the form
\begin{equation}\label{globalsmallterm}
\varphi=\sum_{l=1}^2 \chi^{\natural}_{3,l}\varphi_l+\psi.
\end{equation}

Using the fact that $\chi^{\natural}_{4,l}\chi^{\natural}_{3,l}=\chi^{\natural}_{4,l}$, the Allen-Cahn equation can be written as
\begin{equation}\notag
\begin{aligned}
&0=S(w+\varphi)=S(w)+\Delta\varphi+F'(w)\varphi+Q_w(\varphi)=\\
&\sum_{l=1}^2\chi^{\natural}_{3,l}(\Delta\varphi_l+F'(w)\varphi_l+\chi^{\natural}_{4,l}S(w)+\chi^{\natural}_{4,l}
Q_w(\varphi_l+\psi)+\chi^{\natural}_{4,l}(F'(w)+2)\psi)\\
&+\Delta\psi+\big(2-(1-\sum_{l=1}^2 \chi^{\natural}_{4,l})(F'(w)+2)\big)\psi+(1-\sum_{l=1}^2 \chi^{\natural}_{4,l})S(w)\\
&+\sum_{l=1}^k 2\nabla\chi^{\natural}_{3,l}\cdotp\nabla\varphi_l+\Delta\chi^{\natural}_{3,l}\varphi_l
+(1-\chi^{\natural}_{4,l})Q_w(\psi+\sum_{i=1}^2 \chi^{\natural}_{3,i}\varphi_i),
\end{aligned}
\end{equation}
where we have denoted
$$
Q_w(\varphi)=F(w+\varphi)-F(w)-F'(w)\varphi.
$$

Since we look for an $O(m)\times O(n)$-invariant solution, also $\psi$ and $\varphi_1,\varphi_2$ have to satisfy these symmetries. In other words, $\varphi_l=\phi^{\natural}_l$, for some functions $\phi_l:\Sigma_\eps\times\R\to\R$ of the $(\pe,t)$-variables, which are $O(m)\times O(n)$-invariant. Therefore we have to solve the system given by the equations
\begin{equation}
\label{eq_nl_psi}
\begin{aligned}
&\Delta\psi+\left(2-(1-\sum_{l=1}^2 \chi^{\natural}_{4,l})(F'(w)+2)\right)\psi+\left(1-\sum_{l=1}^2 \chi^{\natural}_{4,l}\right)S(w)\\
&+\sum_{l=1}^2 2\nabla\chi^{\natural}_{3,l}\cdotp\nabla\varphi_l+\Delta\chi^{\natural}_{3,l}\varphi_l
+(1-\chi^{\natural}_{4,l})Q_w\left(\psi+\sum_{i=1}^2 \chi^{\natural}_{3,i}\varphi_i\right)=0 \quad \hbox{in} \quad \R^{N+1},\quad l=1,2
\end{aligned}
\end{equation}
and
\begin{equation}
\label{eq_nl_phi_j}
\begin{aligned}
&\Delta_{\Sigma_\eps}\phi_l+\partial^2_t\phi_l+F'(v_\star)\phi_l+\chi_4 S(w^{{\sharp}}_l)+\chi_4
Q_{w^{\sharp}_{l}}(\phi_l+\psi^{\sharp}_{l})+\chi_4(F'(w^{\sharp}_{l})+2)\psi^{\sharp}_{l}\\
&\qquad+\chi_2(\Delta_{\mathcal{N}_{l,\eps}}-\partial^2_t-\Delta_{\Sigma_\eps})\phi_l+\left(F'(w^{\sharp}_{l})-F'(v_\star)\right)\chi_2\phi_l=0 \quad \hbox{in} \quad {\Sigma_{\eps} \times \R},\quad  l=1,2,
\end{aligned}
\end{equation}
where $\Delta_{\mathcal{N}_{l,\eps}}$ represents the Laplacian in the $(\pe,t)$-coordinates, given by Lemma \ref{lemma_coord_Fermi}.

\medskip
Using the decay of the error both along the surface and in the orthogonal direction, it is possible to prove the following result about the behaviour of the error far from the interfaces.

\begin{lemma}
\label{lemma_error_far}
In the previous notations, for some $\bar{\gamma}>0$,
\begin{equation}
\left|\left(1-\sum_{l=1}^2 \chi^{\natural}_{4,l}\right)S(w)\right|\le C\eps^{2+\bar{\gamma}}(|\eps\xi|^2+2)^{-\frac{2+\bar{\gamma}}{2}},\qquad\forall\,\xi\in\R^{N+1}.
\end{equation}
\end{lemma}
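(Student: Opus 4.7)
The plan is to partition $\R^{N+1}$ into three regions according to the support structure of the cutoffs $\chi^{\natural}_{4,l}$ and $\tilde\zeta$, and to verify the pointwise bound separately in each; throughout, write $L:=\log(s(\eps\pe)^2+2)+2|\log\eps|$ for the natural scale appearing in \eqref{cond_growth_hl} and \eqref{metric_FC}.

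For $\xi\notin\mathcal{N}_\eps$ the construction in subsection \ref{subs_gluing} gives $\tilde\zeta(\xi)=0$ and $w(\xi)\equiv -1$; since $F(-1)=0$, the bound is trivial. If instead $\xi\in\mathcal{N}_{l,\eps}$ with $\chi^{\natural}_{4,l}(\xi)=0$ for some $l$, I will use Fermi coordinates $(\pe,t)=X^{-1}_{\eps,\h_l}(\xi)$ and observe that $|t|\ge\frac{L}{4\sqrt{2}}-3$. Because $|h_l|\ll\frac{4L}{\sqrt{2}}$ by \eqref{cond_growth_hl}, one still has $\tilde\zeta\equiv 1$ locally and $w=U_1$, so Lemma \ref{lemma_new_error} applies. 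Using \eqref{cond_growth_hl}--\eqref{cond_growth_der_hl} together with the decay of $\alpha,\beta$ from \eqref{alphaasymptotics}--\eqref{betaasymptotics}, one controls $|\Delta_\Sigma\h_l+|A_\Sigma|^2\h_l|\le CL(s(\eps\pe)^2+2)^{-1}$; the lower bound on $|t|$ gives $|v'_\star(t)|\le C\eps^{1/2}(s(\eps\pe)^2+2)^{-1/4}$; and \eqref{cond_growth_hl} yields $e^{-\sqrt{2}(h_2-h_1)}\le \eps^{2(1-2\alpha)}(s(\eps\pe)^2+2)^{-(1-2\alpha)}$. Multiplying these bounds, both the Jacobi term and the Toda interaction term in Lemma \ref{lemma_new_error} are dominated by $C\eps^{2+\bar\gamma}(s(\eps\pe)^2+2)^{-(2+\bar\gamma)/2}$ for any $0<\bar\gamma<\min\{\gamma,\tfrac12-4\alpha\}$, a range that is non-empty since $\alpha<1/9$; the logarithmic factor $L$ is absorbed by slightly decreasing $\bar\gamma$. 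The remainder $R_{6,\eps}$ already has the required decay.

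The genuinely new case is $\xi\in\mathcal{N}_\eps\setminus\bigcup_l\mathcal{N}_{l,\eps}$. In Fermi coordinates $(\pe,z)=X_\eps^{-1}(\xi)$ of $\Sigma_\eps$ one then has $|z-h_l(\eps\pe)|\ge \frac{L}{4\sqrt{2}}$ for both $l$, producing three sub-cases (below $h_1$, between $h_1,h_2$, above $h_2$) in each of which $U_0(\pe,z)=c+\mathcal{O}(e^{-L/4})$ for some $c\in\{-1,+1\}$ determined by the sub-case, with the same bound on the $z$-derivatives. The crucial point is that, because $v''_\star=-F(v_\star)$ and $F'(\pm 1)=-2$, a Taylor expansion of $F$ around each $v_\star(z-h_j)$ produces the quadratic cancellation
\begin{equation*}
\partial_z^2 U_0+F(U_0)=\mathcal{O}\bigl((U_0-c)^2\bigr)=\mathcal{O}(e^{-L/2})=\mathcal{O}\bigl(\eps(s(\eps\pe)^2+2)^{-1/2}\bigr).
\end{equation*}
The remaining Fermi--Laplace contributions $-H_{\Sigma_z}\partial_z U_0+\Delta_{\Sigma_z}U_0$ pick up an explicit $\eps^2$ from differentiating $h_l(\eps\pe)$ together with exponential decay from $v'_\star,v''_\star$, so they are comfortably within the required bound. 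The correction $\eta=U_1-U_0$ consists of terms carrying either an $e^{-\sqrt{2}(h_{l\pm 1}-h_l)}$ or an $\eps^2$ prefactor times an exponentially small profile in $|t|$, and contributes comparably. In the transition zone of $\tilde\zeta$ (at $|z|\sim\frac{4L}{\sqrt{2}}$) the quantity $\tilde U_1+1$ is exponentially small in $|z|$, so the cross terms produced by $\Delta w=\Delta[\tilde\zeta(\tilde U_1+1)-1]$ are negligible.

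The principal difficulty lies in this last region: one must simultaneously verify the quadratic cancellation between $\partial_z^2 U_0$ and $F(U_0)$ via the Taylor expansion around $\pm 1$, patch together the estimates across the three sub-regions and the $\tilde\zeta$-transition, and choose $\bar\gamma>0$ small enough that the logarithmic factors in $h_l$ together with the various powers of $\eps$ and of $s(\eps\pe)$ combine into a single uniform bound compatible with the choice $\alpha<1/9$ fixed in subsection \ref{approxslnAC}.
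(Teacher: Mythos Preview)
Your decomposition into three regions is sensible, and your treatment of the first two regions is essentially the paper's argument: the paper's own proof only works in the transition shell of $\chi_4$ (your region 2), bounding the product $e^{-\sqrt{2}|t|}e^{-\sqrt{2}(\h_{j+1}-\h_j)}$ to obtain $\bar\gamma=\tfrac12-\tfrac92\alpha$, which is exactly the mechanism you isolate as ``Jacobi term times $v'_\star$'' and ``Toda term times $v'_\star$''.

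There is, however, a genuine gap in your region 3 argument. In the intermediate zone between the two interfaces you write
\[
\partial_z^2 U_0+F(U_0)=\mathcal{O}\bigl((U_0-c)^2\bigr)=\mathcal{O}(e^{-L/2})=\mathcal{O}\bigl(\eps\,(s(\eps\pe)^2+2)^{-1/2}\bigr),
\]
but $\mathcal{O}(\eps)$ is two full powers of $\eps$ short of the target $\eps^{2+\bar\gamma}$, so this step does not close. A direct computation actually gives the exact identity
\[
\partial_z^2 U_0+F(U_0)=3\bigl(v_\star(z-\h_1)-1\bigr)\bigl(v_\star(z-\h_2)+1\bigr)\bigl(1+U_0\bigr),
\]
so the leading contribution is a \emph{cross} term of size $e^{-\sqrt{2}(\h_2-\h_1)}$, not the square $(U_0-c)^2$. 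Under the a~priori constraint \eqref{cond_growth_hl} this is only $\mathcal{O}\bigl(\eps^{2(1-2\alpha)}\bigr)$, again short of $\eps^{2+\bar\gamma}$ for any $\bar\gamma>0$; moreover the correction $\eta$ does \emph{not} cancel it in the between region, since $\psi_0(z-\h_1)$ and $\psi_0(\h_2-z)$ both have large positive arguments there and decay like $e^{-2\sqrt{2}\,\cdot}$. Thus neither your quadratic-cancellation claim nor the remark that ``$\eta$ contributes comparably'' yields the stated bound in the gap between $\mathcal{N}_{1,\eps}$ and $\mathcal{N}_{2,\eps}$. The paper's proof does not explicitly treat this region either; if you want to close the argument you must either exploit the precise size of $\h_2-\h_1$ coming from the actual Jacobi--Toda ansatz (so that $e^{-\sqrt{2}(\h_2-\h_1)}\sim\eps^2|A_\Sigma|^2|\log\eps|$ carries the extra decay in $|\pe|$), or else push the Fermi expansion of $S(U_1)$ in Lemma~\ref{lemma_new_error} beyond the boundary of $\mathcal{N}_{l,\eps}$ and use that the residual interaction term there still carries a factor $v'_\star(t)$.
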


\begin{proof}
Here we estimate the error far from the interfaces. TO be more precise, let $\xi=X_{\eps,\h_l}(\pe,t)$ with $\pe\in\Sigma_\eps$ and
$$
\frac{1}{4\sqrt{2}}\left(\log(s(\eps\pe)^2+2)+2|\log\eps|\right)-3\le|t|
\le\frac{1}{4\sqrt{2}}\left(\log(s(\eps\pe)^2+2)+2|\log\eps|\right)-2.
$$

Then, for $\s=\s(\pe)$
\begin{equation}\notag
\begin{aligned}
e^{-\sqrt{2}|t|}(e^{-\sqrt{2}({\tt h}_{j+1}-{\tt h}_j)}+e^{-\sqrt{2}({\tt h}_j-{\tt h}_{j-1})})\le &
e^{-(1-\alpha)\sqrt{2}|t|}e^{-(1-2\alpha)\left(\log(s(\eps\pe)^2+2)+2|\log\eps|\right)}e^{-\alpha\sqrt{2}|t|}\\
\le &e^{-\left(\frac{1-\alpha}{4}+1-2\alpha\right)\left(\log(s(\eps\pe)^2+2)+2|\log\eps|\right)} e^{-\alpha\sqrt{2}|t|}\\
= & \eps^{\frac{5}{2}-\frac{9}{2}\alpha}(s(\eps\pe)^2+2)^{-\frac{5}{4}+\frac{9}{4}\alpha} e^{-\alpha\sqrt{2}|t|}\\
\le &C\eps^{\frac{5}{2}-4\alpha}(|s(\eps\pe)|^2+2)^{-\frac{5}{4}+\frac{9}{4}\alpha}e^{-\alpha\sqrt{2}|t|}\\
= &C\eps^{\frac{5}{2}-4\alpha}(|\eps\s|^2+2)^{-\frac{5}{4}+\frac{9}{4}\alpha}e^{-\alpha\sqrt{2}|t|}.
\end{aligned}
\end{equation}

Thus the conclusion follows by setting $\min\left\{\frac{5}{2}-4\alpha,\frac{5}{2}-\frac{9}{2}\alpha\right\}=:2+\bar{\gamma}>2$ with any  $\alpha \in (0, \frac{1}{9})$. 
\end{proof}
In view of Lemma \ref{lemma_error_far}, we can find a solution $\psi=\psi(\phi_1,\phi_2,{\tt h}_1,{\tt h}_2)$ to equation (\ref{eq_nl_psi}), for any fixed $\phi_1,\phi_2$ and ${\tt h}_1,{\tt h}_2$. This will be done in subsection \ref{subs_eq_far}, thanks to coercivity of the bilinear form associated with the linear operator. After that, we will plug this solution into system (\ref{eq_nl_phi_j}), which will be solved with respect to $(\phi_1,\phi_2,{\tt h}_1,{\tt h}_2)$.

\medskip
To solve (\ref{eq_nl_phi_j}) we will rely on the infinite dimensional Lyapunov-Schmidt reduction. This allows to overcome the fact that  the operator $\Delta_{\Sigma_\eps}+\partial^2_t+F'(v_\star)$ has a one-dimensional kernel spanned by $v'_\star$.  In order to set up the reduction scheme we set
\begin{equation}
\begin{aligned}
{\tt N}_{l}^{\sharp}(\psi,\phi_1,\phi_2,\h_1,\h_2)&:=\chi_4Q_{w_{l}^{{\sharp}}}(\phi_l+\psi_{l}^{{\sharp}})+\chi_4(F'(w_{l}^{{\sharp}})+2)\psi_{l}^{{\sharp}}+\chi_2(\Delta_{\mathcal{N}_{l,\eps}}-\partial^2_t-\Delta_{\Sigma_\eps})\phi_l\\
&\qquad+(F'(w_{l}^{{\sharp}})-F'(v_\star))\chi_2\phi_l,\\
P_{l}^{\sharp}(\psi,\phi_1,\phi_2,\h_1,\h_2)(\pe)&:=\int_\R \left(\chi_4 S(w_{l}^{{\sharp}})+{\tt N}_{l}^{\sharp}(\psi,\phi_1,\phi_2,\h_1,\h_2)\right)v'_\star(t) dt \quad\hbox{for} \quad\pe\in\Sigma_\eps.
\end{aligned}
\end{equation}

First we fix ${\tt h}_1,{\tt h}_2$ and we find a solution $(\phi_1,\phi_2)=(\phi_1({\tt h}_1,{\tt h}_2),\phi_2({\tt h}_1,{\tt h}_2))$ to the system
\begin{equation}
\label{eq_nl_phi_j_pr}
\begin{aligned}
\Delta_{\Sigma_\eps}\phi_l+\partial^2_t\phi_l+F'(v_\star)\phi_l=-\chi_4 S(w_{l}^{{\sharp}})-&{\tt N}_{l}^{\sharp}(\psi,\phi_1,\phi_2,\h_1,\h_2)+P_{l}^{\sharp}(\psi,\phi_1,\phi_2,\h_1,\h_2)(\pe)v'_\star, \\
\int_\R \phi_l(\pe,t)v'_\star(t)dt=&0,\qquad\forall\, \pe\in\Sigma_\eps, \quad  l=1,\,2
\end{aligned}
\end{equation}
where $\psi=\psi(\varphi_1,\varphi_2,\h_1,\h_2)$ is the solution found above. In other words, for any ${\tt h}_1,\, {\tt h}_2$ fixed, equation (\ref{eq_nl_phi_j}) can be solved up to Lagrange multipliers $P_{l}^{\sharp}(\psi,\phi_1,\phi_2,\h_1,\h_2)$, which depend on ${\tt h}_1$ and ${\tt h}_2$ (we recall that also $\psi,\,\phi_1$ and $\phi_2$ do depend on ${\tt h}_1$ and ${\tt h}_2$). This system, known as the \textit{auxiliary equation} will be treated in Subsection \ref{subs_aux_eq}. Finally we will determine $\h_1,\h_2$ by solving the system
\begin{equation}\notag
P_{l}^{\sharp}(\psi,\phi_1,\phi_2,\h_1,\h_2)=0, \qquad l=1,\,2,
\end{equation}
known as the \textit{bifurcation equation}. Equivalently, we have to choose ${\tt h}_1$ and ${\tt h}_2$ in order for the Lagrange multipliers to vanish. Integrating over $\R$ and using Lemma \ref{lemma_new_error}, it is possible to see that this is equivalent to solve a non-linear system of the form
\begin{equation}\label{JTNonhomogsystemsect5}
\begin{aligned}
&\eps^2 J_\Sigma{\tt h}_1+a_\star e^{-\sqrt{2}({\tt h}_2-{\tt h}_1)}=\eps^2 f_1(\pe,\h_1,\h_2)\\
&\eps^2 J_\Sigma{\tt h}_2-a_\star e^{-\sqrt{2}({\tt h}_2-{\tt h}_1)}=\eps^2 f_2(\pe,\h_1,\h_2)
\end{aligned}
\quad \hbox{in} \quad  \Sigma
\end{equation}
for ${\tt h}_1$ and ${\tt h}_2$ satisfying \eqref{cond_growth_hl} and \eqref{cond_growth_der_hl} with 
\begin{equation}
\label{dec_f_l}
|f_l(\pe,\h_1,\h_2)|\le c\eps^{2\mu}(s(\pe)^2+2)^{2+\mu} \quad \hbox{for} \quad \pe\in\Sigma,\quad\,l=1,2,
\end{equation}
where $\,\mu:=\min\{\gamma,\bar{\gamma}\}>0$ with $\gamma$ and $\bar{\gamma}$ begin the constants in  {Lemmas \ref{lemmaerrorSUzero} and \ref{lemma_error_far}} and  where from  
$$
a_{\star}= \|v'_{\star}\|^{-2}_{L^2(\R)}\int_{\R}6(1-v_\star^2)e^{-\sqrt{2}t}v'_{\star}(t)dt>0
$$
is the constant in \eqref{AClinearOrthCond1}.

\medskip
Setting
\begin{equation}\notag
\begin{cases}
{\tt v}_1:=\h_1+\h_2,\\
{\tt v}_2:=\h_2-\h_1,
\end{cases}\qquad
\begin{cases}
g_1:=f_1+f_2,\\
g_2:=f_2-f_1,
\end{cases}
\end{equation}
we reduce the bifurcation system  \eqref{JTNonhomogsystemsect5} reduces to to \begin{equation}\notag
\begin{aligned}
J_\Sigma{\tt v}_1=&g_1\left(\pe,\frac{{\tt v}_1-{\tt v}_2}{2},\frac{{\tt v}_1+{\tt v}_2}{2}\right),\\
\eps^2 J_\Sigma{\tt v}_2-2a_\star e^{-\sqrt{2}{\tt v}_2}=&\eps^2 g_2\left(\pe,\frac{{\tt v}_1-{\tt v}_2}{2},\frac{{\tt v}_1+{\tt v}_2}{2}\right).
\end{aligned}
\quad \hbox{in} \quad \Sigma.
\end{equation}
We look for a solution of the form 
$${\tt v}_l={\tt v}_{0,l}+\q_l,$$
where the pair $({\tt v}_{0,1},{\tt v}_{0,2})$ solves the homogeneous Jacobi-Toda system 
\begin{equation}
\begin{aligned}
J_\Sigma{\tt v}_{0,1}=&0\\
\eps^2 J_\Sigma{\tt v}_{0,2}-2a_\star e^{-\sqrt{2}{\tt v}_{0,2}}=&0.
\end{aligned}
\quad \hbox{in} \quad \Sigma.
\end{equation}
From Proposition \ref{prop_Jacobi_lin}, ${\tt v}_{0,1}=0$, while Theorem \ref{th_Liouville} with $
\delta =\eps^2 >0$, provides the existence  of ${\tt v}_{0,2}$. 
Consequently, the pair $(\q_1,\q_2)$ is determined by solving the nonlinear system
\begin{equation}
\label{syst_q}
\begin{aligned}
J_\Sigma{\tt q}_1&=\tilde{g}_1({\pe},\q_1,\q_2)\\
\eps^2 J_\Sigma \q_2+2\sqrt{2}a_\star e^{-\sqrt{2}{\tt v}}\q_2&=\eps^2\tilde{g}_2(\pe,\q_1,\q_2)
\end{aligned}
\quad \hbox{in} \quad \Sigma,
\end{equation}
where ${\tt v}={\tt v}_{0,2}$ has the asymptotic behavior of the approximate solution to the Jacobi-Toda equation described in Lemma \ref{lemmaapproximateslnJacToda}, i.e.
$$
{\tt v}_{0,2}\sim \frac{1}{\sqrt{2}}W\left(\frac{2\sqrt{2}a_\star}{\eps^2|A_{\Sigma}|^2}\right).
$$

The right-hand sides in \eqref{syst_q} are given by
\begin{equation}\notag
\begin{aligned}
\tilde{g}_1(\pe,\q_1,\q_2)&=g_1\left(\pe,-\frac{{\tt v}_{0,2}}{2}+\frac{{\tt q}_1-{\tt q}_2}{2},\frac{{\tt v_{0,2}}}{2}+\frac{{\tt q}_1+{\tt q}_2}{2}\right)\\
\tilde{g}_2(\pe,\q_1,\q_2)=&g_2\left(\pe,-\frac{{\tt v}_{0,2}}{2}+\frac{{\tt q}_1-{\tt q}_2}{2},\frac{{\tt v}_{0,2}}{2}+\frac{{\tt q}_1+{\tt q}_2}{2}\right)\\
&+2a_\star\eps^{-2} e^{-\sqrt{2}{\tt v}_{0,2}} Q(\q_2)-2\sqrt{2}a_\star\eps^{-2}e^{-\sqrt{2}{\tt v}}(e^{-\sqrt{2}({\tt v}_{0,2}-{\tt v})}-1)\q_2.
\end{aligned}
\end{equation}

For further details about system (\ref{syst_q}), we refer to subsection \ref{subs_bifo_eq}.

\medskip
\begin{remark}
{\rm The solution $({\tt h}_1, {\tt h}_2)$ of the system  \eqref{JTNonhomogsystemsect5} will have the form
\begin{equation}
\label{def_h_l}
\h_1=-\frac{{\tt v}_{0,2}}{2}+\frac{{\tt q}_1-{\tt q}_2}{2},\qquad\h_2=\frac{{\tt v_{0,2}}}{2}+\frac{{\tt q}_1+{\tt q}_2}{2}.
\end{equation}

From \eqref{est_approx_sol} in Lemma \ref{lemmaapproximateslnJacToda} and the developments in subsection 3.9, we are able to conclude that $({\tt h}_1, {\tt h}_2)$ 
satisfies \eqref{cond_growth_hl} and \eqref{cond_growth_der_hl}, since $({\tt q}_1, {\tt q}_2)$ are determined as small perturbations of solutions to the Jacobi-Toda system.

\medskip
Roughly speaking, the two connected components of the zero level set of ${u_\eps}$ diverge logarithmically from the cone $C_{m,n}$ at infinity.}
\end{remark}

\subsection{Function spaces}\label{subs_function_sp}

In order to treat equation (\ref{eq_nl_psi}) and system  \eqref{eq_nl_phi_j_pr} we introduce some function spaces.

For $\beta\in(0,1)$, $\mu>0$ and functions $g\in C^{0,\beta}_{loc}(\R^{N+1})$, we introduce the norm (c.f (\ref{normsJacLinTheory})):
\begin{equation}\notag
\|g\|_{{\infty, \mu}}:=\|(|\eps\xi|^2+2)^{\frac{\mu}{2}}g\|_{L^\infty(\R^{{N+1}})}.
\end{equation}
Moreover, we say that $g\in{\tt Y}^{{\natural,\beta}}_{{\mu}}$ if it is $O(m)\times O(n)$-invariant and the norm
\begin{equation}
\|g\|_{{\tt Y}^{{\natural,\beta}}_{{\mu}}}:=\sup_{\xi\in\R^{N+1}}(2+|\eps\xi|^2)^{\frac{2+\mu}{2}}\|g\|_{C^{0,\beta}(B_1(\xi))}
\end{equation}
is finite. 

\medskip
We also say that a function $\psi\in C^{2,\beta}_{loc}(\R^{N+1})$ is in ${\tt X}^{{\natural,\beta}}_{{\mu}}$ if it is $O(m)\times O(n)$-invariant and the norm
\begin{equation}
\label{norm_C2pmu}
\|\psi\|_{{\tt X}^{{\natural,\beta}}_{{\mu}}}:=\sup_{\xi\in\R^{N+1}}(2+|\eps\xi|^2)^{\frac{2+\mu}{2}}\|D^2\psi\|_{C^{0,\beta}(B_1(\xi))}
+\|\nabla\psi\|_{{\infty, 2+\mu}}+\|\psi\|_{{\infty, 2+\mu}}
\end{equation}
is finite.

\medskip
Given $\beta\in (0,1)$, $\rho\in(0,\sqrt{2})$, $\mu>0$ and a function $f\in C^{0,\beta}_{loc}(\Sigma_\eps\times\R)$, we define the norm
\begin{equation}
\|f\|_{{\infty, \mu,\rho}}:=\|(s(\eps\pe)^2+2)^{\frac{2+\mu}{2}}\cosh(t)^\rho f\|_{L^\infty(\Sigma_\eps\times\R)},
\end{equation}
Furthermore, we say that $f\in Y^{{\sharp,\beta}}_{{\mu, \rho}}$ if it is $O(m)\times O(n)$-invariant and
\begin{equation}
\|f\|_{Y^{{\sharp,\beta}}_{{\mu, \rho}}}:=\sup_{\pe\in \Sigma_\eps,\,t\in\R}(s(\eps\pe)^2+2)^{\frac{2+\mu}{2}}\cosh(t)^\rho \|f\|_{C^{0,\beta}(I_{\pe,t})},\qquad I_{\pe,t}:=B_1(\pe)\times(t,t+1) 
\end{equation}
is finite. Moreover, for $O(m)\times O(n)$-invariant functions $\phi\in C^{2,\beta}_{loc}(\Sigma_\eps\times\R)$, we say that $\phi\in X^{{\sharp,\beta}}_{{\mu, \rho}}$ if
\begin{equation}
\|\phi\|_{X^{{\sharp,\beta}}_{{\mu, \rho}}}:=\sup_{\pe\in\Sigma_\eps,\,t\in\R}(s(\eps\pe)^2+2)^{\frac{2+\mu}{2}}\cosh(t)^\rho \|D^2\phi\|_{C^{0,\beta}(I_{\pe,t})}+\|\nabla\phi\|_{{\infty, 2+\mu,\rho}}
+\|\phi\|_{{\infty, 2+\mu,\rho}}
\end{equation}
is finite.

\subsection{The equation far from the nodal set}\label{subs_eq_far}

The aim of this subsection  is to solve equation (\ref{eq_nl_psi}), with $\h_1,\,\h_2,\,\phi_1$ and $\phi_2$ fixed. Recall that we have set $\mu:=\min\{\gamma,\bar{\gamma}\}>0$, with $\gamma$ and $\bar{\gamma}$ defined as in {Lemmas \ref{lemmaerrorSUzero} and \ref{lemma_error_far}.} We also refer the reader back to subsections 2.7 and 3.8 for the respective definition of the spaces $\mathcal{C}^{2,\beta}_{\infty,\mu}(\Sigma)$ and $\mathcal{D}^{2,\beta}_{\mu,\frac{1}{2}}(\Sigma)$.

\medskip
\begin{proposition}
Let $\beta\in(0,\frac{1}{2})$, $\rho>0$ {be given} and let $\Lambda_0,\,\Lambda_1>0$ be fixed constants. Let also $\h_1,\,\h_2$ be 
of the form (\ref{def_h_l}), with $\q_1\in\mathcal{C}^{2,\beta}_{\infty,\mu}(\Sigma),\,\q_2\in\mathcal{D}^{2,\beta}_{\mu,\frac{1}{2}}(\Sigma)$ such that
$$\|\q_1\|_{\mathcal{C}^{2,\beta}_{\infty,\mu}(\Sigma)}<\Lambda_0\eps^\mu,\qquad
\|\q_2\|_{\mathcal{D}^{2,\beta}_{\mu,\frac{1}{2}}(\Sigma)}<\Lambda_0\eps^{\mu}.$$
Let $\phi_1,\,\phi_2\in X^{{\sharp,\beta}}_{{\mu, \rho}}$ be such that 
$$\|\phi_1\|_{X^{{\sharp,\beta}}_{{\mu, \rho}}}<\Lambda_1\eps^{2+\mu},$$ 
 {Set  $\varphi_l=\phi^{\natural}_l$}. Then there exists a unique solution $\psi:=\psi({\varphi_1,\varphi_2},\h_1,\h_2)\in {\tt X}^{{\sharp,\beta}}_{{\mu, \rho}}$ to equation (\ref{eq_nl_psi}) satisfying
\begin{equation}\notag
\begin{aligned}
\|\psi({\varphi_1,\varphi_2},\h_1,\h_2)\|_{{\tt X}^{{\sharp,\beta}}_{{\mu, \rho}}}  &\le\Lambda_2\eps^{2+\mu},\\
\|\psi({\varphi^1_1,\varphi^1_2},\h_1,\h_2)-\psi({\varphi^2_1,\varphi^2_2},\h_1,\h_2)\|_{{\tt X}^{{\sharp,\beta}}_{{\mu, \rho}}}&\le c\eps^{2+\mu}\left(\|{\varphi^1_1-\varphi^2_1}\|_{{\tt X}^{{\sharp,\beta}}_{{\mu, \rho}}}+\|{\varphi^1_2-\varphi^2_2}\|_{{\tt X}^{{\sharp,\beta}}_{{\mu, \rho}}}\right)\\
\|\psi({\varphi_1,\varphi_2},\h^1_1,\h^2_2)-\psi({\varphi_1,\varphi_2},\h^2_1,\h^2_2)\|_{{\tt X}^{{\sharp,\beta}}_{{\mu, \rho}}}&
\le c\eps^{2+\mu}\left(\|\q^1_1-\q^2_1\|_{\mathcal{C}^{2,\beta}_{\infty,\mu}(\Sigma)}
+\|\q^1_2-\q^2_2\|_{\mathcal{D}^{2,\beta}_{0,\varrho+\frac{1}{2}}(\Sigma)}\right),
\end{aligned}
\end{equation}
for some $c,\,\Lambda_2>0$. 
\label{prop_psi}
\end{proposition}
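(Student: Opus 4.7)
The plan is to solve \eqref{eq_nl_psi} for $\psi$ by a fixed-point argument in the weighted H\"older space ${\tt X}^{\natural,\beta}_{\mu}$, treating $(\varphi_1,\varphi_2,{\tt h}_1,{\tt h}_2)$ as frozen parameters satisfying the stated hypotheses. The linear part of the equation is
\[
\mathcal{L}\psi := \Delta\psi + W_\eps\,\psi,\qquad W_\eps := 2 - \Bigl(1-\sum_{l=1}^{2}\chi^{\natural}_{4,l}\Bigr)\bigl(F'(w)+2\bigr).
\]
The essential structural feature is that $\mathcal{L}$ is uniformly coercive on $\R^{N+1}$: where $1-\sum_l\chi^{\natural}_{4,l}$ vanishes one has $W_\eps\equiv 2$, while on its support the approximation $w$ is exponentially close to $\pm 1$, so $F'(w)+2=3(1-w^2)$ is small and $W_\eps$ stays above a fixed positive constant $c_0>0$ independent of $\eps$.

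The first step would be to establish $\eps$-uniform invertibility $\mathcal{L}\colon {\tt X}^{\natural,\beta}_{\mu}\to {\tt Y}^{\natural,\beta}_{\mu}$ with an estimate $\|\psi\|_{{\tt X}^{\natural,\beta}_{\mu}}\le C\|g\|_{{\tt Y}^{\natural,\beta}_{\mu}}$. Existence in $H^1(\R^{N+1})$ would follow from Lax--Milgram applied to the coercive form $\int(|\nabla\psi|^2+W_\eps\psi^2)\,d\xi$; the weighted $L^\infty$-bound would be obtained from a barrier of the form $A(2+|\eps\xi|^2)^{-\mu/2}$, which is a supersolution for $\eps$ small because $|\Delta|$ of this weight is of order $\eps^2$ while $W_\eps\ge c_0$; passing from $L^\infty$ to the full $C^{2,\beta}$-norm in ${\tt X}^{\natural,\beta}_{\mu}$ is then standard interior Schauder theory on unit balls, using that $W_\eps$ is uniformly bounded and H\"older on each $B_1(\xi)$.

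The second step is to estimate each contribution to the right-hand side of \eqref{eq_nl_psi} in ${\tt Y}^{\natural,\beta}_{\mu}$. Lemma~\ref{lemma_error_far} gives directly
\[
\Bigl\|\bigl(1-\textstyle\sum_l \chi^{\natural}_{4,l}\bigr)S(w)\Bigr\|_{{\tt Y}^{\natural,\beta}_{\mu}} \le C\eps^{2+\mu},
\]
because $\mu\le\bar\gamma$. The cutoff commutator terms $2\nabla\chi^{\natural}_{3,l}\cdot\nabla\varphi_l+(\Delta\chi^{\natural}_{3,l})\varphi_l$ are supported where $|t|\ge\frac{1}{4\sqrt{2}}(\log(s(\eps\pe)^2+2)+2|\log\eps|)-3$, so the exponential weight $\cosh(t)^{-\rho}$ in the $X^{\sharp,\beta}_{\mu,\rho}$-norm of $\varphi_l=\phi_l^\natural$ supplies an additional factor of order $\eps^{\rho/(4\sqrt{2})}$, producing an ${\tt Y}^{\natural,\beta}_{\mu}$-bound of order $\Lambda_1\eps^{2+\mu+\rho/(4\sqrt{2})}$. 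Finally, writing $Q_w(u)=-3w\,u^2-u^3$ and using $\|w\|_\infty\le 1+o(1)$ together with the a priori smallness of $\psi$ and of $\sum_i\chi^{\natural}_{3,i}\varphi_i$, the nonlinear term $(1-\chi^{\natural}_{4,l})Q_w(\psi+\sum_i\chi^{\natural}_{3,i}\varphi_i)$ is quadratic in the unknown with Lipschitz constant of order $\eps^{2+\mu}$.

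The fixed-point map $\mathcal{T}(\psi):=-\mathcal{L}^{-1}\bigl[\,\text{error}+\text{cutoff commutators}+\text{nonlinearity}\,\bigr]$ is then shown to send the ball $\{\psi:\|\psi\|_{{\tt X}^{\natural,\beta}_{\mu}}\le \Lambda_2\eps^{2+\mu}\}$ into itself and to be a contraction there, provided $\Lambda_2$ is large enough (depending on $\Lambda_0,\Lambda_1$) and $\eps$ is small, yielding the unique $\psi$ claimed. The Lipschitz dependence of $\psi$ on $(\varphi_1,\varphi_2)$ and on $({\tt h}_1,{\tt h}_2)$ would be read off from the Lipschitz dependence of each ingredient of the right-hand side on these parameters; for $({\tt h}_1,{\tt h}_2)$ this uses the structure \eqref{def_h_l}, the $\mathcal{C}^{2,\beta}_{\infty,\mu}(\Sigma)$ and $\mathcal{D}^{2,\beta}_{\mu,1/2}(\Sigma)$ bounds on $(\q_1,\q_2)$, and the smoothness of $w$ in its normal variable. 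The main technical obstacle I expect is the $\eps$-uniform invertibility of $\mathcal{L}$ in the weighted H\"older topology, in particular the construction of the barrier across the transition region where $W_\eps$ attains its infimum; the remainder of the argument is essentially bookkeeping driven by the exponential decay in $t$.
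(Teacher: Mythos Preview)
Your approach is essentially the same as the paper's: invert the uniformly coercive operator $-\Delta+V_\eps$ (your $W_\eps$) via a barrier argument and Schauder estimates, then run a contraction on the ball of radius $\Lambda_2\eps^{2+\mu}$ using Lemma~\ref{lemma_error_far} for the far error and the quadratic structure of $Q_w$. Two small technical corrections: the barrier exponent should be $(2+\mu)/2$, not $\mu/2$, to match the ${\tt X}^{\natural,\beta}_{\mu}$ norm; and direct Lax--Milgram does not apply because a generic $g\in{\tt Y}^{\natural,\beta}_{\mu}$ is not in $L^2(\R^{N+1})$ when $N\ge 7$ and $\mu$ is small---the paper truncates $g$ to compact support, solves in $H^1$, uses the a priori barrier estimate (with a two-term barrier $\lambda(|\eps\xi|^2+2)^{-(2+\mu)/2}+\nu(|\eps\xi|^2+2)^{(2+\mu)/2}$ on large balls, $\nu\to 0$) to get uniform bounds, and passes to the limit by Ascoli--Arzel\`a.
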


The proof basically consists of two steps. First we construct a right inverse of the operator 
$$-\Delta+V_\eps, \qquad V_\eps:=2-(1-\sum_{l=1}^2 \chi^{{\natural}}_{4,l})(F'(w)+2)$$ 
using the fact that the potential $V_\eps$ is positive and bounded away from $0$, uniformly in $\eps$ (see Proposition \ref{prop_right-inv-far}). After that, we will use a perturbation argument, based on the contraction mapping theorem. 

\medskip
To carry out the first step we consider the equation:
\begin{eqnarray}
\label{eq_far_lin}
-\Delta\psi+V_\eps(\xi)\psi=g.
\end{eqnarray}
\begin{proposition}
Let $\beta\in(0,1)$, $\mu>0$, and $g\in {\tt Y}^{{\natural}}$. Then, for $\eps>0$ small enough, there exists a solution $\psi:={\tt F}_3(g)\in {{\tt X}^{{\sharp,\beta}}_{{\mu, \rho}}}$ to (\ref{eq_far_lin}). Moreover, it satisfies
\begin{equation}
\|\Psi(g)\|_{{\tt X}^{{\sharp,\beta}}_{{\mu, \rho}}}\le c\|g\|_{{\tt Y}^{{\sharp,\beta}}_{{\mu, \rho}}},
\end{equation}
for some constant $c>0$.
\label{prop_right-inv-far}
\end{proposition}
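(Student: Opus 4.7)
The plan is to establish Proposition \ref{prop_right-inv-far} in four stages: first a uniform coercivity bound for the potential $V_\eps$, then existence of a weak $H^1$ solution via Lax--Milgram, then a weighted $L^\infty$ bound obtained by comparison with an explicit barrier, and finally promotion to the full $C^{2,\beta}$ weighted estimate by local Schauder theory.

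The starting observation is that $F'(w)+2 = 3(1-w^2)$ vanishes where $|w|=1$ and is bounded by $3$ everywhere. On the support of the cut-off $1-\sum_{l=1}^2\chi^\natural_{4,l}$ one is at distance at least $\tfrac{1}{4\sqrt 2}(\log(s(\eps\pe)^2+2)+2|\log\eps|)-4$ from $\Sigma_\eps$, so the global approximation $w$ is exponentially close to $\pm 1$ in $|t|$ by the choice of $v_\star$; consequently $(1-\sum\chi^\natural_{4,l})(F'(w)+2) \le \tfrac{1}{2}$ for $\eps$ small, yielding $V_\eps(\xi) \ge \tfrac{3}{2} =: c_0 > 0$ uniformly in $\xi$ and $\eps$. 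With coercivity in hand, the bilinear form $B(\psi,\varphi) := \int_{\R^{N+1}} \nabla\psi\cdot\nabla\varphi + V_\eps\psi\varphi$ is continuous and coercive on $H^1(\R^{N+1})$, and Lax--Milgram yields a unique weak solution $\psi \in H^1$ for any $g\in L^2$ (approximating a general $g\in {\tt Y}^{\natural,\beta}_\mu$ by truncations and passing to the limit using the $L^\infty$ bound below).

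For the weighted pointwise estimate I would use a barrier of the form
\begin{equation*}
\bar\psi(\xi) := A\,\|g\|_{{\tt Y}^{\natural,\beta}_\mu}\bigl(2+|\eps\xi|^2\bigr)^{-(2+\mu)/2}.
\end{equation*}
A direct computation gives $|\Delta\bar\psi|\le C\eps^2(2+|\eps\xi|^2)^{-(2+\mu)/2}$, where the $\eps^2$ factor arises from the two derivatives hitting the rescaled argument. Hence, for $\eps$ small,
\begin{equation*}
-\Delta\bar\psi + V_\eps\bar\psi \ge \bigl(c_0 - C\eps^2\bigr)\bar\psi \ge \tfrac{c_0}{2}\bar\psi \ge |g|
\end{equation*}
provided $A$ is chosen larger than $2/c_0$. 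The comparison principle, valid because $V_\eps > 0$, then yields $|\psi|\le \bar\psi$, i.e.\ $\|\psi\|_{\infty,2+\mu}\le c\,\|g\|_{{\tt Y}^{\natural,\beta}_\mu}$. The $O(m)\times O(n)$-invariance of $\psi$ follows from the invariance of $V_\eps$, $g$, and the uniqueness of the Lax--Milgram solution (alternatively, one averages over the group action).

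For the $C^{2,\beta}$ part of the norm I would apply interior Schauder estimates on the unit ball $B_1(\xi)$ to the equation $-\Delta\psi + V_\eps\psi = g$. Since $V_\eps$ is bounded in $C^{0,\beta}(B_1(\xi))$ uniformly in $\xi$ and $\eps$ (this uses the bounds on derivatives of $w$ and of the cut-offs $\chi^\natural_{4,l}$ on unit balls), the standard estimate gives
\begin{equation*}
\|\psi\|_{C^{2,\beta}(B_{1/2}(\xi))} \le C\bigl(\|\psi\|_{L^\infty(B_1(\xi))} + \|g\|_{C^{0,\beta}(B_1(\xi))}\bigr).
\end{equation*}
Multiplying by $(2+|\eps\xi|^2)^{(2+\mu)/2}$ and using that this weight is comparable on $B_1(\xi)$ uniformly in $\eps$ yields the desired bound on $D^2\psi$, $\nabla\psi$, and $\psi$ in the $X^{\natural,\beta}_\mu$-norm. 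The main obstacle will be the careful verification of the uniform lower bound $V_\eps\ge c_0>0$: one has to track precisely what $w$ looks like on the annular region where $\sum_l\chi^\natural_{4,l}$ transitions from $1$ to $0$, using the logarithmic distance from $\Sigma_\eps$ built into the cut-offs together with the exponential convergence $v_\star \to \pm 1$; everything else is standard linear elliptic theory.
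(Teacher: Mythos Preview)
Your proposal is correct and follows essentially the same strategy as the paper: coercivity of $V_\eps$, variational existence (with truncation of $g$), a weighted $L^\infty$ bound via a barrier of the form $(2+|\eps\xi|^2)^{-(2+\mu)/2}$, and local Schauder estimates to upgrade to the full norm.

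The only notable difference is in the barrier step. You invoke the comparison principle directly on all of $\R^{N+1}$ with the single decaying barrier $\bar\psi$; this requires knowing a priori that $\psi$ is bounded (so that a Phragm\'en--Lindel\"of type maximum principle for $-\Delta+V_\eps$ with $V_\eps\ge c_0>0$ applies), which you do not explicitly justify. The paper sidesteps this by working on finite balls $B_{R_0}$ with the two-parameter barrier
\[
v_{\lambda,\nu}(\xi)=\lambda(|\eps\xi|^2+2)^{-(2+\mu)/2}+\nu(|\eps\xi|^2+2)^{(2+\mu)/2},
\]
where the growing term $\nu(\cdot)^{(2+\mu)/2}$ dominates the (merely bounded) solution on $\partial B_{R_0}$, and then lets $\nu\to 0$. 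Your route is slightly quicker once boundedness of $\psi$ is established (which, as in the paper, follows by bootstrap from $\psi_R\in H^1$ with compactly supported right-hand side); the paper's route is more self-contained in that it only uses the classical maximum principle on bounded domains.
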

First we state the following a priori estimate, which will be used to treat the linear problem.
\begin{lemma}
\label{lemma_dec_far}
Let $\beta\in(0,1)$, $\mu>0$ and $\psi$ be a bounded solution to (\ref{eq_far_lin}) with $g\in{{\tt Y}^{{\sharp,\beta}}_{{\mu, \rho}}}$. Then
$$\|\psi\|_{{\infty},2+\mu}\le c\|g\|_{{\infty},2+\mu}.$$
\end{lemma}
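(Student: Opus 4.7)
The strategy is a comparison argument based on an explicit polynomial barrier, made effective by the uniform coercivity of the potential $V_\eps$.

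\emph{Step 1: uniform coercivity of $V_\eps$.} The first task is to establish constants $c_0>0$ and $\eps_0>0$ such that $V_\eps(\xi)\ge c_0$ for every $\xi\in\R^{N+1}$ and every $\eps\in(0,\eps_0)$. Rewriting
\[
V_\eps \;=\; 2\sum_{l=1}^{2}\chi^{\natural}_{4,l}\;-\;\Bigl(1-\sum_{l=1}^{2}\chi^{\natural}_{4,l}\Bigr)F'(w),
\]
in the region where $\sum_l\chi^{\natural}_{4,l}\equiv 1$ one has $V_\eps\equiv 2$. Outside the supports of these cutoffs the approximation $w$ lies at a distance $|t|\ge\tfrac{1}{4\sqrt 2}(\log(s(\eps\pe)^2+2)+2|\log\eps|)$ from the closest interface, so that $w=\pm 1+O(e^{-\sqrt 2|t|})$ and $F'(w)+2=O(e^{-2\sqrt 2|t|})=O(\eps^2)$, yielding $V_\eps=2+o_\eps(1)$. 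The transition annuli are handled by the same exponential smallness of $F'(w)+2$.

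\emph{Step 2: the barrier.} Set $W(\xi):=(|\eps\xi|^2+2)^{-(2+\mu)/2}$. A direct computation gives $|\Delta W(\xi)|\le C\eps^2\, W(\xi)$ with $C=C(N,\mu)$, so that, after reducing $\eps_0$ if needed,
\[
-\Delta W+V_\eps W \;\ge\; (c_0-C\eps^2)\,W \;\ge\; \tfrac{c_0}{2}\,W \qquad \text{on }\R^{N+1}.
\]
Put $\omega:=\tfrac{2}{c_0}\|g\|_{\infty,2+\mu}W$. By definition of the norm, $|g(\xi)|\le\|g\|_{\infty,2+\mu}W(\xi)$ pointwise, hence $-\Delta\omega+V_\eps\omega\ge|g|\ge\pm g$ on $\R^{N+1}$. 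The goal is then $|\psi|\le\omega$, which rewritten in weighted norm is exactly $\|\psi\|_{\infty,2+\mu}\le\tfrac{2}{c_0}\|g\|_{\infty,2+\mu}$.

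\emph{Step 3: comparison on $\R^{N+1}$.} The functions $\phi_\pm:=\omega\mp\psi$ are bounded and satisfy $-\Delta\phi_\pm+V_\eps\phi_\pm\ge 0$. Assume, for contradiction, $m:=\inf\phi_\pm<0$; pick $\xi_n$ with $\phi_\pm(\xi_n)\to m$. Since $\omega\to 0$ at infinity while $\psi\in L^\infty$, necessarily $|\xi_n|\to\infty$. Translate $\phi_n(\xi):=\phi_\pm(\xi+\xi_n)$; the Laplacian $\Delta\phi_n$ is uniformly bounded on $\R^{N+1}$ (because $\omega$ is smooth and $\Delta\psi=V_\eps\psi-g\in L^\infty$), so Schauder estimates and a diagonal extraction give $\phi_n\to\phi_\infty$ in $C^{1,\alpha}_{\mathrm{loc}}$ along a subsequence, with $V_\eps(\cdot+\xi_n)\stackrel{\ast}{\rightharpoonup}V_\infty$ in $L^\infty_{\mathrm{loc}}$ and $V_\infty\ge c_0$ a.e. The limit $\phi_\infty\in C^{1,\alpha}\cap W^{2,p}_{\mathrm{loc}}$ attains its global infimum $m<0$ at $\xi=0$ and satisfies $-\Delta\phi_\infty+V_\infty\phi_\infty\ge 0$ distributionally. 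Near $0$, where $\phi_\infty\le m/2<0$ and $V_\infty\ge c_0$, we get $\Delta\phi_\infty\le V_\infty\phi_\infty\le -c_0|m|/2$ a.e.; hence, fixing $\eta:=c_0|m|/(4(N+1))$, the function $\phi_\infty+\tfrac{\eta}{2(N+1)}|\xi|^2$ is superharmonic in a small ball around $0$ and has a strict local minimum there, contradicting the minimum principle for superharmonic functions.

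\emph{Main obstacle.} The delicate step is the uniform positivity of $V_\eps$ in the transition annuli of the cutoffs $\chi^{\natural}_{4,l}$, where neither term on the right of the decomposition in Step~1 dominates a priori; the saving feature is that the cutoff thresholds are placed at weighted normal distance of order $|\log\eps|$ from the interfaces, where $w$ is already exponentially close to $\pm 1$, so that $F'(w)+2$ is $\eps^{\sigma}$-small for some $\sigma>0$ and absorbs any deficit of $\sum_l\chi^{\natural}_{4,l}$.
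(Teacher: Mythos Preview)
Your proof follows the same barrier--comparison strategy as the paper, and Steps~1--2 match the paper's reasoning (uniform coercivity of $V_\eps$, and the supersolution $(|\eps\xi|^2+2)^{-(2+\mu)/2}$). The difference lies in Step~3, where you justify the comparison on the unbounded domain via a translation--compactness argument, whereas the paper instead adds a small growing correction $\nu(|\eps\xi|^2+2)^{(2+\mu)/2}$ to the barrier, applies the classical maximum principle on a large ball $B_{R_0}$ (the growing term enforces $\psi\le v_{\lambda,\nu}$ on $\partial B_{R_0}$ simply by boundedness of $\psi$), and then sends $\nu\to 0$. The paper's device is shorter and avoids the compactness machinery and the passage to a weak-$*$ limit of the potential.

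One caveat in your Step~3: the assertion ``necessarily $|\xi_n|\to\infty$'' does not follow from $\omega\to 0$ and $\psi\in L^\infty$; the infimum of $\phi_\pm$ may well be attained at a finite point. This is easy to repair---if a subsequence of $(\xi_n)$ stays bounded it converges, up to extraction, to a point where $\phi_\pm$ attains its negative minimum, and the strong minimum principle for $-\Delta+V_\eps$ with $V_\eps\ge c_0>0$ gives the contradiction directly---but as written the claim is incorrect.
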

\begin{proof}
The idea is to take a point $\xi_0\in\R^{N+1}$ and to prove that
$$|\psi(\xi_0)|\le \lambda (|\eps\xi_0|^2+2)^{-\frac{2+\mu}{2}}.$$
In order to do so, we take $\nu>0$ arbitrary, $\lambda>0$ (to be fixed) and we compare $\psi$ with the barrier
$$v_{\lambda,\nu}(\xi):=\lambda (|\eps\xi|^2+2)^{-\frac{2+\mu}{2}}+\nu (|\eps\xi|^2+2)^{\frac{2+\mu}{2}}$$
in a ball of radius $R_0>|\xi_0|$ so large that 
$${|\psi(\xi)|}\le\|\psi\|_{L^\infty(\R^{N+1})}\le \nu ((\eps R_0)^2+2)^{\frac{2+\mu}{2}},\qquad \forall\, \xi\in\partial B_{R_0}.$$
In fact, in such a ball, we have
$$(-\Delta+V_\eps)(\psi-v_{\lambda,\nu})\le (\|g\|_{{\infty}, 2+\mu}-\lambda(2-\delta))(|\eps\xi|^2+2)^{-\frac{2+\mu}{2}}=0$$
with $0<2-\delta<2$, provided $\eps$ is small enough and 
$$\lambda=\frac{\|g\|_{{\infty}, 2+\mu}}{2-\delta}.$$
Therefore, applying the maximum principle and letting $\nu\to 0$, we have the statement.
\end{proof}
Now we can prove Proposition \ref{prop_right-inv-far}.
\begin{proof}[Proof of Proposition \ref{prop_right-inv-far}]
We take a smooth cutoff function $\chi:\R\to\R$ such that $\chi(t)=1$ for $t<1$ and $\chi(t)=0$ for $t>2$. For $k\ge 0$, we set $\chi_R(x)=\chi(|\eps x|-R)$ and $g_R:=\chi_R g$, so that $g_R\to g$ uniformly as $R\to\infty$ and we consider the unique solution $\psi_R$ to the problem
\begin{equation}\notag
\begin{cases}
-\Delta \psi_R+V_\eps(x)\psi_R=g_R, \qquad\text{in $\R^{N+1}$}\\
\psi_R\in H^1(\R^{N+1}).
\end{cases}
\end{equation} 
Since $g$ is $O(m)\times O(n)$-invariant, then, by uniqueness, so is  $\psi_R$. Moreover, by a bootstrap argument, it is possible to prove that $\psi_R\in L^\infty(\R^{N+1})$. Thus Lemma \ref{lemma_dec_far}, gives the bound 
$$\|\psi_R\|_{L^\infty(\R^{N+1})}\le\|\psi_R\|_{{\infty},2+\mu}\le c\|g_R\|_{2+\mu,\ast}\le \|g\|_{{\infty},2+\mu}.$$

By the standard {{ellitpic regularity and}} and the Ascoli-Arzel? theorem, there exists a sequence $R_k\to \infty$ such that $\psi_{R_k}$ converges uniformly on compact subsets to a bounded solution $\psi$ to (\ref{eq_far_lin}), which itself satisfies 
$$\|\psi\|_{{\infty}, 2+\mu}\le c\|g\|_{{\infty}, 2+\mu}.$$
To conclude, the estimate in the norm $\|\cdotp\|_{{\tt X}^{{\sharp,\beta}}_{{\mu, \rho}}}$ follows from the elliptic estimates.
\end{proof}
Now we can prove Proposition \ref{prop_psi}.
\begin{proof}[Proof of Proposition \ref{prop_psi}]
Equation (\ref{eq_nl_psi}) can be reduce to the fixed point problem
\begin{equation}
\psi=-{\tt F}_3\left((1-\sum_{l=1}^2 \chi^{{\natural}}_{4,l})S(w)+\sum_{l=1}^2 2\nabla\chi^{{\natural}}_{3,l}\cdotp\nabla\varphi_l+\Delta\chi^{{\natural}}_{3,l}\varphi_l
+(1-\chi^{{\natural}}_{4,l})Q_w(\psi+\sum_{i=1}^2 \chi^{{\natural}}_{3,i}\varphi_i)\right),
\end{equation}
which can be uniquely solved in the ball
$$B_{\Lambda_1}:=\{\psi\in {{\tt X}^{{\sharp,\beta}}_{{\mu, \rho}}}:\,\|\psi\|_{{\tt X}^{{\sharp,\beta}}_{{\mu, \rho}}}<\Lambda_1\eps^{2+\mu}\},$$
using  the contraction mapping theorem and thanks to Lemmas \ref{lemma_new_error} and \ref{lemma_error_far}. {We leave the details to the reader and refer to \cite{DELPINOKOWALCKZYKWEI2011}, section $5$ for details on similar developments.}
\end{proof}


\subsection{The auxiliary equation}\label{subs_aux_eq}

In this subsection we will deal with the auxiliary system (\ref{eq_nl_phi_j_pr})  with $\h_1,\,\h_2$ fixed.

\medskip
First we introduce the spaces
$$
\begin{aligned}
&X_{\bot,\mu,\rho}^{{\sharp},\beta}:=\left\{\phi\in X^{{\sharp},\beta}_{\mu,\rho}:\,\int_\R \phi(\pe,t)v'_\star(t)dt=0,\,\forall\, \pe\in\Sigma_\eps\right\},\\ 
&Y_{\bot,\mu,\rho}^{{\sharp},\beta}:=\left\{f\in Y^{{\sharp},\beta}_{\mu,\rho}:\,\int_\R f(\pe,t)v'_\star(t)dt=0,\,\forall\, \pe\in\Sigma_\eps\right\}.
\end{aligned}$$

\medskip
\begin{proposition}
Let $\beta\in(0,\frac{1}{2})$, $\rho>0$ be given and let $\h_1,\,\h_2$ be 
of the form (\ref{def_h_l}), with $\q_1\in\mathcal{C}^{2,\beta}_{\infty,\mu}(\Sigma),\,\q_2\in\mathcal{D}^{2,\beta}_{\mu,\frac{1}{2}}(\Sigma)$ such that
$$\|\q_1\|_{\mathcal{C}^{2,\beta}_{\infty,\mu}(\Sigma)}<\Lambda_0\eps^{\mu},\qquad
\|\q_2\|_{\mathcal{D}^{2,\beta}_{\mu,\frac{1}{2}}(\Sigma)}<\Lambda_0\eps^{\mu},$$
with $\Lambda_0>0$ fixed. Then there exists a unique solution $(\phi_1,\phi_2)=(\phi_1(\h_1,\h_2),\phi_2(\h_1,\h_2))\in X_{\bot,\mu,\rho}^{\sharp,\beta}\times X_{\bot,\mu,\rho}^{\sharp,\beta}$ to system (\ref{eq_nl_phi_j_pr}) satisfying
\begin{equation}
\begin{aligned}
\|\phi_1(\h_1,\h_2)\|_{X_{\mu,\rho}^{\sharp,\beta}}+\|\phi_2(\h_1,\h_2)\|_{X_{\mu,\rho}^{\sharp,\beta}}&\le \Lambda_1 \eps^{2+\mu},\\
\|\phi_l(\h^1_1,\h^1_2)-\phi_l(\h^2_1,\h^2_2)\|_{X_{\mu,\rho}^{\sharp,\beta}}&\le c\eps^{2+\mu}(\|\q^1_1-\q^2_1\|_{\mathcal{C}^{2,\beta}_{\infty,\mu}(\Sigma)}
+\|\q^1_2-\q^2_2\|_{\mathcal{D}^{2,\beta}_{\mu,\frac{1}{2}}(\Sigma)}), \qquad l=1,2,
\end{aligned}
\end{equation}
for some $c,\,\Lambda_1>0$.
\label{prop_phi_l}
\end{proposition}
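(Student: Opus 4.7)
The plan is to recast the auxiliary system (\ref{eq_nl_phi_j_pr}) as a fixed point problem in the product space $X^{\sharp,\beta}_{\bot,\mu,\rho}\times X^{\sharp,\beta}_{\bot,\mu,\rho}$, and then to apply the contraction mapping theorem in a ball of radius of order $\eps^{2+\mu}$. The Lagrange multiplier $P_l^{\sharp}(\pe)v'_\star(t)$ on the right-hand side is precisely the $L^2_t$-projection of the right-hand side onto the kernel direction $v'_\star$, so the effective problem is to invert the model operator
\[
L_0\phi:=\Delta_{\Sigma_\eps}\phi+\partial^2_t\phi+F'(v_\star)\phi
\]
on the orthogonal complement of $v'_\star$ in the $t$ variable.

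The first and main step is to build a right inverse ${\tt F}_2:Y^{\sharp,\beta}_{\bot,\mu,\rho}\to X^{\sharp,\beta}_{\bot,\mu,\rho}$ of $L_0$ restricted to orthogonal functions, with norm bounded independently of $\eps$. The scheme is standard: for each $\pe\in\Sigma_\eps$ and each fixed $t\in\R$ one performs the Fourier-type decomposition forced by the constraint $\int_\R \phi(\pe,t)v'_\star(t)\,dt=0$. The coercivity estimate
\[
\int_{\Sigma_\eps\times\R}\bigl(|\nabla\phi|^2-F'(v_\star)\phi^2\bigr)\,dV\ge c\int_{\Sigma_\eps\times\R}(\phi^2+|\nabla\phi|^2)\,dV
\]
valid on $X^{\sharp,\beta}_{\bot,0,0}$ (because on the orthogonal complement the one-dimensional operator $\partial^2_t+F'(v_\star)$ has a spectral gap, $v'_\star$ spanning its kernel) yields existence and uniqueness of a bounded solution by a minimisation/approximation argument on compact truncations of $\Sigma_\eps\times\R$. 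Weighted a priori estimates in the norms $\|\cdot\|_{\infty,2+\mu,\rho}$ are then obtained by a barrier argument, using $(s(\eps\pe)^2+2)^{-(2+\mu)/2}\cosh(t)^{-\rho}$ as a supersolution modulo lower order terms (the choice $\rho<\sqrt{2}$ is crucial so that $\cosh(t)^{-\rho}$ is a good barrier against $F'(v_\star)$ away from the interface). H\"older regularity in the $X^{\sharp,\beta}_{\mu,\rho}$ norm then follows from standard interior Schauder estimates.

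Once ${\tt F}_2$ is available, system (\ref{eq_nl_phi_j_pr}) is rewritten as the fixed point equation
\[
(\phi_1,\phi_2)={\tt T}(\phi_1,\phi_2):=\bigl({\tt F}_2\bigl(\Pi^\bot_1\mathcal{R}_1\bigr),\,{\tt F}_2\bigl(\Pi^\bot_2\mathcal{R}_2\bigr)\bigr),
\]
where $\Pi^\bot_l$ projects out the $v'_\star$-direction in $t$ and
\[
\mathcal{R}_l:=-\chi_4 S(w_{l}^{\sharp})-{\tt N}_{l}^{\sharp}\bigl(\psi(\phi_1,\phi_2,\h_1,\h_2),\phi_1,\phi_2,\h_1,\h_2\bigr),
\]
with $\psi$ supplied by Proposition \ref{prop_psi}. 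To close the argument in the ball $\mathcal{B}_{\Lambda_1}:=\{\|\phi_1\|_{X^{\sharp,\beta}_{\mu,\rho}}+\|\phi_2\|_{X^{\sharp,\beta}_{\mu,\rho}}\le \Lambda_1\eps^{2+\mu}\}$, one controls each contribution to $\mathcal{R}_l$ in $Y^{\sharp,\beta}_{\mu,\rho}$: the error term $\chi_4 S(w_l^{\sharp})$ is of size $\eps^{2+\mu}$ by Lemma \ref{lemma_new_error}; the term $\chi_4 Q_{w_l^\sharp}(\phi_l+\psi_l^\sharp)$ is quadratic, of size $O(\eps^{4+2\mu})$; the term $\chi_4(F'(w_l^{\sharp})+2)\psi_l^{\sharp}$ is bounded by $e^{-\sqrt{2}|t|}\|\psi\|_{{\tt X}^{\sharp,\beta}_{\mu,\rho}}=O(\eps^{2+\mu})$; the commutator $\chi_2(\Delta_{\mathcal{N}_{l,\eps}}-\partial^2_t-\Delta_{\Sigma_\eps})\phi_l$ and the potential perturbation $(F'(w_l^{\sharp})-F'(v_\star))\chi_2\phi_l$ carry an extra factor $\eps$ from Lemma \ref{lemma_coord_Fermi} and from the exponential decay of $w_l^{\sharp}-v_\star$, so they contribute $o(1)\|\phi_l\|_{X^{\sharp,\beta}_{\mu,\rho}}$. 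Choosing $\Lambda_1$ larger than the constant produced by $\chi_4 S(w_l^{\sharp})$ and then $\eps_0$ small makes ${\tt T}$ map $\mathcal{B}_{\Lambda_1}$ into itself and contract it.

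The Lipschitz dependence on $(\h_1,\h_2)$ is then a consequence of the Lipschitz dependence of each ingredient. The error $S(w_l^{\sharp})$ depends smoothly on $(\h_1,\h_2)$ through the composition with the Fermi coordinates, and from the explicit form in Lemma \ref{lemma_new_error} one gets a Lipschitz bound with constant $O(\eps^{2+\mu})$ in the $\mathcal{C}^{2,\beta}_{\infty,\mu}(\Sigma)\times\mathcal{D}^{2,\beta}_{\mu,1/2}(\Sigma)$ norm, because the main terms in $S$ are linear in $\h_l$ or depend on $\h_2-\h_1$ through an exponential whose derivative is controlled by Lemma \ref{lemmaapproximateslnJacToda}. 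The Lipschitz dependence of $\psi$ on $(\h_1,\h_2)$ is already given by Proposition \ref{prop_psi}. Combining these two facts with the contraction estimates above yields the desired Lipschitz property for $\phi_l(\h_1,\h_2)$. The main technical obstacle is the construction of ${\tt F}_2$ with the weighted decay $\cosh(t)^{-\rho}$ at the right rate $\rho<\sqrt{2}$ and simultaneous polynomial decay $(s(\eps\pe)^2+2)^{-(2+\mu)/2}$ along $\Sigma_\eps$; this requires combining the spectral gap on the transverse direction with a barrier argument along the dilated surface, and careful handling of the orthogonality constraint under truncation.
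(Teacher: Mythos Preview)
Your overall strategy---recast \eqref{eq_nl_phi_j_pr} as a fixed point problem via a right inverse of $L_0=\Delta_{\Sigma_\eps}+\partial_t^2+F'(v_\star)$ on the orthogonal complement of $v'_\star$, then contract in a ball of radius $\Lambda_1\eps^{2+\mu}$---is exactly the paper's approach, and your bookkeeping of the nonlinear contributions $\chi_4 S(w_l^\sharp)$, $Q_{w_l^\sharp}$, $(F'(w_l^\sharp)+2)\psi_l^\sharp$, the Fermi commutator, and the Lipschitz dependence on $(\h_1,\h_2)$ is correct and in fact more explicit than what the paper records.

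The gap is in your construction of the right inverse. The product barrier $B(\pe,t)=(s(\eps\pe)^2+2)^{-(2+\mu)/2}\cosh(t)^{-\rho}$ is \emph{not} a supersolution for $-L_0$ near the interface: at $t=0$ one has $F'(v_\star(0))=1$ and $\partial_t^2(\cosh^{-\rho})(0)=-\rho$, so $(-\partial_t^2-F'(v_\star))B=(\rho-1)B<0$ there for $\rho<1$, while $\Delta_{\Sigma_\eps}B_1=O(\eps^2)B_1$ is far too small to compensate. You even note that the barrier is only good ``away from the interface'', but you do not say how to close the estimate across the strip where $F'(v_\star)>0$. The paper handles this in two separate steps which you have collapsed: first an a priori $L^\infty$ bound $\|\phi\|_{L^\infty}\le c\|f\|_{L^\infty}$, uniform in $\eps$, obtained not by barriers but by a blow-up/contradiction argument together with the Liouville-type rigidity ``bounded solutions of $-\Delta_{\R^N}\phi-\partial_t^2\phi+(3v_\star^2-1)\phi=0$ are multiples of $v'_\star$'' (this is where the orthogonality constraint is actually used); second, the decay in $t$ is obtained by barriers only in the region $|t|>t_1$ where $3v_\star^2-1>1+\rho^2/2$, using the $L^\infty$ bound on $|t|=t_1$, and the decay in $\pe$ is obtained by first integrating out $t$---setting $\varphi(\pe)=\int_\R\phi^2\,dt$ and using the spectral gap on the orthogonal complement to derive $-\Delta_{\Sigma_\eps}\varphi+2\varphi\le\int_\R f^2\,dt$---and only then applying a barrier on $\Sigma_\eps$. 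Without the blow-up $L^\infty$ estimate and the integrated-quantity trick, your barrier scheme does not close; the coercivity you quote gives $H^1$ control but not the pointwise weighted bounds you need uniformly in $\eps$ and in the truncation parameter.
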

Once again, first we will construct a right inverse of the operator $-\Delta_{\Sigma_\eps}-\partial^2_t+(3v_\star^2-1)$ under the suitable orthogonality condition, then we will apply a fixed point argument. For this purpose, we consider the linear problem
\begin{equation}
\label{eq_lin_product}
-\Delta_{\Sigma_\eps}\phi-\partial^2_t \phi+(3v_\star^2-1)\phi=f \qquad\text{in $\Sigma_\eps\times\R$.}
\end{equation}
In order to be able to solve equation (\ref{eq_lin_product}), $f$ has to satisfy the orthogonality condition
\begin{equation}
\label{ort_v'star}
\int_\R f(\pe,t)v'_\star(t) dt=0 \quad \hbox{for} \quad \pe\in\Sigma_\eps,
\end{equation}
and we look for a solution $\phi$ which satisfies (\ref{ort_v'star}). The aim of this subsection is to prove the following result.
\begin{proposition}
\label{prop_eq_lin_product}
Let $\beta\in(0,1)$, $\rho\in(0,\sqrt{2})$ and $\mu\in(0,1)$. Then, for any $f\in Y_{\bot,\mu,\rho}^{\sharp,\beta}$, there exists a unique solution $\phi:={\tt F}_4(f)\in X_{\bot,\mu,\rho}^{\sharp,\beta}$ to (\ref{eq_lin_product}). Moreover
$$\|\phi\|_{X^{\sharp,\beta}_{\mu,\rho}}\le c\|f\|_{Y^{\sharp,\beta}_{\mu,\rho}}.$$
\end{proposition}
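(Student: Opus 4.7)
The plan is to prove Proposition \ref{prop_eq_lin_product} by the standard two-step strategy: establish an a priori weighted $L^\infty$ estimate under the orthogonality constraint, then obtain existence by an approximation argument and upgrade to the $C^{2,\beta}$ norm via local Schauder theory. The whole scheme exploits the fact that the one-dimensional operator $L_0 := -\partial_t^2 + (3v_\star^2-1)$ has $v'_\star$ as its unique (up to scaling) bounded kernel element, and satisfies a spectral gap on $\{v'_\star\}^\perp$: there is $\lambda_0>0$ such that
\begin{equation*}
\int_\R \bigl((\partial_t\phi)^2 + (3v_\star^2-1)\phi^2\bigr)\,dt \ge \lambda_0 \int_\R \phi^2\,dt
\end{equation*}
whenever $\int_\R \phi(t) v'_\star(t)\,dt=0$. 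Since $-\Delta_{\Sigma_\eps}$ is nonnegative, this gives coercivity of the full bilinear form on functions of $(\pe,t)$ satisfying the pointwise (in $\pe$) orthogonality condition.

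First I would prove the a priori estimate: if $\phi\in L^\infty(\Sigma_\eps\times\R)$ solves \eqref{eq_lin_product} with $f \in Y^{\sharp,\beta}_{\mu,\rho}$ and both $f$ and $\phi$ are pointwise $L^2_t$-orthogonal to $v'_\star$, then $\|\phi\|_{\infty,2+\mu,\rho} \le c\|f\|_{\infty,2+\mu,\rho}$. The standard route is by contradiction: take sequences $f_n,\phi_n$ with $\|\phi_n\|_{\infty,2+\mu,\rho}=1$ and $\|f_n\|_{\infty,2+\mu,\rho}\to 0$; choose a point $(\pe_n,t_n)$ where the weighted supremum is nearly attained, translate so that $\pe_n$ becomes the origin, and pass to a limit $\phi_\infty$. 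Because $\rho < \sqrt{2}$, the exponential decay in $t$ forbids escape to infinity in the $t$ direction after suitable rescaling; and because of the polynomial weight in $s$, the limiting profile lives either on $\R^N\times\R$ (where one obtains a bounded solution of $(-\Delta + L_0)\phi_\infty = 0$ orthogonal to $v'_\star$, which must vanish by the spectral gap and a standard Liouville argument) or on the half-model obtained from the asymptotically conical geometry of $\Sigma_\eps$. In either case $\phi_\infty \equiv 0$, contradicting the normalization. As an alternative more quantitative route one can compare $|\phi|$ against the barrier $\lambda(s(\eps\pe)^2+2)^{-(2+\mu)/2}\cosh(t)^{-\rho}$ plus a small auxiliary term, using that $L_0\cosh(t)^{-\rho}\ge(2-\rho^2-\delta)\cosh(t)^{-\rho}>0$ for $\rho<\sqrt{2}$, and that $-\Delta_{\Sigma_\eps}$ applied to the polynomial weight contributes a term of order $\eps^2$ on this scale; this yields the estimate directly.

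Next, for existence I would approximate on bounded domains $\Omega_R := \Sigma_\eps\cap B_R \times (-R,R)$ with Dirichlet conditions, and on each $\Omega_R$ solve the constrained problem by minimization of the quadratic functional on the Hilbert space $H^1_0(\Omega_R)\cap\{\int \phi\,v'_\star\,dt=0\}$, using the coercivity above. The orthogonality is preserved by the equation because the right-hand side $f$ is orthogonal and the one-dimensional kernel $v'_\star$ is annihilated by $L_0$, so the Lagrange multiplier vanishes. With the a priori bound of the previous step applied to these approximations (up to minor boundary-layer error that shrinks as $R\to\infty$), a diagonal extraction produces a solution $\phi$ on $\Sigma_\eps\times\R$ in $L^\infty$ with the weighted bound. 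The $C^{2,\beta}$ estimate then follows from standard interior Schauder estimates applied on unit balls $I_{\pe,t}$, using that the coefficients of the operator are smooth and bounded. Uniqueness in $X^{\sharp,\beta}_{\bot,\mu,\rho}$ is immediate from the a priori estimate with $f=0$. The main obstacle is the a priori bound: one must take care that the pointwise (in $\pe$) orthogonality projection is compatible with the weighted sup-norm topology and with the blow-up/barrier analysis; this is where the strict inequality $\rho<\sqrt{2}$ and the coercivity of $L_0$ on $\{v'_\star\}^\perp$ are used decisively.
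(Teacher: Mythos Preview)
Your outline is correct and matches the paper's approach in spirit; the differences are organizational. The paper breaks the a priori estimate into three separate steps rather than going for the full weighted norm at once: first an unweighted $L^\infty$ bound by exactly the blow-up/Liouville argument you sketch (limit equation on $\R^N\times\R$, kernel spanned by $v'_\star$); then exponential decay in $t$ by a barrier restricted to the region $|t|>t_1$ where $3v_\star^2-1>1+\rho^2/2$ --- note that your claimed inequality $L_0\cosh^{-\rho}(t)\ge(2-\rho^2-\delta)\cosh^{-\rho}(t)$ is false near $t=0$ since $3v_\star^2(0)-1=-1$, so a one-shot product barrier does not work globally and one must first control the finite strip $|t|\le t_1$ by the unweighted $L^\infty$ bound; finally polynomial decay in $\pe$ via the auxiliary scalar $\varphi(\pe):=\int_\R\phi(\pe,t)^2\,dt$, which by the spectral gap satisfies $-\Delta_{\Sigma_\eps}\varphi+2\varphi\le\int_\R f^2\,dt$ and is then handled by a barrier in $\pe$ alone. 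This last device is what replaces your direct weighted blow-up in the $\pe$-direction. For existence the paper truncates only the right-hand side, $f_R:=f\chi_R$, and minimizes on $H^1(\Sigma_\eps\times\R)\cap\{v'_\star\}^\perp$, rather than imposing Dirichlet conditions on bounded domains; either variant is fine.
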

The proof of Proposition \ref{prop_eq_lin_product} involves several steps, which will be dealt with the aid of some Lemmas and Remarks.\\

Since the decay of $f$ along the surface is slow, $f$ is not necessarily in $L^2(\Sigma_\eps\times\R)$, thus we start with a truncated problem with right-hand side $f_R:=f\chi_R$, where $\chi_R(\pe):=\chi(s(\eps\pe)-R)$ and $\chi:\R\to\R$ is a smooth cutoff function such that 
$$\chi(\tau)=
\begin{cases}
1,\qquad \tau>2\\
0,\qquad \tau<1
\end{cases}, \qquad 0\le \chi\le 1.$$
\begin{lemma}[Existence for a truncated problem]
\label{lemma_existence}
Let $\beta\in(0,1)$, $\rho\in(0,\sqrt{2})$, $\gamma\in(0,1)$ and $f\in Y_{\bot,\mu,\rho}^{\sharp,\beta}$. Then, for any $R>0$, there exists a unique solution $\phi_R\in H^1(\Sigma_\eps\times\R)$ to (\ref{eq_lin_product}) such that
\begin{equation}\notag
\int_\R \phi_R(\pe,t)v'_\star(t)dt=0, \qquad\forall\, \pe\in\Sigma\eps.
\end{equation}
Moreover, $\phi\in C^{2,\beta}_{loc}(\Sigma_\eps\times\R)$, it is $O(m)\times O(n)$-invariant and bounded.
\end{lemma}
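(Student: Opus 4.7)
The plan is to solve the linear problem \eqref{eq_lin_product} with truncated right-hand side $f_R$ via a constrained variational formulation, and then upgrade the regularity by standard elliptic estimates.

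First, I would introduce the constrained Hilbert space
\[
H := \left\{ \phi \in H^1(\Sigma_\eps\times\R) : \int_\R \phi(\pe,t) v'_\star(t)\,dt = 0 \text{ for a.e.\ } \pe\in\Sigma_\eps \right\},
\]
endowed with the standard $H^1$ inner product, and the symmetric bilinear form
\[
B(\phi,\psi) := \int_{\Sigma_\eps\times\R} \bigl( \nabla_{\Sigma_\eps}\phi\cdot\nabla_{\Sigma_\eps}\psi + \partial_t\phi\,\partial_t\psi + (3v_\star^2-1)\phi\,\psi \bigr)\,d\sigma\,dt.
\]
The compatibility condition on $f$ ensures that $f_R$ also satisfies $\int_\R f_R(\pe,t)v'_\star(t)\,dt=0$, so that the linear functional $\psi\mapsto \int f_R\psi$ is well-defined on $H$.

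The crucial coercivity estimate rests on the spectral properties of the one-dimensional linearized Allen--Cahn operator $\mathcal{L} := -\partial_t^2 + (3v_\star^2-1)$. Its kernel in $L^2(\R)$ is spanned by $v'_\star>0$, and by Sturm--Liouville theory there exists $\lambda_0>0$ such that
\[
\int_\R \bigl(|\partial_t u|^2 + (3v_\star^2-1)u^2\bigr)\,dt \ge \lambda_0 \int_\R (|\partial_t u|^2 + u^2)\,dt
\]
for every $u\in H^1(\R)$ with $\int_\R u\,v'_\star\,dt=0$. Integrating this estimate in $\pe\in\Sigma_\eps$ and combining with the non-negative term $\int |\nabla_{\Sigma_\eps}\phi|^2$, I obtain that $B$ is coercive on $H$ with constant depending only on $\lambda_0$. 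Lax--Milgram then yields a unique weak solution $\phi_R\in H$ to \eqref{eq_lin_product} with right-hand side $f_R$, and the orthogonality condition is automatically satisfied.

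For the qualitative properties: the $O(m)\times O(n)$-invariance of $\phi_R$ follows from the invariance of $f_R$ together with uniqueness in $H$, since the group action commutes with the operator and preserves the constraint. To pass from $H^1$ to $C^{2,\beta}_{loc}$ I apply interior Schauder estimates on small balls of $\Sigma_\eps\times\R$, using that $f_R\in C^{0,\beta}_{loc}$ with uniformly bounded local $C^{0,\beta}$ norm. Boundedness of $\phi_R$ comes from a Moser-type iteration or from a barrier argument: since $f_R$ is compactly supported in the $\pe$-variable and decays like $e^{-\rho|t|}$, and since the potential $3v_\star^2-1$ tends to $2>0$ as $|t|\to\infty$, comparison with a barrier of the form $C(1+|\eps\pe|^2)^{-(2+\mu)/2}\cosh(t)^{-\rho}$ outside a large compact set combined with the local $L^\infty$-bound from De Giorgi--Nash--Moser gives $\phi_R\in L^\infty$.

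The main obstacle in this lemma is the coercivity step: enforcing the orthogonality constraint fiberwise and then exploiting it to overcome the negativity of the potential $3v_\star^2-1$ near $t=0$. The delicate point is that the projection onto the orthogonal complement of $v'_\star$ acts only in the $t$-variable, so one must carefully integrate the one-dimensional spectral gap inequality along $\Sigma_\eps$ while preserving measurability and using that the constraint is a closed subspace of $H^1(\Sigma_\eps\times\R)$. All other steps (symmetry via uniqueness, Schauder, boundedness) are standard once coercivity is in hand.
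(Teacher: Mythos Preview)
Your approach is essentially the same as the paper's: set up the bilinear form on the closed subspace orthogonal to $v'_\star$ (the paper calls it $Z$, you call it $H$), use the spectral gap of $-\partial_t^2+(3v_\star^2-1)$ on $(v'_\star)^\perp$ to get coercivity, and apply a variational principle (the paper minimizes the energy directly; you invoke Lax--Milgram, which is equivalent here). Symmetry via uniqueness, local Schauder regularity, and a bootstrap to $L^\infty$ are handled the same way in both.

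There is, however, one step you skip that the paper takes care to include. Lax--Milgram on $H$ produces $\phi_R\in H$ satisfying $B(\phi_R,\psi)=\int f_R\psi$ only for test functions $\psi\in H$, i.e.\ those satisfying the fiberwise orthogonality. This does \emph{not} immediately mean $\phi_R$ is a weak solution of \eqref{eq_lin_product}: a priori there could be a Lagrange multiplier of the form $c(\pe)v'_\star(t)$ on the right-hand side. To rule this out you must check that the identity extends to all $\psi\in H^1(\Sigma_\eps\times\R)$. The paper does this by writing an arbitrary $\psi$ as $\tilde\psi+a(\pe)v'_\star(t)$ with $\tilde\psi\in H$, and then verifying that both $B(\phi_R,a\,v'_\star)$ and $\int f_R\,a\,v'_\star$ vanish: the former because $v'_\star$ lies in the kernel of $-\partial_t^2+(3v_\star^2-1)$ and because $\int_\R\phi_R v'_\star\,dt\equiv 0$ kills the tangential term, the latter because $f_R$ inherits the orthogonality from $f$. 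This is short but not automatic, and your sentence ``Lax--Milgram then yields a unique weak solution $\phi_R\in H$ to \eqref{eq_lin_product}'' glosses over it. Once you insert this check, your argument is complete and matches the paper's.
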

\begin{proof}
Since we have multiplied by a cutoff function an we have exponential decay in $t$, $f_R\in L^2(\Sigma\times\R)$, thus equation (\ref{eq_lin_product}) can be attached with variational techniques. In fact, due to the spectral decomposition of the ordinary differential operator $-\partial^2_t+(3v_\star^2-1)$, the corresponding functional
$$\int_{\Sigma_\eps\times\R} |\nabla_{\Sigma_\eps}\phi|^2+\int_{\Sigma_\eps\times\R}(\partial_t\phi)^2+\int_{\Sigma_\eps\times\R}(3v_\star^2-1)\phi^2 d\sigma(\pe)dt-\int_{\Sigma_\eps\times\R} f_R\phi d\sigma(\pe)dt$$
is coercive and lower semicontinuous on the closed subspace
$$Z:=\left\{\phi\in H^1(\Sigma_\eps\times\R):\,\int_\R \phi(\pe,t)v'_\star(t) dt=0,\,\forall\, \pe\in\Sigma_\eps\right\},$$
thus it has a unique minimiser $\phi_R\in Z$, which fulfils
\begin{equation}
\label{weak_sol}
\int_{\Sigma_\eps\times\R}\left(\langle\nabla_{\Sigma_\eps}\phi_R,\nabla_{\Sigma_\eps}\psi\rangle_{\Sigma_\eps}
+\partial_t\phi_R\partial_t\psi+(3v_\star^2-1)\phi_R\psi\right) d\sigma(\pe)dt=\int_{\Sigma_\eps\times\R}f_R\psi d\sigma(\pe)dt, \qquad\forall\,\psi\in Z.
\end{equation}
In order to prove that $\phi$ is a true weak solution we have to show that (\ref{weak_sol}) is satisfied for any $\psi\in H^1(\Sigma_\eps\times\R)$. This follows from a direct computation after writing any $\psi\in H^1(\Sigma_\eps\times\R)$ as 
$$\psi=\tilde{\psi}+a(\pe)v'_\star(t), \qquad\tilde{\psi}\in Z,\qquad a(\pe):=\frac{\int_\R \psi(\pe,t)v'_\star(t)dt}{\int_\R v'_\star(t)^2dt}$$
and we use the fact that $v'_\star$ is in the kernel of $-\Delta_{\Sigma_\eps}-\partial^2_t+(3v_\star^2-1)$. Symmetry and regularity for $\phi_R$ follow from symmetry and regularity of $f$ and uniqueness. The fact that $\phi\in L^\infty(\Sigma_\eps\times\R)$ follows from a bootstrap argument. 
\end{proof}
In order to prove the decay of the solution and the estimates in the required weighted norms we need an a priori estimate, which relies on the following well-known result, which is proved, for instance in \cite{DELPINOKOWALCKZYKWEI2011}.
\begin{lemma}
Let $\phi$ be a bounded solution to 
$$-\Delta_{\R^N}\phi-\partial^2_t\phi+(3v_\star^2-1)\phi=0, \qquad\forall\,(\mathbf{y},t)\in\R^N\times\R.$$
Then $\phi(\mathbf{y},t)=cv'_\ast(t)$, for some constant $c\in\R$.
\end{lemma}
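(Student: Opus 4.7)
The plan is to decompose $\phi$ into a component along the ground state $v_\star'$ of the one-dimensional operator $L_0 := -\partial_t^2 + (3v_\star^2-1)$ and an orthogonal remainder on which $L_0$ is coercive, then use a Liouville principle to kill each piece.

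First I would establish that any bounded solution decays exponentially in $t$, uniformly in $\mathbf{y}$. Since $3v_\star^2(t)-1 \to 2$ as $|t|\to\infty$, pick $T_0>0$ and $\rho\in(0,\sqrt{2})$ with $3v_\star^2(t)-1 \geq \rho^2$ for $|t|\geq T_0$. The function $\Phi_\pm(\mathbf{y},t) := \|\phi\|_{L^\infty} e^{-\rho(t-T_0)}$ (and its reflection for $t\leq -T_0$) satisfies $(-\Delta_{\R^N}-\partial_t^2+(3v_\star^2-1))\Phi_\pm \geq 0$; comparing $\pm\phi$ with $\Phi_\pm$ via the maximum principle on slabs $\{|t|>T_0\}\cap\{|\mathbf{y}|<R\}$ and letting $R\to\infty$ (a standard argument, using that $\pm\phi \leq \Phi_\pm$ on the boundary $|t|=T_0$ and is bounded) yields
\begin{equation*}
|\phi(\mathbf{y},t)| \leq C\|\phi\|_\infty e^{-\rho|t|}, \qquad \forall (\mathbf{y},t)\in\R^N\times\R.
\end{equation*}

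Next, for each $\mathbf{y}\in\R^N$ set
\begin{equation*}
a(\mathbf{y}) := \|v_\star'\|_{L^2(\R)}^{-2}\int_\R \phi(\mathbf{y},t)\,v_\star'(t)\,dt, \qquad \phi^\perp(\mathbf{y},t) := \phi(\mathbf{y},t)-a(\mathbf{y})v_\star'(t),
\end{equation*}
so that $\phi^\perp(\mathbf{y},\cdot)\perp v_\star'$ in $L^2(\R)$ and both $a$ and $\phi^\perp$ inherit exponential decay / boundedness from step 1. Differentiating under the integral sign (justified by the decay) and using $L_0 v_\star'=0$ together with the equation,
\begin{equation*}
\Delta_{\R^N} a(\mathbf{y}) = \|v_\star'\|_{L^2}^{-2}\int_\R (-L_0\phi)(\mathbf{y},t)\,v_\star'(t)\,dt = -\|v_\star'\|_{L^2}^{-2}\int_\R \phi(\mathbf{y},t)\,L_0 v_\star'(t)\,dt = 0.
\end{equation*}
Since $a$ is a bounded harmonic function on $\R^N$, Liouville's theorem gives $a\equiv c$ for some constant $c\in\R$.

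The remaining step, and the one I expect to be the main obstacle, is showing that $\phi^\perp\equiv 0$. Here the crucial input is the spectral gap: because $v_\star'$ is the (simple) ground state of $L_0$ with eigenvalue $0$ and the essential spectrum of $L_0$ equals $[2,\infty)$, there exists $\lambda_1>0$ with
\begin{equation*}
\int_\R f(t)\,L_0 f(t)\,dt \geq \lambda_1 \int_\R f(t)^2\,dt \qquad \text{for every } f\in H^1(\R) \text{ with } f\perp v_\star'.
\end{equation*}
Set $g(\mathbf{y}):=\int_\R \phi^\perp(\mathbf{y},t)^2\,dt$, which is bounded on $\R^N$ by step 1. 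Since $\phi^\perp$ solves the same equation as $\phi$, a direct computation combined with the equation gives
\begin{equation*}
\Delta_{\R^N} g(\mathbf{y}) = 2\int_\R |\nabla_{\R^N}\phi^\perp|^2\,dt + 2\int_\R \phi^\perp L_0\phi^\perp\,dt \geq 2\lambda_1\, g(\mathbf{y}).
\end{equation*}
To conclude $g\equiv 0$, I would test the differential inequality $\Delta g \geq 2\lambda_1 g$ against $g\eta_R^2$, where $\eta_R$ is a standard cutoff equal to $1$ on $B_R$, supported in $B_{2R}$ with $|\nabla\eta_R|\leq C/R$; after integration by parts and Cauchy--Schwarz one obtains
\begin{equation*}
\lambda_1\int_{B_R}g^2 \leq C\int_{B_{2R}\setminus B_R} g^2|\nabla\eta_R|^2 \leq \frac{C\|g\|_\infty^2}{R^2}|B_{2R}|,
\end{equation*}
which forces $g\equiv 0$ when combined with $g\geq 0$ bounded (the right-hand side grows like $R^{N-2}$ while the left would grow like $R^N$ if $g\not\equiv 0$). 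Hence $\phi^\perp\equiv 0$ and $\phi(\mathbf{y},t)=c\,v_\star'(t)$, completing the proof.
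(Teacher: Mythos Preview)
The paper does not actually prove this lemma; it is stated as a well-known result with a reference to del Pino--Kowalczyk--Wei (Ann.\ Math.\ 2011). Your argument is correct and is in fact essentially the standard one appearing in that reference: exponential decay in $t$ by a barrier, projection onto $v_\star'$ giving a bounded harmonic (hence constant) function, and elimination of the orthogonal part via the differential inequality $\Delta_{\R^N} g\ge 2\lambda_1 g$ for $g(\mathbf y)=\int_\R(\phi^\perp)^2\,dt$.

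One small point worth tightening is your final parenthetical. The inequality you obtain,
\[
2\lambda_1\int_{\R^N} g^2\eta_R^2 \;\le\; \int_{\R^N} g^2|\nabla\eta_R|^2,
\]
does give the right-hand side of order $R^{N-2}$, but the assertion that the left-hand side must grow like $R^N$ whenever $g\not\equiv 0$ is not automatic for a generic bounded nonnegative function. What makes it true here is that $g^2$ is itself subharmonic: indeed $\Delta(g^2)=2g\,\Delta g+2|\nabla g|^2\ge 0$ since $g\ge 0$ and $\Delta g\ge 2\lambda_1 g\ge 0$. The sub-mean-value inequality then yields $\int_{B_R(\mathbf y_0)}g^2\ge g(\mathbf y_0)^2|B_R|\sim R^N$ whenever $g(\mathbf y_0)>0$, producing the desired contradiction. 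Alternatively, you can test $\Delta g\ge 2\lambda_1 g$ against $\eta_R^2$ (rather than $g\eta_R^2$) and use that $g$ is subharmonic; either route closes the argument.
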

\begin{lemma}[A priori estimate]
\label{lemma_Apriori_est}
Let $\beta\in(0,1)$, $\rho\in(0,\sqrt{2})$, $\mu\in(0,1)$ and $f\in Y_{\bot,\mu,\rho}^{\sharp,\beta}$. Let $\phi$ be a bounded solution to (\ref{eq_lin_product}). Then
$$\|\phi\|_{L^\infty(\Sigma_\eps\times\R)}\le c\|f\|_{L^\infty(\Sigma_\eps\times\R)}.$$
\end{lemma}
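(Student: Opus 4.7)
The plan is to argue by contradiction and compactness, reducing the $L^\infty$ bound to the Liouville-type rigidity recalled just before the statement. If the estimate fails, one obtains sequences $\phi_n$ of bounded solutions of (\ref{eq_lin_product}) and $f_n \in Y^{\sharp,\beta}_{\bot,\mu,\rho}$ with $\|\phi_n\|_{L^\infty} = 1$, $\|f_n\|_{L^\infty} \to 0$, and $(\pe_n, t_n) \in \Sigma_\eps\times\R$ such that $|\phi_n(\pe_n, t_n)| \geq 1/2$. (Here $\phi_n$ is taken to inherit the pointwise orthogonality $\int_\R \phi_n(\pe,t)v_\star'(t)\,dt = 0$, without which the statement would be false, since $v_\star'(t)$ belongs to the homogeneous kernel.) The first step is to localize in $t$: because $3v_\star^2(t) - 1 \to 2$ exponentially as $|t|\to\infty$, for $|t|\geq T_0$ large and independent of $n$ the zeroth-order coefficient is bounded below by a fixed $\kappa>0$, and a maximum-principle comparison with the exponential supersolution $Ae^{-\sqrt{\kappa}(|t|-T_0)} + B\|f_n\|_{L^\infty}$ on $\Sigma_\eps\times\{|t|\geq T_0\}$ forces $\phi_n\to 0$ uniformly there, so $|t_n|$ stays bounded.

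Next, I would pass to a limit via local coordinates on $\Sigma_\eps$ centered at $\pe_n$. If $\{\pe_n\}$ stays bounded, standard interior Schauder estimates yield a $C^2_{loc}$ subsequential limit $\tilde\phi$ on $\Sigma_\eps \times \R$ of the homogeneous equation, satisfying the pointwise orthogonality. If $|\pe_n|\to\infty$, I use normal coordinates $y\in\R^N$ on $\Sigma_\eps$ around $\pe_n$; since $|A_{\Sigma_\eps}(\pe_n)| = \eps\,|A_\Sigma(\eps\pe_n)| \to 0$ by the decay $|A_\Sigma|^2 \sim (N-1)|s|^{-2}$ obtained in Subsection~2.5, the metric coefficients of $\Sigma_\eps$ in these coordinates converge smoothly to $\delta_{ij}$ on bounded sets. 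Consequently $\tilde\phi_n(y, t) := \phi_n(\exp_{\pe_n}(y), t)$ converges in $C^2_{loc}$ to a bounded $\tilde\phi$ on $\R^N\times\R$ satisfying
\[
-\Delta_{\R^N}\tilde\phi - \partial_t^2\tilde\phi + (3v_\star^2(t) - 1)\tilde\phi = 0,
\]
together with the pointwise orthogonality $\int_\R \tilde\phi(y,t) v_\star'(t)\,dt = 0$ and $|\tilde\phi(0, t_\infty)| \geq 1/2$. The Liouville lemma recalled in the text then yields $\tilde\phi(y,t) = c\,v_\star'(t)$ for some constant $c\in\R$, and the orthogonality forces $c=0$, a contradiction. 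The bounded-$\pe_n$ case is handled analogously, either by a further translation along $\Sigma_\eps$ (which is a complete noncompact hypersurface, so room to translate is always available) to reduce to the flat model, or directly from the slicewise decomposition $\tilde\phi = \varphi_0(\pe)v_\star'(t) + \tilde\phi^\perp$, using that orthogonality kills $\varphi_0$ while the positive spectral gap of $-\partial_t^2 + (3v_\star^2 - 1)$ on $\{v_\star'\}^\perp$ forces $\tilde\phi^\perp \equiv 0$.

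The main obstacle is the case $|\pe_n|\to\infty$, where the genuinely curved Riemannian geometry of $\Sigma_\eps$ must be controlled at unit scale around the moving base point in order to identify the limiting equation with a constant-coefficient problem on $\R^N\times\R$. This is precisely where the detailed asymptotic description of $\Sigma$ in Theorem~\ref{th_Al} and the decay of $|A_\Sigma|^2$ from Subsection~2.5 are essential: they show that in normal coordinates at $\pe_n$ both the Christoffel symbols and the metric perturbations are of order $\eps|A_\Sigma(\eps\pe_n)| \to 0$, so the limiting operator is the flat model and the Liouville step applies. Everything else (localization in $t$, pointwise preservation of orthogonality, and the Schauder compactness) is routine and uniform in $n$.
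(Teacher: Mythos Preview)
Your contradiction/compactness/Liouville strategy is the same as the paper's, and your preliminary barrier argument pinning $|t_n|$ to a bounded range is a clean alternative to the paper's after-the-fact case split on~$t_n$. However, there is a genuine gap.

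The paper's contradiction sequence explicitly carries $\eps_n\to 0$. This is the key mechanism: in normal coordinates on $\Sigma$ centred at $\eps_n\pe_n$, the rescaled Laplace--Beltrami operator on $\Sigma_{\eps_n}$ reads $g^{ij}(\eps_n y)\partial_{ij}+\eps_n b^i(\eps_n y)\partial_i$, and this converges to $\Delta_{\R^N}$ on compact $y$-sets simply because $\eps_n\to 0$ and $\Sigma$ has uniformly bounded geometry. Thus the blow-up limit is \emph{always} the flat model $\R^N\times\R$, regardless of the location of $\pe_n$, and the quoted Liouville lemma applies directly. You instead keep $\eps$ fixed and split on $|\pe_n|$. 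The $|\pe_n|\to\infty$ branch is fine, but in the bounded-$\pe_n$ branch the limit lives on the genuinely curved $\Sigma_\eps\times\R$, to which the Liouville lemma (stated only for $\R^N$) does not apply. Your option~(a), translating along $\Sigma_\eps$, does not flatten the metric and cannot reduce to the flat model. Your option~(b), the slicewise spectral-gap argument, is viable but incomplete: after obtaining $\Delta_{\Sigma_\eps}\Psi\ge 2\lambda_2\Psi$ for $\Psi(\pe)=\int_\R\tilde\phi^2\,dt$, you still need a maximum-principle-at-infinity (e.g.\ Omori--Yau on $\Sigma_\eps$) to force $\Psi\equiv 0$, and you do not supply it.

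More importantly, by working at fixed $\eps$ you do not establish that $c$ is uniform as $\eps\to 0$, which is exactly what the subsequent fixed-point construction requires. Sending $\eps_n\to 0$ both eliminates the bounded-$\pe_n$ case and delivers that uniformity for free; this is the missing idea.
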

\begin{proof}
Assume, by contradiction, that there exist a sequence $\eps_n\to 0$ and sequences $\phi_n,\, f_n\in L^\infty(\Sigma_\eps\times\R)$ such that
\begin{equation}\notag
\begin{aligned}
&-\Delta_{\Sigma_{\eps_n}}\phi_n-\partial^2_t \phi_n+(3v_\star^2-1)\phi_n=f_n\\
&\int_\R \phi_n(\pe,t)dt=0\qquad\forall\, \pe\in\Sigma_{\eps_n}\\
&\|\phi_n\|_{L^\infty(\Sigma_\eps\times\R)}=1,\qquad\|f_n\|_{L^\infty(\Sigma_\eps\times\R)}\to 0.
\end{aligned}
\end{equation}
Let us consider a sequence $(\pe_n,t_n)\in\Sigma_{\eps_n}\times\R$ such that $|\phi_n(\pe_n,t_n)|\ge \frac{1}{2}$ and a parametrisation $Y:B_\theta\subset\R^N\to\Sigma$ of $\Sigma$ such that $Y(0)=\eps_n \pe_n$ and the metric at $0$ is the identity. In these coordinates, the laplacian reads
$$\Delta_{\Sigma_{\eps_n}}:=g^{ij}(\eps_n \mathbf{y})\partial_{ij}+\eps_n b^i(\eps_n \mathbf{y})\partial_i$$
so that the equation reads
$$-g^{ij}(\eps_n \mathbf{y})\partial_{ij}\tilde{\phi}_n-\eps_n b^i(\eps_n \mathbf{y})\partial_i\tilde{\phi}_n-\partial^2_t\tilde{\phi}_n+(3v_\star^2-1)\tilde{\phi}_n=\tilde{f}_n, \qquad\forall\,(\mathbf{y},t)\in B_{\theta\eps^{-1}}\times\R,$$
where $\tilde{\phi}_n(\mathbf{y},t):=\phi_n(\pe,t)$ and $\tilde{f}_n(\mathbf{y},t):=f_n(\pe,t)$. Note that $|\tilde{\phi}_n(0,t_n)|=|\phi_n(\pe_n,t_n)|\ge \frac{1}{2}$.\\

Let us assume first that $t_n$ is bounded. Then, up to a subsequence, $t_n\to t_\infty\in\R$ and, by the Ascoli-Arzel? theorem, $\tilde{\phi}_n$ converges uniformly on compact subsets to a bounded solution $\phi_\infty$ to
$$-\Delta_{\R^N}\phi_\infty-\partial^2_t \phi_\infty+(3v_\star^2-1)\phi_\infty=0$$
in $\R^{N+1}$ such that $|\phi_\infty(0,t_\infty)|\ge\frac{1}{2}$, which yields that $\phi_\infty=C v'_\star(t)$, with $C\ne 0$. However, the orthogonality condition
$$\int_\R \phi_\infty v'_\star dt=0,\qquad\forall\mathbf{y}\in\R^N$$
yields that $C=0$, a contradiction.\\

If $t_n$ is unbounded, say $t_n\to \infty$, the situation is similar. In this case we set
$$\phi^\sharp_n(\mathbf{y},t):=\tilde{\phi}_n(\mathbf{y},t+t_n)$$
and we get, in the limit, a bounded solution $\phi_\infty$ to
$$-\Delta_{\R^N}\phi_\infty-\partial^2_t \phi_\infty+2\phi_\infty=0$$
in $\R^{N+1}$ such that $|\phi_\infty(0,\infty)|\ge\frac{1}{2}$, which contradicts the maximum principle and the boundedness of $\phi_\infty$.
\end{proof}
\begin{lemma}[Decay in $t$]
\label{lemma_dec_t}
Let $\beta\in(0,1)$, $\rho\in(0,\sqrt{2})$, $\mu\in(0,1)$ and $f\in Y_{\bot,\mu,\rho}^{\sharp,\beta}$. Let $\phi$ be a bounded solution to (\ref{eq_lin_product}). Then
$$|\phi(\pe,t)|\cosh(t)^\rho\le c(\|\phi\|_{L^\infty(\Sigma_\eps\times\R)}+\|f \cosh(t)^\rho\|_{L^\infty(\Sigma_\eps\times\R)}), \qquad\forall \, (\pe,t)\in\Sigma_\eps\times\R.$$
\end{lemma}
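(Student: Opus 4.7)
The plan is to construct a weighted barrier of the form $v(t) := A\cosh(t)^{-\rho}$ and compare $\phi$ with it via the maximum principle, exploiting that $\rho < \sqrt{2}$ and that the potential $3v_\star(t)^2 - 1$ tends to $2$ as $|t|\to\infty$.

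First, I would compute directly that
\begin{equation*}
-\partial_t^2 \cosh(t)^{-\rho} = \rho\cosh(t)^{-\rho} - \rho(\rho+1)\sinh^2(t)\cosh(t)^{-\rho-2},
\end{equation*}
which is asymptotic to $-\rho^2\cosh(t)^{-\rho}$ as $|t|\to\infty$. Combined with $3v_\star(t)^2 - 1 \to 2$, this yields a constant $c_0 = c_0(\rho) > 0$ and $T = T(\rho) > 0$ (both independent of $\eps$ and of $\phi$) such that
\begin{equation*}
\bigl(-\partial_t^2 + (3v_\star^2 - 1)\bigr)\cosh(t)^{-\rho} \geq c_0 \cosh(t)^{-\rho}, \qquad |t| > T.
\end{equation*}
Since the barrier does not depend on $\pe$, the full operator $-\Delta_{\Sigma_\eps} - \partial_t^2 + (3v_\star^2 - 1)$ produces the same lower bound on $\{|t| > T\}$.

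Next, set $M := \|\phi\|_{L^\infty(\Sigma_\eps\times\R)}$ and $N := \|f\cosh(t)^\rho\|_{L^\infty(\Sigma_\eps\times\R)}$, and choose
\begin{equation*}
A := \cosh(T)^\rho M + c_0^{-1}N,
\end{equation*}
so that $v(\pm T) = A\cosh(T)^{-\rho} \geq M \geq |\phi(\pe,\pm T)|$ for all $\pe\in\Sigma_\eps$ and $(-\Delta_{\Sigma_\eps} - \partial_t^2 + (3v_\star^2 - 1))v \geq N\cosh(t)^{-\rho} \geq |f(\pe,t)|$ in $\{|t| > T\}$. Applying the maximum principle to $\pm\phi - v$ on the unbounded region $\Sigma_\eps \times (\{t > T\}\cup\{t < -T\})$ will yield $|\phi(\pe,t)| \leq A\cosh(t)^{-\rho}$ for $|t| > T$, i.e.\ the desired weighted bound in that region. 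For $|t| \leq T$, the claim is immediate from the trivial estimate $|\phi(\pe,t)|\cosh(t)^\rho \leq \cosh(T)^\rho M$.

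The main obstacle is justifying the maximum principle on the unbounded domain $\Sigma_\eps \times \{|t|>T\}$, since the coefficient $3v_\star^2 - 1$ is not uniformly coercive (it is only $\geq 2-\delta$ for $|t|$ large) and the domain is non-compact in the $\pe$-direction. I would handle this by a standard blow-up/translation argument: if $\sup(\phi - v) > 0$ were positive and not attained, one would extract a sequence $(\pe_n, t_n)$ along which the supremum is approached, translate $\pe_n$ to a fixed point using the local parametrization of $\Sigma_\eps$, and pass to the limit (using interior Schauder estimates) to obtain either a bounded nontrivial solution of $(-\Delta_{\R^N} - \partial_t^2 + (3v_\star^2-1))w = 0$ achieving a positive maximum with $|t^\ast|\geq T$, which is ruled out since $3v_\star(t^\ast)^2 - 1 > 0$, or (if $|t_n|\to\infty$) a bounded positive solution of $-\Delta w + 2w \leq 0$ on $\R^{N+1}$ attaining its positive supremum, which contradicts the strong maximum principle. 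This is exactly the kind of contradiction argument used in Lemma \ref{lemma_Apriori_est}, and should transfer with only notational changes.
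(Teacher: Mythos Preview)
Your approach is correct and reaches the same conclusion, but it differs from the paper's in how the maximum principle on the unbounded domain is justified. The paper avoids your blow-up/translation argument entirely by working on bounded domains with a two-parameter barrier
\[
v_{\lambda,\nu}(t)=\bigl(\lambda\cosh(t)^{-\rho}+\nu\cosh(t)^{\rho}\bigr)e^{\nu R_0},
\]
applied on $\Sigma_{\eps,\eps^{-1}R_0}\times(t_1,t_2)$: the growing term $\nu\cosh(t)^{\rho}$ handles the boundary at $t=t_2$, the factor $e^{\nu R_0}$ handles the lateral boundary $\partial\Sigma_{\eps,\eps^{-1}R_0}$, and then one sends $\nu\to0$ (and $t_2,R_0\to\infty$). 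This exhaustion technique is more elementary and self-contained, needing only the classical maximum principle on compact sets. Your route is more direct conceptually, but the blow-up step requires a bit more infrastructure (bounded geometry of $\Sigma_\eps$, compactness of the translated sequence). One small inaccuracy: in your Case 2a the limiting function $w=\lim(\phi-v)$ does not satisfy the \emph{homogeneous} equation $Lw=0$, since $f$ need not vanish in the limit; what you actually obtain is the differential inequality $Lw\le0$ together with a positive interior maximum at a point where the zeroth-order coefficient $3v_\star^2-1$ is strictly positive, and that is what yields the contradiction.
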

\begin{proof}
The proof relies on comparing the solution with a barrier. For any $\nu>0$ and $\lambda>0$, we set
$$v_{\lambda,\nu}(t):=(\lambda\cosh^{-\rho}(t)+\nu\cosh^\rho(t))e^{\nu R_0}.$$
The idea is to apply the maximum principle to bounded domain 
$$\Omega:=\Sigma_{\eps,\eps^{-1}R_0}\times(t_1,t_2), \qquad\Sigma_{\eps,\eps^{-1}R_0}:=\Sigma_\eps\cap B_{\eps^{-1}R_0},$$
with $R_0>0,t_1>0,t_2>0$ to be determined. First we note that, in order to apply the maximum principle, $f'(v_\star(t))$ must be uniformly negative, thus we choose $t_1>0$ such that $1-3v_\star^2<-1-\frac{\gamma^2}{2}$ for $|t|>t_1$. Then we observe that, since $e^{\nu R_0}\ge 1$,  for $\pe\in\Sigma_\eps$ and $t=t_1$ we have
$$\phi(\pe,t_1)\le\|\phi\|_{L^\infty(\Sigma_\eps\times\R)}\le \lambda \cosh^{-\rho}(t_1) \le v_{\lambda,\nu}(t_1)$$
provided $\lambda\ge\|\phi\|_{L^\infty(\Sigma_\eps\times\R)}\cosh^{\rho}(t_1)$. For $|t|=t_2$, we have
$$\phi(\pe,t_2)\le \|\phi\|_{L^\infty(\Sigma_\eps\times\R)}\le \nu\cosh^\rho(t_2)\le v_{\lambda,\nu}(t_2)$$
provided $t_2$ is large enough. Moreover, on $\partial \Sigma_{\eps,\eps^{-1}R_0}\times (t_1,t_2)$, we have
$$v_{\lambda,\nu}(t)\ge (\lambda\cosh^{-\rho}(t_2)+\nu\cosh^\rho(t_1)) e^{\nu R_0}\ge \|\phi\|_{L^\infty(\Sigma_\eps\times\R)}\ge \phi(\pe,t),$$
provided $R_0>0$ is large enough. Differentiation shows that in $\Omega$ the following differential inequality holds
\begin{equation}\notag
(-\Delta_{\Sigma_\eps}-\partial^2_t+(3v_\star^2-1))(\phi-v_{\lambda,\nu})\le \left(\|f\cosh(t)^\rho\|_{L^\infty(\Sigma_\eps\times\R)}-\lambda e^{\nu R_0}\left(1-\frac{\rho^2}{2}\right)\right) \cosh(t)^{-\rho}\le 0
\end{equation}
provided
$$\lambda\ge \frac{2\|f\cosh(t)^\rho\|_{L^\infty(\Sigma_\eps\times\R)}}{e^{\nu R_0}(2-\rho^2)}.$$
In conclusion, applying the maximum principle and letting $\nu\to 0$, we have
$$\phi\le \lambda \cosh^{-\rho}(t) \text{ in $\Sigma_\eps\times(t_1,\infty)$}, \qquad \lambda=\|\phi\|_{L^\infty(\Sigma_\eps\times\R)}\cosh^{\rho}(t_1)
+\frac{2\|f\cosh^\rho(t)\|_{L^\infty(\Sigma_\eps\times\R)}}{2-\rho^2}.$$
If $\phi$ satisfies the hypothesis of the Lemma, then also $-\phi$ and $\phi(\pe,-t)$ do, hence the proof is concluded.
\end{proof}
\begin{lemma}[Decay in $\pe$]
\label{lemma_dec_y}
Let $\beta\in(0,1)$, $\rho\in(0,\sqrt{2})$, $\mu\in(0,1)$ and $f\in Y_{\bot,\mu,\rho}^{\sharp,\beta}$. Let $\phi$ be a bounded solution to (\ref{eq_lin_product}). Then
$$\|\phi\|_{2+\mu,\rho,\infty}\le c(\|\phi\|_{L^\infty(\Sigma_\eps\times\R)}+\|f\|_{2+\mu,\rho,\infty}).$$
\end{lemma}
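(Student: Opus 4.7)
My plan is to follow the contradiction and blow-up strategy used in the proof of Lemma \ref{lemma_Apriori_est}, adapted so as to preserve the extra $\pe$-weight. Suppose the stated estimate fails. Then there exist $\eps_n\downarrow 0$, sources $f_n\in Y_{\bot,\mu,\rho}^{\sharp,\beta}$ and bounded solutions $\phi_n$ of (\ref{eq_lin_product}) normalised so that $\|\phi_n\|_{2+\mu,\rho,\infty}=1$ while $\|\phi_n\|_{L^\infty(\Sigma_{\eps_n}\times\R)}+\|f_n\|_{2+\mu,\rho,\infty}\to 0$. Pick $(\pe_n,t_n)$ realising the first norm up to a factor $\tfrac{1}{2}$. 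If $\eps_n\pe_n$ stays bounded, the weight $(s(\eps_n\pe_n)^2+2)^{(2+\mu)/2}$ is bounded, while Lemma \ref{lemma_dec_t} together with $\|f_n\cosh^\rho\|_\infty\le C\|f_n\|_{2+\mu,\rho,\infty}$ forces $\cosh(t_n)^\rho|\phi_n(\pe_n,t_n)|\to 0$, contradicting the lower bound. Thus $|\eps_n\pe_n|\to\infty$.

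I would then rescale. Take normal coordinates $z\in\R^N$ on $\Sigma_{\eps_n}$ centred at $\pe_n$ and set $\tilde\phi_n(z,t):=(s(\eps_n\pe_n)^2+2)^{(2+\mu)/2}\phi_n(\pe_n+z,t)$, and similarly $\tilde f_n$. Since $s(\eps_n(\pe_n+z))=s(\eps_n\pe_n)+O(\eps_n|z|)$ and $|\eps_n\pe_n|\to\infty$, the weight ratio tends to $1$ on bounded sets in $z$; hence $|\tilde\phi_n(z,t)|\le(1+o(1))\cosh(t)^{-\rho}$ on such sets, and $\tilde f_n\to 0$ in $L^\infty_{loc}$. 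By Theorem \ref{th_Al} and \eqref{asymptoticsabatinfinity}, the principal curvatures of $\Sigma_{\eps_n}$ at $\pe_n$ are of order $\eps_n/(1+|\eps_n\pe_n|)\to 0$, so the induced metric flattens on balls of arbitrarily large radius and $\Delta_{\Sigma_{\eps_n}}$ converges on compact sets to $\Delta_{\R^N}$. Interior Schauder estimates then produce a subsequential limit $\tilde\phi_\infty$ on $\R^N\times\R$ which is bounded, orthogonal to $v'_\star$ in $t$, and solves $-\Delta_{\R^N}\tilde\phi_\infty-\partial_t^2\tilde\phi_\infty+(3v_\star^2-1)\tilde\phi_\infty=0$.

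Two sub-cases for $t_n$ must then be handled. If $t_n$ stays bounded, the kernel lemma quoted before Lemma \ref{lemma_Apriori_est} forces $\tilde\phi_\infty(z,t)=C\,v'_\star(t)$; orthogonality gives $C=0$, but $|\tilde\phi_\infty(0,t_\infty)|\ge\tfrac{1}{2}\cosh(t_\infty)^{-\rho}>0$, a contradiction. If $|t_n|\to\infty$, I would perform a second rescaling $\Phi_n(z,s):=\cosh(t_n)^\rho\tilde\phi_n(z,s+t_n)$. The ratio $\cosh(s+t_n)/\cosh(t_n)$ stays bounded on compact $s$-sets, so $\Phi_n$ is uniformly bounded and $\Phi_n(0,0)\ge\tfrac{1}{2}$; the potential $3v_\star(s+t_n)^2-1$ tends to $2$ locally uniformly, and the rescaled right-hand side tends to $0$ locally uniformly. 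The limit $\Phi_\infty$ is then a bounded entire solution of $(-\Delta_{\R^{N+1}}+2)\Phi_\infty=0$ with $\Phi_\infty(0,0)\neq 0$, which is excluded by the maximum principle.

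The main obstacle I foresee is the geometric verification that $\Sigma_{\eps_n}$ flattens around $\pe_n$ with quantitative $C^1_{loc}$ control on balls of arbitrarily large radius when $|\eps_n\pe_n|\to\infty$; this is a quantitative consequence of the conical asymptotics recorded in Theorem \ref{th_Al} and the estimates in \eqref{asymptoticsabatinfinity}. Once the weighted $L^\infty$ bound on $\phi$ is established, the remaining contributions to $\|\phi\|_{X^{\sharp,\beta}_{\mu,\rho}}$, namely the weighted bounds on $\nabla\phi$ and on $\|D^2\phi\|_{C^{0,\beta}}$, will follow from standard interior Schauder estimates applied to (\ref{eq_lin_product}) in unit-size balls, combined with the $L^\infty$ bound just obtained and the $Y^{\sharp,\beta}_{\mu,\rho}$-bound on $f$.
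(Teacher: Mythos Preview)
Your proposal is correct in spirit but takes a genuinely different route from the paper. The paper does \emph{not} argue by contradiction and blow-up; instead it introduces the auxiliary energy
\[
\varphi(\pe):=\int_\R \phi(\pe,t)^2\,dt,
\]
uses the spectral gap of $-\partial_t^2+(3v_\star^2-1)$ on $(v'_\star)^\perp$ to derive the differential inequality
\[
-\Delta_{\Sigma_\eps}\varphi+2\varphi\le \int_\R f(\pe,t)^2\,dt,
\]
and then applies the maximum principle with explicit barriers of the form $\lambda(s(\eps\pe)^2+2)^{-(2+\mu)}+\nu(s(\eps\pe)^2+2)^{2+\mu}$ to obtain the $\pe$-decay, followed by a second barrier to make the decay uniform in $t$. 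This energy/barrier method is more direct and avoids any geometric blow-up analysis; your approach has the advantage of being a uniform template (it simply re-runs the machinery of Lemma~\ref{lemma_Apriori_est} with the extra weight), at the cost of needing the flattening of $\Sigma_{\eps_n}$ near $\pe_n$ that you flag.

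One technical point in your scheme deserves care. In the sub-case $|t_n|\to\infty$, after the second rescaling $\Phi_n(z,s)=\cosh(t_n)^\rho\,\tilde\phi_n(z,s+t_n)$, the bound you inherit is
\[
|\Phi_n(z,s)|\le (1+o(1))\Bigl(\frac{\cosh t_n}{\cosh(s+t_n)}\Bigr)^{\rho}\longrightarrow e^{\mp\rho s},
\]
which is \emph{not} globally bounded; so the limit $\Phi_\infty$ need not be a bounded solution of $(-\Delta+2)\Phi_\infty=0$, and the plain maximum-principle argument does not apply as stated. The fix is short: set $\Psi(z,s)=e^{\pm\rho s}\Phi_\infty(z,s)$; then $\Psi$ is bounded and solves $-\Delta\Psi\pm 2\rho\,\partial_s\Psi+(2-\rho^2)\Psi=0$, and since $\rho\in(0,\sqrt{2})$ the zeroth-order coefficient $2-\rho^2$ is strictly positive, so the standard Liouville theorem (e.g.\ via the Fourier transform of a tempered solution) forces $\Psi\equiv 0$. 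With this adjustment your contradiction closes, and the remaining Schauder upgrade to the full $X^{\sharp,\beta}_{\mu,\rho}$-norm is exactly as you describe.
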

\begin{proof}
For $\pe\in\Sigma_\eps$, we define 
$$\varphi(\pe):=\int_\R \phi(\pe,t)^2dt.$$
Due to the exponential decay in $t$ provided by Lemma \ref{lemma_dec_t}, $\psi$ is well defined and bounded with
\begin{equation}\notag
\|\varphi\|_{L^\infty(\Sigma_\eps\times\R)}\le c(\|\phi\|_{L^\infty(\Sigma_\eps\times\R)}+\|f \cosh(t)^\rho\|_{L^\infty(\Sigma_\eps\times\R)}).
\end{equation}
A computation shows that
\begin{equation}\notag
\Delta_{\Sigma_\eps}\varphi=\int_\R (2\phi\Delta_{\Sigma_\eps}\phi+2|\nabla_{\Sigma_\eps}\phi|^2)dt,
\end{equation}
thus, multiplying (\ref{eq_lin_product}) by $\phi$ and integrating over $\R$,
$$\Delta_{\Sigma_\eps}\varphi= 2\int_\R |\nabla_{\Sigma_\eps}\phi|^2 dt+2\int_\R (\partial_t\phi)^2 dt+2\int_\R(3v_\star^2-1)\phi^2 dt-2\int_\R \phi f dt,$$
therefore, by the spectral properties of the ordinary differential operator $-\partial^2_t+(3v_\star^2-1)$, we have
$$\Delta_{\Sigma_\eps}\varphi\ge 3\int_\R \phi^2 dt-2\int_\R f\phi dt,$$
and hence, by the Young inequality, $\psi\ge 0$ satisfies the differential inequality
$$-\Delta_{\Sigma_\eps}\varphi(\pe)+2\varphi(\pe)\le\int_\R f(\pe,t)^2 dt,\qquad\forall\, \pe\in\Sigma_\eps.$$

Therefore, using the barrier
$$w_{\lambda,\nu}(\pe):=\lambda(s(\eps\pe)^2+2)^{-(2+\mu)}+\nu(s(\eps\pe)^2+2)^{2+\mu},$$
with $$\lambda:=\frac{2\|f\|_{2+\mu,\rho,\infty}^2}{2-\delta}+c(\mu)\|\varphi\|_{L^\infty(\Sigma_\eps\times\R)}$$
and $\nu>0$ arbitrarily small, the maximum principle gives
$$0\le \varphi(\pe)\le c\|f\|_{2+\mu,\rho,\infty}^2(s(\eps\pe)^2+2)^{-\frac{2+\mu}{2}}$$
or equivalently
$$(s(\eps\pe)^2+2)^{2+\mu}\int_\R \phi(\pe,t)^2 dt\le c\|f\|_{2+\mu,\rho,\infty}^2.$$

By the elliptic estimates, we have
$$(s(\eps\pe)^2+2)^{\frac{2+\mu}{2}}\|\phi\|_{L^\infty(I_{\pe,t})}\le c\|f\|_{2+\mu,\rho,\infty}, \qquad\forall \pe\in\Sigma_\eps.$$

In order to prove that the decay along the surface is uniform in $t$, we use the barrier
$$\bar{w}_{\lambda,\nu}(\pe,t):=\bar{c}(\mu,\rho)\|f\|_{2+\mu,\rho,\infty}(s(\eps\pe)^2+2)^{-\frac{2+\mu}{2}}\cosh(t)^{-\rho}
-\nu(s(\eps\pe)^2+2)^{\frac{2+\mu}{2}}\cosh(t)^\rho$$
where $\bar{c}(\mu,\rho)>0$ is a suitable constant, in a region of the form
$$\left\{(\pe,t)\in\Sigma_\eps\times\R:t_0<t<t_1,\, |s(\eps\pe)|\le s_0\right\},$$
with $t_0$ so large that $3v_\star^2-1>1+\frac{\rho^2}{2}$ for $t\in (t_0,\infty)$. This concludes the proof.
\end{proof}

\medskip
Now we can conclude the proof of Proposition \ref{prop_eq_lin_product}.
\begin{proof}
Given $f\in Y_{\bot,\mu,\rho}^{\sharp,\beta}$, for any $R>0$, by Lemma \ref{lemma_existence} it is possible to find a bounded $O(m)\times O(n)$-invariant solution $\phi_R\in H^1(\Sigma_\eps\times\R)$ to the truncated equation
$$-\Delta_{\Sigma_\eps}\phi_R-\partial^2_t\phi_R+(3v_\star^2-1)\phi_R=f_R$$
which satisfies the orthogonality condition (\ref{ort_v'star}) and, by the a priori estimate provided in Lemma \ref{lemma_Apriori_est},
$$\|\phi_R\|_{L^\infty(\Sigma_\eps\times\R)}\le c\|f_R\|_{L^\infty(\Sigma_\eps\times\R)}\le c\|f\|_{L^\infty(\Sigma_\eps\times\R)}.$$

Therefore, by the Ascoli-Arzel? theorem, there exists a sequence $R_k\to\infty$ such that $\phi_{R_k}$ converges uniformly on compact subsets to a bounded solution $\phi$ to (\ref{eq_lin_product}).

\medskip
Since $\phi_R$ satisfies the orthogonality condition (\ref{ort_v'star}) and is $O(m)\times O(n)$-invariant, then also $\phi$ does. Moreover, by Lemmas \ref{lemma_dec_t} and $\ref{lemma_dec_y}$, $\phi$ has the suitable decay both in $t$ and in $\pe$ and 
$$\|\phi\|_{2+\mu,\rho,\infty}\le c(\|\phi\|_{L^\infty(\Sigma_\eps\times\R)}+\|f\|_{2+\mu,\rho,\infty}).$$

Once again by the a priori estimate, namely by Lemma \ref{lemma_Apriori_est},
$$\|\phi\|_{2+\mu,\rho,\infty}\le \|f\|_{2+\mu,\rho,\infty}.$$

By the elliptic estimates, we can see that $\phi\in X_0$ and
$$\|\phi\|_{X_{\bot,\mu,\rho}^{\sharp,\beta}}\le c\|f\|_{Y_{\bot,\mu,\rho}^{\sharp,\beta}}.$$
\end{proof}
Now we can prove Proposition \ref{prop_phi_l}.
\begin{proof}
System \eqref{eq_nl_phi_j_pr} can be formulated as a fixed point problem in the form
$$\phi_l={\tt F}_4\left(-\chi_4 S(w)_{\ast,l}-{\tt N}_l(\psi,\phi_1,\phi_2,\h_1,\h_2)+P_l(\psi,\phi_1,\phi_2,\h_1,\h_2)(\pe)v'_\star\right),\qquad l=1,2,$$
where $\psi=\psi(\phi_1,\phi_2,\h_1,\h_2)$ is the correction found in Proposition \ref{prop_psi}.

\medskip
Thanks to Lemma \ref{lemma_new_error} and to the size of $\psi$ in $\eps$, the above problem has a unique solution in the ball
$$B_{\Lambda_1}=\{(\phi_1,\phi_2)\in X_{\bot,\mu,\rho}^{\sharp,\beta}\times X_{\bot,\mu,\rho}^{\sharp,\beta}:\,\|\phi_1\|_{X_{\mu,\rho}^{\sharp,\beta}}+\|\phi_2\|_{X_{\mu,\rho}^{\sharp,\beta}}<\Lambda_1\eps^{2+\mu}\},$$
provided $\Lambda_1>0$ is large enough. The details of the nonlinear argument are similar to the proof of Proposition \ref{prop_psi}. A similar proof can be found in Section $6$ from \cite{DELPINOKOWALCKZYKWEI2011}. This concludes the proof.
\end{proof}

\subsection{The bifurcation equation}\label{subs_bifo_eq}

We recall that the bifurcation equation is actually a system, given by
$$P_{{l}}^{\sharp}(\psi,\phi_1,\phi_2,\h_1,\h_2)=0, \qquad l=1,\,2,$$
where $\psi,\,\phi_1$ and $\phi_2$ are were constructed in subsections \ref{subs_eq_far} and \ref{subs_aux_eq}.

\medskip
As we shown in subsection \ref{subs_gluing}, this turns out to be equivalent to a system of the form (\ref{syst_q}), which can be solve using a fixed point argument in the ball
$$B_{\Lambda_0}:=\{(\q_1,\q_2)\in\mathcal{C}^{2,\beta}_{\infty,\mu}(\Sigma)\times\mathcal{D}^{2,\beta}_{\mu,\frac{1}{2}}(\Sigma):\,
\|\q_1\|_{\mathcal{C}^{2,\beta}_{\infty,\mu}(\Sigma)}<\Lambda_0\eps^{\mu},
\,\|\q_2\|_{\mathcal{D}^{2,\beta}_{\mu,\frac{1}{2}}(\Sigma)}<\Lambda_0\eps^{\mu}\},$$
provided $\Lambda_1>0$ is large enough, but indepedent of $\eps>0$.

\medskip
This is due mainly to the fact that, thanks to Proposition \ref{propinverselinearJT}, the right inverse of $\Delta_\Sigma+|A_\Sigma|^2+2\sqrt{2}a_\star\eps^{-2}e^{-\sqrt{2}{\tt v}}$ is of order $|\log\eps|^{2\emph{K}}$ and Lemma \ref{lemma_new_error}, providing the size of the error. Once again, the details are left to the reader. Similar proofs can be found in \cite{DELPINOKOWALCKZYKWEI2011}.

\section{Estimate of the energy on a ball}\label{sec_energy_ball}

In this section we will prove point (\ref{est_energy_grad}) and hence This concluding  the proof of Theorem \ref{2ndMainTheo}. Recall that the developments in Section 5 have yielded a solution $u_{\eps}=w + \varphi$ to \eqref{Allen-Cahn-eq}, where $w$ is described in \eqref{globalapproxsect5} and $\varphi$ is described in \eqref{globalsmallterm}. 

To show 
\[
\int_{B_R}\frac{1}{2} |\nabla u_\eps|^2+\frac{1}{4}(1-u_\eps^2)^2\le c R^N,
\]
we first claim  that 
\[
\int_{B_R} |\nabla u_\eps|^2\leq C R^N.
\]

\medskip
Our developments make extensive use of the coordinates $(\pe,t)$-coordinates. First we observe that the volume element in the Fermi coordinates $(\pe,{\tt z})$ of $\Sigma$ is given by
$$
\sqrt{|\det G|}=\sqrt{|\det g|}+P(\pe,{\tt z}),
$$
being $P$ a polynomial in ${\tt z}$ such that $P(\pe,0)=0$ (see \eqref{metric_FC} for the definition of $G$ and $g$).

\medskip
With the change of variables in  \eqref{eps_shift_coord}, the volume element in the $(\pe,t)$-coordinates in $X_{\eps,\h_l}^{-1}(\mathcal{N}_{\eps,\h_l})$ is given by
\begin{multline*}
\sqrt{|\det g_\eps|}+P(\eps\pe,\eps(t+\h_l(\eps\pe)))=\eps^{-(m-1)}a(s(\eps\pe))^{m-1}\eps^{-(n-1)}b(s(\eps\pe))^{n-1}\\
+P(\eps\pe,\eps(t+\h_l(\eps\pe))),
\end{multline*}
being $g_\eps$ the metric of $\Sigma_\eps=\eps^{-1}\Sigma$.

\medskip
Let $\pe$ is any point in $\Sigma_\eps\cap\partial  B_R$ and set $\s(R):=\s(\pe)$ and
$$t_\eps:=\frac{1}{4\sqrt{2}}\left(\log(s(\eps\pe)^2+2)+2|\log\eps|\right).$$

Assume that $R>2\eps^{-1}$. Then
\begin{equation}\notag
\begin{aligned}
\int_{B_R}|\nabla w|^2 d\xi&\le c\int_{0}^{\s(R)}\eps^{-(m-1)}a(\eps\s)^{m-1}\eps^{-(n-1)}b(\eps\s)^{n-1}d\s\int_{0}^{t_\eps}v'_\star(t)^2dt\\
&\le c\eps^{-(N-1)}\int_0^{\s(R)}(1+\eps\s)^{m-1}(\eps\s)^{n-1}d\s\\
&=c\eps^{-N}\int_0^{\eps\s(R)}(1+s)^{m-1}s^{n-1} ds\\
&=c\eps^{-N}\left(\int_0^{1}(1+s)^{m-1}s^{n-1} ds+\int_1^{\eps\s(R)}(1+s)^{m-1}s^{n-1} ds\right)\\
&\le c\eps^{-N}(1+(\eps \s(R))^N)\le c\eps^{-N}(1+(\eps R)^N)\le cR^N.
\end{aligned}
\end{equation}

\medskip
Setting $\tilde{\varphi}=\sum_{l=1}^2\chi^{\natural}_{3,l}\varphi_l$, 
\begin{equation}
\begin{aligned}
\int_{B_R}|\nabla\tilde{\varphi}|^2 d\xi&\le c\eps^{4+2\mu}\int_0^{\s(R)}\frac{\eps^{-(m-1)}a(\eps\s)^{m-1}\eps^{-(n-1)}b(\eps\s)^{n-1}}{((\eps \s)^2+2)^{2+\mu}}d\s \int_0^{t_\eps}e^{-2\rho t}dt\\
&\le c\eps^{4+2\mu-(N-1)}\int_0^{\s(R)}\frac{(1+\eps\s)^{m-1}(\eps\s)^{n-1}}{((\eps \s)^2+2)^{2+\mu}}d\s\\
&=c\eps^{4+2\mu-N}\int_0^{\eps\s(R)}\frac{(1+s)^{m-1}s^{n-1}}{(s^2+2)^{2+\mu}}ds\\
&=c\eps^{4+2\mu-N}\left(\int_0^{1}\frac{(1+s)^{m-1}s^{n-1}}{(s^2+2)^{2+\mu}}ds
+\int_1^{\eps\s(R)}\frac{(1+s)^{m-1}s^{n-1}}{(s^2+2)^{2+\mu}}ds\right)\\
&\le c\eps^{4+2\mu-N}(1+(\eps\s(R))^{N-4-2\mu})\\
&\le c\eps^{4+2\mu-N}(1+(\eps R)^{N-4-2\mu})\\
&\le cR^{N-4-2\mu}.
\end{aligned}
\end{equation}

Finally, we estimate the gradient of $\psi$, to find that
\begin{equation}\notag
\begin{aligned}
\int_{B_R}|\nabla\psi|^2d\xi&\le c\eps^{4+2\mu}\int_{B_R}\frac{1}{(|\eps\xi|+2)^{2+\mu}}d\xi\le\\
&=c\eps^{4+2\mu}\int_0^{\s(R)}\frac{r^N}{((\eps r)^2+2)^{2+\mu}}dr\\
&=c\eps^{4+2\mu}\int_0^{\eps\s(R)}\frac{(\eps^{-1}\rho)^N}{(\rho^2+2)^{2+\mu}}\frac{d\rho}{\eps}\\
&=c\eps^{4+2\mu-N-1}\left(\int_0^1\frac{\rho^N}{(\rho^2+2)^{2+\mu}}d\rho
+\int_1^{\eps\s(R)}\frac{\rho^N}{(\rho^2+2)^{2+\mu}}d\rho\right)\\
&\le c\eps^{4+2\mu-N-1}\left(1+\int_1^{\eps\s(R)}\rho^{N-4-2\mu}d\rho\right)\\
&\le c\eps^{4+2\mu-N-1}(1+(\eps\s(R))^{N-3-2\mu})\\
&\le c\eps^{4+2\mu-N-1}(1+(\eps R)^{N-3-2\mu})\\
& \le cR^{N-3-2\mu}.
\end{aligned}
\end{equation}

Similar estimates hold for the mixed terms and this completes the proof of the claim. To show that 
\[
\int_{B_R}(1-u_\eps^2)^2\leq CR^N
\]
a similar argument as in the proof of the above claim can be used. We leave the details to the reader. The proof of the theorem is now complete.

\end{document}